\theoremstyle{definition}
\newtheorem{Defn}{Definition}[section]
\theoremstyle{plain}
\newtheorem{Lemma}[Defn]{Lemma}
\newtheorem{prop}[Defn]{Proposition}
\newtheorem{theorem}[Defn]{Theorem}
\newtheorem{Corollary}[Defn]{Corollary}
\newtheorem{THM}{Theorem}
\newtheorem{COR}[THM]{Corollary}
\theoremstyle{remark}
\newtheorem{remark}[Defn]{Remark}
\newtheorem{Example}[Defn]{Example}
\newif\ifshowcomments
\title{On analytic exponential functors on free groups}
\author{Minkyu Kim \& Christine Vespa}
\date{}
\begin{document}

\maketitle
\begin{abstract}
This paper concerns exponential contravariant functors on free groups.
We obtain an equivalence of categories between analytic, exponential contravariant functors on free groups and conilpotent cocommutative Hopf algebras. 
This result explains how equivalences of categories obtained previously by Pirashvili and by Powell interact.
Moreover, we obtain an equivalence between the categories of outer, exponential contravariant functors on free groups and bicommutative Hopf algebras. 
We also go further by introducing a subclass of analytic, contravariant functors on free groups, called {\it primitive} functors; and prove an equivalence between primitive, exponential contravariant functors and primitive cocommutative Hopf algebras.
\end{abstract}

\tableofcontents


\section{Introduction}

Functors on the category $\mathsf{gr}$ of finitely generated free groups and group homomorphisms  appear naturally in different contexts. For example, they give rise to natural examples of coefficients for the stable homology of the family of automorphisms groups of free groups $\mathsf{Aut}(\mathsf{F_n})$ (see \cite{DV, D, KV}), where $\mathsf{F}_n$ is the free group generated by $n$ elements. By \cite{PV}, Hochschild-Pirashvili homology for a wedge of circle gives rise to interesting functors on the category $\mathsf{gr}$. Since the linearization of $\mathsf{gr}^{\mathsf{op}}$ is a subcategory of the linear category of Jacobi diagrams in handlebodies introduced by Habiro and Massuyeau in \cite{HM}, a linear functor on this linear category gives rise, by restriction, to a functor on $\mathsf{gr}^{\mathsf{op}}$.  This is exploited in \cite{Katada1, Katada2, V-Jacobi} in order to study functors associated to (beaded) open Jacobi diagrams.

To a functor from the category $\mathsf{gr}$  to the category of $\mathds{k}$-modules (for $\mathds{k}$ a commutative ring), we can associate a family of  linear representations of the groups $\mathsf{Aut}(\mathsf{F}_n)$, for $n\in \mathbb{N}$. 
Hence, a functor from the category $\mathsf{gr}$ to the category of $\mathds{k}$-modules is called a $\mathsf{gr}$-module and gives a compatible family of representations of the groups $\mathsf{Aut}(\mathsf{F_n})$, where the compatibilities are given by the functoriality.

In the applications cited above, the functors on the category $\mathsf{gr}$ (resp. $\mathsf{gr}^{\mathsf{op}}$) satisfy further properties. One of the advantages of considering functors instead of families of linear representations lies in the exploitation of these properties. 

This paper deals with the functors on the category $\mathsf{gr}^{\mathsf{op}}$ satisfying the exponential property  (or, equivalently, the fact that the functor is symmetric monoidal) and the analyticity property obtained from the polynomial property in the sense of Eilenberg-Mac Lane.


On the one hand, by the work of Pirashvili \cite{Pirashvili} (see also \cite[Remark 2.1, Theorem 2.2]{MR3765469} and \cite{Hab}), the category of exponential functors on $\mathsf{gr}^{\mathsf{op}}$ is equivalent to the category of cocommutative Hopf algebras. On the other hand, Powell considers, in \cite{powell2021analytic}, the category of analytic functors from $\mathsf{gr}^{\mathsf{op}}$ to  the category of $\mathds{k}$-vector spaces (for $\mathds{k}$ a field of characteristic $0$). He proves an equivalence between this category and the category $\mathcal{F}_{\mathfrak{Lie}}$ of linear functors on the linear PROP associated to the Lie operad $\mathfrak{Lie}$. 

This paper answers the following question: how do these two equivalences of categories interact or more precisely can we describe the category of analytic exponential functors on 
$\mathsf{gr}^{\mathsf{op}}$?

Let $\mathcal{F}^\mathsf{exp}_\omega ( \mathsf{gr^{\mathsf{op}}} ; \mathds{k} )$ be the category of exponential and analytic functors on $\mathsf{gr^{\mathsf{op}}}$ and $\mathsf{Hopf}^\mathsf{cc,conil}_\mathds{k}$ the category of cocommutative and conilpotent Hopf algebras.
We obtain the following result, which is valid for any commutative ring $\mathds{k}$:

\begin{THM}
\label{202211301648}
(Theorem \ref{202207212109})
The evaluation on ${\mathsf{F}_1}$ induces an equivalence:
$$
\mathcal{F}^\mathsf{exp}_\omega ( \mathsf{gr^{\mathsf{op}}} ; \mathds{k} ) \simeq \mathsf{Hopf}^\mathsf{cc,conil}_\mathds{k} .
$$
\end{THM}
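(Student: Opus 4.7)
The plan is to refine Pirashvili's equivalence $\mathcal{F}^{\mathsf{exp}}(\mathsf{gr}^{\mathsf{op}};\mathds{k})\simeq\mathsf{Hopf}^{\mathsf{cc}}_{\mathds{k}}$ (given by evaluation on $\mathsf{F}_1$) by matching the analytic filtration of an exponential functor $F$ with the coradical filtration of its associated Hopf algebra $H:=F(\mathsf{F}_1)$. Since Pirashvili's theorem already handles the full exponential/cocommutative Hopf side, the entire content reduces to verifying that, under this correspondence, $F$ is analytic if and only if $H$ is conilpotent.

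The first step is to compute cross-effects. Under the identification $F(\mathsf{F}_n)\cong H^{\otimes n}$, the map induced by the inclusion $\mathsf{F}_{n-1}\hookrightarrow\mathsf{F}_n$ omitting the $i$-th generator is $\mathrm{id}^{\otimes(i-1)}\otimes\epsilon\otimes\mathrm{id}^{\otimes(n-i)}$, where $\epsilon\colon H\to\mathds{k}$ is the counit. Intersecting the kernels gives
\[
\mathrm{cr}_n(F)(\mathsf{F}_1,\ldots,\mathsf{F}_1)\;\cong\;\bar H^{\otimes n},\qquad \bar H:=\ker\epsilon,
\]
so that no nontrivial exponential functor is itself polynomial of finite degree; analyticity has to be detected via non-exponential polynomial subfunctors of $F$.

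I would then construct the candidate filtration. For each $d\ge 0$, set $G_d(\mathsf{F}_n):=P_d(H^{\otimes n})$, the $d$-th step of the coradical filtration of the coalgebra $H^{\otimes n}$. Functoriality on $\mathsf{gr}^{\mathsf{op}}$ follows because $\epsilon$, $\Delta$ and the antipode all respect the coradical filtration. Assuming $H$ is conilpotent, so that $P_0 H=\mathds{k}\cdot 1$ and $H=\bigcup_d P_d H$, a direct pigeonhole argument shows that $G_d$ is polynomial of degree $\leq d$: its $(d{+}1)$-st cross-effect is a sum of terms $\overline{P_{i_1}H}\otimes\cdots\otimes\overline{P_{i_{d+1}}H}$ with $i_1+\cdots+i_{d+1}\leq d$, but $\overline{P_0 H}=0$ forces every $i_j\ge 1$, which is impossible. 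Hence $F=\bigcup_d G_d$ is analytic.

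For the converse, let $G\subseteq F$ be a polynomial subfunctor of degree $\leq d$ and pick $x\in G(\mathsf{F}_1)$. Naturality applied to the iterated fold map $\mathsf{F}_{d+1}\to\mathsf{F}_1$ shows $\Delta^{(d+1)}(x)\in G(\mathsf{F}_{d+1})$, and its projection onto the $\bar H^{\otimes(d+1)}$-summand of $H^{\otimes(d+1)}$, which equals $\bar\Delta^{(d+1)}(x)$, lies in $\mathrm{cr}_{d+1}(G)(\mathsf{F}_1,\ldots,\mathsf{F}_1)=0$. Thus $G(\mathsf{F}_1)\subseteq P_d H$, and summing over all polynomial subfunctors yields $H=\bigcup_d P_d H$, i.e.\ $H$ is conilpotent. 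The main obstacle I expect is the careful verification that the coradical filtration of $H^{\otimes n}$ is stable under every morphism of $\mathsf{gr}^{\mathsf{op}}$ (in particular the diagonal-type maps producing the multiplication of $H$), together with the identification of the cross-effect projection with the iterated reduced coproduct; granted these points, the equivalence follows formally from Pirashvili's theorem and the definitions.
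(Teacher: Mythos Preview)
Your overall plan matches the paper's: start from Pirashvili's equivalence and show that, under it, analyticity of $F$ corresponds to conilpotency of $H=\mathrm{ev}_1(F)$. Your argument for the direction ``$F$ analytic $\Rightarrow$ $H$ conilpotent'' is correct and is precisely the computation underlying Theorem~\ref{202207141636} evaluated at $\mathsf{F}_1$.

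The gap is in the pigeonhole step for the other direction. To conclude that the $(d{+}1)$st cross-effect of $G_d$ vanishes you assert that it decomposes as a sum of pieces $\overline{P_{i_1}H}\otimes\cdots\otimes\overline{P_{i_{d+1}}H}$ with $\sum i_j\le d$; this presupposes
\[
P_d\bigl(H^{\otimes(d+1)}\bigr)\;\subseteq\;\sum_{i_1+\cdots+i_{d+1}\le d}P_{i_1}(H)\otimes\cdots\otimes P_{i_{d+1}}(H).
\]
Only the \emph{opposite} inclusion is immediate (Proposition~\ref{202207282312}), and the one you need is not established---for a general commutative ring $\mathds{k}$ it is not clear it even holds.

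The paper avoids this (and your ``main obstacle'' about functoriality) in one stroke. It proves directly (Theorem~\ref{202207141636}) that for exponential $F$ the defining map of the polynomial filtration, $\chi_F\circ F(\Delta_X^{(n+1)})$, coincides under the exponential isomorphism with $\delta^{n+1}_{\widetilde F(X)}$; hence $(P_nF)(X)=P_n(\widetilde F(X))$ on the nose. Thus your $G_d$ \emph{is} $P_d(F)$: it is automatically a subfunctor, and has degree $\le d$ by the general Lemma~\ref{202207141317}, with no internal analysis of $P_d(H^{\otimes(d+1)})$ required. The remaining point, that $H$ conilpotent forces all $H^{\otimes n}$ conilpotent (so $F=\bigcup_d G_d$), is Proposition~\ref{202207281554}. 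Your converse argument already contains the key identity (cross-effect projection $=$ iterated reduced comultiplication); upgrading it from an inclusion to an equality is exactly Theorem~\ref{202207141636}, and both obstacles dissolve.
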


The proof of this result is independent of Powell's result and is based on Theorem \ref{202207141636} implying that the polynomial filtration of an exponential functor on $\mathsf{gr^{\mathsf{op}}}$ and the coradical filtration coincide. Theorem \ref{202207141636}  extends a result obtained by Touz\'e \cite[Theorem 11.10]{touze} concerning functors on an additive category. Recall that the category $\mathsf{gr^{\mathsf{op}}}$ is not additive. 

For $\mathds{k}$ a field of characteristic zero, using the equivalence of categories:
$$P : \mathsf{Hopf}^\mathsf{cc,conil}_\mathds{k} \stackrel{\simeq}{\longrightarrow} \mathsf{Lie}_\mathds{k}$$
given by the primitive elements of Hopf algebras, we obtain:

\begin{COR}(Corollary \ref{202211071547})
For $\mathds{k}$ a field of characteristic zero, the composition of the evaluation on ${\mathsf{F}_1}$ with the functor $P$ induces an equivalence of categories:
$$\mathcal{F}^\mathsf{exp}_\omega ( \mathsf{gr^{\mathsf{op}}} ; \mathds{k} ) \simeq \mathsf{Lie}_\mathds{k}.$$
\end{COR}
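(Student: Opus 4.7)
The plan is essentially to compose two equivalences of categories, so the proof is short and conceptual rather than technical.

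First, I would invoke Theorem \ref{202211301648} (the main theorem just stated) to get the equivalence
\[
\mathrm{ev}_{\mathsf{F}_1} : \mathcal{F}^\mathsf{exp}_\omega ( \mathsf{gr^{\mathsf{op}}} ; \mathds{k} ) \xrightarrow{\simeq} \mathsf{Hopf}^\mathsf{cc,conil}_\mathds{k}
\]
given by evaluation on the rank-one free group $\mathsf{F}_1$. Since this is already proved (independently of characteristic), I may use it as a black box here.

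Next, I would recall the Milnor--Moore theorem in the form stated in the excerpt: over a field $\mathds{k}$ of characteristic zero, the functor of primitives
\[
P : \mathsf{Hopf}^\mathsf{cc,conil}_\mathds{k} \xrightarrow{\simeq} \mathsf{Lie}_\mathds{k}
\]
is an equivalence of categories, with quasi-inverse given by the universal enveloping algebra $U$ (equipped with its standard cocommutative Hopf algebra structure, which is automatically conilpotent because $U$ is generated in degree $1$ by primitives). The characteristic-zero hypothesis is essential precisely here, for the quasi-inverse $U$ to land in $\mathsf{Hopf}^\mathsf{cc,conil}_\mathds{k}$ and for the unit/counit to be isomorphisms.

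Finally, since the composition of equivalences of categories is an equivalence, the composite functor $P \circ \mathrm{ev}_{\mathsf{F}_1}$ yields the desired equivalence
\[
\mathcal{F}^\mathsf{exp}_\omega ( \mathsf{gr^{\mathsf{op}}} ; \mathds{k} ) \xrightarrow{\simeq} \mathsf{Lie}_\mathds{k}.
\]
There is no real obstacle: the only substantive content lies in the two input equivalences, both of which are available by the time one reaches this corollary. If anything, the only point worth checking is that Theorem \ref{202211301648} is stated for arbitrary commutative rings $\mathds{k}$, so specializing to a field of characteristic zero causes no loss and matches the hypotheses needed for Milnor--Moore.
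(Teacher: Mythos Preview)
Your proposal is correct and matches the paper's approach exactly: the paper's proof of this part of Corollary~\ref{202211071547} is the single sentence ``The first claim is immediate from Theorem~\ref{202207212109},'' i.e.\ compose the equivalence $\mathcal{F}^\mathsf{exp}_\omega ( \mathsf{gr^{\mathsf{op}}} ; \mathds{k} ) \simeq \mathsf{Hopf}^\mathsf{cc,conil}_\mathds{k}$ with the Milnor--Moore equivalence $P : \mathsf{Hopf}^\mathsf{cc,conil}_\mathds{k} \simeq \mathsf{Lie}_\mathds{k}$ recalled just before the corollary.
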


Another important property which could be satisfied by a functor on the category $\mathsf{gr}^{\mathsf{op}}$ is the \textit{outer} property introduced in \cite{PV}. A functor $F: \mathsf{gr^{\mathsf{op}}} \to \mathsf{Mod}_\mathds{k}$ is an outer functor if the inner automorphisms act trivially on $F(\mathsf{F}_n)$ for all $n\in \mathbb{N}$. Outer functors appear naturally in \cite{PV} and \cite{Katada1}. For $\mathds{k}$ a field of characteristic zero, Powell describes in \cite{P-21} outer analytic functors. More precisely he gives an equivalence of categories between outer analytic functors on $\mathsf{gr^{\mathsf{op}}}$ and a subcategory of $\mathcal{F}_{\mathfrak{Lie}}$ denoted by $\mathcal{F}_{\mathfrak{Lie}}^{\mu}$. 

Let $\mathcal{F}^\mathsf{exp}_\mathsf{out} ( \mathsf{gr}^{\mathsf{op}} ; \mathds{k} )$ be the full subcategory of $\mathcal{F}^\mathsf{exp} ( \mathsf{gr}^{\mathsf{op}} ; \mathds{k} )$ consisting of outer $\mathsf{gr}^{\mathsf{op}}$-modules and $\mathsf{Hopf}^\mathsf{bc}_{\mathds{k}}$ be the category of bicommutative Hopf algebras.
We obtain the following result:

\begin{THM}(Theorem \ref{202207311733})
\label{202211301647}
The evaluation on ${\mathsf{F}_1}$ induces an equivalence of categories:
$$
\mathcal{F}^\mathsf{exp}_\mathsf{out} ( \mathsf{gr}^{\mathsf{op}} ; \mathds{k} ) 
\simeq
\mathsf{Hopf}^\mathsf{bc}_{\mathds{k}}.
$$
\end{THM}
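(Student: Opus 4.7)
The plan is to deduce this theorem from Pirashvili's equivalence
$\mathcal{F}^\mathsf{exp}(\mathsf{gr}^{\mathsf{op}}; \mathds{k}) \simeq \mathsf{Hopf}^\mathsf{cc}_\mathds{k}$
by showing that, under evaluation at $\mathsf{F}_1$, the full subcategory of outer exponential functors corresponds exactly to the full subcategory of cocommutative Hopf algebras that are moreover commutative. The key computational ingredient is the explicit formula expressing $F(\phi)\colon H^{\otimes m}\to H^{\otimes n}$ for a morphism $\phi\colon\mathsf{F}_n\to\mathsf{F}_m$ in $\mathsf{gr}$ with $\phi(x_i)=w_i$: through the exponential isomorphisms $F(\mathsf{F}_k)\cong H^{\otimes k}$, where $H=F(\mathsf{F}_1)$, the $i$-th output slot is obtained by evaluating the word $w_i$ in $H$, using iterated comultiplication to furnish the required copies of each input letter and the antipode $S$ to handle inverses.

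For the direction outer $\Rightarrow$ commutative, it suffices to test outerness on the single inner automorphism $c_{x_1}\colon\mathsf{F}_2\to\mathsf{F}_2$ sending $x_1\mapsto x_1$ and $x_2\mapsto x_1 x_2 x_1^{-1}$. In Sweedler notation with $\Delta^2(a)=\sum a_{(1)}\otimes a_{(2)}\otimes a_{(3)}$, a direct application of the above formula combined with the counit axiom yields
\[ F(c_{x_1})(a\otimes b) \;=\; \sum a_{(1)}\otimes a_{(2)}\,b\,S(a_{(3)}). \]
Outerness forces this to equal $a\otimes b$ for all $a,b\in H$. Applying $\mu\circ(S\otimes\mathrm{id})$ to both sides and using the identity $\sum S(a_{(1)})\,a_{(2)}\otimes a_{(3)} = 1_H\otimes a$ (an immediate consequence of coassociativity, the antipode axiom $\mu\circ(S\otimes\mathrm{id})\circ\Delta=\eta\epsilon$, and the counit axiom) collapses the left-hand side to $b\,S(a)$, while the right-hand side is $S(a)\,b$. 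Thus $S(a)\,b = b\,S(a)$ for all $a,b\in H$, and since $S$ is bijective on a cocommutative Hopf algebra (where $S^2=\mathrm{id}$), this is equivalent to commutativity of $H$.

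For the converse, given bicommutative $H$ and any $g\in\mathsf{F}_n$, I factor the inner automorphism $c_g\colon\mathsf{F}_n\to\mathsf{F}_n$ through a universal conjugation. Let $\phi,\iota\colon\mathsf{F}_n\to\mathsf{F}_{n+1}$ be the maps sending $x_i$ to $y_0 y_i y_0^{-1}$ and to $y_i$ respectively, and let $\gamma\colon\mathsf{F}_{n+1}\to\mathsf{F}_n$ be the map with $y_0\mapsto g$, $y_i\mapsto x_i$; then $c_g=\gamma\circ\phi$ and $\mathrm{id}_{\mathsf{F}_n}=\gamma\circ\iota$. Commutativity of $H$ collapses the adjoint action to $\sum a_0^{(1)}\,a_i\,S(a_0^{(2)}) = \epsilon(a_0)\,a_i$, and slot by slot the exponential formula then shows that the morphisms $\mathsf{F}_1\to\mathsf{F}_{n+1}$ selecting $y_0 y_i y_0^{-1}$ and $y_i$ induce the same map $H^{\otimes(n+1)}\to H$; assembling via the exponential structure gives $F(\phi)=F(\iota)$. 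Since $F$ is contravariant on $\mathsf{gr}$, we conclude $F(c_g) = F(\phi)\circ F(\gamma) = F(\iota)\circ F(\gamma) = F(\mathrm{id}) = \mathrm{id}$, so every inner automorphism acts trivially on every $F(\mathsf{F}_n)$ and $F$ is outer.

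The main obstacle is the first direction: isolating the correct Hopf-algebraic manipulation (here, postcomposition with $\mu\circ(S\otimes\mathrm{id})$ combined with the antipode/coassociativity identity above) that extracts full commutativity of $H$ from the single equation produced by $c_{x_1}$ on $\mathsf{F}_2$. Once this step is in place the converse is essentially a direct computation, and the theorem then follows by restricting Pirashvili's equivalence to the two full subcategories identified above.
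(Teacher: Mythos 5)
Your proposal is correct, and it is a genuine hybrid of the paper's argument and a different one. The crucial direction (outer $\Rightarrow$ commutative) is exactly the paper's key computation: the paper also tests outerness on the conjugation by $x_1$ on $\mathsf{F}_2$ and derives the identical formula $F(\rho)(x \otimes y) = x_{(1)} \otimes x_{(2)}\, y\, S(x_{(3)})$; only the extraction of commutativity differs --- the paper applies $\varepsilon \otimes \mathrm{id}$ to get $\varepsilon(x)y = x_{(1)} y S(x_{(2)})$ and then runs the three-step Sweedler computation $yx = \varepsilon(x_{(1)}) y x_{(2)} = x_{(1)} y S(x_{(2)}) x_{(3)} = xy$, whereas you postcompose with $\mu \circ (S \otimes \mathrm{id})$ and invoke $S^2 = \mathrm{id}$; both manipulations are valid over any commutative ring $\mathds{k}$. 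Where you genuinely diverge is in the overall architecture and in the converse direction. The paper's Theorem \ref{202207311733} actually proves the stronger statement that precomposition with the abelianization $\mathfrak{a} : \mathsf{gr} \to \mathsf{ab}$ gives an equivalence $\mathcal{F}^\mathsf{exp}(\mathsf{ab}^{\mathsf{op}};\mathds{k}) \simeq \mathcal{F}^\mathsf{exp}_\mathsf{out}(\mathsf{gr}^{\mathsf{op}};\mathds{k})$, and then deduces the Hopf-algebra statement by composing with Touz\'e's equivalence $\mathrm{ev}_{\mathds{Z}} : \mathcal{F}^\mathsf{exp}(\mathsf{ab}^{\mathsf{op}};\mathds{k}) \simeq \mathsf{Hopf}^\mathsf{bc}_\mathds{k}$ via the commutative square (\ref{202211071428}); on that route the converse direction is free, since inner automorphisms become trivial after abelianization, so every functor in the image of $\mathfrak{a}^\ast$ is automatically outer. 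You instead restrict Pirashvili's equivalence $\mathrm{ev}_1$ directly to the two full subcategories and prove bicommutative $\Rightarrow$ outer by hand, via the factorization $c_g = \gamma \circ \phi$ and $\mathrm{id} = \gamma \circ \iota$ through $\mathsf{F}_{n+1}$ together with the collapse $\sum a_{(1)}\, c\, S(a_{(2)}) = \varepsilon(a)\,c$ in a commutative Hopf algebra. This buys you a self-contained proof that never mentions $\mathsf{ab}$ or Touz\'e's theorem, at the cost of not producing the intermediate equivalence with $\mathcal{F}^\mathsf{exp}(\mathsf{ab}^{\mathsf{op}};\mathds{k})$, which is the other half of the paper's theorem. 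One step you state informally --- ``assembling via the exponential structure gives $F(\phi) = F(\iota)$'' --- is fine but deserves a sentence: it follows either because $\phi$ factors in $\mathsf{gr}$ through the fold map $\mathsf{F}_{n+1}^{\ast n} \to \mathsf{F}_{n+1}$ (so $F(\phi)$ is the tensor product of the word maps precomposed with the iterated comultiplication of $H^{\otimes(n+1)}$), or because $F(\phi)$ and $F(\iota)$ are morphisms of coaugmented cocommutative coalgebras into the product $H^{\otimes n}$ (cf.\ Proposition \ref{202312072047}) agreeing after every projection.
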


This result is deduced from the fact (see Theorem \ref{202207311733}) that the composition by the abelianization functor $\mathfrak{a} : \mathsf{gr} \to \mathsf{ab}$ (where $\mathsf{ab}$ is the category consisting of the free abelian groups $\mathds{Z}^n$ for $n \in \mathds{N}$ and group homomorphisms)  induces an equivalence of categories:
$$
\mathcal{F}^\mathsf{exp} ( \mathsf{ab}^{\mathsf{op}} ; \mathds{k} )
\simeq
\mathcal{F}^\mathsf{exp}_\mathsf{out} ( \mathsf{gr}^{\mathsf{op}} ; \mathds{k} ).
$$

Let $\mathcal{F}^\mathsf{exp}_{\omega, \mathsf{out}} ( \mathsf{gr^{\mathsf{op}}} ; \mathds{k} )$ be the full subcategory of $\mathcal{F}^\mathsf{exp} ( \mathsf{gr}^{\mathsf{op}} ; \mathds{k} )$ consisting of analytic and outer $\mathsf{gr}^{\mathsf{op}}$-modules. For $\mathds{k}$ a field of characteristic zero, we obtain:

\begin{COR}(Corollary \ref{202211071547})
For $\mathds{k}$ a field of characteristic zero, the composition of the evaluation on ${\mathsf{F}_1}$ with the functor $P$ induces an equivalence of categories:
$$\mathcal{F}^\mathsf{exp}_{\omega, \mathsf{out}} ( \mathsf{gr^{\mathsf{op}}} ; \mathds{k} )  \simeq \mathsf{Lie}^\mathsf{ab}_\mathds{k}.$$
\end{COR}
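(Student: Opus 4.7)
The plan is to assemble this corollary from the two main theorems already displayed in the introduction together with the classical Milnor--Moore theorem over a field of characteristic zero. Since the statement concerns an intersection of two full subcategories, the natural strategy is to first identify this intersection on the Hopf-algebra side, and only then pass to Lie algebras via $P$.

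First, I would note the tautological identification
$$
\mathcal{F}^\mathsf{exp}_{\omega, \mathsf{out}} ( \mathsf{gr^{\mathsf{op}}} ; \mathds{k} )
= \mathcal{F}^\mathsf{exp}_\omega ( \mathsf{gr^{\mathsf{op}}} ; \mathds{k} ) \cap \mathcal{F}^\mathsf{exp}_\mathsf{out} ( \mathsf{gr^{\mathsf{op}}} ; \mathds{k} )
$$
as full subcategories of $\mathcal{F}^\mathsf{exp}(\mathsf{gr}^{\mathsf{op}};\mathds{k})$. Since Theorem \ref{202211301648} and Theorem \ref{202211301647} are \emph{both} incarnated by evaluation at $\mathsf{F}_1$, the intersection is sent into $\mathsf{Hopf}^\mathsf{cc,conil}_\mathds{k} \cap \mathsf{Hopf}^\mathsf{bc}_\mathds{k}$, which is exactly the category $\mathsf{Hopf}^\mathsf{bc,conil}_\mathds{k}$ of conilpotent bicommutative Hopf algebras. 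It follows that evaluation at $\mathsf{F}_1$ restricts to an equivalence
$$
\mathcal{F}^\mathsf{exp}_{\omega, \mathsf{out}} ( \mathsf{gr^{\mathsf{op}}} ; \mathds{k} ) \simeq \mathsf{Hopf}^\mathsf{bc,conil}_\mathds{k}.
$$

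Second, I would invoke the Milnor--Moore theorem, which over a characteristic-zero field gives that the primitive-elements functor $P : \mathsf{Hopf}^\mathsf{cc,conil}_\mathds{k} \xrightarrow{\simeq} \mathsf{Lie}_\mathds{k}$ is an equivalence, with quasi-inverse the universal enveloping algebra $U$. The key observation here is that $U(\mathfrak{g})$ is commutative if and only if the bracket of $\mathfrak{g}$ vanishes; equivalently, a conilpotent cocommutative Hopf algebra $H$ is commutative if and only if $P(H)$ is an abelian Lie algebra. Hence $P$ restricts to an equivalence
$$
P : \mathsf{Hopf}^\mathsf{bc,conil}_\mathds{k} \xrightarrow{\simeq} \mathsf{Lie}^\mathsf{ab}_\mathds{k}.
$$
Composing this with the equivalence of the previous paragraph yields the desired statement.

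The main (mild) obstacle is the compatibility of the two full-subcategory restrictions: one has to check that a conilpotent cocommutative Hopf algebra $H$ coming from an analytic exponential functor $F$ is commutative precisely when $F$ is outer. This is immediate from the fact that the two equivalences are implemented by the \emph{same} functor $F \mapsto F(\mathsf{F}_1)$, so the outer condition on $F$ translates into the commutativity of the Hopf algebra $F(\mathsf{F}_1)$ via Theorem \ref{202207311733}. With this alignment in hand, the corollary follows formally.
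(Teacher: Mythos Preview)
Your proposal is correct and follows essentially the same route as the paper. The paper's own proof is extremely terse---it simply says the claim ``follows from the second part of Theorem \ref{202207311733}''---but unpacking that one line amounts exactly to what you wrote: intersect the two equivalences given by $\mathrm{ev}_{\mathsf{F}_1}$ to land in conilpotent bicommutative Hopf algebras, then apply Milnor--Moore and the observation that $U(\mathfrak{g})$ is commutative iff $\mathfrak{g}$ is abelian.
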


For $\mathds{k}$ a field of characteristic zero, our results and previous results obtained by Pirashvili and Powell can be summarized in the following commutative diagram:

$$
\begin{tikzcd}[column sep=tiny, row sep=small]
& & \mathcal{F}^\mathsf{exp}_\omega ( \mathsf{gr^{\mathsf{op}}} )\ar[ddd, "\simeq"', near start] \ar[dll, hookrightarrow] \ar[rr] & & \mathcal{F}_\omega ( \mathsf{gr^{\mathsf{op}}}  ) \ar[ddd, "\simeq"']  & & \\
\mathcal{F}^\mathsf{exp} ( \mathsf{gr^{\mathsf{op}}})\ar[ddd, "\simeq"'] & & & \mathcal{F}^\mathsf{exp}_{\omega,\mathsf{out}} ( \mathsf{gr^{\mathsf{op}}} ) \ar[ul, hookrightarrow] \ar[rrr, crossing over] \ar[dll, crossing over, hookrightarrow] \ar[ddd, "\simeq"'] & & & \mathcal{F}_{\omega,\mathsf{out}} ( \mathsf{gr^{\mathsf{op}}})  \ar[ddd, "\simeq"] \ar[ull, hookrightarrow] \ar[dl, hookrightarrow] \\
& \mathcal{F}^\mathsf{exp}_\mathsf{out} ( \mathsf{gr^{\mathsf{op}}} ) \ar[ul, hookrightarrow] \ar[rrrr, crossing over] & & & & \mathcal{F}_\mathsf{out} ( \mathsf{gr^{\mathsf{op}}} )& \\
& & \mathsf{Lie}_\mathds{k} \ar[dll, hookrightarrow] \ar[rr] & & \mathcal{F}_{\mathfrak{Lie}} & & \\
\mathsf{Hopf}^\mathsf{cc}_\mathds{k} & & & \mathsf{Lie}^\mathsf{ab}_\mathds{k} \ar[ul, hookrightarrow] \ar[dll, hookrightarrow]  \ar[rrr] & & & \mathcal{F}^{\mu}_{\mathfrak{Lie}} \ar[ull, hookrightarrow, shift right] \\
& \mathsf{Hopf}^\mathsf{bc}_\mathds{k} \ar[ul, hookrightarrow] \ar[from=uuu, crossing over, "\simeq"', near start] & & & & &
\end{tikzcd}
$$
where we omit the field $\mathds{k}$ in the notation of the categories of functors. 
For example, we write $\mathcal{F}^\mathsf{exp} ( \mathsf{gr^{\mathsf{op}}} )$ instead of $\mathcal{F}^\mathsf{exp} ( \mathsf{gr^{\mathsf{op}}} ; \mathds{k})$.

For $\mathds{k}$ a field of positive characteristic, in Corollary \ref{202211301650}, we obtain  similar statements to Theorems \ref{202211301648}, \ref{202211301647} by replacing the polynomial filtration with the {\it primitive} filtration introduced in section \ref{202211081022}. 

The above results concern $\mathsf{gr}^{\mathsf{op}}$-modules. These are consequences of more general results obtained in this paper and concerning functors on a small category $\mathcal{C}$ with finite products and a null object. 

In part \ref{202312081751}, we study the polynomial filtration of functors on $\mathcal{C}$.
In section \ref{202207311622}, we recall the polynomial filtration of functors.
Section \ref{202207211508} concerns the coradical filtration of coaugmented coalgebras. One of the main ingredients of this paper is the description of the coradical filtration based on idempotents and mimicking the notion of cross-effect for functors (see Proposition \ref{202208302218}). We also give examples of computations of the coradical filtration using the description given in Proposition \ref{202208302218}.
The proof of this result is based on  Proposition \ref{202208302218}.
In section \ref{202211251159} (see Theorem \ref{202207141636})  we prove that the polynomial filtration of an exponential functor $F$ on $\mathcal{C}$, evaluated on $X$, coincides with the coradical filtration of the  bialgebra $F(X)$ for any $X \in \mathcal{C}$. 

In part \ref{202312081754}, we give applications of part \ref{202312081751} to $\mathcal{C} = \mathsf{gr}^{\mathsf{op}}$.
In section \ref{202212161644}, we give an overview of the operad $\mathsf{Cat}_{\mathfrak{Ass}^u}$ of unital associative algebras and related constructions.
The goal of this section is to recall Powell's assertion \cite{powell2021analytic} that $\mathsf{Cat}_{\mathfrak{Ass}^u}$ induces a bilinear functor ${}_\Delta \mathsf{Cat}_{\mathfrak{Ass}^u} : \mathsf{Cat}_{\mathfrak{Lie}}^{\mathsf{op}} \times \mathds{k} \mathsf{gr}^{\mathsf{op}} \to \mathsf{Mod}_{\mathds{k}}$.
In section \ref{202312081800}, we introduce a left ideal $\mathcal{I}$ of the linear category $\mathds{k}\mathsf{gr}^{\mathsf{op}}$ which is used to introduce the primitive filtration of $\mathsf{gr}^{\mathsf{op}}$-modules.
In section \ref{202211081004}, we apply the results of part \ref{202312081751} to study the polynomial filtration of exponential $\mathsf{gr}^{\mathsf{op}}$-modules.
In section \ref{202211081022}, we introduce the primitive filtration of $\mathsf{gr}^{\mathsf{op}}$-modules.
In section \ref{202312081805}, we study primitive exponential $\mathsf{gr}^{\mathsf{op}}$-modules.

\section*{Acknowledgements}

We thank Geoffrey Powell for valuable comments on previous version of this paper.
We thank Kawazumi Sensei for introducing the authors to each other and giving the opportunity to start this project.
The first author appreciates Leading Graduate Course for Frontiers of Mathematical Sciences and Physics; and a KIAS Individual Grant MG093701 at Korea Institute for Advanced Study.
The second author would like to thank the JSPS (The Japan Society for the Promotion of Science) for its support through the invitational fellowship for research in Japan (long-term) ID L21510 and the university of Tokyo for its hospitality.

\section{Notation} \label{202208181550}

We denote by $\mathds{k}$ a commutative unital ring unless otherwise specified.

We denote by $\mathds{N}$ the set of natural numbers $0, 1, 2, \cdots$.

We denote by $\mathcal{C}$ a small category with finite products $\times$ and a null object $\ast$.
We use the notation $\varepsilon_X : X \to \ast$ and $\eta_X : \ast \to X$ to denote the unique morphisms and we use the notation $p_X := \eta_X \circ \varepsilon_X: X \to X$. We can view $\mathcal{C}$  as a symmetric monoidal category of the form $(\mathcal{C}, \times, \ast)$.
The example of main interest here is the category $\mathcal{C}_{H}$ generated by a coaugmented cocommutative coalgebra $H$ (see section \ref{202211251159}).
The product is given by the tensor product.
Another example arises from the opposite category of the category of finitely generated free groups and group homomorphisms.
The product is given by the free product $\ast$ of groups and the trivial group is the null object.
We choose a skeleton of the category which we denote by $\mathsf{gr}^{\mathsf{op}}$.
Its objects consist of natural numbers $n$ which we identify with $\mathsf{F}_n$ the free group generated by $n$ elements $x_1, x_2 , \cdots , x_n$ (with the unit $e$).
Using this notation, the free product $\ast$ is given by the sum of natural numbers.
A morphism from $n$ to $m$ is given by a group homomorphism from $\mathsf{F}_m$ to $\mathsf{F}_n$.

We introduce the notation of morphisms in the {\it opposite} category $\mathsf{gr}^{\mathsf{op}}$.
For a group homomorphism $\rho : \mathsf{F}_m \to \mathsf{F}_n$, we denote by $[\rho (x_1) | \cdots | \rho (x_m) ]_n : n \to m$ the corresponding morphism in $\mathsf{gr}^{\mathsf{op}}$.
In particular, we set $\Delta {:=} [x_1|x_1]_1$ for the fold map, $\nabla {:=} [x_1x_2]_2$ for the coproduct and $\gamma {:=} [x_1^{-1}]_1$ for the antipode respectively.
We denote by $\eta : 0 \to 1$, $\epsilon: 1 \to 0$ the trivial morphisms.

We denote by $[ X_i ]^{n}_{i=1}$ a finite sequence of objects of $\mathcal{C}$.

For a category $\mathcal{C}$ and objects $X,Y \in \mathcal{C}$, we denote by $\mathcal{C} (X,Y)$ the set of morphisms from $X$ to $Y$.

For functors $F,G$ from $\mathcal{C}$ to $\mathsf{Mod}_{\mathds{k}}$, we denote by $\mathrm{Nat} ( F, G)$ the $\mathds{k}$-module consisting of natural transformations from $F$ to $G$.

We denote by $\Sigma$ the category induced by the symmetric groups.
In other words, its objects are given by $n \in \mathds{N}$ and the morphism set $\Sigma (n,m)$  is given by  $\Sigma_n$ the $n$-th symmetric group if $n=m$; and is empty if $n \neq m$.

We denote by $\underline{n} {:=} \{1, \ldots, n \}$ for $n > 0$, and $\underline{0} {:=} \emptyset$.

\part{The general case}
\label{202312081751}

\section{Polynomial filtration of $\mathcal{C}$-modules}
\label{202207311622}

In this section, we recall the notions of cross-effects and polynomial filtration of $\mathcal{C}$-modules. These notions were introduced by Eilenberg and Mac Lane in \cite{EML} for functors between categories of modules and extended to functors on a monoidal category where the unit is a null object in \cite{HPV}.
The aim of this section is to give an explicit way to construct the polynomial approximations.

\subsection{Cross-effects of $\mathcal{C}$-modules}
\label{202208131840}

Let $\mathds{k}$ be a commutative unital ring.
A {\it $\mathcal{C}$-module} is a functor from $\mathcal{C}$ to the category $\mathsf{Mod}_\mathds{k}$ of $\mathds{k}$-modules and homomorphisms.
We denote by $\mathcal{F} ( \mathcal{C} ; \mathds{k})$ the category of $\mathcal{C}$-modules and natural transformations.

Let $\mathcal{F}^{(n)}_\ast ( \mathcal{C} ; \mathds{k} )$ be the full subcategory of $\mathcal{F} ( \mathcal{C}^{\times n} ; \mathds{k} )$ consisting of $F \in \mathcal{F} ( \mathcal{C}^{\times n} ; \mathds{k} )$ such that $F ( X _1 , \cdots , X_n ) \cong 0$ if $X_i = \ast$ for some $i$. We denote by $\Delta_n: \mathcal{C} \to \mathcal{C}^{\times n}$ the diagonal functor.

\begin{Defn}
For $n \geq 1$, the {\it $n$-th cross-effect functor} is the left adjoint to the precomposition with the diagonal functor $\Delta_n^\ast : \mathcal{F}^{(n)}_\ast ( \mathcal{C} ; \mathds{k} ) \to \mathcal{F} ( \mathcal{C} ; \mathds{k} )$.
\end{Defn}

In order to give an explicit description of the $n$-th cross-effect functor, we introduce the following notation.
Let $m,n$ be natural numbers such that $1 \leq m \leq n$.
For $[ X_i ]^{n}_{i=1}$ a finite sequence of objects of $\mathcal{C}$ and $p_{X_m} = \eta_{X_m} \circ \varepsilon_{X_m}: X_m \to X_m$, we define the morphism:
$$\tau_m ( [ X_i ]^{n}_{i=1} ) {:=} \left( \prod^{m-1}_{i=1} \mathrm{id}_{X_i} \right) \times p_{X_m}  \times \left( \prod^{n}_{i=m+1} \mathrm{id}_{X_i} \right): \prod^{n}_{i=1} {X_i} \to  \prod^{n}_{i=1} {X_i}.$$

The morphisms $\tau_m( [ X_i ]^{n}_{i=1} )$ are pairwise commuting idempotents. 
Hence, for  $F \in \mathcal{F} ( \mathcal{C} ; \mathds{k} )$ and $X = \prod^{n}_{i=1} X_i$, the morphisms $e_m: F(X) \to F(X)$ defined by:
$$e_m= \mathrm{id}_{F(X)} - F(\tau_m ([ X_i ]^{n}_{i=1}))$$
are pairwise commuting idempotents. 

We define an endomorphism of $F( X )$ by:
$$
\chi_{F} ( [ X_i ]^{n}_{i=1} ) {:=} ( e_1 \circ  e_2 \circ \cdots \circ e_n ) : F(X) \to F(X) .
$$
For $n \geq 1$, we define $\mathrm{cr}^{n} (F) \in \mathcal{F}^{(n)}_\ast (  \mathcal{C}  ; \mathds{k} )$ by

\begin{equation}
\label{202211251145}
( \mathrm{cr}^{n}( F )) ( 
[ X_i ]^{n}_{i=1} )
{=}
\mathrm{Im} \left(  \chi_{F} ( [ X_i ]^{n}_{i=1} ) \right) .
\end{equation}

This induces a functor 
$$\mathrm{cr}^{n}: \mathcal{F} ( \mathcal{C} ; \mathds{k} ) \to \mathcal{F}^{(n)}_\ast (  \mathcal{C}  ; \mathds{k} ).$$

\begin{prop}
\label{202207211151}
The functor $\mathrm{cr}^{n}: \mathcal{F} ( \mathcal{C} ; \mathds{k} ) \to \mathcal{F}^{(n)}_\ast (  \mathcal{C}  ; \mathds{k} )$ is the $n$-th cross-effect functor.
\end{prop}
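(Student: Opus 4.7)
The plan is to construct explicit, mutually inverse natural bijections
$$
\mathrm{Nat}(\mathrm{cr}^n F, G) \longleftrightarrow \mathrm{Nat}(F, \Delta_n^\ast G)
$$
for each $F \in \mathcal{F}(\mathcal{C}; \mathds{k})$ and $G \in \mathcal{F}^{(n)}_\ast(\mathcal{C}; \mathds{k})$, thereby exhibiting $\mathrm{cr}^n$ as the left adjoint of $\Delta_n^\ast$.

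First I will unpack the structural features of $\chi_F$. Since $\varepsilon_{X_m} \circ \eta_{X_m} = \mathrm{id}_\ast$, each $\tau_m([X_i]_{i=1}^n)$ is idempotent in $\mathcal{C}$, and these endomorphisms commute pairwise. Applying $F$ gives commuting idempotents $F(\tau_m)$, hence the $e_m = \mathrm{id} - F(\tau_m)$ are pairwise commuting idempotents as well, so $\chi_F = e_1 \circ \cdots \circ e_n$ is itself idempotent with image $\bigcap_m \ker F(\tau_m)$. If some $X_m = \ast$ then $\tau_m = \mathrm{id}$, hence $e_m = 0$ and $\chi_F = 0$; this shows that $\mathrm{cr}^n F$ indeed takes values in $\mathcal{F}_\ast^{(n)}(\mathcal{C}; \mathds{k})$.

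Next I will define the two candidate maps. Given $\alpha : \mathrm{cr}^n F \to G$, I set
$$
\tilde{\alpha}_Y \ \mathrel{:=}\ \alpha_{Y, \dots, Y} \circ \chi_F^{(Y, \dots, Y)} \circ F(\delta_Y) \ : \ F(Y) \to G(Y, \dots, Y),
$$
where $\delta_Y : Y \to Y^{\times n}$ is the diagonal morphism. Conversely, given $\beta : F \to \Delta_n^\ast G$, I set
$$
\hat{\beta}_{[X_i]_{i=1}^n} \ \mathrel{:=}\ G(\pi_1, \dots, \pi_n) \circ \beta_{\prod_{i=1}^n X_i} \circ \iota,
$$
with $\pi_j$ the $j$-th projection and $\iota : \mathrm{cr}^n F \hookrightarrow F(\prod_i X_i)$ the canonical inclusion. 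Naturality of both constructions follows from the naturality of $\chi_F$ (itself inherited from the functoriality of $F$) together with the functoriality of $G$.

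The remaining task is to verify that these assignments are mutually inverse. The identity $\widehat{\tilde{\alpha}} = \alpha$ will follow quickly from the tautology $(\pi_1 \times \cdots \times \pi_n) \circ \delta_X = \mathrm{id}_X$ with $X = \prod_i X_i$, the naturality of $\alpha$, and the fact that elements of $\mathrm{cr}^n F$ are fixed by $\chi_F$. I expect the opposite identity $\widetilde{\hat{\beta}} = \beta$ to be the main obstacle. To establish it I will expand $\chi_F^{(Y, \dots, Y)} = \sum_{S \subseteq \underline{n}} (-1)^{|S|} F(\tau_S)$, where $\tau_S = \prod_{m \in S} \tau_m$, and apply the naturality of $\beta$. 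The ensuing coordinate-wise composite $G(\pi_1, \dots, \pi_n) \circ G(\tau_S \circ \delta_Y, \dots, \tau_S \circ \delta_Y)$ has $m$-th coordinate equal to $\mathrm{id}_Y$ when $m \notin S$ and to $p_Y$ when $m \in S$. Since $p_Y$ factors through $\ast$ and $G$ lies in $\mathcal{F}^{(n)}_\ast$, every term with $S \neq \emptyset$ vanishes, so only the $S = \emptyset$ summand survives and collapses to $\beta_Y$. This concludes the adjunction and the proof.
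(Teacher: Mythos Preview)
Your proof is correct and follows essentially the same approach as the paper: the maps $\tilde{\alpha}$ and $\hat{\beta}$ are exactly the paper's $\Psi$ and $\Phi$ respectively. In fact you go further than the paper, which merely states the two constructions and asserts they are inverse; your inclusion--exclusion argument for $\widetilde{\hat{\beta}} = \beta$ (using the vanishing of $G$ when any slot factors through $\ast$) supplies the verification the paper leaves implicit.
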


\begin{proof}
Let $F \in \mathcal{F} ( \mathcal{C} ; \mathds{k} )$ and $G \in \mathcal{F}^{(n)}_\ast ( \mathcal{C} ; \mathds{k} )$.
We give a direct construction of a natural isomorphism $\Phi : \mathrm{Nat} ( F , \Delta_n^\ast (G)) \to \mathrm{Nat} ( \mathrm{cr}^{n} (F) , G )$.
Let $\beta \in \mathrm{Nat} ( F , \Delta_n^\ast (G))$.
For an $n$-tuple $[ X_i ]^{n}_{i=1}$ in $\mathcal{C}$, let $X = \prod^{n}_{i=1} X_i$ and  $\pi_i : X \to X_i$ be the $i$-th projection.
We define $\Phi (\beta) :  ( \mathrm{cr}^{n} (F) ) \left( [X_i ]^{n}_{i=1} \right) \to G \left( [X_i  ]^{n}_{i=1} \right)$ to be the composition:
$$
( \mathrm{cr}^{n} (F) )\left( [ X_i  ]^{n}_{i=1} \right)
\hookrightarrow
F( X )
\stackrel{\beta_{X}}{\to}
(\Delta_n^\ast (G) ) ( X  ) =
G \left( [ X  ]^{n}_{i=1} \right)
\stackrel{G( [\pi_i]^{n}_{i=1})}{\longrightarrow}
G \left( [ X_i ]^{n}_{i=1} \right) .
$$
The inverse of $\Phi$ is given by $\Psi : \mathrm{Nat} ( \mathrm{cr}^{n} (F) , G ) \to \mathrm{Nat} ( F , \Delta_n^\ast (G))$ defined as follows.
Let $\beta^\prime \in \mathrm{Nat} ( \mathrm{cr}^{n} (F) , G )$.
For $Z \in \mathcal{C}$ with the $n$-times iterated diagonal map $\Delta^{(n)}_Z : Z \to Z^{\times n}$, we define $\Psi (\beta^\prime) :  F ( Z ) \to (\Delta_n^\ast (G))(Z)$ as the composition:
$$
F(Z)
\stackrel{F(\Delta^{(n)}_Z)}{\longrightarrow}
F(Z^{\times n})
\stackrel{\chi_{F} ( [ Z ]^{n}_{i=1} )}{\longrightarrow}
( \mathrm{cr}^{n}(F) ) ( [ Z ]^{n}_{i=1} )
\stackrel{\beta^\prime_{[ Z  ]^{n}_{i=1}}}{\longrightarrow}
(\Delta_n^\ast (G)) ( Z ) .
$$
\end{proof}

\begin{prop}
\label{202207281651}
Let $F \in \mathcal{F} ( \mathcal{C} ; \mathds{k})$.
Then we have a natural isomorphism:
$$
F ( \prod^{n}_{i=1} X_i ) \cong F(\ast ) \oplus \left( \bigoplus^{n}_{k=1} \bigoplus_{1 \leq i_1 < \cdots < i_k \leq n} (\mathrm{cr}^k (F))(X_{i_1} , \cdots , X_{i_k} ) \right) .
$$
\end{prop}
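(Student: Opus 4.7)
The plan is to decompose $F( \prod_{i=1}^n X_i )$ as a direct sum indexed by subsets $S \subseteq \underline{n}$, using a system of orthogonal idempotents built from the $F(\tau_m ( [X_i]_{i=1}^n ))$, and then identify each summand with the claimed cross-effect. For each $S \subseteq \underline{n}$, set
$$
\chi_S {:=} \left( \prod_{m \in S} e_m \right) \circ \left( \prod_{m \notin S} F( \tau_m ( [X_i]_{i=1}^n )) \right) : F(X) \to F(X) .
$$
Since the $F(\tau_m)$ are pairwise commuting idempotents, so are the $e_m = \mathrm{id} - F(\tau_m)$ and any mixed product; moreover $e_m \circ F(\tau_m) = 0$. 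Hence the family $\{ \chi_S \}_{S \subseteq \underline{n}}$ consists of pairwise orthogonal idempotents, and the binomial-type expansion
$$
\prod_{m=1}^n \bigl( e_m + F(\tau_m) \bigr) = \mathrm{id}_{F(X)}
$$
shows they sum to the identity. Consequently $F(X) = \bigoplus_{S \subseteq \underline{n}} \mathrm{Im}(\chi_S)$.

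It then suffices to identify each summand. For $S = \emptyset$ we have $\chi_\emptyset = F(p_X) = F(\eta_X) \circ F(\varepsilon_X)$; since $\varepsilon_X \circ \eta_X = \mathrm{id}_\ast$, the map $F(\eta_X)$ is a split injection with image canonically isomorphic to $F(\ast)$. For $S = \{ i_1 < \cdots < i_k \}$ with $k \geq 1$, set $Y = \prod_{j=1}^k X_{i_j}$, let $\sigma_S : X \to Y$ be the canonical projection, and let $\iota_S : Y \to X$ be the morphism whose components are the identity on $S$-coordinates and $\eta_{X_i} \circ \varepsilon_Y$ on the remaining coordinates, so that $\sigma_S \circ \iota_S = \mathrm{id}_Y$ and $\iota_S \circ \sigma_S = \prod_{m \notin S} \tau_m$. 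The key computation is that for $m = i_j \in S$ one has $\sigma_S \circ \tau_m ( [X_i]_{i=1}^n ) = \tau_j ( [X_{i_l}]_{l=1}^k ) \circ \sigma_S$ while for $m \notin S$ one has $\sigma_S \circ \tau_m = \sigma_S$; applying $F$ yields the intertwining relation
$$
F(\sigma_S) \circ \chi_S = \chi_F ( [ X_{i_l} ]_{l=1}^k ) \circ F(\sigma_S) .
$$
In particular $F(\sigma_S)$ restricts to a map $\phi_S : \mathrm{Im}(\chi_S) \to (\mathrm{cr}^k F)( X_{i_1}, \ldots, X_{i_k} )$, with candidate inverse $\psi_S {:=} \chi_S \circ F(\iota_S)$ restricted to the cross-effect. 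Using the idempotency of $F(\tau_m)$, the composition $\psi_S \circ \phi_S$ collapses to $\chi_S$ on $\mathrm{Im}(\chi_S)$, and $\phi_S \circ \psi_S$ collapses to $\chi_F([X_{i_l}])$ on the cross-effect; both are identities on the relevant summands.

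Naturality of the isomorphism in $[X_i]_{i=1}^n$ is automatic, since $\tau_m$, $\sigma_S$ and $\iota_S$ are all natural. The main obstacle is the careful index bookkeeping needed to verify the intertwining $F(\sigma_S) \circ \chi_S = \chi_F([X_{i_l}]) \circ F(\sigma_S)$: one must track which components of the $\tau_m$'s commute through $\sigma_S$ unchanged, which are absorbed, and which translate to the corresponding $\tau_j$ on $Y$. Once this is in place, orthogonality of the $\chi_S$ and the completeness relation assemble the summands into the stated direct sum decomposition.
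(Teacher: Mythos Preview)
Your proof is correct and follows essentially the same approach as the paper: both use the family of commuting idempotents $e_m$ (equivalently $F(\tau_m)$) to decompose $F(\prod_i X_i)$ into simultaneous eigenspaces indexed by subsets $S \subseteq \underline{n}$, and then identify each eigenspace with the corresponding cross-effect. The paper's version is terser and leaves the identification of the eigenspace in $F(X)$ with $(\mathrm{cr}^k F)(X_{i_1},\ldots,X_{i_k}) \subset F(\prod_j X_{i_j})$ implicit, whereas you make this explicit via the section/retraction pair $\sigma_S, \iota_S$; but the underlying argument is the same.
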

\begin{proof}
The previous idempotents $e_i$'s are simultaneously diagonalizable since they commute with each other.
For $1 \leq i_1 < \cdots < i_k \leq n$, the simultaneous eigenspace consisting of $v \in F(X)$ such that $e_i v = v$ only for $i = i_1, \cdots i_k$ coincides with $(\mathrm{cr}^k (F))(X_{i_1} , \cdots , X_{i_k} )$.
The eigenspace consisting of $v \in F(X)$ such that $e_i v = 0$ for any $i$ coincides with $F(\ast)$.
This observation leads to the natural isomorphism.
\end{proof}

\subsection{Polynomial and analytic functors}
\label{202211251500}

In this section, we give an overview of polynomial functors by using cross-effects.
We define the polynomial functor approximation as being the right adjoint of the inclusion.

\begin{Defn}
\label{Korea202306121414}
A functor $F\in \mathcal{F} ( \mathcal{C} ; \mathds{k})$ {\it has degree less than or equal to $n$}, written $\mathrm{deg} (F) \leq n$, if $\mathrm{cr}^{n+1} (F) \cong 0$; and $F\in \mathcal{F} ( \mathcal{C} ; \mathds{k})$ {\it has degree $n$} if $\mathrm{deg} (F) \leq n$ and $\mathrm{deg} (F) \not\leq (n-1)$.
A $\mathcal{C}$-module $F$ is a {\it polynomial functor} if there exists $n \in \mathbb{N}$ such that $\mathrm{deg} (F) \leq n$.
Denote by $\mathcal{F}_{\leq n} ( \mathcal{C} ; \mathds{k})$ the full subcategory of $\mathcal{F} ( \mathcal{C} ; \mathds{k})$ consisting of functors with degree less than or equal to $n$.
\end{Defn}

Note that, if $\mathrm{deg} ( F) \leq n$, then $\mathrm{deg} (F) \leq (n+1)$.

\begin{Defn}
\label{202207202245}
Let $i_n :  \mathcal{F}_{\leq n} ( \mathcal{C} ; \mathds{k} ) \to \mathcal{F} ( \mathcal{C} ; \mathds{k} )$ be the inclusion.
{\it The $n$-th polynomial approximation} $P_n: \mathcal{F} ( \mathcal{C} ; \mathds{k} ) \to \mathcal{F}_{\leq n} ( \mathcal{C} ; \mathds{k} ) $ is the right adjoint to the inclusion $i_n$.
\end{Defn}

\begin{prop}
\label{202207211024}
\begin{itemize}
\item
For $F\in \mathcal{F} ( \mathcal{C} ; \mathds{k})$, the counits of the adjunctions induce the following commutative diagram:
$$
\begin{tikzcd}
& & F  & \\
 \cdots \ar[r] \ar[urr, shift left] & i_n ( P_n (F) ) \ar[r] \ar[ur] & i_{n+1} ( P_{n+1} (F) ) \ar[r] \ar[u] & \cdots \ar[ul]
\end{tikzcd}
$$
\item
For $G \in \mathcal{F}_{\leq n} (\mathcal{C} ; \mathds{k})$, the unit $G \to P_n ( i_n ( G ))$ is an isomorphism.
\end{itemize}
\end{prop}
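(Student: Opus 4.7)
The plan is to derive both statements directly from formal properties of the adjunction $i_n \dashv P_n$ together with the remark preceding Definition \ref{202207202245} that $\mathrm{deg}(F) \leq n$ implies $\mathrm{deg}(F) \leq n+1$. This remark gives full inclusions $j_{n,n+1} : \mathcal{F}_{\leq n}(\mathcal{C};\mathds{k}) \hookrightarrow \mathcal{F}_{\leq n+1}(\mathcal{C};\mathds{k})$ such that $i_n = i_{n+1} \circ j_{n,n+1}$, and this compatibility of the inclusions is what binds the adjunctions for different $n$ together.

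For part (1), I would proceed as follows. Given $F \in \mathcal{F}(\mathcal{C};\mathds{k})$, the counit of the $n$-th adjunction is a morphism $\varepsilon_{n,F} : i_n ( P_n (F)) \to F$ whose source, being in $\mathcal{F}_{\leq n}$, also lies in $\mathcal{F}_{\leq n+1}$ via $j_{n,n+1}$. The universal property of $\varepsilon_{n+1,F} : i_{n+1}(P_{n+1}(F)) \to F$ (equivalently, the adjunction $i_{n+1} \dashv P_{n+1}$) then produces a unique transpose $P_n(F) \to P_{n+1}(F)$ whose image under $i_{n+1}$ yields a map $i_n(P_n(F)) \to i_{n+1}(P_{n+1}(F))$, and the commutativity of the triangle with apex $F$ is precisely the adjunction identity expressing that $\varepsilon_{n,F}$ factors through $\varepsilon_{n+1,F}$. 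Iterating in $n$ assembles the full commutative diagram; the outer ``shift left'' arrows toward $F$ are simply the counit maps themselves.

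For part (2), I would invoke the classical fact that for an adjunction $L \dashv R$, the unit $\eta : \mathrm{id} \to R L$ is a natural isomorphism if and only if $L$ is fully faithful. Since $i_n$ is by construction the inclusion of a full subcategory, it is fully faithful, and the conclusion is immediate: for $G \in \mathcal{F}_{\leq n}(\mathcal{C};\mathds{k})$ the unit $G \to P_n (i_n (G))$ is an isomorphism.

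In both parts the argument is purely formal, given that Definition \ref{202207202245} has already posited the existence of $P_n$ as a right adjoint. There is no genuine obstacle; the only step worth spelling out carefully, if one wished to be self-contained, would be the standard equivalence between fully-faithfulness of $L$ and invertibility of the unit of $L \dashv R$, which I would simply cite.
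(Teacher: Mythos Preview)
Your proposal is correct and takes essentially the same approach as the paper: both parts are handled by the same formal adjunction arguments (using the inclusion $j : \mathcal{F}_{\leq n} \hookrightarrow \mathcal{F}_{\leq n+1}$ for part 1, and full faithfulness of $i_n$ for part 2). The only cosmetic difference is that for part 2 the paper spells out the Yoneda-style bijection $\mathrm{Nat}(G',G) \cong \mathrm{Nat}(i_n(G'),i_n(G)) \cong \mathrm{Nat}(G',P_n(i_n(G)))$ rather than citing the standard equivalence between full faithfulness of the left adjoint and invertibility of the unit.
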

\begin{proof}
Let $j : \mathcal{F}_{\leq n} (\mathcal{C} ;\mathds{k}) \to \mathcal{F}_{\leq (n+1)} (\mathcal{C} ;\mathds{k})$ be the inclusion.
The unit $i_n ( P_n (F)) \to F$ induces a natural transformation $j ( P_n (F)) \to P_{n+1} (F)$ by the adjunction with respect to $i_{n+1}$.
We apply $i_{n+1}$ to this to obtain $i_n (P_n(F)) \to i_{n+1} ( P_{n+1} (F))$.
The commutativity in the first part follows from adjunction properties.

For the second part, it suffices to verify that the postcomposition $\mathrm{Nat} ( G^\prime , G) \to \mathrm{Nat} (G^\prime ,P_n (i_n (G)))$ gives a bijection.
Note that $i_n : \mathcal{F}_{\leq n} ( \mathcal{C} ; \mathds{k}) \to \mathcal{F} ( \mathcal{C} ; \mathds{k})$ is a fully faithful functor by definition.
Equivalently, we have a bijection $\mathrm{Nat} ( G^\prime , G) \to \mathrm{Nat} ( i_n (G^\prime) , i_n (G) )$.
The result follows from the adjunction $\mathrm{Nat} (G^\prime ,P_n (i_n (G))) \cong \mathrm{Nat} ( i_n (G^\prime) ,i_n (G))$.
\end{proof}

\begin{remark}
The existence of a right adjoint to the inclusion $i_n$ is proved in Section \ref{202207211510}.
It turns out that the natural transformations in the first part of Proposition \ref{202207211024} are inclusions.
In particular, the sequence of $i_n (P_n (F))$'s gives a filtration of $F$ called the {\it polynomial filtration} of $F$.
\end{remark}

\begin{Defn}
A $\mathcal{C}$-module $F$ is {\it analytic} if $F$ is the colimit of its polynomial filtration.
Denote by $\mathcal{F}_\omega ( \mathcal{C} ; \mathds{k} )$ the full subcategory of $\mathcal{F} ( \mathcal{C} ; \mathds{k})$ consisting of analytic $\mathcal{C}$-module.
\end{Defn}

\subsection{Construction of the polynomial filtration}
\label{202207211510}

In this section, we show that there exists a polynomial approximation as in Definition \ref{202207202245}.
We give an explicit description of the functor $P_n (F)$ and we prove that it corresponds to the right adjoint of the inclusion.

\begin{Defn}
\label{202207202244}
For $X$ an object of $\mathcal{C}$, we define $( P_n (F) ) (X )$ to be the kernel of the composite,
$$F(X) \stackrel{F(\Delta^{(n+1)}_X)}{\to} F( X^{\times (n+1)} ) \stackrel{\chi}{\longrightarrow} F( X^{\times (n+1)} )$$
where $\Delta^{(n+1)}_X$ is the $(n+1)$-times iterated diagonal map of $X$ and $\chi= \chi_{F} ( [ X ]^{n+1}_{i=1} )$.
Denote by $P_n (F) \in \mathcal{F} ( \mathcal{C} ; \mathds{k} )$ the induced functor.
\end{Defn}

\begin{Lemma}
\label{202207141317}
For $F \in \mathcal{F} ( \mathcal{C} ; \mathds{k} )$, we have $\mathrm{cr}^{n+1} ( P_n (F) ) \cong 0$.
In particular, $\mathrm{deg} ( P_n (F) ) \leq n$.
\end{Lemma}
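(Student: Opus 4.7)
The plan is to exhibit, for each $(n+1)$-tuple $[Y_i]_{i=1}^{n+1}$ in $\mathcal{C}$ with $Y := \prod_{i=1}^{n+1} Y_i$, a single morphism $d : Y^{\times(n+1)} \to Y$ satisfying
\[
\chi_{F}\bigl([Y_i]_{i=1}^{n+1}\bigr) = F(d) \circ \chi_{F}\bigl([Y]_{i=1}^{n+1}\bigr) \circ F\bigl(\Delta_Y^{(n+1)}\bigr).
\]
The right-hand side equals $F(d) \circ \delta_Y$, where $\delta_Y := \chi_{F}([Y]_{i=1}^{n+1}) \circ F(\Delta_Y^{(n+1)})$ is the map whose kernel is $(P_n(F))(Y)$ by Definition \ref{202207202244}. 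Hence $\chi_{F}([Y_i]_{i=1}^{n+1})$ annihilates $(P_n(F))(Y)$, and \eqref{202211251145} then yields $(\mathrm{cr}^{n+1}(P_n(F)))([Y_i]_{i=1}^{n+1}) = 0$, which is the desired conclusion.

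A preliminary step is to verify that $X \mapsto \ker \delta_X$ genuinely defines a subfunctor of $F$, so that $\chi_{P_n(F)}$ is simply the restriction of $\chi_F$ along $(P_n(F))(Y) \hookrightarrow F(Y)$. I would deduce this from naturality of $\delta$, combining $\Delta_{X'}^{(n+1)} \circ f = f^{\times(n+1)} \circ \Delta_X^{(n+1)}$ with the relation $\tau_m([X']_{i=1}^{n+1}) \circ f^{\times(n+1)} = f^{\times(n+1)} \circ \tau_m([X]_{i=1}^{n+1})$ for every $f : X \to X'$. The latter in turn follows from the universal property of the null object via $f \circ p_X = \eta_{X'} \circ \varepsilon_X = p_{X'} \circ f$.

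For the construction of $d$, I would use the universal property of $Y = \prod_j Y_j$ to set $\rho_j \circ d := \rho_j \circ \pi_j$ for each $j$, where $\pi_i : Y^{\times(n+1)} \to Y$ and $\rho_j : Y \to Y_j$ are the structural projections. Expanding both cross-effect operators as alternating sums over subsets $S \subseteq \{1, \ldots, n+1\}$, the targeted identity reduces to
\[
d \circ T_S \circ \Delta_Y^{(n+1)} = \tau_S\bigl([Y_i]_{i=1}^{n+1}\bigr),
\]
where $T_S$ (resp.\ $\tau_S([Y_i]_{i=1}^{n+1})$) is the composite of the commuting idempotents $T_m = \mathrm{id}^{\times(m-1)} \times p_Y \times \mathrm{id}^{\times(n+1-m)}$ on $Y^{\times(n+1)}$ (resp.\ $\tau_m([Y_i]_{i=1}^{n+1})$ on $Y$) for $m \in S$. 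I would check this identity projection-by-projection: applying $\rho_j$ to either side gives $\rho_j$ when $j \notin S$, and $\eta_{Y_j} \circ \varepsilon_Y$ when $j \in S$.

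The main conceptual difficulty lies in relating the two a priori distinct cross-effect operators, one built from $(n+1)$ copies of $Y$ and its diagonal, the other from the intrinsic factorization $Y = \prod_i Y_i$. The morphism $d$ provides precisely the bridge; once it is in hand the argument collapses to a term-by-term matching of the two alternating expansions, and no further analysis is needed.
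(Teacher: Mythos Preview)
Your proof is correct and follows essentially the same route as the paper: your map $d$ is precisely the paper's $\pi = \prod_i \pi_i$ (with $\pi_i : Y \to Y_i$ the $i$-th projection), and both arguments establish the same identity $\chi_F([Y_i]_{i=1}^{n+1}) = F(\pi)\circ \chi_F([Y]_{i=1}^{n+1}) \circ F(\Delta_Y^{(n+1)})$. The only stylistic difference is that the paper verifies this via the intertwining relation $F(\pi)\circ \chi_F([Y]) = \chi_F([Y_i])\circ F(\pi)$ together with $\pi \circ \Delta_Y^{(n+1)} = \mathrm{id}_Y$, whereas you expand both cross-effects as alternating sums and match terms; the content is the same.
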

\begin{proof}
We prove that $\mathrm{cr}^{n+1} ( P_n (F)) ( [ X_i ]^{n+1}_{i=1} ) = 0$ for a sequence $[ X_i ]^{n+1}_{i=1}$ of objects of $\mathcal{C}$.
It suffices to show that $\chi_{P_n(F)} ( [ X_i ]^{n+1}_{i=1} ) = 0$.
Let $X = \prod^{n+1}_{i=1} X_i$.
Let $\pi : X^{\times (n+1)} \to \prod^{n+1}_{i=1} X_i$ be $\prod^{n+1}_{i=1}  \pi_i$ where $\pi_i = \prod_{j \neq i} \varepsilon_j : X = X_1 \times \cdots \times X_{n+1} \to X_i$.
It is easy to check that the diagram below commutes for any $j$ where $\tau_j$ is introduced in section \ref{202208131840}:
$$
\begin{tikzcd}[row sep=scriptsize]
X^{\times (n+1)} \ar[rr, "\tau_j ({[ X ]}^{n+1}_{i=1})"] \ar[d, "\pi"] & & X^{\times (n+1)} \ar[d, "\pi"] \\
\prod^{n+1}_{i=1} X_i \ar[rr, "\tau_j ( {[ X_i ]}^{n+1}_{i=1} )"] & & \prod^{n+1}_{i=1} X_i
\end{tikzcd}
$$
By combining the above commutative diagrams iteratively, we obtain 
$$F( \pi ) \circ \chi_{F} ( [ X ]^{n+1}_{i=1}) = \chi_{F} ( [ X_i ]^{n+1}_{i=1}) \circ F (\pi) . $$
Moreover, $\pi \circ \Delta^{(n+1)}_X = \mathrm{id}_X$ implies $F( \pi ) \circ \chi_{F} ( [ X ]^{n+1}_{i=1}) \circ F( \Delta^{(n+1)}_X ) = \chi_{F} ( [ X_i ]^{n+1}_{i=1})$.
Now let $x \in ( P_n (F) ) ( X )$.
Then the above result implies 
\begin{align*}
(\chi_{P_n(F)} ( [ X_i ]^{n+1}_{i=1} ) ) (x) &= (\chi_{F} ( [ X_i ]^{n+1}_{i=1} ) ) (x) , \\
&= \left( F( \pi ) \circ \left( \chi_{F} ( [ X ]^{n+1}_{i=1}) \circ F( \Delta^{(n+1)}_X ) \right) \right) (x) , \\
&= 0  ~~~ ( \mathrm{since} ~~~ x \in ( P_n (F) ) ( X ) ) .
\end{align*}
\end{proof}

\begin{prop}
\label{202207202250}
The assignment of $P_n (F)$ to $F \in \mathcal{F} ( \mathcal{C} ; \mathds{k})$ gives a right adjoint to the inclusion $i_n$.
In particular, $P_n$ is the $n$-th polynomial approximation.
\end{prop}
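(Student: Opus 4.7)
By Lemma \ref{202207141317}, the construction $F \mapsto P_n(F)$ takes values in $\mathcal{F}_{\leq n}(\mathcal{C}; \mathds{k})$, so the task reduces to establishing a natural isomorphism
$$\mathrm{Nat}(i_n(G), F) \;\cong\; \mathrm{Nat}(G, P_n(F))$$
for $G \in \mathcal{F}_{\leq n}(\mathcal{C}; \mathds{k})$ and $F \in \mathcal{F}(\mathcal{C}; \mathds{k})$. The isomorphism should send $\alpha$ to its unique factorization through the pointwise kernel defining $P_n(F)$, and the inverse is postcomposition with the inclusion $P_n(F) \hookrightarrow F$.

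First I would verify that $P_n(F)$ is genuinely a subfunctor of $F$ rather than only a pointwise kernel. The map $F(\Delta^{(n+1)}_X)$ is natural in $X$ since $\Delta^{(n+1)}$ is a natural transformation of functors $\mathcal{C} \to \mathcal{C}$. The naturality of the idempotents $\tau_j([X]^{n+1}_{i=1})$ in $X$ reduces to the identity $f \circ p_X = p_Y \circ f$ for any $f : X \to Y$, which holds because both sides are the unique morphism $X \to \ast \to Y$ factoring through the null object. Consequently $\chi_F([X]^{n+1}_{i=1}) \circ F(\Delta^{(n+1)}_X)$ is natural in $X$, and $P_n(F)$ is a subfunctor of $F$ via the obvious inclusion.

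The heart of the argument is to show that every $\alpha \in \mathrm{Nat}(i_n(G), F)$ factors through this subfunctor. Equivalently, for every $X \in \mathcal{C}$ I need
$$\chi_F([X]^{n+1}_{i=1}) \circ F(\Delta^{(n+1)}_X) \circ \alpha_X \;=\; 0.$$
Applying the naturality of $\alpha$ to $\Delta^{(n+1)}_X$ and to each of the maps $\tau_j([X]^{n+1}_{i=1})$ yields the commutation
$$\chi_F([X]^{n+1}_{i=1}) \circ F(\Delta^{(n+1)}_X) \circ \alpha_X \;=\; \alpha_{X^{\times(n+1)}} \circ \chi_G([X]^{n+1}_{i=1}) \circ G(\Delta^{(n+1)}_X).$$
Since $\mathrm{deg}(G) \leq n$, the functor $\mathrm{cr}^{n+1}(G)$ vanishes, so by the description \eqref{202211251145} the map $\chi_G([X]^{n+1}_{i=1})$ is itself zero, and the whole composite vanishes. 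Monicity of the inclusion $P_n(F) \hookrightarrow F$ guarantees the factorization is unique, and its components automatically assemble into a natural transformation $G \to P_n(F)$ because they are the restriction of a natural transformation $\alpha$ between subfunctor-compatible codomains.

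The reverse assignment, postcomposition with $P_n(F) \hookrightarrow F$, is plainly well defined, and the two constructions are mutually inverse by the uniqueness of the factorization. Naturality of the bijection in $F$ and $G$ follows from the corresponding naturality of the inclusion and of the factorization. The main obstacle is essentially packaged into the single observation that naturality of $\alpha$ forces $\chi_F \circ \alpha = \alpha \circ \chi_G$ componentwise; once this is in hand the vanishing of $\chi_G$ on degree-$\leq n$ functors delivers the adjunction.
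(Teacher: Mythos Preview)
Your proof is correct and follows essentially the same route as the paper: both use naturality of $\alpha$ to interchange $\chi_F([X]^{n+1}_{i=1}) \circ F(\Delta^{(n+1)}_X)$ with $\chi_G([X]^{n+1}_{i=1}) \circ G(\Delta^{(n+1)}_X)$, and then invoke $\mathrm{cr}^{n+1}(G) = 0$ to get the factorization. Your version is slightly more explicit in checking that $P_n(F)$ is a subfunctor and in spelling out the inverse bijection, but the core argument is identical.
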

\begin{proof}
Note that Lemma \ref{202207141317} implies that $P_n (F) \in \mathcal{F}_{\leq n} ( \mathcal{C} ; \mathds{k})$, so that we obtain a functor $P_n : \mathcal{F} ( \mathcal{C} ; \mathds{k}) \to \mathcal{F}_{\leq n} ( \mathcal{C} ; \mathds{k})$.
The inclusion $\epsilon_F : i_n ( P_n (F) ) \to F$ induces a natural transformation $\mathrm{Nat} ( G , P_n (F)) \to \mathrm{Nat} ( i_n (G) , F); ~ \phi \mapsto \epsilon_F \circ i_n (\phi)$ for $G \in \mathcal{F}_{\leq n} (  \mathcal{C} ; \mathds{k})$ and $F \in \mathcal{F} ( \mathcal{C} ; \mathds{k} )$.
We now prove that this gives a natural isomorphism.
It suffices to show that a natural transformation $\psi : i_n (G) \to F$ factors uniquely through $\epsilon_F$.
Let $X$ be an object of $\mathcal{C}$ and put $Y = X^{\times (n+1)}$.
Then the naturality of $\psi$ implies $( \chi_{F} ( [ X ]^{n+1}_{i=1}) \circ F(\Delta^{(n+1)}_X ) ) \circ \psi_{X} = \psi_{Y} \circ ( \chi_{G} ([ X ]^{n+1}_{i=1}) \circ G (\Delta^{(n+1)}_X ) )$, with the right hand side zero due to the assumption $\mathrm{cr}^{n+1} (G ) \cong 0$.
Hence, $( \chi_{F} ( [ X ]^{n+1}_{i=1}) \circ F(\Delta^{(n+1)}_X ) ) \circ \psi_{X} = 0$, and thus $\psi_X$ uniquely factors through the inclusion $\epsilon_F$.

\end{proof}

\begin{remark}
\label{202207201449}
Dually, one can consider a small category $\mathcal{D}$ with finite {\it coproducts} and a null object.
Let $j_n : \mathcal{F}_{\leq n} ( \mathcal{D} ; \mathds{k} ) \to \mathcal{F} ( \mathcal{D} ; \mathds{k} )$ be the inclusion.
In a similar fashion, one can construct a cofiltration of a $\mathcal{D}$-module $F$ by using a {\it left} adjoint $P^n$ to the inclusion $j_n$:
$$
\begin{tikzcd}
& & F \ar[d] \ar[dl] \ar[dll, shift right] \ar[dr] & \\
 \cdots \ar[r] & j_{n+1} ( P^{n+1} (F) ) \ar[r]  & j_{n} ( P^{n} (F) ) \ar[r] & \cdots
\end{tikzcd}
$$
\end{remark}


\section{Coradical filtration of coaugmented coalgebras}
\label{202207211508}

This section concerns the coradical filtration of coaugmented coalgebras. After recalling the usual definition of the coradical filtration, we give an equivalent definition based on idempotents by  mimicking the notion of cross-effects for functors recalled in section  \ref{202208131840}. 
The description of the $n$-th term of the coradical filtration as a kernel in Proposition \ref{202208302218} will be useful in section \ref{202211251159}. To illustrate Proposition \ref{202208302218}, we give in section \ref{202211292154} the coradical filtrations of the polynomial bialgebra, the shuffle bialgebra and finite-dimensional bialgebras.

\subsection{Coradical filtration}
\label{202212191823Japan}

Let $\mathds{k}$ be a commutative unital ring.
We recall that a {\it coaugmented coalgebra} (over $\mathds{k}$) is a quadruple $(H, \Delta  , \varepsilon  , \eta )$ such that $(H, \Delta, \varepsilon)$ is a coalgebra with the comultiplication $\Delta : H \to H \otimes H$ and counit $\varepsilon : H \to \mathds{k}$ ; and $\eta : \mathds{k} \to H$ is a coalgebra homomorphism.
We abbreviate $(H, \Delta  , \varepsilon  , \eta )$ to a coaugmented coalgebra $H$ if no confusion arises, and use notations $\Delta_H, \varepsilon_H, \eta_H$ for its structure maps.
For simplicity, we introduce the notation $1_H$ to denote $\eta (1_\mathds{k}) \in H$.
A morphism between coaugmented coalgebras is a coalgebra homomorphism preserving the coaugmentations.
We denote by $\mathds{k} \downarrow \mathsf{Coalg}_{\mathds{k}}$ the category of coaugmented coalgebras and their morphisms.
The tensor product of coalgebras induces a symmetric monoidal structure on $\mathds{k} \downarrow \mathsf{Coalg}_{\mathds{k}}$.

For a coaugmented coalgebra $H$, we introduce the endomorphism $e_{H} {:=} (\mathrm{id}_H - \eta_H \circ \varepsilon_H)$ of the underlying module of $H$.
It is an idempotent such that $\mathrm{Ker} (e_H) = \mathds{k} 1_H$ and $\mathrm{Im} ( e_H ) = \overline{H}$ where $\overline{H} {:=} \mathrm{Ker} ( \varepsilon_H )$.
In particular, we have an eigenspace decomposition $H \cong \mathds{k} 1_H \oplus \overline{H}$.

{\it The reduced comultiplication} $\overline{\Delta}_H : \overline{H} \to \overline{H} \otimes \overline{H}$ is defined by $\overline{\Delta}_H ( a) {:=} \Delta_H (a) - a \otimes 1_H - 1_H \otimes a$.
We define $\mathrm{Prim} (H) {:=} \mathrm{Ker} ( \overline{\Delta}_H )$ whose elements are called to be {\it primitive}.
It is well known that for a bialgebra $H$, $\mathrm{Prim} (H)$ associated with the underlying coaugmented coalgebra of $H$ becomes a Lie algebra by the commutator.

The reduced comultiplication is coassociative and thus it induces a well-defined $n$-times iterated reduced comultiplication $\overline{\Delta}_H^{(n)} : \overline{H} \to \overline{H}^{\otimes n}$.
This induces {\it the coradical filtration} consisting of submodules $P_n (H) {:=} \mathds{k} 1_H \oplus \mathrm{Ker} ( \overline{\Delta}_H^{(n+1)} ) \subset H$ for $n \in \mathds{N}$ (e.g. see  \cite[Section 1.2.4]{loday2012algebraic}):
\begin{align}
\label{202211251436}
P_0 (H) \subset \cdots \subset P_n (H) \subset P_{n+1} (H) \subset \cdots \subset H 
\end{align}

\begin{remark}
Note that, in this paper, we use the same notation $P_n$ to denote the coradical filtration of coaugmented coalgebras and also the polynomial approximation of functors (see Definition \ref{202207202245}).
We investigate their relation in section \ref{202211251159}.
\end{remark}

The coaugmented coalgebra $H$ {\it has degree less than or equal to $n$} if $P_n (H) = H$. We denote by $(\mathds{k} \downarrow \mathsf{Coalg}_{\mathds{k}})_{\leq n}$ the full subcategory of $\mathds{k} \downarrow \mathsf{Coalg}_{\mathds{k}}$ of coaugmented coalgebras of degree less than or equal to $n$.

The coaugmented coalgebra $H$ is {\it conilpotent} if $H = \bigcup_{n \in \mathds{N}} P_n (H)$, or, equivalently, for any $x \in \overline{H}$ there exists $n \in \mathds{N}$ such that $\overline{\Delta}^{(n)}_H (x) = 0$.

The following definition should be viewed as an analogue for coaugmented coalgebras of the explicit description of the cross-effect functor given in (\ref{202211251145}).
\begin{Defn}
\label{2022073111613}
Let $H$ be a coaugmented coalgebra and $n \in \mathds{N}$. 
For $a \in H$, we define a linear map $\delta^n_H : H \to H^{\otimes n}$ by $\delta^n_{H} (a) {:=} e_H^{\otimes n} \left( \Delta_H^{(n)} (a) \right) \in H^{\otimes n}$.
Here, $\Delta^{(n)}_H$ is the $n$-times iterated comultiplication of $H$ and $e_H^{\otimes n} : H^{\otimes n} \to H^{\otimes n}$ is the $n$-fold tensor product of $e_H$.
\end{Defn}

\begin{Example}
\label{202212181851}
For $n = 0,1,2$, we have
\begin{align*}
\delta^0_H (a) &= \varepsilon_H (a) , \\
\delta^1_H (a) &= e_H (a) = a - \varepsilon_H(a) 1_H ,\\
\delta^2_H (a) &= \Delta_H (a) - a \otimes 1_H - 1_H \otimes a + \varepsilon_H (a) 1_H^{\otimes 2} .
\end{align*}
\end{Example}

\begin{prop}
\label{202208302218}
Let $n \in \mathds{N}$.
\begin{enumerate}
\item
For $a \in \overline{H}$, we have $\delta^n_H (a) = \overline{\Delta}_H^{(n)} ( a )$.
\item
We have $P_n ( H) = \mathrm{Ker} ( \delta^{n+1}_H )$.
\end{enumerate}
\end{prop}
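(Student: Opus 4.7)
\textbf{Plan for Proposition \ref{202208302218}.} The plan is to prove part (1) by induction on $n$, and then to derive part (2) immediately from (1) together with the decomposition $H = \mathds{k}1_H \oplus \overline{H}$ furnished by the idempotent $e_H$.

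For part (1), I would take the base case $n=1$ from Example \ref{202212181851}: for $a \in \overline{H}$ one has $\varepsilon_H(a) = 0$, so $\delta^1_H(a) = a = \overline{\Delta}_H^{(1)}(a)$. The inductive step will rest on the compatibility identity
$$(e_H \otimes e_H) \circ \Delta_H = (\iota \otimes \iota) \circ \overline{\Delta}_H \circ e_H,$$
where $\iota : \overline{H} \hookrightarrow H$ denotes the inclusion. I would verify this identity separately on $1_H$ (both sides vanish since $\Delta_H(1_H) = 1_H \otimes 1_H$ and $e_H(1_H) = 0$) and on $b \in \overline{H}$ (using the defining formula $\Delta_H(b) = b \otimes 1_H + 1_H \otimes b + \overline{\Delta}_H(b)$ together with $e_H(1_H) = 0$ and $e_H|_{\overline{H}} = \mathrm{id}_{\overline{H}}$). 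With this in hand, coassociativity gives $\Delta_H^{(n+1)} = (\Delta_H \otimes \mathrm{id}^{\otimes (n-1)}) \circ \Delta_H^{(n)}$, and the calculation
\begin{align*}
\delta^{n+1}_H(a)
&= e_H^{\otimes (n+1)} \circ (\Delta_H \otimes \mathrm{id}^{\otimes (n-1)}) \circ \Delta_H^{(n)}(a) \\
&= \bigl( (e_H \otimes e_H)\Delta_H \otimes e_H^{\otimes (n-1)} \bigr) \Delta_H^{(n)}(a) \\
&= (\overline{\Delta}_H \otimes \mathrm{id}^{\otimes (n-1)}) \circ e_H^{\otimes n} \circ \Delta_H^{(n)}(a) \\
&= (\overline{\Delta}_H \otimes \mathrm{id}^{\otimes (n-1)}) \overline{\Delta}_H^{(n)}(a) \;=\; \overline{\Delta}_H^{(n+1)}(a)
\end{align*}
closes the induction, where the penultimate equality invokes the induction hypothesis.

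For part (2), I would first observe that $\delta^{n+1}_H(1_H) = e_H^{\otimes (n+1)}(1_H^{\otimes (n+1)}) = 0$, so $\mathds{k}1_H \subseteq \mathrm{Ker}(\delta^{n+1}_H)$. Any $a \in H$ decomposes uniquely as $a = \varepsilon_H(a) \cdot 1_H + e_H(a)$ with $e_H(a) \in \overline{H}$, so by linearity and part (1), $\delta^{n+1}_H(a) = \overline{\Delta}_H^{(n+1)}(e_H(a))$. Thus $a \in \mathrm{Ker}(\delta^{n+1}_H)$ if and only if $e_H(a) \in \mathrm{Ker}(\overline{\Delta}_H^{(n+1)})$, yielding $\mathrm{Ker}(\delta^{n+1}_H) = \mathds{k}1_H \oplus \mathrm{Ker}(\overline{\Delta}_H^{(n+1)}) = P_n(H)$.

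The only non-routine ingredient will be the compatibility identity $(e_H \otimes e_H)\Delta_H = (\iota \otimes \iota) \overline{\Delta}_H e_H$; once that is established, the rest is bookkeeping via coassociativity and the splitting $H = \mathds{k}1_H \oplus \overline{H}$, so I do not anticipate any substantive obstacle.
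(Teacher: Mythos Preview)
Your proposal is correct and follows essentially the same approach as the paper: both argue by induction on $n$, reducing to the key identity that $\delta^2_H$ agrees with $\overline{\Delta}_H$ on $\overline{H}$ (your identity $(e_H\otimes e_H)\Delta_H = \overline{\Delta}_H\, e_H$ is exactly this, packaged as a map on all of $H$), and both deduce part~(2) from part~(1) via the splitting $H = \mathds{k}1_H \oplus \overline{H}$ together with $\delta^{n+1}_H(1_H)=0$. The only cosmetic difference is that the paper first isolates the recursion $(\delta^2_H \otimes \mathrm{id}^{\otimes(n-1)})\circ \delta^n_H = \delta^{n+1}_H$ as a standalone identity on $H$ before specializing to $\overline{H}$, whereas you fold that step directly into the inductive computation.
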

\begin{proof}
The linear map $\delta^n_H$ can be considered as an $n$-times iterated application of $\delta^ 2_H : H \to H \otimes H$ as follows.
Namely, we have $(\delta^2_H \otimes \mathrm{id}_H^{\otimes (n-1)} ) \circ \delta^n_H = \delta^{n+1}_H$, by the calculation below:
\begin{align*}
(\delta^2_H \otimes \mathrm{id}_H^{\otimes (n-1)} ) \circ e_H^{\otimes n} \circ \Delta_H^{(n)} =& (\delta^2_H \otimes \mathrm{id}_H^{\otimes (n-1)} ) \circ ( (\mathrm{id}_H - \eta_H \circ \varepsilon_H ) \otimes e_H^{\otimes (n-1)} ) \circ \Delta_H^{(n)} , \\
=& (\delta^2_H \otimes \mathrm{id}_H^{\otimes (n-1)} ) \circ (\mathrm{id}_H \otimes e_H^{\otimes (n-1)} ) \circ \Delta_H^{(n)} , \\
=& ((e_H^{\otimes 2} \circ \Delta_H) \otimes \mathrm{id}_H^{\otimes (n-1)} ) \circ (\mathrm{id}_H \otimes e_H^{\otimes (n-1)} ) \circ \Delta_H^{(n)}  \\
=& e_H^{\otimes (n+1)} \circ \Delta_H^{(n+1)} . 
\end{align*}
Here, the second equality follows from $\delta^2_H \circ \eta_H = 0$.
By Example \ref{202212181851} and using $a \in \overline{H}$, we have:
$$\delta^2_H (a) = \Delta_H (a) - a \otimes 1_H - 1_H \otimes a = \overline{\Delta}_H (a)$$ for every $a \in \overline{H}$.
Hence, we can inductively prove the first assertion by:
$$\delta^{n+1}_H (a) = ( (\delta^2_H \otimes \mathrm{id}_H^{\otimes (n-1)} ) \circ \delta^n_H ) (a)  = ( (\overline{\Delta}_H \otimes \mathrm{id}_H^{\otimes (n-1)} ) \circ \overline{\Delta}_H^{(n)} ) (a)  = \overline{\Delta}_H^{(n+1)} (a).$$
We prove the second assertion. 
It is obvious that $\delta^n (1_H) = 0$, and thus we have $\mathrm{Ker} ( \delta^n_H ) = \mathds{k} 1_H \oplus \mathrm{Ker} ( \overline{\Delta}_H^{(n)} ) = P_n (H)$, by the above result and the definition of $P_n (H)$.
\end{proof}

Let $n , k \geq 1$ be natural numbers and consider a subset $S = \{ m_1 , m_2 , \cdots , m_k \} \subset \underline{n}$ with $m_1 <m_2 < \cdots < m_k$.
We define the linear map $(-)_{S} : H^{\otimes k} \to H^{\otimes n}$ to be the linear extension of the assignment of $1^{\otimes (m_1 -1)} \otimes a_1 \otimes 1^{\otimes (m_2 -m_1 -1)} \otimes a_2 \otimes \cdots \otimes 1^{\otimes (m_k - m_{k-1} -1)} \otimes a_k \otimes 1^{\otimes (n-m_k)}$ to a tensor product $a_1 \otimes \cdots \otimes a_k \in H^{\otimes k}$.
In other words, it inserts the $j$-th tensor component into the $m_j$-th component.

The idempotent $e_H$ induces a direct sum decomposition of $H^{\otimes n}$ as follows.
Let $\tau_i (e_H) {:=} \mathrm{id}_H^{\otimes (i-1)} \otimes e_H \otimes \mathrm{id}_H^{\otimes (n-i)}$ for $1 \leq i \leq n$.
For a subset $S \subset \underline{n}$, let 
$$H_{n,S} {:=} \{ v \in H^{\otimes n } ~;~ (\tau_i (e_H))( v) = v ~( i \in S),~ (\tau_i (e_H) ) (v) = 0~ ( i \not\in S) \} . $$
It is obvious that $H_{n, S} = \{ w_S \in H^{\otimes n} ~;~ w \in \overline{H}^{\otimes k} \} \cong \overline{H}^{\otimes k}$ where $|S| = k$.
Furthermore, we have the decomposition below, since the $\tau_i (e_H)$'s are commuting idempotents.
\begin{align}
\label{202208301310}
H^{\otimes n} \cong \bigoplus_{S \subset \underline{n}} H_{n,S} .
\end{align}

\begin{prop}
\label{202207301503}
Let $H$ be a coaugmented coalgebra.
For $n \in \mathds{N}$ and $a \in H$, the $H_{n,S}$-component of $\Delta_H^{(n)} (a)$ in the decomposition (\ref{202208301310}) is $\delta^{|S|}_H (a)_S$.
In particular, the following equality holds.
\begin{align*}
\Delta^{(n)}_H (a) = \sum_{S \subset \underline{n}} \delta^{|S|}_H ( a)_{S} .
\end{align*}
\end{prop}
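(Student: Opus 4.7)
The plan is to identify, for each $S \subset \underline{n}$, the idempotent projection of $H^{\otimes n}$ onto the summand $H_{n,S}$ and to evaluate it on $\Delta_H^{(n)}(a)$. By the characterisation of $H_{n,S}$ as the simultaneous eigenspace on which $\tau_i(e_H)$ acts as the identity for $i\in S$ and as zero for $i\notin S$, this projection is the commuting composite
\[
\pi_S \;{:=}\; \Bigl(\prod_{i \in S} \tau_i(e_H)\Bigr) \circ \Bigl(\prod_{i \notin S} \tau_i(\eta_H \circ \varepsilon_H)\Bigr),
\]
where commutativity holds because $e_H$ and $\eta_H \circ \varepsilon_H = \mathrm{id}_H - e_H$ are orthogonal idempotents acting on disjoint tensor slots.

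First I would compute the action of $\prod_{i \notin S} \tau_i(\eta_H \circ \varepsilon_H)$ on $\Delta_H^{(n)}(a)$ by iterated application of the counit axiom. Applying $\varepsilon_H$ to the $i$-th tensor factor of $\Delta_H^{(n)}(a)$ contracts that factor and yields $\Delta_H^{(n-1)}(a)$ on the remaining $n-1$ slots; post-composing with $\eta_H$ then places $1_H$ back at position $i$. Iterating this for all $i \notin S$ (write $k = |S|$), coassociativity of $\Delta_H$ ensures the order of the contractions is irrelevant, and the result is precisely $\bigl(\Delta_H^{(k)}(a)\bigr)_S$ in the insertion notation $(-)_S$ introduced just before the statement. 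Then applying the remaining operator $\prod_{i \in S} \tau_i(e_H)$ acts as $e_H^{\otimes k}$ on the $k$ non-trivial slots indexed by $S$, so by the definition of $\delta_H^{k}$ in Definition \ref{2022073111613} we obtain
\[
\pi_S\bigl(\Delta_H^{(n)}(a)\bigr) \;=\; \bigl(e_H^{\otimes k}(\Delta_H^{(k)}(a))\bigr)_S \;=\; \bigl(\delta_H^{|S|}(a)\bigr)_S.
\]
This is the first assertion; the displayed identity $\Delta_H^{(n)}(a) = \sum_{S \subset \underline{n}} (\delta_H^{|S|}(a))_S$ then follows immediately by summing the $\pi_S$-components over all $S$ via the decomposition (\ref{202208301310}).

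The main bookkeeping point is to justify the iterated counit contraction cleanly: one must check that contracting the factors indexed by $\underline{n} \setminus S$ in any chosen order produces $\Delta_H^{(k)}(a)$ inserted at the positions of $S$. This follows from coassociativity together with the counit axiom applied one slot at a time, and is entirely parallel in spirit to the idempotent/eigenspace argument already used in the proof of Proposition \ref{202207281651}. Once this is granted, the rest of the argument is a direct unwinding of the definitions of $\pi_S$, of $(-)_S$, and of $\delta_H^{k}$.
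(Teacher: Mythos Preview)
Your argument is correct and is essentially the same as the paper's: both identify the projection onto $H_{n,S}$ as the tensor product of $e_H$ on the $S$-slots and $\eta_H\circ\varepsilon_H$ (equivalently, $\varepsilon_H$ followed by reinsertion via $(-)_S$) on the complement, and then invoke the counit axiom iteratively to reduce $\Delta_H^{(n)}(a)$ to $\Delta_H^{(k)}(a)$ on the $S$-slots. The only cosmetic difference is that the paper first passes to $\overline{H}^{\otimes k}$ via $\pi'_{n,S}=\bigotimes_{i\in S}e_H\otimes\bigotimes_{i\notin S}\varepsilon_H$ and then reinserts via $(-)_S$, whereas you keep everything inside $H^{\otimes n}$ throughout.
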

\begin{proof}
For $S \subset \underline{n}$ such that $|S|=k$, let $\pi_{n,S} : H^{\otimes n} \to H_{n,S}$ be the projection to the eigenspace.
Note that the composition of $\pi_{n,S}$ and the inverse of the isomorphism $(-)_S : \overline{H}^{\otimes k} \to H_{n,S}$ is equal to the tensor product $\pi^\prime_{n,S} = \bigotimes_{i \in S} e_H \otimes \bigotimes_{i \in \underline{n} \backslash S} \varepsilon_H : H^{\otimes n} \to \overline{H}^{\otimes k}$.
By using the counital property, we have $\pi^\prime_{n,S} ( \Delta_H^{(n)} (a) ) = \delta^{k}_H (a)$.
Hence, we obtain $\pi_{n,S} ( \Delta_H^{(n)} (a) ) = \delta^{k}_H (a)_S$.
\end{proof}

\begin{prop}
\label{202207282312}
Let $H_1$ and $H_2$ be coaugmented coalgebras.
If $a \in P_n ( H_1)$ and $b\in P_m (H_2)$, then we have $a \otimes b \in P_{n+m} (H_1 \otimes H_2 )$.
\end{prop}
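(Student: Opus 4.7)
The plan is to use the characterisation $P_n (H) = \mathrm{Ker} ( \delta^{n+1}_H )$ from Proposition \ref{202208302218} and reduce the claim to showing $\delta^{n+m+1}_{H_1 \otimes H_2} ( a \otimes b ) = 0$. The key ingredient will be a tensor-product formula expressing $\delta^{k}_{H_1 \otimes H_2} ( a \otimes b )$ in terms of the maps $\delta^{i}_{H_1}$ and $\delta^{j}_{H_2}$, obtained by applying Proposition \ref{202207301503} to all three coalgebras $H_1$, $H_2$ and $H_1 \otimes H_2$ and comparing the results.

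Concretely, I would compute $\Delta^{(k)}_{H_1 \otimes H_2} ( a \otimes b )$ in two ways. Applying Proposition \ref{202207301503} to the coalgebra $H_1 \otimes H_2$ gives
$$
\Delta^{(k)}_{H_1 \otimes H_2} ( a \otimes b ) = \sum_{S \subset \underline{k}} \delta^{|S|}_{H_1 \otimes H_2} ( a \otimes b )_{S},
$$
the summand indexed by $S$ being precisely the $(H_1 \otimes H_2)_{k, S}$-component in the decomposition (\ref{202208301310}). On the other hand, since the comultiplication of $H_1 \otimes H_2$ is the tensor product of the two comultiplications (up to the canonical shuffle isomorphism $(H_1 \otimes H_2)^{\otimes k} \cong H_1^{\otimes k} \otimes H_2^{\otimes k}$), one has
$$
\Delta^{(k)}_{H_1 \otimes H_2} ( a \otimes b ) \cong \Delta^{(k)}_{H_1} (a) \otimes \Delta^{(k)}_{H_2} (b) = \sum_{S_1, S_2 \subset \underline{k}} \delta^{|S_1|}_{H_1} (a)_{S_1} \otimes \delta^{|S_2|}_{H_2} (b)_{S_2},
$$
again by Proposition \ref{202207301503}.

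To match the two expressions, I would check that, because $\overline{H}_i \otimes \mathds{k} 1_{H_{3-i}}$ is contained in the augmentation ideal $\overline{H_1 \otimes H_2}$, the summand $\delta^{|S_1|}_{H_1} (a)_{S_1} \otimes \delta^{|S_2|}_{H_2} (b)_{S_2}$ lies in $(H_1 \otimes H_2)_{k, S_1 \cup S_2}$ after the shuffle. Collecting by $S = S_1 \cup S_2$ then yields
$$
\delta^{|S|}_{H_1 \otimes H_2} ( a \otimes b )_S = \sum_{S_1 \cup S_2 = S} \delta^{|S_1|}_{H_1} (a)_{S_1} \otimes \delta^{|S_2|}_{H_2} (b)_{S_2}.
$$

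The conclusion is then immediate: if $|S| \geq n+m+1$, then for any decomposition $S = S_1 \cup S_2$ one has $|S_1| + |S_2| \geq |S| \geq n+m+1$, so either $|S_1| \geq n+1$ or $|S_2| \geq m+1$; by the hypotheses and Proposition \ref{202208302218}, this forces $\delta^{|S_1|}_{H_1} (a) = 0$ or $\delta^{|S_2|}_{H_2} (b) = 0$, hence the whole sum vanishes. In particular $\delta^{n+m+1}_{H_1 \otimes H_2} ( a \otimes b ) = 0$, so $a \otimes b \in P_{n+m} (H_1 \otimes H_2)$. I expect the only delicate point to be the bookkeeping of the shuffle isomorphism that identifies $(H_1 \otimes H_2)^{\otimes k}$ with $H_1^{\otimes k} \otimes H_2^{\otimes k}$ and matches up the eigenspace decompositions on the two sides, but this is essentially formal.
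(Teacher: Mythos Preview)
Your proposal is correct and follows essentially the same route as the paper: reduce to $\delta^{n+m+1}_{H_1\otimes H_2}(a\otimes b)=0$, express $\Delta^{(n+m+1)}(a\otimes b)$ via the switching isomorphism as $\Delta^{(n+m+1)}(a)\otimes\Delta^{(n+m+1)}(b)$, apply Proposition~\ref{202207301503} to each factor, and observe that every surviving term (with $|S_1|\le n$, $|S_2|\le m$) lands in an eigenspace $(H_1\otimes H_2)_{n+m+1,S}$ with $|S|<n+m+1$. The paper argues only at level $k=n+m+1$ rather than deriving your general formula for $\delta^{|S|}_{H_1\otimes H_2}(a\otimes b)_S$, but this is a cosmetic difference.
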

\begin{proof}
It suffices to prove that $\delta^{n+m+1}_{H_1 \otimes H_2} ( a \otimes b) = 0$.
The iterated comultiplication $\Delta^{(n+m+1)} ( a \otimes b)$ corresponds to $\Delta^{(n+m+1)} (a) \otimes \Delta^{(n+m+1)} (b)$ up to the switching isomorphism 
$$\tau : H_1^{\otimes (n+m+1)} \otimes H_2^{\otimes (n+m+1)} \to (H_1 \otimes H_2)^{\otimes (n+m+1)} . $$
We apply the decomposition in Proposition \ref{202207301503} to $\Delta^{(n+m+1)} (a) , \Delta^{(n+m+1)} (b)$ respectively.
By Proposition \ref{202208302218}, $\delta^k_{H_1} (a) = 0$ for $k \geq n+1$ and $\delta^l_{H_2} (b) = 0$ for $l \geq m+1$.
Hence, $\Delta^{(n+m+1)} (a) \otimes \Delta^{(n+m+1)} (b)$ is the sum of $\delta^k_{H_1} (a)_{i_1, \cdots ,i_k} \otimes \delta^l_{H_2} (b)_{j_1, \cdots ,j_l}$ for $k \leq n$, $l \leq m$, $i_1, \cdots ,i_k \in \underline{(n+m+1)}$ and $j_1, \cdots ,j_l \in \underline{(n+m+1)}$.
Note that we have
$$\tau \left(  \delta^k_{H_1} (a)_{i_1, \cdots ,i_k} \otimes \delta^l_{H_2} (b)_{j_1, \cdots ,j_l} \right) \in ( H_1 \otimes H_2 )_{n+m+1,S} $$ 
where $S= \{ i_1, \cdots ,i_k , j_1, \cdots ,j_l \}$ and $|S| < n+m+1$.
Thus, by Proposition \ref{202207301503}, we see that $\delta^{n+m+1}_{H_1 \otimes H_2} ( a \otimes b) = 0$.
\end{proof}

\begin{prop}
\label{202207281554}
The tensor product of conilpotent coaugmented coalgebras is conilpotent.
\end{prop}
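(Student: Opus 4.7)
The plan is to reduce the statement directly to Proposition \ref{202207282312}, which already handles pure tensors of filtered elements. The definition of conilpotent states that $H$ is conilpotent precisely when $H = \bigcup_{n \in \mathds{N}} P_n(H)$, so I only need to show every element of $H_1 \otimes H_2$ lies in $P_N(H_1 \otimes H_2)$ for some $N$.

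First, I would take an arbitrary element $z \in H_1 \otimes H_2$ and write it as a finite sum $z = \sum_{i=1}^r a_i \otimes b_i$ with $a_i \in H_1$ and $b_i \in H_2$. Since $H_1$ is conilpotent, for each $i$ there exists $n_i \in \mathds{N}$ with $a_i \in P_{n_i}(H_1)$; similarly there exists $m_i \in \mathds{N}$ with $b_i \in P_{m_i}(H_2)$. Applying Proposition \ref{202207282312} to each pair gives $a_i \otimes b_i \in P_{n_i + m_i}(H_1 \otimes H_2)$.

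Next I would set $N := \max_{1 \leq i \leq r} (n_i + m_i)$. Because the coradical filtration \eqref{202211251436} is increasing, each summand $a_i \otimes b_i$ lies in $P_N(H_1 \otimes H_2)$, and since $P_N(H_1 \otimes H_2)$ is a submodule, the sum $z$ lies in it as well. This shows $H_1 \otimes H_2 = \bigcup_{N \in \mathds{N}} P_N(H_1 \otimes H_2)$, which is exactly the definition of conilpotence.

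There is no real obstacle here: the entire content is packed into Proposition \ref{202207282312}, and the remaining argument is the standard observation that finite sums of filtered elements lie in a single (high enough) filtration stage. The only minor subtlety to double-check is that $H_1 \otimes H_2$ is genuinely being equipped with the tensor-product coaugmented coalgebra structure from the symmetric monoidal structure on $\mathds{k} \downarrow \mathsf{Coalg}_{\mathds{k}}$ recalled in section \ref{202212191823Japan}, which is implicit in the statement of Proposition \ref{202207282312}.
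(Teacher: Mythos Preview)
Your proof is correct and takes essentially the same approach as the paper: both reduce the claim to Proposition~\ref{202207282312} and then argue that every element of $H_1 \otimes H_2$ lands in some $P_N(H_1 \otimes H_2)$. The paper phrases the final step in colimit language (constructing a map $v : \mathrm{colim}_n P_n(H_1) \otimes \mathrm{colim}_m P_m(H_2) \to \mathrm{colim}_j P_j(H_1 \otimes H_2)$ whose composite with the inclusion is the identity), whereas you spell out the same thing elementwise via a finite sum of pure tensors; the content is identical.
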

\begin{proof}
Let $H_1, H_2$ be conilpotent coaugmented cocommutative coalgebras.
Let $M = \mathrm{colim}_j P_j ( H_1 \otimes H_2 )$ and $u : M \to H_1 \otimes H_2$ be the canonical injection.
We prove that $u$ is surjective.
By Proposition \ref{202207282312}, we obtain a linear map $P_n ( H_1) \otimes P_m ( H_2 ) \to P_{n+m} (H_1 \otimes H_2 )$.
This map is compatible with the filtration, so that it induces $v : \mathrm{colim}_{n} P_n ( H_1 ) \otimes \mathrm{colim}_{m} P_ m ( H_2) \to M$ whose domain turns out to be $H_1 \otimes H_2$.
The map $u$ is surjective since $u \circ v$ is the identity.
\end{proof}

By replacing the tensor products in Proposition \ref{202207282312} with the multiplication of $H$, we also obtain the following statement:
\begin{prop} \label{202401101549}
Let $H$ be a bialgebra.
For $a \in P_n (H)$ and $b \in P_m (H)$, we have $a \cdot b \in P_{n+m} (H)$ .
\end{prop}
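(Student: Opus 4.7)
The plan is to mimic the proof of Proposition \ref{202207282312}, replacing the tensor product $H_1 \otimes H_2$ of two coaugmented coalgebras by the multiplication map $H \otimes H \to H$, exploiting the bialgebra compatibility. By Proposition \ref{202208302218}, it suffices to show $\delta^{n+m+1}_H(a \cdot b) = 0$.

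First I would use that, in a bialgebra, the iterated comultiplication $\Delta^{(n+m+1)}_H : H \to H^{\otimes (n+m+1)}$ is an algebra homomorphism, so
$$
\Delta^{(n+m+1)}_H ( a \cdot b ) = \Delta^{(n+m+1)}_H ( a ) \cdot \Delta^{(n+m+1)}_H ( b ),
$$
where the product on the right is the componentwise product in $H^{\otimes (n+m+1)}$. Next, I would apply Proposition \ref{202207301503} to expand
$$
\Delta^{(n+m+1)}_H ( a ) = \sum_{S \subset \underline{n+m+1}} \delta^{|S|}_H ( a )_S , \qquad \Delta^{(n+m+1)}_H ( b ) = \sum_{T \subset \underline{n+m+1}} \delta^{|T|}_H ( b )_T .
$$
Since $a \in P_n (H)$ and $b \in P_m (H)$, Proposition \ref{202208302218} gives $\delta^k_H (a) = 0$ for $k \geq n+1$ and $\delta^l_H (b) = 0$ for $l \geq m+1$; so only the terms with $|S| \leq n$ and $|T| \leq m$ survive.

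I would then compute each term $\delta^{|S|}_H ( a )_S \cdot \delta^{|T|}_H ( b )_T$ factor by factor in $H^{\otimes(n+m+1)}$: for $i \notin S \cup T$ both factors are $1_H$, so the $i$-th tensor component is $1_H$. To finish, I would project onto the direct summand $H_{n+m+1 , \underline{n+m+1}}$ in the decomposition (\ref{202208301310}). The composition of the projection with the inverse of $(-)_{\underline{n+m+1}} : \overline{H}^{\otimes (n+m+1)} \to H_{n+m+1, \underline{n+m+1}}$ is exactly $e_H^{\otimes (n+m+1)}$, and since $|S \cup T| \leq |S| + |T| \leq n+m < n+m+1$, each surviving term has at least one tensor factor equal to $1_H$, on which $e_H$ vanishes. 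Hence the $H_{n+m+1 , \underline{n+m+1}}$-component of $\Delta^{(n+m+1)}_H ( a \cdot b )$ is zero, and by Proposition \ref{202207301503} this component equals $\delta^{n+m+1}_H ( a \cdot b )$. Applying Proposition \ref{202208302218} once more yields $a \cdot b \in P_{n+m}(H)$.

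The main obstacle, though mild here, is the bookkeeping at the positions $i \in S \cap T$: unlike in the proof of Proposition \ref{202207282312}, the factor at such a position is a genuine product in $H$ rather than a pure tensor, so one must be careful that the argument only uses the \emph{complement} of $S \cup T$. The decisive inequality $|S \cup T| < n+m+1$ isolates at least one position outside $S \cup T$, and it is solely there — where the factor is forced to be $1_H$ — that the projection $e_H$ kills the term, making the bialgebra compatibility suffice without any further structural input.
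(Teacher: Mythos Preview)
Your proof is correct and follows exactly the route the paper intends: the paper does not write out a separate argument but simply says to replace the tensor product in the proof of Proposition \ref{202207282312} by the multiplication of $H$, which is precisely what you do. Your closing remark about the positions in $S \cap T$ is a helpful clarification, but the essential mechanism --- $|S \cup T| < n+m+1$ forces at least one tensor slot to be $1_H$, so $e_H^{\otimes(n+m+1)}$ annihilates every surviving term --- is the same as in the paper.
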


\subsection{Coradical filtration: the case of a field $\mathds{k}$}
\label{202211251457}

In this section, we assume that the ground ring $\mathds{k}$ is a field.
Under this hypothesis, we prove that the coradical filtration lifts to a coaugmented coalgebra filtration.
Furthermore, the assignment of the $n$-th coradical filtration $P_n (H)$ to a coaugmented coalgebra $H$ could be described as a right adjoint.

\begin{Lemma}
\label{202207282250}
For linear maps $f: V_1 \to V_0$ and $g : W_1 \to W_0$, let $f \tilde{\otimes} g : V_1 \otimes W_1 \to ( V_0 \otimes W_1 ) \oplus (V_1 \otimes W_0)$ be $f \otimes \mathrm{id}_{W_1} \oplus \mathrm{id}_{V_1} \otimes g$.
Then we have $\mathrm{Ker} (f) \otimes \mathrm{Ker} (g) \cong \mathrm{Ker} (f \tilde{\otimes} g)$.
\end{Lemma}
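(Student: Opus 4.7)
The plan is to exploit that over a field every short exact sequence splits, so we can freely choose linear complements and use exactness of $-\otimes-$.

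First, I would unpack the kernel on the target side. Since the codomain of $f\tilde{\otimes} g$ is a direct sum, a tensor $z\in V_1\otimes W_1$ lies in $\mathrm{Ker}(f\tilde{\otimes} g)$ if and only if both components vanish, i.e.
$$
\mathrm{Ker}(f\tilde{\otimes} g) \;=\; \mathrm{Ker}(f\otimes \mathrm{id}_{W_1})\;\cap\; \mathrm{Ker}(\mathrm{id}_{V_1}\otimes g).
$$
Next, I would invoke the flatness of vector spaces: tensoring the short exact sequence $0\to \mathrm{Ker}(f)\to V_1\to \mathrm{Im}(f)\to 0$ with $W_1$ (and similarly with $g$) stays exact, yielding
$$
\mathrm{Ker}(f\otimes \mathrm{id}_{W_1}) \;=\; \mathrm{Ker}(f)\otimes W_1,\qquad \mathrm{Ker}(\mathrm{id}_{V_1}\otimes g) \;=\; V_1\otimes \mathrm{Ker}(g).
$$

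The key remaining step is to identify the intersection $(\mathrm{Ker}(f)\otimes W_1)\cap(V_1\otimes \mathrm{Ker}(g))$ inside $V_1\otimes W_1$ with $\mathrm{Ker}(f)\otimes \mathrm{Ker}(g)$. I would choose linear complements $V_1=\mathrm{Ker}(f)\oplus V_1'$ and $W_1=\mathrm{Ker}(g)\oplus W_1'$, which is possible since $\mathds{k}$ is a field. Distributing the tensor product gives a four-term direct sum decomposition
$$
V_1\otimes W_1 \;\cong\; \bigl(\mathrm{Ker}(f)\otimes \mathrm{Ker}(g)\bigr)\oplus\bigl(\mathrm{Ker}(f)\otimes W_1'\bigr)\oplus\bigl(V_1'\otimes \mathrm{Ker}(g)\bigr)\oplus\bigl(V_1'\otimes W_1'\bigr).
$$
In this decomposition, $\mathrm{Ker}(f)\otimes W_1$ is the sum of the first two summands and $V_1\otimes \mathrm{Ker}(g)$ is the sum of the first and third, so their intersection is exactly the first summand $\mathrm{Ker}(f)\otimes \mathrm{Ker}(g)$.

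The only subtlety I foresee is being careful that the isomorphism asserted by the lemma is the canonical one, namely the inclusion induced by $\mathrm{Ker}(f)\otimes \mathrm{Ker}(g)\hookrightarrow V_1\otimes W_1$; this is automatic from the chain of identifications above and so no separate argument is needed. The use of splittings (hence the field hypothesis) is the essential ingredient; over a general commutative ring this step would fail, which is why the hypothesis on $\mathds{k}$ is explicitly recorded in the section header.
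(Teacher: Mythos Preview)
Your argument is correct and complete. The three steps---reducing to an intersection of kernels, identifying each factor via flatness, and computing the intersection by splitting---all go through cleanly, and you correctly flag that the field hypothesis is what makes the splitting available.

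The paper takes a different, more packaged route: it regards $f$ and $g$ as the differentials of two-term chain complexes $C_\bullet$ and $D_\bullet$ (concentrated in degrees $0$ and $1$), and then reads off the statement from the K\"unneth formula over a field. Concretely, $H_1(C)=\mathrm{Ker}(f)$, $H_1(D)=\mathrm{Ker}(g)$, and $\mathrm{Ker}(f\tilde{\otimes}g)$ is the top homology of $C\otimes D$; K\"unneth gives the tensor product decomposition with no Tor term since $\mathds{k}$ is a field. (The paper says ``zeroth homology,'' which appears to be a slip for the degree-two homology of the tensor complex.) Your approach is more elementary and self-contained, while the paper's is a one-line citation of a standard theorem; both rely on exactly the same field hypothesis, and indeed your direct-sum-and-intersect computation is essentially what the K\"unneth proof unwinds to in this special case.
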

\begin{proof}
Let $C_\bullet$ be the chain complex concentrated only on the zeroth and first components which are given by $V_0$ and $V_1$ respectively.
The only nontrivial differential is given by $f$.
Let $D_\bullet$ be the similar chain complex induced by $g$.
Then the claim follows from the K{\" u}nneth formula with respect to the zeroth homology.
\end{proof}

\begin{prop}
\label{202207311344}
The subspace $P_n (H )$ is a subcoalgebra of $H$ which inherits the coaugmentation.
\end{prop}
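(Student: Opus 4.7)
The plan is to prove two things: first, that $P_n(H)$ contains $1_H$ and hence inherits the coaugmentation, and second, that $\Delta_H$ restricts to a comultiplication $P_n(H) \to P_n(H) \otimes P_n(H)$. The counit automatically restricts.

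The first point is immediate: by Proposition \ref{202208302218} we have $P_n(H) = \mathrm{Ker}(\delta^{n+1}_H)$, and $\delta^{n+1}_H(1_H) = e_H^{\otimes(n+1)}(1_H^{\otimes(n+1)}) = 0$ because $e_H(1_H) = 0$. Hence $1_H \in P_n(H)$ and the coaugmentation $\eta_H$ factors through $P_n(H)$.

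For the second point, I would use Lemma \ref{202207282250} (which requires $\mathds{k}$ to be a field to invoke the K\"unneth formula) applied to $f = g = \delta^{n+1}_H$ to identify
\[
P_n(H) \otimes P_n(H) \;\cong\; \mathrm{Ker}\bigl( \delta^{n+1}_H \tilde{\otimes} \delta^{n+1}_H \bigr) .
\]
Thus it suffices to check, for $a \in P_n(H)$, that $(\delta^{n+1}_H \otimes \mathrm{id}_H)(\Delta_H(a)) = 0$ and $(\mathrm{id}_H \otimes \delta^{n+1}_H)(\Delta_H(a)) = 0$. By coassociativity, the first of these equals $(e_H^{\otimes(n+1)} \otimes \mathrm{id}_H)(\Delta_H^{(n+2)}(a))$. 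Now I would apply the decomposition of Proposition \ref{202207301503}:
\[
\Delta_H^{(n+2)}(a) \;=\; \sum_{S \subset \underline{n+2}} \delta^{|S|}_H(a)_S .
\]
Since $e_H$ vanishes on $\mathds{k} 1_H$ and is the identity on $\overline{H}$, the operator $e_H^{\otimes(n+1)} \otimes \mathrm{id}_H$ annihilates $\delta^{|S|}_H(a)_S$ unless $\{1,\dots,n+1\} \subset S$, which leaves only the cases $S = \{1,\dots,n+1\}$ and $S = \{1,\dots,n+2\}$. The first contributes $\delta^{n+1}_H(a) \otimes 1_H$, which vanishes because $a \in \mathrm{Ker}(\delta^{n+1}_H)$. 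The second contributes $\delta^{n+2}_H(a)$, and the iterative identity $\delta^{n+2}_H = (\delta^2_H \otimes \mathrm{id}_H^{\otimes n}) \circ \delta^{n+1}_H$ already established in the proof of Proposition \ref{202208302218} shows that this also vanishes. The symmetric computation handles $(\mathrm{id}_H \otimes \delta^{n+1}_H)(\Delta_H(a))$.

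The conceptual heart of the argument is therefore the interaction between the decomposition of Proposition \ref{202207301503} and the recursive formula for $\delta^n_H$; neither is hard individually, but the careful bookkeeping of which summands survive after applying $e_H^{\otimes(n+1)} \otimes \mathrm{id}_H$ is the main subtlety. The only essential use of the field hypothesis is to guarantee the tensor-product identification of kernels via Lemma \ref{202207282250}; everything else is purely algebraic and would remain valid verbatim over any commutative ring, which hints at why the comultiplication on the coradical filtration is the content that truly requires $\mathds{k}$ to be a field.
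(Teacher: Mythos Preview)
Your proof is correct and follows essentially the same route as the paper's own argument: both invoke Lemma \ref{202207282250} to reduce to showing $(\delta^{n+1}_H \otimes \mathrm{id}_H)\circ\Delta_H$ vanishes on $P_n(H)$, and both conclude by combining $\delta^{n+1}_H(a)=0$ with $\delta^{n+2}_H(a)=0$ via the recursive identity from the proof of Proposition \ref{202208302218}. The only cosmetic difference is that the paper writes the recursion as $\delta^{n+2}_H = (\delta^{n+1}_H \otimes e_H)\circ\Delta_H$ and argues directly, whereas you pass through the full decomposition of Proposition \ref{202207301503} to isolate the two surviving summands; this is slightly more bookkeeping but yields the same computation.
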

\begin{proof}
We prove that the composition $f : P_n (H) \hookrightarrow H \stackrel{\Delta}{\to} H \otimes H$ factors through $P_n(H) \otimes P_n (H) \hookrightarrow H \otimes H$.
Lemma \ref{202207282250} implies that $P_n(H) \otimes P_n (H) = \mathrm{Ker} ( \delta^{n+1}_{H} ) \otimes \mathrm{Ker} ( \delta^{n+1}_{H} ) = \mathrm{Ker} ( \delta^{n+1}_{H} \tilde{\otimes} \delta^{n+1}_{H} )$.
Hence, it suffices to show that $(\delta^{n+1}_{H} \tilde{\otimes} \delta^{n+1}_{H}) \circ f = 0$, in particular that $( \delta^{n+1}_{H} \otimes \mathrm{id}_H ) \circ f = 0$.
For $h \in P_n(H)$,  by the filtration (\ref{202211251436}) we have $h \in P_n(H) \subset P_{n+1}(H)$, so by Proposition \ref{202208302218} we have $\delta^{n+2}_H (h) = 0$.
Note that we have
$$\delta^{n+2}_{H} = \left( \delta^{n+1}_{H} \otimes ( \mathrm{id}_H - \eta \circ \varepsilon ) \right) \circ \delta_H . $$
Hence, we have $\left( ( \delta^{n+1}_{H} \otimes \mathrm{id}_H ) \circ f \right) (h) = \left( ( \delta^{n+1}_{H} \otimes ( \eta \circ \varepsilon) ) \circ f \right) (h)$. 
Moreover, for $h \in P_n (H)$,  we have
$$\left( ( \delta^{n+1}_{H} \otimes ( \eta \circ \varepsilon) ) \circ f \right) (h)= \delta^{n+1}_{H} (h) \otimes \eta (1) = 0 .$$
Therefore, $f$ factors as $\Delta_{P_n(H)} : P_n (H) \to P_n (H) \otimes P_n (H)$.
All that remain is to give the counit and coaugmentation of $P_n(H)$.
The restriction of the counit $\varepsilon$ is the counit of $P_n (H)$ ; and the coaugmentation $\eta : \mathds{k} \to H$ induces the coaugmentation of $P_n (H)$.
\end{proof}

\begin{Corollary}
The functor $P_n :  \left(\mathds{k} \downarrow \mathsf{Coalg}_\mathds{k} \right) \to \mathsf{Mod}_\mathds{k}$ lifts along the forgetful functor $\left(\mathds{k} \downarrow \mathsf{Coalg}_\mathds{k} \right)_{\leq n} \to \mathsf{Mod}_\mathds{k}$.
Moreover, the lift is a right adjoint of the embedding functor $i$:
\begin{equation}
\begin{tikzcd}
\left(\mathds{k} \downarrow \mathsf{Coalg}_\mathds{k} \right)_{\leq n} \ar[r, "i", shift left = 2] &  \left(\mathds{k} \downarrow \mathsf{Coalg}_\mathds{k} \right) \ar[l, "P_n", "\perp"', shift left = 2]
\end{tikzcd}
\end{equation}
\end{Corollary}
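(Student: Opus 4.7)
The plan is to assemble this Corollary out of Proposition \ref{202207311344} plus two routine verifications: functoriality of $P_n$ on coaugmented coalgebra maps, and the universal property of the inclusion $P_n(H) \hookrightarrow H$.

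First, I would record the key naturality property of the operators $\delta^k$. For any morphism $f : H \to K$ of coaugmented coalgebras, the relation $\Delta_K \circ f = (f \otimes f) \circ \Delta_H$, together with $e_K \circ f = f \circ e_H$ (which holds because $f$ intertwines counit and coaugmentation), implies
\[
\delta^k_K \circ f = f^{\otimes k} \circ \delta^k_H
\]
for every $k \in \mathds{N}$. Consequently $f$ sends $\mathrm{Ker}(\delta^{n+1}_H) = P_n(H)$ into $\mathrm{Ker}(\delta^{n+1}_K) = P_n(K)$. Combined with Proposition \ref{202207311344}, which equips $P_n(H)$ with the structure of a coaugmented subcoalgebra of $H$, this shows that $P_n$ defines a functor $(\mathds{k} \downarrow \mathsf{Coalg}_\mathds{k}) \to (\mathds{k} \downarrow \mathsf{Coalg}_\mathds{k})$.

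Next I would check that this functor actually lands in $(\mathds{k} \downarrow \mathsf{Coalg}_\mathds{k})_{\leq n}$, i.e.\ that $P_n(P_n(H)) = P_n(H)$. Let $\iota : P_n(H) \hookrightarrow H$ denote the inclusion of coaugmented coalgebras given by Proposition \ref{202207311344}. Applying the naturality relation just established, $\delta^{n+1}_{P_n(H)}$ is the restriction of $\delta^{n+1}_H$ along $\iota^{\otimes(n+1)}$; since $\iota$ is injective and $\delta^{n+1}_H$ vanishes on $P_n(H)$ by construction, $\delta^{n+1}_{P_n(H)}$ vanishes identically. Hence $P_n(H)$ has degree $\leq n$, and we obtain the claimed lift.

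Finally, I would establish the adjunction. The counit at $H$ is the inclusion $\epsilon_H : i(P_n(H)) \to H$, which is a morphism of coaugmented coalgebras by Proposition \ref{202207311344}. Given $G \in (\mathds{k} \downarrow \mathsf{Coalg}_\mathds{k})_{\leq n}$ and a morphism $\phi : i(G) \to H$, the naturality relation gives
\[
\delta^{n+1}_H \circ \phi = \phi^{\otimes (n+1)} \circ \delta^{n+1}_G = 0,
\]
since $\delta^{n+1}_G = 0$ by the hypothesis $\deg(G) \leq n$ and Proposition \ref{202208302218}. Therefore $\phi$ factors through $\mathrm{Ker}(\delta^{n+1}_H) = P_n(H)$, and the factorization is unique because $\epsilon_H$ is a monomorphism. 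The resulting map $\phi' : G \to P_n(H)$ is automatically a morphism of coaugmented coalgebras: compatibility with counits and coaugmentations is inherited from $\phi$, and compatibility with comultiplications follows by postcomposing with the mono $\epsilon_H \otimes \epsilon_H$. This gives a natural bijection $\mathrm{Hom}(i(G), H) \cong \mathrm{Hom}(G, P_n(H))$, hence $P_n \dashv i$.

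The only mildly delicate step is the naturality formula $\delta^k_K \circ f = f^{\otimes k} \circ \delta^k_H$; once this is in hand, the lift, its landing in degree $\leq n$, and the adjunction all drop out essentially by chasing kernels, in close analogy with the argument for polynomial approximation of $\mathcal{C}$-modules in Proposition \ref{202207202250}.
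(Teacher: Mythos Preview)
Your argument is correct and follows essentially the same route as the paper's proof: both use the naturality relation $\delta^{n+1}_K \circ f = f^{\otimes(n+1)} \circ \delta^{n+1}_H$ to show that any morphism out of a coaugmented coalgebra of degree $\leq n$ factors uniquely through the inclusion $P_n(H) \hookrightarrow H$. You are simply more explicit than the paper about functoriality, about why $P_n(H)$ itself has degree $\leq n$, and about why the factorized map is a morphism of coaugmented coalgebras.

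One minor slip: in your final sentence you write ``hence $P_n \dashv i$'', but the bijection $\mathrm{Hom}(i(G),H) \cong \mathrm{Hom}(G, P_n(H))$ says that $i$ is the \emph{left} adjoint, so the correct notation is $i \dashv P_n$.
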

\begin{proof}
The first claim is immediate from Proposition \ref{202207311344}.
We sketch the proof of the second part.
Let $H$, $L$ be coaugmented coalgebras and $\varphi : L \to H$ be a morphism.
Suppose that $\mathrm{deg} (L ) \leq n$, i.e. $\delta^{n+1}_{L} =0$.
We have $\varphi^{\otimes (n+1)} \circ \delta^{n+1}_{H} = \delta^{n+1}_{L} \circ \varphi = 0$ since $\varphi$ preserves the structure maps.
Hence $\varphi$ uniquely factors through the inclusion $P_n (H) \hookrightarrow H$.
It proves $\mathrm{Nat} ( i(L) , H ) \cong \mathrm{Nat} ( L , P_n (H ) )$.
\end{proof}

\subsection{Examples of coradical filtrations}
\label{202211292154}

In this section, we give concrete examples of coradical filtrations.

\subsubsection{The polynomial bialgebra} \label{202211071544}

Let $\mathds{k}$ be a commutative unital ring and $\mathds{k}[t]$ be the polynomial algebra with the variable $t$ over $\mathds{k}$.
It is well-known that the algebra structure extends to a Hopf algebra structure with the comultiplication characterized by $\Delta (t) = 1 \otimes t + t \otimes 1$.
If $m \cdot 1_{\mathds{k}} \in \mathds{k}$ is not a zero divisor for any nonzero integer $m$, then the coradical filtration of $\mathds{k} [t]$ coincides with the degree filtration:
$$P_n ( \mathds{k} [t] ) = \bigoplus_{i \leq n} \mathds{k} t^i$$
For example, $\mathds{k}$ could be the integers $\mathds{Z}$ or a field of characteristic zero.

We go further by giving an explicit calculation of $P_n ( \mathds{k} [t] )$ in {\it positive characteristic}.
The precise meaning of positive characteristic for a general commutative ring $\mathds{k}$ is given in the statement of Theorem \ref{202211071539}.

Let $p$ be a prime.
For $n \in \mathds{N}$, let $( n_i )_{i \in \mathds{N}}$ be the representation of $n$ in base $p$.
In other words, $( n_i )_{i \in \mathds{N}}$ is a sequence of $\mathds{N}$ such that $n = \sum^{\infty}_{i=0} n_i p^i$ and $0 \leq n_i < p$.
We define the map $L_p : \mathds{N} \to \mathds{N}$ by $L_p (n) {:=} \sum^{\infty}_{i=0} n_i$.

We give a brief review of multinomial coefficients.
Let $m , r \in \mathds{N}$ such that $r \geq 1$.
For $k_1, k_2, \cdots, k_r \in \mathds{Z}$ such that $\sum^{r}_{j=1} k_j = m$, the {\it multinomial coefficient} $\begin{pmatrix} m \\ k_1 , k_2, \cdots , k_r \end{pmatrix} \in \mathds{N}$ is characterized by the equation,
$$(z_1 + z_2 + \cdots + z_r)^m = \sum_{\substack{k_i \in \mathds{Z} \\ \sum_{i}k_i = m}} \begin{pmatrix} m \\ k_1 , k_2, \cdots , k_r \end{pmatrix} z_1^{k_1} z_2^{k_2} \cdots z_r^{k_r} .$$
One can deduce the following lemma from elementary number theory, in particular the Lucas theorem on binomial coefficients.

\begin{Lemma}
\label{202209182244}
Let $p$ be a prime.
For $m , r \in \mathds{N}$, the following conditions are equivalent:
\begin{itemize}
\item
There exist positive integers $k_1, k_2, \cdots, k_r$ such that $m = \sum^{r}_{j=1} k_j$ and the multinomial coefficient $\begin{pmatrix} m \\ k_1 , k_2, \cdots , k_r \end{pmatrix}$ is coprime to $p$.
\item
$r \leq L_p (m)$.
\end{itemize}
\end{Lemma}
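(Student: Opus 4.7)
The plan is to translate the coprimality condition on the multinomial coefficient into a ``no carries'' condition for the base-$p$ addition $k_1 + \cdots + k_r = m$, via the classical formula that follows from Legendre's identity $v_p(n!) = (n - L_p(n))/(p-1)$:
\begin{equation*}
v_p \!\begin{pmatrix} m \\ k_1, \ldots, k_r \end{pmatrix} = \frac{L_p(k_1) + \cdots + L_p(k_r) - L_p(m)}{p-1} .
\end{equation*}
Since this valuation is always a non-negative integer, the multinomial is coprime to $p$ precisely when $L_p(k_1) + \cdots + L_p(k_r) = L_p(m)$, equivalently when no digit-level carries occur in forming $k_1 + \cdots + k_r$ in base $p$. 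This reformulation is essentially a multinomial version of Lucas' theorem and is the only nontrivial input.

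Given this dictionary, the forward direction is immediate: if such positive $k_j$ exist with the multinomial coprime to $p$, then each $k_j \geq 1$ forces $L_p(k_j) \geq 1$, so $r \leq \sum_{j=1}^r L_p(k_j) = L_p(m)$.

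For the converse, I would use the base-$p$ expansion $m = \sum_{i \geq 0} m_i p^i$ to form the multiset $\mathcal{M}$ consisting of $m_i$ copies of $p^i$ for each index $i$. It has cardinality $L_p(m) \geq r$ and total sum $m$. Partition $\mathcal{M}$ into $r$ non-empty subsets $\mathcal{M}_1, \ldots, \mathcal{M}_r$ (possible since $|\mathcal{M}| \geq r$) and let $k_j$ be the sum of the elements of $\mathcal{M}_j$. By construction each digit of $m$ is distributed among the $k_j$ without overlap, so the addition $k_1 + \cdots + k_r = m$ produces no carries, $\sum_{j=1}^r L_p(k_j) = L_p(m)$, and the multinomial is coprime to $p$.

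The main obstacle, if any, is simply to verify the multinomial valuation formula carefully enough (and confirm that the no-carry condition is the correct equivalent of having $p$-adic valuation zero); once this dictionary is in place, both implications reduce to straightforward bookkeeping on base-$p$ digits.
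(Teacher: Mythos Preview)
Your argument is correct and complete. The paper itself does not give a proof: it only remarks that the lemma ``can be deduced from elementary number theory, in particular the Lucas theorem on binomial coefficients.'' Your route via Legendre's formula $v_p(n!) = (n - L_p(n))/(p-1)$ and the resulting identity
\[
v_p\!\begin{pmatrix} m \\ k_1,\ldots,k_r \end{pmatrix} = \frac{\sum_{j} L_p(k_j) - L_p(m)}{p-1}
\]
is exactly the standard multinomial extension of Kummer's/Lucas' theorem, so you are supplying precisely the details the paper leaves implicit. The forward implication (each $L_p(k_j)\geq 1$ forces $r\leq L_p(m)$) and the constructive converse (partition the multiset of $m_i$ copies of $p^i$ into $r$ non-empty blocks, producing a carry-free decomposition) are both clean and correct.
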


\begin{theorem}
\label{202211071539}
Suppose that $p > 0$ is a prime such that $p \cdot 1_{\mathds{k}} = 0$ and $m \cdot 1_{\mathds{k}} \in \mathds{k}$ is not a zero divisor for $m \in \mathds{Z}$ which is coprime to $p$.
Then the coradical filtration of $\mathds{k} [t]$ is given by,
$$P_n ( \mathds{k} [t] ) = \bigoplus_{L_p (i) \leq n} \mathds{k} t^i .$$
In particular, the underlying coalgebra of $\mathds{k} [t]$ is conilpotent.
\end{theorem}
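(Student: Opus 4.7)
The strategy is to invoke Proposition \ref{202208302218}(2), which identifies $P_n(H)$ with $\mathrm{Ker}(\delta^{n+1}_H)$, and to compute $\delta^{n+1}_{\mathds{k}[t]}$ explicitly on each monomial $t^i$. Since $\Delta(t) = 1 \otimes t + t \otimes 1$ and $\Delta$ is an algebra map, the multinomial expansion gives
$$
\Delta^{(n+1)}(t^i) \;=\; \sum_{k_1+\cdots+k_{n+1}=i} \binom{i}{k_1,\ldots,k_{n+1}}\, t^{k_1}\otimes\cdots\otimes t^{k_{n+1}}.
$$
The idempotent $e = \mathrm{id} - \eta\circ\varepsilon$ fixes $t^k$ for $k\geq 1$ and annihilates $1$, so $e^{\otimes(n+1)}$ kills exactly the terms where some $k_j$ equals $0$. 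Therefore
$$
\delta^{n+1}_{\mathds{k}[t]}(t^i) \;=\; \sum_{\substack{k_1+\cdots+k_{n+1}=i\\ k_j\geq 1}} \binom{i}{k_1,\ldots,k_{n+1}}\, t^{k_1}\otimes\cdots\otimes t^{k_{n+1}}.
$$

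The next step is to identify, for a general $f = \sum_i c_i t^i \in \mathds{k}[t]$, when $\delta^{n+1}(f)=0$. The image $\delta^{n+1}(t^i)$ lies in the ``total degree $i$'' summand of $\mathds{k}[t]^{\otimes(n+1)}$, and these summands are $\mathds{k}$-linearly independent; moreover within each, the tensors $t^{k_1}\otimes\cdots\otimes t^{k_{n+1}}$ are themselves a $\mathds{k}$-basis. Hence $\delta^{n+1}(f)=0$ iff for every $i$ and every ordered composition $(k_1,\ldots,k_{n+1})$ of $i$ with positive parts, the element $c_i \cdot \binom{i}{k_1,\ldots,k_{n+1}}\cdot 1_\mathds{k}$ vanishes in $\mathds{k}$.

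I would then close the computation by invoking Lemma \ref{202209182244}. If $L_p(i)\leq n$, then $n+1 > L_p(i)$, so every such multinomial coefficient is divisible by $p$; by hypothesis $p\cdot 1_\mathds{k}=0$, so each coefficient is already $0$ in $\mathds{k}$ and $t^i$ lies in $\mathrm{Ker}(\delta^{n+1})$. Conversely, if $L_p(i) > n$, the lemma gives a composition $(k_1,\ldots,k_{n+1})$ of $i$ with positive parts whose multinomial coefficient is coprime to $p$; the hypothesis on $\mathds{k}$ then says $\binom{i}{k_1,\ldots,k_{n+1}}\cdot 1_\mathds{k}$ is not a zero divisor, which forces $c_i=0$. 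Combining the two directions yields exactly $P_n(\mathds{k}[t]) = \bigoplus_{L_p(i)\leq n} \mathds{k} t^i$. Conilpotency is then immediate, because every $i\in\mathds{N}$ has $L_p(i) < \infty$, so $t^i \in P_{L_p(i)}(\mathds{k}[t])$ and $\mathds{k}[t] = \bigcup_n P_n(\mathds{k}[t])$.

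The routine parts are the multinomial expansion and the application of $e^{\otimes(n+1)}$. The main subtlety, and the step I expect to take most care with, is the passage from ``each tensor coefficient $c_i\binom{i}{k_1,\ldots,k_{n+1}}\cdot 1_\mathds{k}$ must vanish'' to ``$c_i=0$ when $L_p(i)>n$'': here one really needs the full strength of the hypothesis that integers coprime to $p$ act as non-zero-divisors on $\mathds{k}$, not merely that they are nonzero, so that the single good composition produced by Lemma \ref{202209182244} suffices to cancel $c_i$.
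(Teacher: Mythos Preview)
Your proof is correct and follows essentially the same route as the paper: both reduce to $\mathrm{Ker}(\delta^{n+1})$ via Proposition~\ref{202208302218}, expand $\delta^{n+1}(t^i)$ by the multinomial formula, and invoke Lemma~\ref{202209182244} to decide which coefficients survive modulo $p$. Your handling of the converse direction is in fact slightly more explicit than the paper's: where the paper records that the $\delta^{n+1}(t^m)$ are ``linearly independent'' because their total degrees differ, you spell out that one needs the non-zero-divisor hypothesis to pass from $c_i \cdot \binom{i}{k_1,\ldots,k_{n+1}} \cdot 1_{\mathds{k}} = 0$ to $c_i = 0$, which is exactly the content hidden in that step.
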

\begin{proof}
For simplicity, write $H = \mathds{k} [t]$.
By Proposition \ref{202208302218}, it suffices to compute the kernel of $\delta^{n+1}_H$ (see Definition \ref{2022073111613}).
One can compute $\delta^{n+1}_H ( t^m ) = e^{\otimes (n+1)}_H ( \Delta^{(n+1)}_H ( t^m ) )$ as follows:
\begin{align*}
\delta^{n+1}_H ( t^m ) &= \sum_{\substack{0 < k_i \\ \sum_{i}k_i = m}} \begin{pmatrix} m \\ k_1, k_2, \cdots , k_{n+1} \end{pmatrix} t^{k_1} \otimes t^{k_2} \otimes \cdots \otimes t^{k_{n+1}} 
\end{align*}
By Corollary \ref{202209182244} and the assumption on $p$, $\delta^{n+1}_H (t^m ) \neq 0$ if and only if $L_p (m) > n$.
Furthermore, the assumption on $p$ implies that $\{ \delta^{n+1}_H (t^m ) ~;~ L_p (m) > n\}$ is linearly independent since the total degrees of $t^{k_1} \otimes t^{k_2} \otimes \cdots \otimes t^{k_{n+1}}$, i.e. $\sum k_i = m$, are different from each other.
\end{proof}

\begin{remark}
In section \ref{202211301655}, we give a review of primitive bialgebras which are automatically conilpotent.
One can go further by using the above calculation to prove that the polynomial bialgebra is primitive.
\end{remark}

\subsubsection{The shuffle bialgebra}
\label{202212111901}

For a $\mathds{k}$-module $V$, we give a quick review of the shuffle-deconcatenation bialgebra, or simply the shuffle bialgebra, $\mathrm{Sh} (V)$ whose multiplication is given by shuffles and comultiplication is given by deconcatenation.
Its underlying $\mathds{k}$-module is the tensor module $T(V) {:=} \bigoplus_{i \in \mathds{N}} V^{\otimes i}$.
For $v_1 , v_2 , \cdots , v_n \in V$, we introduce the notation $\left[ v_1 | v_2 | \cdots | v_i \right] {:=} v_1 \otimes v_2 \otimes \cdots \otimes v_n \in V^{\otimes n} \subset T(V)$.
For natural numbers $p,q$ such that $p+q = n$, a permutation $\sigma \in \Sigma_n$ is a $(p,q)$-shuffle if $\sigma (1) < \sigma (2) < \cdots < \sigma (p)$ and $\sigma (p+1) < \sigma ( p+2) < \cdots < \sigma ( n)$.
Denote by $\Sigma_{p,q}$ the subset of $(p,q)$-shuffles.
The multiplication, denoted by $\shuffle$, is characterized by the following assignment:
$$
\left[ v_1 | v_2 | \cdots | v_i \right] \shuffle \left[ v_{i+1} | v_{i+2} | \cdots | v_n \right] {:=} \sum_{\sigma \in \Sigma_{i,n-i}} \left[ v_{\sigma^{-1} (1)} | v_{\sigma^{-1} (2)} | \cdots | v_{\sigma^{-1} (n)} \right] .
$$
The comultiplication $\Delta = \Delta_{\mathrm{Sh} (V)}$ is defined by deconcatenation:
$$
\Delta ( \left[ v_1 | v_2 | \cdots | v_n \right] ) {:=} \left[ v_1 | \cdots | v_n \right] \otimes 1_\mathds{k} + \sum^{n-1}_{i = 1} \left( \left[ v_1 | v_2 | \cdots | v_i \right] \otimes \left[ v_{i+1} | v_{i+2} | \cdots | v_n \right] \right) + 1_\mathds{k} \otimes \left[ v_1 | \cdots | v_n \right].
$$
The above data yield a well-defined commutative bialgebra $\mathrm{Sh} (V)$.
For a free $\mathds{k}$-module $V$, $\mathrm{Sh} (V)$ is cocommutative if and only if the rank of $V$ does not exceed $1$.
The shuffle bialgebra is a Hopf algebra since $S( \left[ v_1 | v_2 | \cdots | v_n \right] ) = (-1)^n \left[ v_n | v_{n-1} | \cdots | v_1 \right]$ gives the antipode.

\begin{prop}
\label{202211291552}
The coradical filtration of $\mathrm{Sh} (V)$ is identified as follows:
$$
P_n ( \mathrm{Sh} (V)) = \bigoplus_{0 \leq i \leq n} V^{\otimes i}
$$
In particular, the underlying coalgebra of the shuffle bialgebra $\mathrm{Sh} (V)$ is conilpotent.
\end{prop}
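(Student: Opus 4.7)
My plan is to apply Proposition \ref{202208302218}, which reduces the computation to identifying $\mathrm{Ker}(\delta^{n+1}_{\mathrm{Sh}(V)})$. The task is therefore to make the map $\delta^{n+1} = e^{\otimes(n+1)} \circ \Delta^{(n+1)}$ fully explicit on the natural grading $T(V) = \bigoplus_m V^{\otimes m}$ and determine its kernel.

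First I would compute the iterated deconcatenation. For $[v_1|\cdots|v_m] \in V^{\otimes m}$, a standard induction shows
$$
\Delta^{(n+1)}([v_1|\cdots|v_m]) = \sum_{0 \leq j_1 \leq \cdots \leq j_n \leq m}
[v_1|\cdots|v_{j_1}] \otimes [v_{j_1+1}|\cdots|v_{j_2}] \otimes \cdots \otimes [v_{j_n+1}|\cdots|v_m].
$$
Since $e = \mathrm{id} - \eta \circ \varepsilon$ is the projection onto $\overline{\mathrm{Sh}(V)} = \bigoplus_{i \geq 1} V^{\otimes i}$, applying $e^{\otimes(n+1)}$ kills every term with an empty block. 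Hence
$$
\delta^{n+1}([v_1|\cdots|v_m]) = \sum_{\substack{a_1,\dots,a_{n+1} \geq 1 \\ a_1+\cdots+a_{n+1}=m}}
[v_1|\cdots|v_{a_1}] \otimes [v_{a_1+1}|\cdots|v_{a_1+a_2}] \otimes \cdots \otimes [v_{m-a_{n+1}+1}|\cdots|v_m].
$$

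The inclusion $\bigoplus_{m \leq n} V^{\otimes m} \subseteq P_n(\mathrm{Sh}(V))$ is then immediate: for $m \leq n$ there is no composition of $m$ into $n+1$ positive parts, so the sum is empty and $\delta^{n+1}$ vanishes on $V^{\otimes m}$. For the reverse inclusion, I would argue component by component in the total grading. The map $\delta^{n+1}$ preserves total degree (where $V^{\otimes a_1}\otimes\cdots\otimes V^{\otimes a_{n+1}}$ inside $\mathrm{Sh}(V)^{\otimes(n+1)}$ is assigned weight $a_1+\cdots+a_{n+1}$), so $\delta^{n+1}(\sum_m x_m)=0$ forces $\delta^{n+1}(x_m)=0$ for each $m$. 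It remains to show that $\delta^{n+1}\!\upharpoonright_{V^{\otimes m}}$ is injective for $m \geq n+1$; this is the only step requiring care, and it is the place where one might worry about generality in $V$ (no basis assumed). The clean way is to project the target onto a single multidegree component, say $(a_1,\ldots,a_{n+1}) = (1,\ldots,1,m-n)$: under the canonical identification $V^{\otimes 1}\otimes\cdots\otimes V^{\otimes 1}\otimes V^{\otimes(m-n)} \cong V^{\otimes m}$, this projection of $\delta^{n+1}$ is literally the identity, so $\delta^{n+1}\!\upharpoonright_{V^{\otimes m}}$ is injective, even split mono.

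Combining both inclusions yields $P_n(\mathrm{Sh}(V)) = \bigoplus_{0 \leq m \leq n} V^{\otimes m}$, from which conilpotence is automatic since $\bigcup_n P_n(\mathrm{Sh}(V)) = T(V) = \mathrm{Sh}(V)$. The main obstacle I anticipate is the $V$-not-free case: one must avoid the temptation to pick a basis and instead exploit the split-monomorphism trick via projection to a single composition, which works over an arbitrary commutative ring $\mathds{k}$ and arbitrary $\mathds{k}$-module $V$.
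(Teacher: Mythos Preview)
Your argument is correct and follows essentially the same route as the paper: compute $\delta^{n+1}$ explicitly as the sum over compositions of $m$ into $n+1$ positive parts, conclude the easy inclusion from the empty sum when $m\le n$, and use injectivity of $\delta^{n+1}$ on $V^{\otimes m}$ for $m>n$ together with separation by total degree. Your projection onto the single multidegree $(1,\dots,1,m-n)$ makes explicit the injectivity step that the paper leaves to the reader.
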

\begin{proof}
Put $H = \mathrm{Sh} (V)$.
One can inductively prove that the $(n+1)$-times iterated reduced comultiplication acts on $\left[ v_1 | v_2 | \cdots | v_m \right] \in \mathrm{Sh} (V)$ as follows:
$$\delta^{n+1}_H ( \left[ v_1 | v_2 | \cdots | v_m \right] ) = \sum_{i_1, \cdots , i_{n}} \left( \left[ v_1 | v_2 | \cdots | v_{i_1} \right] \otimes \left[ v_{i_1+1} | v_{i_1+2} | \cdots | v_{i_2} \right] \otimes \cdots \left[ v_{i_{n}+1} | v_{i_{n}+2} | \cdots | v_{m} \right] \right)$$
Here, the indices $i_1, \cdots , i_{n}$ run all over $1\leq i_1 < i_2 < \cdots < i_{n} < m$.
Hence, the left hand side in the statement contains the right hand side.
Moreover, if $m > n$, then the above computation implies that $\delta^{n+1}_{H} |_{\mathrm{Sh}_m (V)}$ is injective where $\mathrm{Sh}_m (V)$ is the subspace of $\mathrm{Sh} (V)$ generated by $\left[ v_1 | v_2 | \cdots | v_m \right]$'s.
This completes the proof, since $\delta^{n+1}_H (\mathrm{Sh}_m (V)) \cap \delta^{n+1}_H (\mathrm{Sh}_{m^\prime} (V)) = 0$ for $m \neq m^\prime$.
\end{proof}

\subsubsection{Finite-dimensional bialgebras}

In this section, we give a computation of the coradical filtration of finite-dimensional bialgebras over a field $\mathds{k}$ of characteristic zero.
In particular, we prove that a finite-dimensional bialgebra is conilpotent if and only if it is trivial.
For this, in Lemma \ref{202209010909}, we examine a compatibility of the multiplication of a bialgebra $H$ and $\delta^n_H$ of Definition \ref{2022073111613} for general characteristic.

Below, we regard $\delta_H^n (a) \in H^{\otimes n}$ as an element of the shuffle algebra $\mathrm{Sh} (H)$ of the underlying module of $H$.
\begin{Lemma}
\label{202209010909}
For $a \in P_n (H)$ and $b \in P_m (H)$, we have $\delta_H^{n + m } ( ab) = \delta_H^n (a) \shuffle \delta_H^m (b)$.
\end{Lemma}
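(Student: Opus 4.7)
The plan is to use that for a bialgebra the iterated comultiplication $\Delta_H^{(n+m)} : H \to H^{\otimes(n+m)}$ is an algebra homomorphism, decompose both $\Delta_H^{(n+m)}(a)$ and $\Delta_H^{(n+m)}(b)$ via Proposition \ref{202207301503}, and then show that applying $e_H^{\otimes(n+m)}$ leaves precisely the shuffle sum.

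Concretely, I would start by writing
$$
\Delta_H^{(n+m)}(ab) \;=\; \Delta_H^{(n+m)}(a) \cdot \Delta_H^{(n+m)}(b),
$$
with componentwise multiplication on the right hand side. By Proposition \ref{202207301503},
$$
\Delta_H^{(n+m)}(a) \;=\; \sum_{S \subset \underline{n+m}} \delta^{|S|}_H(a)_S, \qquad \Delta_H^{(n+m)}(b) \;=\; \sum_{T \subset \underline{n+m}} \delta^{|T|}_H(b)_T,
$$
and using $a \in P_n(H)$, $b \in P_m(H)$, Proposition \ref{202208302218} cuts the sums down to $|S| \leq n$ and $|T| \leq m$.

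Next, I would apply $e_H^{\otimes(n+m)}$ to the product $\delta^{|S|}_H(a)_S \cdot \delta^{|T|}_H(b)_T$. By construction of the insertion notation $(-)_S$, in position $i$ this tensor has an element of $\overline{H}$ if $i \in S$ and $1_H$ otherwise, and similarly for the $b$-factor. Hence in position $i \notin S \cup T$ the componentwise product equals $1_H \cdot 1_H = 1_H$, which is annihilated by $e_H$; so only terms with $S \cup T = \underline{n+m}$ survive. Combined with $|S| \leq n$ and $|T| \leq m$, inclusion--exclusion forces $|S| = n$, $|T| = m$ and $S \cap T = \emptyset$. For such a pair, every position contains either $\overline{H} \cdot 1_H$ or $1_H \cdot \overline{H}$, so the product already lies in $\overline{H}^{\otimes(n+m)}$ and $e_H^{\otimes(n+m)}$ acts as the identity. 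This yields
$$
\delta_H^{n+m}(ab) \;=\; \sum_{\substack{S \sqcup T = \underline{n+m} \\ |S| = n,\ |T| = m}} \delta_H^n(a)_S \cdot \delta_H^m(b)_T.
$$

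Finally, I would identify this sum with the shuffle product. Subsets $S \subset \underline{n+m}$ of cardinality $n$ are in bijection with $(n,m)$-shuffles $\sigma \in \Sigma_{n,m}$ via $S = \sigma(\{1,\dots,n\})$; under this bijection, $\delta_H^n(a)_S \cdot \delta_H^m(b)_{\underline{n+m} \setminus S}$ is precisely the tensor that places the $i$-th component of $\delta_H^n(a)$ in the $i$-th smallest position of $S$ and the $j$-th component of $\delta_H^m(b)$ in the $j$-th smallest position of $\underline{n+m} \setminus S$, which is exactly the $\sigma$-summand of $\delta_H^n(a) \shuffle \delta_H^m(b)$. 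The main obstacle is the bookkeeping around the insertion notation $(-)_S$ together with the careful idempotent-killing argument, since one must verify both that every index outside $S \cup T$ kills the term and that the surviving tensors lie entirely in $\overline{H}^{\otimes(n+m)}$; all of this is routine once the decomposition of Proposition \ref{202207301503} is in place.
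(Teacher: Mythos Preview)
Your proof is correct and follows essentially the same route as the paper's own argument: both expand $\Delta_H^{(n+m)}(ab)$ as a product, decompose each factor via Proposition~\ref{202207301503}, use the hypotheses $a\in P_n(H)$, $b\in P_m(H)$ to truncate, observe that $e_H^{\otimes(n+m)}$ kills all terms except those with $S\sqcup T=\underline{n+m}$, $|S|=n$, $|T|=m$, and finally identify the surviving sum with the shuffle product. Your write-up is in fact slightly more explicit than the paper's in justifying why $e_H^{\otimes(n+m)}$ acts as the identity on the surviving terms.
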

\begin{proof}
Let $N = n + m$.
By proposition \ref{202207301503}, we have 
$$\Delta_H^{(N)} ( ab) = \Delta_H^{(N)} (a) \Delta_H^{(N)} (b) = \left( \sum_{S \subset \underline{N}} \delta_H^{|S|} (a)_S \right) \left( \sum_{T \subset \underline{N}} \delta_H^{|T|} (b)_T \right) . $$
By the assumptions $a \in P_n (H)$ and $b \in P_m (H)$, this is equal to 
$$\left( \sum_{\substack{S \subset \underline{N} \\ |S| \leq n}} \delta_H^{|S|} (a)_S \right) \left( \sum_{\substack{T \subset \underline{N} \\ |T| \leq m}} \delta_H^{|T|} (b)_T \right) = \sum_{\substack{S,T \subset \underline{N} \\ |S| \leq n, |T| \leq m}} \delta_H^{|S|} (a)_S ~ \delta_H^{|T|} (b)_T . $$
Note that, by Proposition \ref{202207301503}, we have $\delta_H^{|S|} (a)_S \in H_{N,S}$.
In other words, $\delta_H^{|S|} (a)_S$ is a linear combination of vectors in $H^{\otimes N}$ whose $j$-th tensor component is $1_H$ for all $j \not\in S$.
$\delta_H^{|T|} (b)_T$ satisfies a similar property, so that their product $\delta_H^{|S|} (a)_S ~ \delta^{|T|} (b)_T$ is a linear combination of vectors in $H^{\otimes N}$ whose $j$-th tensor component is $1_H$ for all $j \not\in S \cup T$.
Hence, if $S \cup T \neq \underline{N}$, then we have $e_H^{\otimes N} \left( \delta_H^{|S|} (a)_S ~ \delta_H^{|T|} (b)_T \right) =0$.
This observation leads to
\begin{align*}
\delta_H^{N} (ab) = (e_H^{\otimes N} ) ( \Delta_H^{(N)} (ab)) = \sum_{\substack{S,T \subset \underline{N}, |S|=n \\ S\cap T= \emptyset}} \delta_H^{|S|} (a)_S ~ \delta_H^{|T|} (b)_T = \sum_{\substack{S,T \subset \underline{N}, |S|=n \\ S\cap T= \emptyset}} \delta_H^{n} (a)_S ~ \delta_H^{m} (b)_T
\end{align*}
This is equal to the shuffle multiplication $\delta_H^n (a) \shuffle \delta_H^m (b)$ by definitions.
\end{proof}

We recall a classical result on shuffle algebras.
For a field $\mathds{k}$ of characteristic zero, the shuffle algebra $\mathrm{Sh} (V)$ is isomorphic to a polynomial algebra based on certain multi-variables \cite{radford1979natural}.
Choose a basis $X$ of $V$ equipped with an order.
A Lyndon word on $X$ (equipped with the order) is a finite word on $X$ which is strictly smaller than all of its rotations with respect to the lexicographical order.
Denote by $\mathrm{Lyn} (X)$ the set of Lyndon words on $X$.
Then there exists an algebra isomorphism $\mathrm{Sh} (V) \cong \mathds{k} [ \mathrm{Lyn} (X) ]$.

\begin{Lemma}
\label{202209011041}
Let $H$ be a (not necessarily finite-dimensional) bialgebra over a field of characteristic zero.
For $a \in P_n (H)$ such that $a \not\in \mathds{k} 1_H$, the sequence $( a^k )_{k\in \mathds{N}}$ of the powers of $a$ is linearly independent.
\end{Lemma}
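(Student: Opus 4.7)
The plan is to combine three ingredients already set up in the paper: Proposition \ref{202208302218} (which identifies $P_n(H)$ with $\mathrm{Ker}(\delta^{n+1}_H)$), Lemma \ref{202209010909} (which turns the multiplication in $H$ into the shuffle product under $\delta$), and the classical fact recalled just before the lemma that, over a field of characteristic zero, the shuffle algebra $\mathrm{Sh}(H)$ is isomorphic to a polynomial algebra on Lyndon words, and in particular is an integral domain.

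First I would sharpen the hypothesis: since $a \notin \mathds{k} 1_H = P_0(H)$, there is a minimal integer $d \geq 1$ with $a \in P_d(H)$, and by Proposition \ref{202208302218} this minimality is equivalent to $\delta^d_H(a) \neq 0$. Next, iterating Lemma \ref{202209010909} gives, for every $k \geq 1$,
\[
\delta^{kd}_H(a^k) \;=\; \delta^d_H(a)^{\shuffle k} \;\in\; H^{\otimes kd} \subset \mathrm{Sh}(H),
\]
and since $\mathrm{Sh}(H)$ has no zero divisors in characteristic zero, the right-hand side is nonzero for every $k$.

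Now I would argue by contradiction: assume $\sum_{k=0}^{N} c_k a^k = 0$ is a nontrivial relation of shortest length, so $c_N \neq 0$ and $N \geq 1$. Apply $\delta^{Nd}_H$ to both sides. For $k < N$, Proposition \ref{202401101549} gives $a^k \in P_{kd}(H)$, and since $d \geq 1$ we have $kd \leq (N-1)d \leq Nd - 1$, so by Proposition \ref{202208302218} we get $\delta^{Nd}_H(a^k) = 0$. What survives is
\[
0 \;=\; c_N\,\delta^{Nd}_H(a^N) \;=\; c_N\,\delta^d_H(a)^{\shuffle N},
\]
contradicting the integral domain property of $\mathrm{Sh}(H)$ together with $c_N \neq 0$ and $\delta^d_H(a) \neq 0$.

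The genuinely substantial step is the computation $\delta^{kd}_H(a^k) = \delta^d_H(a)^{\shuffle k}$, but this is exactly Lemma \ref{202209010909} applied iteratively, so the remaining work is just bookkeeping with the filtration indices and an appeal to the integral-domain property of the shuffle algebra; I do not expect any substantive obstacle. The only slightly delicate point is making sure to pass to the minimal $d$ so that $\delta^d_H(a) \neq 0$, which is essential for concluding from the shuffle relation.
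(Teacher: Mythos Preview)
Your proof is correct and follows essentially the same approach as the paper: both pass to the minimal filtration level $d$ with $\delta^d_H(a)\neq 0$, invoke Lemma~\ref{202209010909} to obtain $\delta^{kd}_H(a^k)=\delta^d_H(a)^{\shuffle k}$, and use that $\mathrm{Sh}(H)$ is a polynomial algebra in characteristic zero to conclude this is nonzero. The only cosmetic difference is that the paper packages the final step via the map $\delta=\sum_m\delta^m_H:H\to\mathrm{Sh}(H)$ and argues that the $\delta(a^k)$ have distinct top degrees $kd$, whereas you apply $\delta^{Nd}_H$ directly to a putative relation; these are equivalent formulations of the same leading-term argument.
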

\begin{proof}
Without loss of generality, we may assume that $a \in P_n (H) \backslash P_{n-1} (H)$.
In other words, $\delta_H^{n+1}(a) =0$ but $\delta_H^{n} (a) \neq 0$.
Consider the linear map $\delta = \sum_{m \in \mathds{N}} \delta_H^{m} : H \to \bigoplus_{m \in \mathds{N}} H^{\otimes m} = \mathrm{Sh} (H)$.
It suffices to show that the induced sequence $( \delta( a^k ) )_{k\in \mathds{N}}$ is linearly independent.
Note that $\delta_H^m (a^k) = 0$ if $m > kn$, since $a^k \in P_{nk} (H)$ by Proposition \ref{202401101549}.
We claim that $\delta_H^m (a^k ) \neq 0$ if $m = kn$.
Then it is obvious that $( \delta ( a^k ) )_{k\in \mathds{N}}$ is linearly independent.

We now prove the above claim.
Indeed, an iterated application of Lemma \ref{202209010909} implies $\delta_H^{kn} (a^k ) = \delta_H^n (a)^{\shuffle k}$.
The shuffle algebra $\mathrm{Sh} (H)$ is isomorphic to a polynomial algebra since $\mathds{k}$ is a field of characteristic zero.
Hence, $\delta_H^{kn} (a^k ) = \delta_H^n (a)^{\shuffle k} \neq 0$ due to $\delta_H^n (a) \neq 0$.
\end{proof}

\begin{theorem}
\label{202211292056}
Let $\mathds{k}$ be a field of characteristic zero.
For a finite-dimensional bialgebra $H$, we have $P_n (H) \cong \mathds{k}$.
In particular, $H$ is conilpotent if and only if $H \cong \mathds{k}$.
\end{theorem}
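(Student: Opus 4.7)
The plan is to deduce Theorem \ref{202211292056} directly from Lemma \ref{202209011041}, using finite-dimensionality as the obstruction to the existence of nonscalar elements in the coradical filtration.

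First I would observe that $1_H \in P_0(H) \subset P_n(H)$ for every $n$, since $\delta^{n+1}_H(1_H) = e_H^{\otimes(n+1)}(1_H^{\otimes(n+1)}) = 0$ (because $e_H(1_H) = 0$). Hence the inclusion $\mathds{k} 1_H \subset P_n(H)$ is automatic, and the content is the reverse inclusion.

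For the reverse inclusion, I would argue by contradiction. Suppose there exists $a \in P_n(H)$ with $a \notin \mathds{k} 1_H$. By Lemma \ref{202209011041}, the sequence of powers $(a^k)_{k\in\mathds{N}}$ is linearly independent in $H$. This yields an infinite linearly independent family in $H$, contradicting the assumption that $H$ is finite-dimensional. Hence $P_n(H) = \mathds{k} 1_H \cong \mathds{k}$ for every $n \in \mathds{N}$.

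The "in particular" clause is then immediate: if $H$ is conilpotent, then by definition $H = \bigcup_{n \in \mathds{N}} P_n(H) = \mathds{k} 1_H \cong \mathds{k}$; and conversely if $H \cong \mathds{k}$, then $H = P_0(H)$ is trivially conilpotent.

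There is no serious obstacle here, as the main work has already been done in Lemma \ref{202209011041}; the only point worth double-checking is that Lemma \ref{202209011041} is stated without any finite-dimensionality hypothesis on $H$, so its conclusion applies verbatim to any element $a \in P_n(H) \setminus \mathds{k} 1_H$ of a finite-dimensional bialgebra, producing the desired contradiction.
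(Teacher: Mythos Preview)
Your proof is correct and is essentially the same as the paper's: the paper's proof reads in full ``It is immediate from Lemma \ref{202209011041},'' and your argument simply spells out the contradiction that lemma provides against finite-dimensionality.
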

\begin{proof}
It is immediate from Lemma \ref{202209011041}.
\end{proof}

\begin{Example}
Let $G$ be a finite group and $H = \mathds{k}G$ be the group algebra over $\mathds{k}$ a field of characteristic zero.
It is well known that $H$ is a bialgebra whose comultiplication $\Delta$ is characterized by $\Delta (g) = g \otimes g$ for $g \in G$.
By Theorem \ref{202211292056}, $H$ is conilpotent if and only if $G$ is trivial.
One can directly verify that this is true even for arbitrary ring $\mathds{k}$.
\end{Example}

\begin{Example}
\label{202212121114}
For $\mathds{k}$ a field of positive characteristic $p$, there exists a finite-dimensional conilpotent bialgebra $H$ with $\dim H > 1$.
Let $J$ be the ideal of $\mathds{k} [t]$ generated by $t^p$.
Then $J$ is a coideal of $\mathds{k} [t]$, i.e. $\Delta (J) \subset J \otimes \mathds{k} [t] + \mathds{k} [t] \otimes J$, so that $J$ induces the quotient Hopf algebra $H = \mathds{k} [t] / J$.
The Hopf algebra $H$ is conilpotent since the conilpotency is inherited via a quotient map.
This also follows from a stronger result that $H$ is primitive (see section \ref{202211301655}).
\end{Example}

\section{Isomorphism of polynomial and coradical filtrations}
\label{202211251159}

Let $\mathcal{C}$ be a category satisfying the conditions given in section \ref{202208181550}. In this section, we prove in Theorem \ref{202207141636}, that the polynomial filtration of an exponential functor on $\mathcal{C}$ and the coradical filtration coincide. This theorem extends a result obtained by Touz\'e \cite[Theorem 11.10]{touze} concerning functors on an additive category. Recall that our main example of interest, the category $\mathsf{gr^{\mathsf{op}}}$ is not additive but satisfies the conditions given in section \ref{202208181550}. Our proof is based on the alternative description of the coradical filtration of a coaugmented coalgebra  given in Proposition \ref{202208302218}.
Furthermore, we show that the analyticity of exponential $\mathcal{C}$-modules could be discussed using object-wise conilpotency.

We regard  the categories $\mathcal{C}$ and $\mathsf{Mod}_\mathds{k}$ as symmetric monoidal categories by using the product (given by the assumption in section \ref{202208181550}) and the tensor product respectively.
Let $\mathcal{F}^\mathsf{exp} ( \mathcal{C} ; \mathds{k} )$ be the category of exponential functors, equivalently symmetric monoidal functors, from $\mathcal{C}$ to $\mathsf{Mod}_\mathds{k}$.

\begin{prop}
\label{202312081109}
Let $\mathsf{Comon^{cc}} ( \mathcal{C} )$ be the category of cocommutative comonoids in $\mathcal{C}$.
The forgetful functor $( \ast \downarrow \mathsf{Comon^{cc}} ( \mathcal{C} ) ) \to \mathcal{C}$ gives an equivalence of symmetric monoidal categories.
\end{prop}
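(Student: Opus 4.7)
The plan is to produce a strict section $s : \mathcal{C} \to (\ast \downarrow \mathsf{Comon^{cc}}(\mathcal{C}))$ of the forgetful functor $U$ using the fact that in a category with finite products every object carries a \emph{canonical} cocommutative comonoid structure, and then show that this comonoid structure is moreover \emph{unique}, so that $s$ is inverse to $U$. Concretely, for $X \in \mathcal{C}$ set $s(X) = (X, \Delta_X, \varepsilon_X, \eta_X)$, where $\Delta_X : X \to X \times X$ is the diagonal, $\varepsilon_X : X \to \ast$ is the unique map to the terminal object, and $\eta_X : \ast \to X$ is the unique map from the initial object. Cocommutativity, coassociativity and the counital axiom follow from the universal property of the product. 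The fact that $\eta_X$ is a morphism of comonoids reduces to checking $(\eta_X \times \eta_X) \circ \Delta_\ast = \Delta_X \circ \eta_X$, which holds by the uniqueness clause in the universal property of $X \times X$ together with initiality of $\ast$.

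Next I would verify functoriality: for any $f : X \to Y$, the identities $(f \times f) \circ \Delta_X = \Delta_Y \circ f$, $\varepsilon_Y \circ f = \varepsilon_X$ and $f \circ \eta_X = \eta_Y$ all hold by the uniqueness parts of the universal properties of the product and of the zero object. Hence $s$ is a well-defined functor and by construction $U \circ s = \mathrm{id}_\mathcal{C}$. To establish $s \circ U = \mathrm{id}$ it suffices to show that on each object the canonical structure is the only coaugmented cocommutative comonoid structure: any counit $X \to \ast$ must equal $\varepsilon_X$ by terminality, any coaugmentation $\ast \to X$ must equal $\eta_X$ by initiality, and once $\varepsilon = \varepsilon_X$ is fixed the counital axiom $(\mathrm{id}_X \times \varepsilon_X) \circ \Delta = \mathrm{id}_X = (\varepsilon_X \times \mathrm{id}_X) \circ \Delta$ combined with the universal property of $X \times X$ forces $\Delta = \Delta_X$.

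Finally, the symmetric monoidal compatibility is essentially tautological: the monoidal product on $(\ast \downarrow \mathsf{Comon^{cc}}(\mathcal{C}))$ is inherited from the cartesian product on $\mathcal{C}$ (using the canonical isomorphism $\ast \times \ast \cong \ast$ to tensor coaugmentations), and the canonical structure on $X \times Y$ visibly coincides with the tensor product of the canonical structures on $X$ and on $Y$; the symmetry isomorphism on either side is the one coming from the cartesian symmetry on $\mathcal{C}$. Both $s$ and $U$ are then strict symmetric monoidal, and $U$ is in fact an isomorphism of symmetric monoidal categories, a fortiori an equivalence.

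There is no real obstacle; the only point demanding care is to invoke the uniqueness clauses of the universal properties consistently — first to construct the canonical comonoid, then to see it is preserved by every morphism, and finally to show it is the unique such structure.
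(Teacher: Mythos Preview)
Your argument is correct and follows the same idea as the paper's proof, namely that in a cartesian category the diagonal is the unique comonoid structure on each object; the paper records this in a single sentence, while you have (correctly) unpacked the functoriality, uniqueness, and monoidal compatibility.
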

\begin{proof}
The universality of the product implies that the diagonal map is the unique comultiplication on each object.
\end{proof}

Based on Proposition \ref{202312081109}, we see that an exponential functor $F: \mathcal{C} \to \mathsf{Mod}_\mathds{k}$ induces an exponential functor $\widetilde{F} : \mathcal{C} \to ( \mathds{k} \downarrow \mathsf{Coalg}^\mathsf{cc}_{\mathds{k}})$.
In fact, it is given by  $\widetilde{F} : \mathcal{C} \simeq ( \ast \downarrow \mathsf{Comon^{cc}} ( \mathcal{C} ) ) \to  ( \mathds{k} \downarrow \mathsf{Comon^{cc}} ( \mathsf{Mod}_\mathds{k} ) ) =  (\mathds{k} \downarrow \mathsf{Coalg}^\mathsf{cc}_{\mathds{k}})$.
In particular, for $X \in \mathcal{C}$, $\widetilde{F} (X)$ is the coaugmented coalgebra whose underlying module is $F(X)$; and the comultiplication, counit and coaugmentation of $\widetilde{F} (X)$ are given by
\begin{align*}
F(X) &\stackrel{F(\Delta_X)}{\longrightarrow} F(X \times X) \cong F(X) \otimes F(X) , \\
\mathds{k} \cong F(\ast) &\stackrel{F(\eta_X)}{\longrightarrow} F(X) , \\
F(X) &\stackrel{F(\varepsilon_X)}{\longrightarrow} F(\ast) \cong \mathds{k} .
\end{align*}

For $X \in \mathcal{C}$, we have $\widetilde{F} (X ) \in (\mathds{k} \downarrow \mathsf{Coalg}^\mathsf{cc}_{\mathds{k}})$, so by section \ref{202212191823Japan}, we have the coradical filtration $P_n ( \widetilde{F} (X ) )$. In the following theorem, we compare the polynomial filtration $P_n(F)$ (see Definition \ref{202207202244}) applied to $X \in \mathcal{C}$ and the previous coradical filtration.

\begin{theorem}
\label{202207141636}
Let $F \in \mathcal{F}^\mathsf{exp} ( \mathcal{C} ; \mathds{k} )$ and $X \in \mathcal{C}$.
Under the identification $F(X) = \widetilde{F} (X)$, the polynomial filtration of $F$ evaluated at $X$ is equal to the coradical filtration of $\widetilde{F} (X)$.
In other words, we have the following commutative diagram for $n \in \mathds{N}$:
$$
\begin{tikzcd}[row sep=scriptsize]
F( X ) \ar[r , equal] & \widetilde{F} ( X ) \\
 ( P_n (F) ) ( X  ) \ar[u, hookrightarrow] \ar[r, equal] &  P_n ( \widetilde{F} ( X ) ) \ar[u, hookrightarrow]  
\end{tikzcd}
$$
\end{theorem}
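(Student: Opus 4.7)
The plan is to show that under the identification $F(X^{\times (n+1)}) \cong F(X)^{\otimes (n+1)}$ coming from the exponential (i.e., symmetric monoidal) structure of $F$, the composite $\chi_F([X]^{n+1}_{i=1}) \circ F(\Delta^{(n+1)}_X)$ from Definition \ref{202207202244} coincides with the map $\delta^{n+1}_{\widetilde{F}(X)}$ from Definition \ref{2022073111613}. Once this is established, taking kernels and invoking Proposition \ref{202208302218}(2) yields the theorem directly, since the left kernel is by definition $(P_n(F))(X)$ and the right kernel is $P_n(\widetilde{F}(X))$.

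First I would unpack the two ingredients separately. For the diagonal: by the very construction of $\widetilde{F}(X)$ from Proposition \ref{202312081109}, the comultiplication $\Delta_{\widetilde{F}(X)}$ is $F(\Delta_X)$ composed with the monoidal comparison $F(X \times X) \cong F(X) \otimes F(X)$; iterating, $F(\Delta^{(n+1)}_X)$ corresponds to $\Delta^{(n+1)}_{\widetilde{F}(X)}$ under the iterated comparison isomorphism. For the idempotents: the morphism $\tau_m([X]^{n+1}_{i=1})$ is a product of identities with a single factor $p_X = \eta_X \circ \varepsilon_X$ in the $m$-th slot. Since $F$ is symmetric monoidal and $F(\eta_X) = \eta_{\widetilde{F}(X)}$, $F(\varepsilon_X) = \varepsilon_{\widetilde{F}(X)}$, applying $F$ gives
\[
F(\tau_m([X]^{n+1}_{i=1})) \;=\; \mathrm{id}_{F(X)}^{\otimes (m-1)} \otimes (\eta_{\widetilde{F}(X)} \circ \varepsilon_{\widetilde{F}(X)}) \otimes \mathrm{id}_{F(X)}^{\otimes (n+1-m)} .
\]
Consequently $e_m = \mathrm{id} - F(\tau_m) = \mathrm{id}^{\otimes(m-1)} \otimes e_{\widetilde{F}(X)} \otimes \mathrm{id}^{\otimes(n+1-m)}$, which is exactly the commuting idempotent $\tau_m(e_{\widetilde{F}(X)})$ introduced before Proposition \ref{202207301503}.

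Composing the $n+1$ commuting idempotents then gives $\chi_F([X]^{n+1}_{i=1}) = e_1 \circ \cdots \circ e_{n+1} = e_{\widetilde{F}(X)}^{\otimes (n+1)}$, since the factors act on disjoint tensor slots. Combining this with the identification $F(\Delta^{(n+1)}_X) = \Delta^{(n+1)}_{\widetilde{F}(X)}$ yields
\[
\chi_F([X]^{n+1}_{i=1}) \circ F(\Delta^{(n+1)}_X) \;=\; e_{\widetilde{F}(X)}^{\otimes(n+1)} \circ \Delta^{(n+1)}_{\widetilde{F}(X)} \;=\; \delta^{n+1}_{\widetilde{F}(X)}.
\]
Taking kernels and applying Proposition \ref{202208302218}(2) completes the identification $(P_n(F))(X) = P_n(\widetilde{F}(X))$ as submodules of $F(X) = \widetilde{F}(X)$.

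The argument is conceptually straightforward: there is no hard estimate, and the degree-$n$ condition of Definition \ref{Korea202306121414} translates slotwise into the kernel condition defining $P_n$ of a coaugmented coalgebra. The only real obstacle is bookkeeping the symmetric monoidal coherence — one must verify that the iterated comparison isomorphism $F(X^{\times(n+1)}) \cong F(X)^{\otimes(n+1)}$ is compatible with both the iterated diagonal (so that $F(\Delta^{(n+1)}_X)$ transforms into $\Delta^{(n+1)}_{\widetilde{F}(X)}$) and with the tensor factor insertions $(-)_S$ from Section \ref{202212191823Japan} (so that the slotwise action of $p_X$ transforms into the slotwise action of $\eta_{\widetilde{F}(X)} \circ \varepsilon_{\widetilde{F}(X)}$). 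Both are immediate consequences of the axioms of a symmetric monoidal functor together with the universal property of the product in $\mathcal{C}$, but writing them out cleanly is what carries the proof.
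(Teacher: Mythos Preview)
Your proposal is correct and follows exactly the paper's approach: the paper's proof is a single commutative diagram asserting that under the exponential isomorphism $F(X^{\times(n+1)}) \cong F(X)^{\otimes(n+1)}$ the map $\chi_F([X]^{n+1}_{i=1}) \circ F(\Delta^{(n+1)}_X)$ corresponds to $e_{\widetilde{F}(X)}^{\otimes(n+1)} \circ \Delta^{(n+1)}_{\widetilde{F}(X)} = \delta^{n+1}_{\widetilde{F}(X)}$, and then one equates kernels using Proposition~\ref{202208302218}(2). Your write-up simply unpacks that diagram in more detail; the aside about the insertions $(-)_S$ is unnecessary for the argument but harmless.
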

\begin{proof}
The coaugmented coalgebra structure of $\widetilde{F} (X)$ induces the following commutative diagram.
The maps in the right two columns are induced by the exponential isomorphism of $F$.
Hence, the kernel of the composition of maps in the first row is equal to that in the second row.
$$
\begin{tikzcd}
F(X) \ar[d, equal] \ar[r, "\Delta^{(n+1)}_{\widetilde{F}(X)}"] & F(X)^{\otimes (n+1)} \ar[d, "\cong"]  \ar[r, "e_{\widetilde{F}(X)}^{\otimes (n+1)}"] & F(X)^{\otimes (n+1)} \ar[d, "\cong"] \\
F(X) \ar[r, "F(\Delta^{(n+1)}_X)"] & F(X^{\times (n+1)} ) \ar[r, "\chi_F ({[}X {]}^{n+1}_{i=1} )"] & F(X^{\times (n+1)} )
\end{tikzcd}
$$
\end{proof}

\begin{Corollary}
\label{202207211158}
Let $F \in \mathcal{F}^\mathsf{exp} ( \mathcal{C} ; \mathds{k} )$.
The $\mathcal{C}$-module $F$ is analytic if and only if the induced coaugmented coalgebra $\widetilde{F} (X)$ is conilpotent for all $X \in \mathcal{C}$.
\end{Corollary}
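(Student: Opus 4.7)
The plan is to reduce analyticity to an objectwise statement, then apply Theorem \ref{202207141636} pointwise. Recall from Remark following Proposition \ref{202207211024} that the morphisms $i_n(P_n(F)) \to i_{n+1}(P_{n+1}(F))$ in the polynomial filtration are inclusions of subfunctors of $F$. Hence for every $X \in \mathcal{C}$, the canonical map
\[
\mathrm{colim}_{n} (P_n (F))(X) \longrightarrow F(X)
\]
is simply the inclusion of the union $\bigcup_{n \in \mathds{N}} (P_n(F))(X) \subset F(X)$, since filtered colimits in $\mathsf{Mod}_\mathds{k}$ are computed as unions along inclusions.

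Next, since colimits in the functor category $\mathcal{F}(\mathcal{C}; \mathds{k})$ are computed objectwise, $F$ is analytic, i.e.\ $F \cong \mathrm{colim}_n i_n(P_n(F))$, if and only if for every $X \in \mathcal{C}$ the above inclusion is an equality, that is, $\bigcup_{n \in \mathds{N}} (P_n(F))(X) = F(X)$.

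The final step is to rewrite this condition using Theorem \ref{202207141636}. Under the identification $F(X) = \widetilde{F}(X)$, that theorem gives $(P_n(F))(X) = P_n(\widetilde{F}(X))$ as submodules, so
\[
\bigcup_{n \in \mathds{N}} (P_n(F))(X) = \bigcup_{n \in \mathds{N}} P_n(\widetilde{F}(X)),
\]
and the right-hand side equals $\widetilde{F}(X)$ precisely when $\widetilde{F}(X)$ is conilpotent, by the definition of conilpotency in section \ref{202212191823Japan}. Combining the two equivalences yields the corollary. There is essentially no obstacle here: once one notes that the polynomial filtration is a genuine filtration by inclusions (so that analyticity is an objectwise union condition), the corollary is an immediate translation via Theorem \ref{202207141636}.
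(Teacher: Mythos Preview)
Your proof is correct and follows essentially the same approach as the paper's own proof, which is a one-line application of Theorem \ref{202207141636} after noting that analyticity is an objectwise colimit condition. You have simply made explicit the intermediate observations (that the polynomial filtration consists of inclusions and that colimits in functor categories are computed objectwise) which the paper leaves implicit.
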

\begin{proof}
The $\mathcal{C}$-module $F$ is analytic, i.e. $\varinjlim_n ( P_n (F) ) ( X  ) = F(X)$ for any $X$, if and only if $\widetilde{F} ( X )$ is conilpotent, i.e. $\varinjlim_n P_n ( \widetilde{F} ( X ) )  = \widetilde{F} ( X )$, for any $X$ by Theorem \ref{202207141636}.
\end{proof}

We give an application of the theorem to coaugmented cocommutative coalgebras.

\begin{Defn}
Let $H$ be a coaugmented cocommutative coalgebra.
We denote by $\mathcal{C}_{H}$ the full subcategory of $( \mathds{k} \downarrow \mathsf{Coalg}_\mathds{k})$ consisting of any $n$-fold tensor product $H^{\otimes n}$ where $n \geq 0$.
\end{Defn}

\begin{prop} \label{202312072047}
The tensor product of coaugmented cocommutative coalgebras gives the product in the category $\mathcal{C}_H$.
\end{prop}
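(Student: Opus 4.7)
The plan is to verify the universal property of the categorical product for $\otimes$ directly, exploiting that $H$, and hence every $H^{\otimes n}$, is cocommutative. Since $\mathcal{C}_H$ is a full subcategory of $\mathds{k}\downarrow \mathsf{Coalg}_\mathds{k}$ closed under tensor products (up to the canonical isomorphism $H^{\otimes n}\otimes H^{\otimes m}\cong H^{\otimes(n+m)}$), it will suffice to show that for any two cocommutative coaugmented coalgebras $A, B$, the tensor product $A\otimes B$, equipped with the projections
$$
\pi_A := \mathrm{id}_A \otimes \varepsilon_B : A\otimes B \to A, \qquad \pi_B := \varepsilon_A \otimes \mathrm{id}_B : A\otimes B \to B,
$$
is the product in the category of cocommutative coaugmented coalgebras. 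Checking that $\pi_A, \pi_B$ are themselves coaugmented coalgebra morphisms is a routine application of counitality and functoriality of $\otimes$.

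Given a cocommutative coaugmented coalgebra $E$ together with morphisms $f: E\to A$ and $g: E\to B$, I would define the pairing $\langle f,g\rangle := (f\otimes g)\circ\Delta_E$. The factorization identities $\pi_A\circ\langle f,g\rangle = f$ and $\pi_B\circ\langle f,g\rangle = g$ follow from counitality of $\Delta_E$ combined with $\varepsilon_A\circ f = \varepsilon_E = \varepsilon_B\circ g$, and compatibility with coaugmentations is immediate from $(f\otimes g)\circ\Delta_E\circ\eta_E = \eta_A(1)\otimes\eta_B(1) = \eta_{A\otimes B}(1)$.

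The key technical step, which I expect to be the main obstacle, is to verify that $\langle f,g\rangle$ itself preserves the comultiplication. Unwinding $\Delta_{A\otimes B}$, which involves a swap between the two middle tensor factors, the required identity $\Delta_{A\otimes B}\circ\langle f,g\rangle = (\langle f,g\rangle\otimes\langle f,g\rangle)\circ\Delta_E$ reduces, after using coassociativity and the coalgebra-morphism property of $f$ and $g$, to commuting this swap past the four-fold iterated coproduct of $E$. This is precisely where cocommutativity of $E$ (automatic in $\mathcal{C}_H$) is essential.

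For uniqueness, given any coalgebra morphism $h: E\to A\otimes B$ with $\pi_A\circ h = f$ and $\pi_B\circ h = g$, I would compute
$$
(\pi_A\otimes\pi_B)\circ\Delta_{A\otimes B}\circ h = (\pi_A\otimes\pi_B)\circ(h\otimes h)\circ\Delta_E = (f\otimes g)\circ\Delta_E = \langle f,g\rangle,
$$
and a short Sweedler calculation using counitality gives $(\pi_A\otimes\pi_B)\circ\Delta_{A\otimes B} = \mathrm{id}_{A\otimes B}$, hence $h = \langle f,g\rangle$. The empty product is handled by noting that $H^{\otimes 0} = \mathds{k}$ is terminal in $\mathcal{C}_H$, with unique morphism $\varepsilon_E: E\to\mathds{k}$.
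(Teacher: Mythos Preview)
Your proposal is correct and follows essentially the same approach as the paper: define the projections $\pi_A,\pi_B$ via the counits, take $\langle f,g\rangle=(f\otimes g)\circ\Delta_E$, and prove uniqueness via $(\pi_A\otimes\pi_B)\circ\Delta_{A\otimes B}=\mathrm{id}$. The only stylistic difference is that, where you verify directly that $\langle f,g\rangle$ preserves the comultiplication, the paper observes more succinctly that $\Delta_E$ is itself a coalgebra homomorphism (this is exactly cocommutativity of $E$), so $\langle f,g\rangle$ is a composite of coalgebra maps; this shortcut replaces your four-fold coproduct computation.
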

\begin{proof}
Let $X,Y$ be objects of $\mathcal{C}_H$.
We prove that $X \stackrel{\pi_X}{\leftarrow} X \otimes Y \stackrel{\pi_Y}{\to} Y$ gives the product of $X$ and $Y$ where $\pi_X = \mathrm{id}_X \otimes \varepsilon_Y$ and $\pi_Y = \varepsilon_X \otimes \mathrm{id}_Y$.
Let $f: Z \to X$ and $g : Z \to Y$ be morphisms of $\mathcal{C}_H$.
Then we define $h : Z \to X \otimes Y$ by $h = (f \otimes g) \circ \Delta_Z$.
Note that $\Delta_Z$ is a coalgebra homomorphism since $Z \in \mathcal{C}_H$ is cocommutative.
It turns out that $h$ gives a morphism in $\mathcal{C}_H$ such that $\pi_X \circ h = f$ and $\pi_Y \circ h = g$.
Moreover, if $h^\prime : Z \to X \otimes Y$ is a morphism of $\mathcal{C}_H$ such that $\pi_X \circ h^\prime = f$ and $\pi_Y \circ h^\prime = g$, then $h^\prime = h$.
This follows from
\begin{align*}
h^\prime &= (\pi_X \otimes \pi_Y) \circ \Delta_{X\otimes Y} \circ h^\prime , \\
&= (\pi_X \otimes \pi_Y) \circ (h^\prime \otimes h^\prime) \circ \Delta_Z , \\
&= (f \otimes g) \circ \Delta_Z .
\end{align*}
\end{proof}

\begin{remark}
In fact, the same proof implies that the tensor product of (coaugmented) coalgebras is the product in the category of cocommutative (coaugmented) coalgebras.
\end{remark}

By Proposition \ref{202312072047}, the category $\mathcal{C}_{H}$ is a small category with finite products and a null object as explained in section \ref{202208181550}.
The forgetful functor from $ \mathcal{C}_{H}$ to $\mathsf{Mod}_{\mathds{k}}$ gives an exponential functor $U_{H} \in \mathcal{F}^{\mathsf{exp}} ( \mathcal{C}_H ; \mathds{k})$.


\begin{Corollary}
Let $H$ be a coaugmented cocommutative coalgebra.
The $\mathcal{C}_H$-module $U_H$ is analytic if and only if $H$ is conilpotent.
\end{Corollary}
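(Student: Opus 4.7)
The plan is to deduce this directly from Corollary \ref{202207211158}, which characterizes analyticity of an exponential functor $F$ on a category $\mathcal{C}$ (satisfying the hypotheses of Section \ref{202208181550}) in terms of objectwise conilpotency of $\widetilde{F}(X)$. Specializing to $F = U_H$ on $\mathcal{C} = \mathcal{C}_H$, the statement will reduce to: \emph{for every $n \geq 0$, the coaugmented coalgebra $H^{\otimes n}$ is conilpotent if and only if $H$ is}, which is then handled by Proposition \ref{202207281554}.

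First I would verify that, for every object $X = H^{\otimes n}$ of $\mathcal{C}_H$, the coaugmented coalgebra $\widetilde{U_H}(X)$ coincides with $X$ itself. The underlying module is obviously $X$. For the comultiplication we must identify the product $X \times X$ in $\mathcal{C}_H$, and the categorical diagonal $\Delta_X^{\mathcal{C}_H}: X \to X \times X$, with the data coming from the coalgebra structure of $X$. Proposition \ref{202312072047} (and its proof) shows that the product in $\mathcal{C}_H$ is the tensor product of coalgebras, with projections given by $\mathrm{id}\otimes\varepsilon$ and $\varepsilon\otimes\mathrm{id}$; chasing the universal property with $f=g=\mathrm{id}_X$ yields that $\Delta_X^{\mathcal{C}_H}$ is precisely the comultiplication $\Delta_X$ of $X$. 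Since $U_H$ is the forgetful functor, the exponential isomorphism $U_H(X\otimes X) \cong U_H(X)\otimes U_H(X)$ is the identity, so the coaugmented coalgebra $\widetilde{U_H}(X)$ obtained from the construction in Section \ref{202211251159} agrees with $X$ as a coaugmented coalgebra. The counit and coaugmentation are identified in the same way.

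Next, Corollary \ref{202207211158} applied to $F = U_H$ on $\mathcal{C}_H$ says that $U_H$ is analytic if and only if $H^{\otimes n}$ is conilpotent for every $n \in \mathds{N}$. One direction is immediate: if $U_H$ is analytic, take $n=1$ and conclude that $H$ itself is conilpotent. For the converse, assume $H$ is conilpotent. By Proposition \ref{202207281554}, the tensor product of conilpotent coaugmented coalgebras is again conilpotent, and an easy induction on $n$ gives that $H^{\otimes n}$ is conilpotent for every $n \geq 0$ (the cases $n=0$ and $n=1$ being trivial, the case $n=0$ corresponding to the null object $\mathds{k}$). Combining with the previous corollary finishes the proof.

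There is no genuine obstacle here: the only point that requires attention is the identification of $\widetilde{U_H}(X)$ with $X$, which is a bookkeeping check unraveling the definitions of the exponential structure on $U_H$ and of products in $\mathcal{C}_H$. All the real content has already been established in Theorem \ref{202207141636} and Propositions \ref{202207281554} and \ref{202312072047}.
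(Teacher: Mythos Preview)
Your proof is correct and follows essentially the same approach as the paper: apply Corollary \ref{202207211158} to $F=U_H$, identify $\widetilde{U_H}(H^{\otimes n})$ with $H^{\otimes n}$ as coaugmented coalgebras, and use Proposition \ref{202207281554} for the nontrivial implication. The paper is terser about the identification $\widetilde{U_H}(X)=X$, but your extra unpacking of the diagonal via Proposition \ref{202312072047} is a welcome clarification rather than a different argument.
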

\begin{proof}
We apply Theorem \ref{202207141636} to $\mathcal{C} = \mathcal{C}_H$ and $F = U_{H}$.
By Corollary \ref{202207211158}, it is clear that, if the $\mathcal{C}_{H}$-module $U_H$ is analytic, then $H$ is conilpotent since $\widetilde{U_H} ( H ) = H$.
Conversely, suppose that $H$ is conilpotent.
By Corollary \ref{202207211158} again, it suffices to prove that $\widetilde{U_H} (X)$ is analytic for any object $X \in \mathcal{C}_{H}$.
Note that any object $X$ is given by an $n$-fold tensor product $X = H^{\otimes n}$ for some $n$.
For such $X$, we have $\widetilde{U_H} (X) = H^{\otimes n}$ as coaugmented coalgebras by definitions.
By Proposition \ref{202207281554}, $H^{\otimes n}$ is conilpotent since so is $H$.
\end{proof}

\part{Application to $\mathsf{gr}^{\mathsf{op}}$}
\label{202312081754}

\section{The operad $\mathfrak{Ass}^u$ and its associated category}
\label{202212161644}

Every $\mathds{k}$-linear category $\mathcal{D}$ induces a bilinear functor $\mathcal{D}^{\mathsf{op}} \times \mathcal{D} \to \mathsf{Mod}_{\mathds{k}}$ given by the morphisms in $\mathcal{D}$.
This section concerns the case where $\mathcal{D}$ is the category $\mathsf{Cat}_{\mathfrak{Ass}^u}$ associated with the operad of unital associative algebras.
We recall the skeleton $\mathsf{gr}^{\mathsf{op}}$ of the opposite category of finitely generated free groups in section \ref{202208181550}.
The aim of this section is to recall Powell's assertion \cite{powell2021analytic} that $\mathsf{Cat}_{\mathfrak{Ass}^u}$ induces a bilinear functor ${}_\Delta \mathsf{Cat}_{\mathfrak{Ass}^u} : \mathsf{Cat}_{\mathfrak{Lie}}^{\mathsf{op}} \times \mathds{k} \mathsf{gr}^{\mathsf{op}} \to \mathsf{Mod}_{\mathds{k}}$ where $\mathsf{Cat}_{\mathfrak{Lie}}$ is the category associated with the Lie operad.
In \cite{powell2021analytic}, the ground ring is assumed to be a field of characteristic zero but the generalization to any commutative ring is straightforward as we sketch below.

\subsection{The operad of unital associative algebras} \label{202401101351}

Let $\mathds{k}$ be a commutative ring with unit.
For a small $\mathds{k}$-linear category $\mathcal{M}$, a {\it left (right, resp.) $\mathcal{M}$-module} is a $\mathds{k}$-linear functor from $ \mathcal{M}$ ($\mathcal{M}^{\mathsf{op}}$, resp.) to $\mathsf{Mod}_{\mathds{k}}$.
More generally, a {\it $(\mathcal{M} , \mathcal{N} )$-bimodule} is a $\mathds{k}$-bilinear functor $\mathcal{N}^\mathsf{op} \times \mathcal{M} \to \mathsf{Mod}_{\mathds{k}}$.

For a small category $\mathcal{D}$, let $\mathds{k} \mathcal{D}$ be the $\mathds{k}$-linear category obtained by the $\mathds{k}$-linearization of $\mathcal{D}$.
Then the category of $\mathds{k} \mathcal{D}$-modules and linear natural transformations is equivalent with $\mathcal{F} ( \mathcal{D} ; \mathds{k} )$ the category of $\mathcal{D}$-modules in the sense of section \ref{202208131840}.

For a $\mathds{k}$-linear operad $\mathfrak{O}$, denote by $\mathsf{Cat}_{\mathfrak{O}}$ the $\mathds{k}$-linear category associated with $\mathfrak{O}$: its objects consist of natural numbers and the morphism sets are given by
$$\mathsf{Cat}_{\mathfrak{O}} (n , m) = \bigoplus_{f: \underline{n} \to \underline{m}} \bigotimes_{i=1}^{m} \mathfrak{O} ( f^{-1} (i) ) $$
where $f : \underline{n} \to \underline{m}$ runs over all the maps.
Then the assignment $(n, m) \mapsto \mathsf{Cat}_{\mathfrak{O}} ( n, m )$ gives a $( \mathsf{Cat}_{\mathfrak{O}} , \mathsf{Cat}_{\mathfrak{O}})$-bimodule which we also denote by $\mathsf{Cat}_{\mathfrak{O}}$.

Let $\mathfrak{Ass}^u$ be the $\mathds{k}$-linear operad of unital associative algebras.
The module $\mathsf{Cat}_{\mathfrak{Ass}^u} ( n, m)$ is isomorphic to the free module generated by $m$-tuples $( f^{-1} (1) , o_1) \otimes ( f^{-1} (2) , o_2) \otimes \cdots ( f^{-1} (m) ,o_m)$ for maps $f : \underline{n} \to \underline{m}$ whose fibers $f^{-1} (i)$ are equipped with a total ordering $o_i$.
Fix a countable set of variables $\{ x_1, x_2, \cdots x_n , \cdots \}$.
For a finite set $S$ equipped with a total ordering $o$, we introduce the notation $x_{(S,o)} {:=} x_{a_1} x_{a_2} \cdots x_{a_k}$ (the {\it formal word}) if $S = \{ a_1, a_2, \cdots , a_k \}$ and the order $o$ is given by $a_i < a_{i+1}$.
If $S = \emptyset$, then $x_{(S,o)} {:=} 1$.
The replacement of $(f^{-1} (i) , o_i )$ by $x_{(f^{-1} (i) , o_i )}$ induces an isomorphism:
\begin{align}
\label{Korea202306142142}
\mathsf{Cat}_{\mathfrak{Ass}^u} ( n, m) \cong \bigoplus_{f: \underline{n} \to \underline{m}} \bigotimes_{i=1}^{m}  \mathds{k} x_{(f^{-1} (i) , o_i )} .
\end{align}

Let $A_n {:=} \mathds{k} \langle x_1 , \cdots , x_n \rangle$ be the free non-commutative polynomial algebra generated by $n$ variables $x_1 , \cdots , x_n$.
We define the $( \mathsf{Cat}_{\mathfrak{Ass}^u} , \mathsf{Cat}_{\mathfrak{Ass}^u})$-bimodule $\mathbb{A}$ by $\mathbb{A} ( n , m) {:=} A_n^{\otimes m}$.
To be precise, for each $n$, the left $\mathsf{Cat}_{\mathfrak{Ass}^u}$-action on $\mathbb{A}$ is induced by the algebra structure on $A_n$.
The right $\mathsf{Cat}_{\mathfrak{Ass}^u}$-action is induced by {\it substitution of variables by words}.
For example, $f = \mathbb{A} ( x_2 x_3 \otimes 1 \otimes x_4 x_1  , 1) : \mathbb{A} ( 3, 1) \to \mathbb{A} ( 4 , 1)$ is an algebra homomorphism characterized by $f(x_1) = x_2 x_3$, $f(x_2) = 1$ and $f(x_3) = x_4 x_1$; and $\mathbb{A} ( x_2 x_3 \otimes 1 \otimes x_4 x_1  , m) = f^{\otimes m}$ for arbitrary $m$.
The isomorphism in (\ref{Korea202306142142}) leads to a $( \mathsf{Cat}_{\mathfrak{Ass}^u} , \mathsf{Cat}_{\mathfrak{Ass}^u})$-bimodule embedding $\iota : \mathsf{Cat}_{\mathfrak{Ass}^u} \hookrightarrow \mathbb{A}$.

\subsection{The bimodule ${}_{\Delta} \mathsf{Cat}_{\mathfrak{Ass}^u}$}
\label{Korea202306141722}

In this section, we will see that using the embedding $\iota : \mathsf{Cat}_{\mathfrak{Ass}^u} \hookrightarrow \mathbb{A}$ we can define a $( \mathds{k} \mathsf{gr}^{\mathsf{op}} , \mathsf{Cat}_{\mathfrak{Lie}})$-bimodule structure on $\mathsf{Cat}_{\mathfrak{Ass}^u}$. 
For this, we start by constructing a $( \mathds{k} \mathsf{gr}^{\mathsf{op}} , \mathsf{Cat}_{\mathfrak{Lie}})$-bimodule $\mathbb{H}$ from the $( \mathsf{Cat}_{\mathfrak{Ass}^u} , \mathsf{Cat}_{\mathfrak{Ass}^u})$-bimodule $\mathbb{A}$.

Let $H_n$ be the Hopf algebra with $A_n$ the underlying algebra structure and the comultiplication given by the shuffle, i.e. $\Delta (x_i ) = x_i \otimes 1 + 1 \otimes x_i$.
In particular, $H_n$ is cocommutative.
We define the $( \mathds{k} \mathsf{gr}^{\mathsf{op}} , \mathsf{Cat}_{\mathfrak{Lie}})$-bimodule $\mathbb{H}$ by $\mathbb{H} ( n , m ) {:=} H_n^{\otimes m}$.
To be precise, for each $n$, the left $\mathds{k} \mathsf{gr}^{\mathsf{op}}$-action on $\mathbb{H}$ is induced by the Hopf algebra structure on $H_n$ (see Remark \ref{202207311722}).
The right $\mathsf{Cat}_{\mathfrak{Lie}}$-action is induced by {\it substitution of variables by Lie brackets}.
For example, $g = \mathbb{H} ( [ x_1 , x_2 ] \otimes x_3 \otimes \cdots \otimes x_n , 1 ) : H_{n-1} \to H_n$ is a Hopf homomorphism characterized by $g (x_1 ) = x_1 x_2 - x_2 x_1$ and $g(x_i) = x_{i+1}$ for $i \geq 2$.
The category $\mathsf{Cat}_{\mathfrak{Lie}}$ is generated by Lie brackets and permutations so that this example implies that $\mathbb{H}$ is well-defined.

The bimodule embedding $\iota : \mathsf{Cat}_{\mathfrak{Ass}^u} \hookrightarrow \mathbb{A}$ gives $\iota^\prime_{( n , m )} : {}_{\Delta} \mathsf{Cat}_{\mathfrak{Ass}^u} ( n , m ) \hookrightarrow \mathbb{H}( n , m )$ for $n, m \in \mathds{N}$ by definitions.
It turns out that the collection of $\iota^\prime_{( n , m )}$ is compatible with the $( \mathds{k} \mathsf{gr}^{\mathsf{op}} , \mathsf{Cat}_{\mathfrak{Lie}})$-bimodule structure on $\mathbb{H}$.
Indeed, the assignment $(n,m) \mapsto \iota^\prime_{n,m}$ preserves the left $\mathds{k} \mathsf{gr}^{\mathsf{op}}$-action (the right $\mathsf{Cat}_{\mathfrak{Lie}}$-action, resp.) since it is compatible with $\Delta, \nabla, \eta, \epsilon, s$ (the Lie brackets, resp.) and permutations (see section \ref{202208181550} for the notation).
We define the $( \mathds{k} \mathsf{gr}^{\mathsf{op}} , \mathsf{Cat}_{\mathfrak{Lie}})$-bimodule ${}_{\Delta} \mathsf{Cat}_{\mathfrak{Ass}^u}$ by the bimodule structure inherited from $\mathbb{H}$ via $\iota^\prime$.

\begin{prop}
\label{202312050902}
For $n \in \mathds{N}$, ${}_{\Delta}\mathsf{Cat}_{\mathfrak{Ass}^u} (n , - ) \in \mathcal{F} ( \mathsf{gr}^{\mathsf{op}} ; \mathds{k})$ has degree $n$.
\end{prop}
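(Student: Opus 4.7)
The plan is to realize ${}_{\Delta}\mathsf{Cat}_{\mathfrak{Ass}^u}(n, -)$ as a sub-$\mathds{k}\mathsf{gr}^{\mathsf{op}}$-module of the exponential $\mathsf{gr}^{\mathsf{op}}$-module $\mathbb{H}(n, -) = H_n^{\otimes(-)}$ via the embedding $\iota'$ of section \ref{Korea202306141722}, and then to invoke Theorem \ref{202207141636}. Under the equivalence of Theorem \ref{202211301648}, $\mathbb{H}(n, -)$ corresponds to the cocommutative Hopf algebra $H_n = \mathds{k}\langle x_1, \ldots, x_n \rangle$ equipped with the shuffle comultiplication, in which every generator $x_i$ is primitive.

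For the upper bound $\mathrm{deg}({}_{\Delta}\mathsf{Cat}_{\mathfrak{Ass}^u}(n, -)) \leq n$, I observe that a basis element $x_{(f^{-1}(1), o_1)} \otimes \cdots \otimes x_{(f^{-1}(m), o_m)}$ of ${}_{\Delta}\mathsf{Cat}_{\mathfrak{Ass}^u}(n, m)$ has its $i$-th tensor factor equal to a product of $|f^{-1}(i)|$ primitive elements of $H_n$, which by iteration of Proposition \ref{202401101549} lies in $P_{|f^{-1}(i)|}(H_n)$. Since $\sum_i |f^{-1}(i)| = n$, Proposition \ref{202207282312} places the whole tensor in $P_n(H_n^{\otimes m})$, so $\iota'$ factors through the coradical filtration $P_n(H_n^{\otimes(-)})$; by Theorem \ref{202207141636} this is precisely the polynomial approximation $P_n(\mathbb{H}(n, -))$ evaluated at $m$. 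Because formula (\ref{202211251145}) expresses $\mathrm{cr}^{n+1}$ as the image of commuting idempotents, the cross-effect functor is preserved by subfunctors, and hence any sub-$\mathsf{gr}^{\mathsf{op}}$-module of $P_n(\mathbb{H}(n, -))$ is itself of degree $\leq n$.

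For the lower bound $\mathrm{deg}({}_{\Delta}\mathsf{Cat}_{\mathfrak{Ass}^u}(n, -)) \not\leq n-1$ (with $n \geq 1$), I would evaluate at $m = n$ and test the witness $a := x_1 \otimes x_2 \otimes \cdots \otimes x_n$ corresponding to the identity map $\underline{n} \to \underline{n}$. By Proposition \ref{202208302218} it suffices to show $\delta^n_{H_n^{\otimes n}}(a) \neq 0$. Since each $x_i$ is primitive, iterating $\Delta$ on $a$ amounts to distributing each variable $x_i$ into one of the $n$ tensor copies of $H_n^{\otimes n}$, and the projector $e_{H_n^{\otimes n}}^{\otimes n}$ annihilates any term in which some copy is $1_{H_n^{\otimes n}}$. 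The only surviving distributions are the bijections $\sigma \in \Sigma_n$, whence
$$
\delta^n_{H_n^{\otimes n}}(a) \;=\; \sum_{\sigma \in \Sigma_n}\; v_{\sigma(1)} \otimes v_{\sigma(2)} \otimes \cdots \otimes v_{\sigma(n)},
$$
where $v_k := 1^{\otimes(k-1)} \otimes x_k \otimes 1^{\otimes(n-k)} \in H_n^{\otimes n}$. The $n!$ summands are pairwise distinct basis tensors of $(H_n^{\otimes n})^{\otimes n}$, so the sum is nonzero.

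The hard part is the lower bound: the upper bound is essentially a formal consequence of Theorem \ref{202207141636} together with the multiplicativity properties of the coradical filtration (Propositions \ref{202207282312} and \ref{202401101549}), whereas the lower bound forces an honest unwinding of the iterated comultiplication on a tensor power of Hopf algebras. Once the witness $x_1 \otimes \cdots \otimes x_n$ is chosen, however, the surjectivity constraint imposed by $e^{\otimes n}$ on the distribution map cleanly reduces the computation to a combinatorial bookkeeping over $\Sigma_n$. The case $n = 0$ is trivial since ${}_{\Delta}\mathsf{Cat}_{\mathfrak{Ass}^u}(0, m) \cong \mathds{k}$ is a nonzero constant functor, which has degree $0$.
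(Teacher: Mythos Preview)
Your argument is correct, but it takes a different route from the paper's, and your assessment of where the difficulty lies is inverted.

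The paper proves both bounds directly from the definition of cross-effects, never invoking Theorem~\ref{202207141636} or the coradical filtration. For the upper bound it uses a pigeonhole argument: given $[m_i]_{i=1}^{n+1}$ with each $m_i\geq 1$ and $N=\sum m_i$, any basis tensor of ${}_{\Delta}\mathsf{Cat}_{\mathfrak{Ass}^u}(n,N)$ comes from a map $f:\underline{n}\to\underline{N}$, and since $N\geq n+1$ the image of $f$ must miss one of the $n+1$ blocks entirely; that block then carries only $1$'s, so the idempotent $e_i$ annihilates the tensor. Your argument via the embedding into $P_n(H_n^{\otimes m})$ and Propositions~\ref{202207282312}, \ref{202401101549} is more structural and explains \emph{why} the bound is exactly the total monomial degree, at the cost of relying on more machinery.

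For the lower bound the paper's proof is a single observation: with $m_i=1$ for all $i$, the morphism $F(\tau_m)$ applies $\eta_{H_n}\varepsilon_{H_n}$ to the $m$-th tensor slot, and $\varepsilon_{H_n}(x_m)=0$, so $e_m(x_1\otimes\cdots\otimes x_n)=x_1\otimes\cdots\otimes x_n$ and hence $\chi_F([1]^n)$ fixes this element. Your computation of $\delta^n_{H_n^{\otimes n}}(a)$ reaches the same witness but through a much longer detour; the ``honest unwinding'' you describe is unnecessary once one works directly with $\chi_F$. Note also that the sentence ``By Proposition~\ref{202208302218} it suffices to show $\delta^n_{H_n^{\otimes n}}(a)\neq 0$'' leaves two steps implicit: one must pass from $a\notin P_{n-1}(H_n^{\otimes n})$ to $a\notin(P_{n-1}(G))(n)$ via $(P_{n-1}(G))(n)=G(n)\cap(P_{n-1}(\mathbb{H}(n,-)))(n)$ (immediate from Definition~\ref{202207202244} for a subfunctor $G\subset\mathbb{H}(n,-)$), and then conclude $\mathrm{deg}(G)\not\leq n-1$. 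Both are routine, but worth stating.
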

\begin{proof}
We first prove that $\mathrm{deg} ({}_{\Delta}\mathsf{Cat}_{\mathfrak{Ass}^u} (n , - )) \leq n$, i.e. $\mathrm{cr}^{n+1} ( {}_{\Delta}\mathsf{Cat}_{\mathfrak{Ass}^u} (n , - ) ) \cong 0$.
Recall the notation in section \ref{202208131840}.
It suffices to prove that $\chi_F ( [m_i]^{n+1}_{i=1} )= 0$ for an arbitrary sequence $[m_i]^{n+1}_{i=1}$ of objects in $\mathsf{gr}^{\mathsf{op}}$ and $F = {}_{\Delta}\mathsf{Cat}_{\mathfrak{Ass}^u} (n , - )$.
If $m_i = 0$ for some $i$, then it is clear that $\chi_F ( [m_i]^{n+1}_{i=1} )= 0$.
Suppose that $m_i >0$ for all $i$.
Let $N = \sum^{n+1}_{i=1} m_i $.
By definitions, the module ${}_{\Delta}\mathsf{Cat}_{\mathfrak{Ass}^u} (n , N )$ is generated by $\bigotimes^{N}_{j=1} w_j \in {}_{\Delta}\mathsf{Cat}_{\mathfrak{Ass}^u} (n , N )$ such that $w_{m_i+1} = w_{m_i+2} = \cdots = w_{m_{i+1}} = 1$ for some $i$ since $(n+1) \leq N$.
For such tensor products, we have $e_i \left( \bigotimes^{N}_{j=1} w_j \right) = 0$.
Hence, $\chi_F ( [m_i]^{n+1}_{i=1} )= 0$.

We now prove that $\mathrm{deg} ({}_{\Delta}\mathsf{Cat}_{\mathfrak{Ass}^u} (n , - )) \not\leq (n-1)$, equivalently, $\chi_F ( [ m_i ]^{n}_{i=1} ) \neq 0$ for some sequence $[ m_i ]^{n}_{i=1}$ of objects in $\mathsf{gr}^{\mathsf{op}}$.
Let $m_i = 1 \in \mathsf{gr}^{\mathsf{op}}$ for any $i$.
By definition, we have 
$$\left( \chi_F ( [ m_i ]^{n}_{i=1} ) \right) ( x_1 \otimes x_2 \otimes \cdots \otimes x_n ) = x_1 \otimes x_2 \otimes \cdots \otimes x_n . $$
\end{proof}

\subsection{Relation between $\mathsf{Cat}_{\mathfrak{Ass}^u}$ and ${}_{\Delta} \mathsf{Cat}_{\mathfrak{Ass}^u}$}
\label{Korea202306121601}

In this section, we study how the bimodules $\mathsf{Cat}_{\mathfrak{Ass}^u}$ and ${}_{\Delta} \mathsf{Cat}_{\mathfrak{Ass}^u}$ interact with each other.

We regard a formal word $w = x_{a_1} x_{a_2} \cdots x_{a_k}$ consisting of the variables $x_1, x_2, \cdots , x_n$ as an element $x_{a_1} x_{a_2} \cdots x_{a_k} \in \mathsf{F}_n$ when $k \geq 1$.
If the word $w$ is trivial, i.e. $w= 1$, then $\mathcal{E}$ assigns the unit $e \in \mathsf{F}_n$.
By extending this observation, we introduce the following functor:
\begin{Defn} \label{202401101407}
We define the linear functor 
$$\mathcal{E} : \mathsf{Cat}_{\mathfrak{Ass}^u} \hookrightarrow \mathds{k} \mathsf{gr}^{\mathsf{op}} .$$
It assigns $\mathcal{E} ( n ) {:=} n$ to an object $n \in \mathsf{Cat}_{\mathfrak{Ass}^u}$.
Let $w_1 \otimes w_2 \otimes \cdots \otimes w_m \in \mathsf{Cat}_{\mathfrak{Ass}^u} ( n, m)$ such that $w_1, w_2 , \cdots, w_m$ are formal words consisting of $x_1, x_2, \cdots x_n$ (see section \ref{202401101351}).
By using the notation of morphisms in $\mathsf{gr}^\mathsf{op}$ in section \ref{202208181550}, the morphism $\mathcal{E} ( w_1 \otimes w_2 \otimes \cdots \otimes w_m )$ from $n$ to $m$ in $\mathsf{gr}^{\mathsf{op}}$ is defined by
$$\mathcal{E} ( w_1 \otimes w_2 \otimes \cdots \otimes w_m ) {:=} [w_1| w_2| \cdots | w_m]_n$$
where we regard the formal words $w_1, w_2, \cdots, w_m$ as elements of $\mathsf{F}_n$ as above.
\end{Defn}

For example, we have $\mathcal{E} ( x_2 x_3 \otimes 1 \otimes x_4 x_1 ) = [x_2x_3 | e | x_4 x_1]_4$.

\begin{prop}
Definition \ref{202401101407} gives a well-defined faithful linear functor.
\end{prop}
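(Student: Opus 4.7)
The plan is to verify in order: linearity is immediate from the definition on a basis, then $\mathcal{E}$ respects identities and composition, and finally faithfulness reduces to a statement about positive words in a free group.

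First, recall from (\ref{Korea202306142142}) that a $\mathds{k}$-basis of $\mathsf{Cat}_{\mathfrak{Ass}^u}(n,m)$ is given by $m$-tuples $(w_1,\dots,w_m)$ of formal words in $x_1,\dots,x_n$ such that each variable $x_i$ appears exactly once across all the $w_j$'s (this encodes the pair $(f,o_\bullet)$ with $f:\underline{n}\to\underline{m}$). Since $\mathcal{E}$ is defined on these basis elements and extended $\mathds{k}$-linearly by definition, linearity on morphism spaces is automatic. The identity morphism on $n$ in $\mathsf{Cat}_{\mathfrak{Ass}^u}$ corresponds to the map $f=\mathrm{id}_{\underline{n}}$ with the trivial ordering on the singleton fibers, i.e.\ $x_1\otimes\cdots\otimes x_n$. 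Under $\mathcal{E}$ this becomes $[x_1|\cdots|x_n]_n$, which is exactly $\mathrm{id}_n$ in $\mathsf{gr}^{\mathsf{op}}$.

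The main verification is that $\mathcal{E}$ respects composition. Let $w=w_1\otimes\cdots\otimes w_m\in \mathsf{Cat}_{\mathfrak{Ass}^u}(n,m)$ and $v=v_1\otimes\cdots\otimes v_k\in \mathsf{Cat}_{\mathfrak{Ass}^u}(m,k)$. Composition in $\mathsf{Cat}_{\mathfrak{Ass}^u}$ (i.e.\ the right $\mathsf{Cat}_{\mathfrak{Ass}^u}$-action on $\mathbb{A}$ described in section \ref{202401101351}, restricted along $\iota:\mathsf{Cat}_{\mathfrak{Ass}^u}\hookrightarrow\mathbb{A}$) is given by substitution of variables by words: $v\circ w = v_1(w_1,\dots,w_m)\otimes\cdots\otimes v_k(w_1,\dots,w_m)$, where $v_j(w_1,\dots,w_m)$ denotes the word obtained from $v_j$ by replacing each occurrence of $x_i$ with $w_i$. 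On the other side, by the description of morphisms in $\mathsf{gr}^{\mathsf{op}}$ from section \ref{202208181550}, the morphism $[w_1|\cdots|w_m]_n:n\to m$ corresponds to the group homomorphism $\mathsf{F}_m\to \mathsf{F}_n,\ x_i\mapsto w_i$, and $[v_1|\cdots|v_k]_m:m\to k$ corresponds to $\mathsf{F}_k\to \mathsf{F}_m,\ x_j\mapsto v_j$. Composing these two group homomorphisms sends $x_j$ to $v_j$ with each $x_i$ replaced by $w_i$, i.e.\ to $v_j(w_1,\dots,w_m)$. Thus $\mathcal{E}(v)\circ\mathcal{E}(w)=[v_1(w_1,\dots,w_m)|\cdots|v_k(w_1,\dots,w_m)]_n=\mathcal{E}(v\circ w)$. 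The only subtlety here is that substitution at the level of formal words (no inverses) agrees with substitution at the level of group elements; this is immediate because positive words embed into the free group.

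For faithfulness it suffices, since the target is a free $\mathds{k}$-module on $\mathsf{gr}^{\mathsf{op}}(n,m)$ and $\mathcal{E}$ is defined on a basis, to show that distinct basis tuples $(w_1,\dots,w_m)$ and $(w_1',\dots,w_m')$ yield distinct morphisms $[w_1|\cdots|w_m]_n\neq[w_1'|\cdots|w_m']_n$ of $\mathsf{gr}^{\mathsf{op}}$, i.e.\ distinct tuples in $\mathsf{F}_n^m$. But each $w_i$ is a positive word (no inverses appear in $\mathsf{Cat}_{\mathfrak{Ass}^u}$), so it is already reduced as an element of $\mathsf{F}_n$; hence distinct formal words represent distinct free-group elements, and distinct tuples of formal words give distinct tuples in $\mathsf{F}_n^m$.

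The only step that requires genuine care is checking compositions; I expect this to be the main obstacle only insofar as one must be explicit about how the right action of $\mathsf{Cat}_{\mathfrak{Ass}^u}$ on $\mathbb{A}$ transports through the embedding $\iota$ and matches substitution of generators of $\mathsf{F}_m$ in $\mathsf{F}_n$. Everything else is bookkeeping.
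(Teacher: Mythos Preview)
Your proof is correct and follows essentially the same approach as the paper: verify identities, then compositions via substitution of words matching composition of the corresponding group homomorphisms, and finally faithfulness from the fact that distinct positive words give distinct elements of $\mathsf{F}_n$. Your treatment of faithfulness is slightly more explicit than the paper's (which simply observes that $\mathcal{E}$ sends the formal-word basis injectively into the basis $\mathsf{gr}^{\mathsf{op}}(n,m)$), but the argument is the same.
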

\begin{proof}
We prove that the assignment $\mathcal{E}$ preserves the identity.
For an object $n \in \mathsf{Cat}_{\mathfrak{Ass}^u}$, the identity of $n$ is $x_1 \otimes x_2 \otimes \cdots \otimes x_n$ by definition.
The corresponding morphism $\mathcal{E} ( x_1 \otimes x_2 \otimes \cdots \otimes x_n ) = [x_1 | x_2 | \cdots | x_n]_n$ is the identity of $n$ in $\mathds{k} \mathsf{gr}^{\mathsf{op}}$.

We now prove the assignment $\mathcal{E}$ preserves the composition.
Let $f = (w_1 \otimes w_2 \otimes \cdots \otimes w_m) : n \to m$ and $f^\prime = (w_1^\prime \otimes w_2^\prime \otimes \cdots \otimes w_l^\prime) : m \to l$ be morphisms in the category $\mathsf{Cat}_{\mathfrak{Ass}^u}$; and put $f^\prime \circ f = w_1^{\prime\prime} \otimes w_2^{\prime\prime} \otimes \cdots \otimes w_l^{\prime\prime}$.
By definition, if we have $w_i^\prime = x_{a_1} x_{a_2} \cdots x_{a_k}$ for some $1 \leq i \leq l$, then we have $w_{a_1} w_{a_2} \cdots w_{a_k}  = w_i^{\prime\prime}$.
To prove that $\mathcal{E} ( f^\prime \circ f) = \mathcal{E} (f^\prime ) \circ \mathcal{E} (f)$, we compute $( \rho \circ \rho^\prime ) (x_i)$ for $1 \leq i \leq l$ where we regard $\rho {:=} \mathcal{E} (f)$ and $\rho^\prime {:=} \mathcal{E} (f^\prime )$ as group homomorphisms: if we have $w_i^\prime = x_{a_1} x_{a_2} \cdots x_{a_k}$, then we obtain
$$
( \rho \circ \rho^\prime ) (x_i) = \rho ( w_i^\prime ) = \rho ( x_{a_1} x_{a_2} \cdots x_{a_k} ) = w_{a_1} w_{a_2} \cdots w_{a_k} = w_i^{\prime\prime} .
$$
Hence, this proves $\mathcal{E} (f^\prime ) \circ \mathcal{E} (f) = \mathcal{E} ( f^\prime \circ f)$.

The functor $\mathcal{E}$ is faithful since it assigns the {\it formal word} basis of $\mathsf{Cat}_{\mathfrak{Ass}^u} ( n, m)$ to the basis $\mathsf{gr}^{\mathsf{op}} (n,m)$ of $\mathds{k}\mathsf{gr}^{\mathsf{op}} (n,m)$.
\end{proof}

On the one hand, the operad homomorphism $\mathfrak{Lie} \to \mathfrak{Ass}^u$ induces the functor:
$$\mathcal{B} : \mathsf{Cat}_{\mathfrak{Lie}} \to \mathsf{Cat}_{\mathfrak{Ass}^u}$$
By definitions, the diagram below commutes up to a natural isomorphism.
$$
\begin{tikzcd}
\mathsf{Cat}_{\mathfrak{Ass}^u}^{\mathsf{op}} \times \mathsf{Cat}_{\mathfrak{Ass}^u} \ar[r, "\mathsf{Cat}_{\mathfrak{Ass}^u}"] & \mathsf{Mod}_{\mathds{k}} \\
\mathsf{Cat}_{\mathfrak{Lie}}^{\mathsf{op}} \times \mathsf{Cat}_{\mathfrak{Ass}^u} \ar[u, "\mathcal{B} \times \mathrm{id}"] \ar[r, hookrightarrow, "\mathrm{id} \times \mathcal{E}"'] & \mathsf{Cat}_{\mathfrak{Lie}}^{\mathsf{op}} \times \mathds{k} \mathsf{gr}^{\mathsf{op}} \ar[u, "{}_{\Delta} \mathsf{Cat}_{\mathfrak{Ass}^u}"']
\end{tikzcd}
$$

\section{On a left ideal of $\mathds{kr}\mathsf{gr}^{\mathsf{op}}$}
\label{202312081800}

The functor $\mathcal{E} : \mathsf{Cat}_{\mathfrak{Ass}^u} \hookrightarrow \mathds{k} \mathsf{gr}^{\mathsf{op}}$ introduced in Definition \ref{202401101407} is not full.
For example, the morphism $\Delta : 1 \to 2$ is not in the image of $\mathcal{E}$.
The aim of this section is to define a certain left ideal $\mathcal{I}$ of $\mathds{kr}\mathsf{gr}^{\mathsf{op}}$ for which the composition $\mathsf{Cat}_{\mathfrak{Ass}^u} (n,m) \stackrel{\mathcal{E}}{\hookrightarrow} \mathds{k} \mathsf{gr}^{\mathsf{op}} (n,m)  \stackrel{\pi}{\to} \mathds{k} \mathsf{gr}^{\mathsf{op}} (n,m)   / \mathcal{I} (n,m) $ is surjective (see Theorem \ref{202312061920}).
In other words, all the morphisms in $\mathds{k}\mathsf{gr}^{\mathsf{op}}$ arises from $\mathcal{E}$ modulo $\mathcal{I}$.
We will use this to define the $\mathsf{Cat}_{\mathfrak{Lie}}$-module $\mathcal{P} (F)$ later (see Definition \ref{202312081139}).

In this section, we freely use the notations of morphisms in $\mathsf{gr}^{\mathsf{op}}$ following section \ref{202208181550}.
Note that we use the notation $[ \rho (x_1) | \cdots | \rho (x_m) ]_n$ to denote morphisms in the opposite category $\mathsf{gr}^{\mathsf{op}}$, not the category $\mathsf{gr}$.

In the following, we recall the notion of {\it semicategory}.
It is similar to that of category, so it has objects and morphisms with composition, but we do not require the identity morphisms.
\begin{Defn}
\label{202312062128}
Let $\theta: 1 \to 2$ be the morphism in $\mathds{k} \mathsf{gr}^{\mathsf{op}}$ defined by 
$$\theta {:=} [ x_1 | x_1 ]_1  - [ e | x_1 ]_1 - [ x_1 | e ]_1 .$$
We define $\mathcal{I} ( n , m)$ to be the submodule of $\mathds{k} \mathsf{gr}^{\mathsf{op}} ( n, m)$ generated by $f \circ (\mathrm{id}_{i-1} \ast \theta \ast \mathrm{id}_{n-i})$ for all $1 \leq i \leq n$ and $f \in \mathds{k} \mathsf{gr}^{\mathsf{op}} (n+1 , m)$.
We denote by $\mathcal{I}$ the subsemicategory of $\mathds{k} \mathsf{gr}^{\mathsf{op}}$ which consists of objects $\mathds{N}$ and morphisms $\mathcal{I} (n,m)$ for $n,m \in \mathds{N}$.
\end{Defn}

\begin{prop} \label{202312091726}
The subsemicategory $\mathcal{I}$ is a left ideal of $\mathds{k} \mathsf{gr}^{\mathsf{op}}$.
In other words, if $h \in \mathds{k} \mathsf{gr}^{\mathsf{op}} (m, l)$ and $g \in \mathcal{I} ( n, m)$, then $h \circ g \in \mathcal{I} ( n, l)$.
\end{prop}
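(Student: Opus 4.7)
The proof is essentially immediate from the definition of $\mathcal{I}$, so the plan is just to spell this out carefully.

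The strategy is to check the left ideal property on generators and then extend by linearity. By Definition \ref{202312062128}, every element of $\mathcal{I}(n,m)$ is a finite $\mathds{k}$-linear combination of morphisms of the form $f \circ (\mathrm{id}_{i-1} \ast \theta \ast \mathrm{id}_{n-i})$ with $f \in \mathds{k}\mathsf{gr}^{\mathsf{op}}(n+1,m)$ and $1 \leq i \leq n$. Because composition in $\mathds{k}\mathsf{gr}^{\mathsf{op}}$ is $\mathds{k}$-bilinear, it suffices to treat the case of a single such generator.

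So the plan is: take $g = f \circ (\mathrm{id}_{i-1} \ast \theta \ast \mathrm{id}_{n-i}) \in \mathcal{I}(n,m)$ with $f \in \mathds{k}\mathsf{gr}^{\mathsf{op}}(n+1,m)$, and compute
\[
h \circ g = h \circ f \circ (\mathrm{id}_{i-1} \ast \theta \ast \mathrm{id}_{n-i})
\]
using associativity of composition in $\mathds{k}\mathsf{gr}^{\mathsf{op}}$. Setting $f' := h \circ f \in \mathds{k}\mathsf{gr}^{\mathsf{op}}(n+1,l)$, we see $h \circ g = f' \circ (\mathrm{id}_{i-1} \ast \theta \ast \mathrm{id}_{n-i})$, which is exactly one of the generators of $\mathcal{I}(n,l)$.

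For a general $g = \sum_j c_j \, f_j \circ (\mathrm{id}_{i_j-1} \ast \theta \ast \mathrm{id}_{n-i_j}) \in \mathcal{I}(n,m)$, $\mathds{k}$-bilinearity of composition gives
\[
h \circ g = \sum_j c_j \, (h \circ f_j) \circ (\mathrm{id}_{i_j-1} \ast \theta \ast \mathrm{id}_{n-i_j}) \in \mathcal{I}(n,l),
\]
as required. There is no real obstacle here; the only thing worth emphasising is that $\mathcal{I}$ is defined precisely as a left ideal (the generators are written with the $\theta$-factor on the right), so the proof reduces to bookkeeping. A dual statement for a right ideal would genuinely require work, since one would then need to understand how $\theta$ interacts under right composition with arbitrary morphisms of $\mathds{k}\mathsf{gr}^{\mathsf{op}}$, but that is not asked here.
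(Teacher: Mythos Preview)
Your proof is correct and follows exactly the same approach as the paper's: reduce to generators by linearity, then use associativity to absorb $h$ into $f$, obtaining $(h \circ f) \circ (\mathrm{id}_{i-1} \ast \theta \ast \mathrm{id}_{n-i}) \in \mathcal{I}(n,l)$.
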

\begin{proof}
The morphism $g$ is a linear combination of $f \circ (\mathrm{id}_{i-1} \ast \theta \ast \mathrm{id}_{n-i})$ for some $f$ and $i$.
The result follows from $(h \circ f) \circ (\mathrm{id}_{i-1} \ast \theta \ast \mathrm{id}_{n-i}) \in \mathcal{I} ( n, l)$.
\end{proof}

We investigate the quotient $\mathds{k} \mathsf{gr}^{\mathsf{op}} ( n, m) /\mathcal{I} (n,m)$ so as to define the primitive filtration in section \ref{202312091723}.
For $\rho,\rho^\prime \in \mathds{k} \mathsf{gr}^{\mathsf{op}} ( n,m)$, we write $\rho \equiv \rho^\prime$ if $\rho - \rho^\prime \in \mathcal{I} ( n,m)$.
In particular, for $\rho \in \mathds{k} \mathsf{gr}^{\mathsf{op}}(n+1, m)$, we have
\begin{align*}
    \rho \circ [x_1| \cdots |x_{i-1}|x_i |x_i | x_{i+1} | \cdots |x_n]_n &\equiv \\
    \rho \circ [x_1| \cdots |x_{i-1}|x_i |e | x_{i+1} | \cdots &|x_n]_n + \rho \circ [x_1| \cdots |x_{i-1}|e |x_i | x_{i+1} | \cdots |x_n]_n .
\end{align*}

\begin{Example}
For $[ x_1^2 ]_1 \in \mathds{k} \mathsf{gr}^{\mathsf{op}} ( 1, 1)$, we have $[x_1^2]_1 = [x_1 x_2]_2 \circ [x_1 | x_1]_1 \equiv [x_1 x_2]_2 \circ [e | x_1]_1 + [x_1 x_2]_2 \circ [ x_1| e]_1 = 2 [x_1]_1$.
Similarly, one may obtain $[x_1^k]_1 \equiv k [x_1]_1$.
\end{Example}

For $1 \leq i \leq n$, let $\mathsf{F}_n^{(i)}$ be the subgroup of $\mathsf{F}_n$ generated by $x_1, \cdots , x_{i-1} , x_{i+1} , \cdots , x_n$.

\begin{Lemma} \label{202307142220}
Let $n , m \in \mathds{N}$.
    \begin{enumerate}
        \item For $1 \leq i \leq n$, we have $[x_1|x_2| \cdots |x_{i-1} | x_{i+1} | \cdots | x_n]_n \equiv 0$.
        \item Let $w_1, w_2 , \cdots, w_m \in \mathsf{F}_n$.
        If there exists $1 \leq i \leq n$ such that $w_1, w_2, \cdots, w_m \in \mathsf{F}_n^{(i)}$, then $[w_1 | \cdots | w_m]_n \equiv 0$.
    \end{enumerate}
\end{Lemma}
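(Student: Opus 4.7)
The plan is to prove (1) by applying the defining relation of $\mathcal{I}$ to a carefully chosen $\rho \in \mathds{k}\mathsf{gr}^\mathsf{op}(n+1,n-1)$, and then deduce (2) from (1) using the left-ideal property of $\mathcal{I}$ (Proposition \ref{202312091726}). The key (and only delicate) point is the choice of $\rho$: it should bypass precisely those coordinates of $\mathsf{F}_{n+1}$ in which the three morphisms occurring in $\mathrm{id}_{i-1} \ast \theta \ast \mathrm{id}_{n-i}$ differ, so that all three compositions with $\rho$ collapse to the same morphism $\pi$, producing a ``$\pi \equiv 2\pi$'' identity which forces $\pi$ into $\mathcal{I}$.

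For (1), set
$$\pi {:=} [x_1|\cdots|x_{i-1}|x_{i+1}|\cdots|x_n]_n : n \to n-1$$
and take
$$\rho {:=} [x_1|\cdots|x_{i-1}|x_{i+2}|x_{i+3}|\cdots|x_{n+1}]_{n+1} \in \mathds{k}\mathsf{gr}^\mathsf{op}(n+1,n-1),$$
which corresponds to the inclusion $\mathsf{F}_{n-1} \hookrightarrow \mathsf{F}_{n+1}$ omitting both $x_i$ and $x_{i+1}$. Denote the three summands of $\mathrm{id}_{i-1} \ast \theta \ast \mathrm{id}_{n-i}$ by
\begin{align*}
\sigma   &{:=} [x_1|\cdots|x_{i-1}|x_i|x_i|x_{i+1}|\cdots|x_n]_n, \\
\sigma_L &{:=} [x_1|\cdots|x_{i-1}|e|x_i|x_{i+1}|\cdots|x_n]_n, \\
\sigma_R &{:=} [x_1|\cdots|x_{i-1}|x_i|e|x_{i+1}|\cdots|x_n]_n.
\end{align*}
A direct substitution shows that $\rho \circ \sigma = \rho \circ \sigma_L = \rho \circ \sigma_R = \pi$: since $\rho$ avoids the $i$-th and $(i+1)$-th slots of $\mathsf{F}_{n+1}$---the only slots where $\sigma$, $\sigma_L$, $\sigma_R$ disagree---all three compositions collapse to $\pi$. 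The inclusion $\rho \circ (\mathrm{id}_{i-1} \ast \theta \ast \mathrm{id}_{n-i}) \in \mathcal{I}(n,n-1)$ then reads $\pi - 2\pi \in \mathcal{I}(n,n-1)$, so $\pi \equiv 0$.

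For (2), given $w_1,\ldots,w_m \in \mathsf{F}_n^{(i)}$, the canonical isomorphism $\mathsf{F}_{n-1} \xrightarrow{\sim} \mathsf{F}_n^{(i)}$ (sending $x_j \mapsto x_j$ for $j<i$ and $x_j \mapsto x_{j+1}$ for $j \geq i$) identifies each $w_j$ with a unique $v_j \in \mathsf{F}_{n-1}$. Factoring the group homomorphism $\mathsf{F}_m \to \mathsf{F}_n$ through $\mathsf{F}_{n-1}$ translates in $\mathsf{gr}^\mathsf{op}$ to
$$[w_1|\cdots|w_m]_n = [v_1|\cdots|v_m]_{n-1} \circ \pi.$$
Since $\pi \in \mathcal{I}(n,n-1)$ by (1) and $\mathcal{I}$ is a left ideal by Proposition \ref{202312091726}, the composite lies in $\mathcal{I}(n,m)$.
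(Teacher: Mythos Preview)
Your proof is correct and follows essentially the same approach as the paper's own proof. The paper reduces to the case $i=n$ and uses $\rho = [x_1|\cdots|x_{n-1}]_{n+1}$, which is precisely your choice of $\rho$ specialized to $i=n$; the computation $\rho \circ \sigma = \rho \circ \sigma_L = \rho \circ \sigma_R = \pi$ and the deduction of (2) from (1) via the left-ideal property and the factorization through $\mathsf{F}_{n-1}$ match the paper's argument verbatim.
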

\begin{proof}
Without loss of generality, we prove the first claim for $i=n$, i.e. $[x_1|x_2| \cdots | x_{n-1}]_n \equiv 0$.
This follows from 
\begin{align*}
& [x_1|x_2| \cdots | x_{n-1}]_n ,  \\
=& [x_1|x_2| \cdots |x_{n-1}]_{n+1} \circ [x_1 | x_2 | \cdots | x_{n-1} | x_n | x_n]_n , \\
\equiv& [x_1|x_2| \cdots |x_{n-1}]_{n+1} \circ [x_1 | x_2 | \cdots | x_{n-1} | x_n | e]_n + [x_1|x_2| \cdots |x_{n-1}]_{n+1} \circ [x_1 | x_2 | \cdots | x_{n-1} | e | x_n]_n , \\
=& [x_1|x_2| \cdots | x_{n-1}]_n + [x_1|x_2| \cdots | x_{n-1}]_n.
\end{align*}
Hence, we have $[x_1|x_2| \cdots | x_{n-1}]_n \equiv 0$.

We now prove the second claim.
Let $\rho : \mathsf{F}_m \to \mathsf{F}_n$ be the homomorphism such that $\rho (x_k) = w_k$ for $1 \leq k \leq m$; and $e_i : \mathsf{F}_{n-1} \to \mathsf{F}_n$ be the homomorphism such that
\begin{align*}
e_i (x_k) = \begin{cases}
		x_k & (k < i) , \\
		x_{k+1} & (k \geq i) . 
		\end{cases}
\end{align*}
By the assumption $w_1, w_2, \cdots, w_m \in \mathsf{F}_n^{(i)}$, the homomorphism $\rho$ factors as follows.
$$
\begin{tikzcd}
\mathsf{F}_m \ar[r, "\rho"] \ar[dr, "\exists \rho^\prime"'] & \mathsf{F}_n \\
& \mathsf{F}_{n-1} \ar[u, hookrightarrow, "e_i"']
\end{tikzcd}
$$
By definitions, this diagram is rephrased as 
$$[w_1 | \cdots | w_m ]_n = [\rho^\prime (x_1) | \cdots | \rho^\prime (x_m) ]_{n-1}  \circ [x_1|x_2| \cdots |x_{i-1} | x_{i+1} | \cdots | x_n]_n$$ 
in the category $\mathsf{gr}^{\mathsf{op}}$.
Hence, the second claim is immediate from the first claim and Proposition \ref{202312091726}.
\end{proof}

\begin{Lemma} \label{202307142238}
   Let $1 \leq i \leq n$.
   For $w_1, \cdots , w_k \in \mathsf{F}_n^{(i)}$ and $a_1, \cdots , a_k \in \mathds{Z}$, we have 
   $$[ w_1 x_i^{a_1} w_2 x_i^{a_2} \cdots w_k x_i^{a_k} ]_n \equiv \sum^{k}_{j=1} [ w_1 w_2 \cdots w_{j-1} ( w_j x_i^{a_j}) w_{j+1} w_{j+2} \cdots w_k ]_n .$$
\end{Lemma}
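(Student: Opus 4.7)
The plan is to lift $[W]_n$ (where $W = w_1 x_i^{a_1} \cdots w_k x_i^{a_k}$) to a morphism involving a larger free group in which the $k$ occurrences of $x_i$ are replaced by $k$ distinct generators, and then iteratively apply the primitive relation defining $\mathcal{I}$ to separate them.

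Concretely, label the generators of $\mathsf{F}_{n+k-1}$ as $x_1, \ldots, x_{i-1}, y_1, y_2, \ldots, y_k, x_{i+1}, \ldots, x_n$, and set $W' {:=} w_1 y_1^{a_1} w_2 y_2^{a_2} \cdots w_k y_k^{a_k} \in \mathsf{F}_{n+k-1}$, which is well-defined because each $w_j \in \mathsf{F}_n^{(i)}$. Introduce
\begin{equation*}
    \sigma_k {:=} [ x_1 | \cdots | x_{i-1} | \underbrace{x_i | \cdots | x_i}_{k} | x_{i+1} | \cdots | x_n ]_n : n \to n+k-1,
\end{equation*}
the ``$k$-fold fold at $i$'', which as a group homomorphism $\mathsf{F}_{n+k-1} \to \mathsf{F}_n$ sends every $y_j$ to $x_i$ and every $x_l$ to $x_l$. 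Then by construction $[W]_n = [W']_{n+k-1} \circ \sigma_k$.

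The core of the argument is the auxiliary claim
\begin{equation*}
    \sigma_k \equiv \sum_{j=1}^{k} \tau_j \pmod{\mathcal{I}(n, n+k-1)}, \qquad \tau_j {:=} [ x_1 | \cdots | x_{i-1} | \underbrace{e | \cdots | e}_{j-1} | x_i | \underbrace{e | \cdots | e}_{k-j} | x_{i+1} | \cdots | x_n ]_n,
\end{equation*}
proved by induction on $k$. The case $k = 2$ is precisely the defining relation of $\mathcal{I}$ at position $i$ (Definition \ref{202312062128}). For the inductive step, write $\sigma_k = f \circ \xi$, where $\xi {:=} [x_1 | \cdots | x_{i-1} | x_i | x_i | x_{i+1} | \cdots | x_n]_n : n \to n+1$ is the primitive-relation pair at $i$ and $f : n+1 \to n+k-1$ is the morphism
\begin{equation*}
f {:=} [x_1 | \cdots | x_{i-1} | x_i | \underbrace{x_{i+1} | \cdots | x_{i+1}}_{k-1} | x_{i+2} | \cdots | x_{n+1}]_{n+1}.
\end{equation*}
The primitive relation gives $\xi \equiv [\ldots|e|x_i|\ldots]_n + [\ldots|x_i|e|\ldots]_n$, and the left-ideal property (Proposition \ref{202312091726}) lifts this to $\sigma_k \equiv f \circ [\ldots|e|x_i|\ldots]_n + f \circ [\ldots|x_i|e|\ldots]_n$ modulo $\mathcal{I}(n, n+k-1)$. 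A direct computation of the two compositions of group homomorphisms shows that $f \circ [\ldots|x_i|e|\ldots]_n = \tau_1$, while $f \circ [\ldots|e|x_i|\ldots]_n = \iota \circ \sigma_{k-1}$, where $\iota : n+k-2 \to n+k-1$ is the insertion $[x_1|\cdots|x_{i-1}|e|x_i|\cdots|x_{n+k-2}]_{n+k-2}$. Invoking the induction hypothesis on $\sigma_{k-1}$ and verifying $\iota \circ \tau_j^{(k-1)} = \tau_{j+1}$ (yet another direct computation, using that $\iota$ shifts the relevant positions by one) produces the remaining summands $\tau_2 + \cdots + \tau_k$.

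Granting the auxiliary claim, composing on the left by $[W']_{n+k-1}$ and invoking the left-ideal property once more yields $[W]_n \equiv \sum_{j=1}^{k} [W']_{n+k-1} \circ \tau_j \pmod{\mathcal{I}(n,1)}$. Now $\tau_j$ corresponds to the homomorphism sending $y_j \mapsto x_i$, $y_l \mapsto e$ for $l \neq j$, and each remaining generator to the corresponding $x_l$; applied to $W'$, it produces exactly $w_1 w_2 \cdots w_{j-1}(w_j x_i^{a_j}) w_{j+1} \cdots w_k$, which is the $j$-th summand on the right-hand side of the lemma. The main obstacle is the inductive step of the auxiliary claim, and in particular the identification $f \circ [\ldots|e|x_i|\ldots]_n = \iota \circ \sigma_{k-1}$ that allows the induction hypothesis to be invoked; everything else reduces to routine composition of group homomorphisms.
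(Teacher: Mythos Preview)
Your proof is correct. Both you and the paper argue by induction on $k$ using the primitive relation together with the left-ideal property of $\mathcal{I}$, but the organization differs. The paper lifts only to $\mathsf{F}_{n+1}$ at each step: writing $\rho = [w_1 x_i^{a_1} w_2 x_{n+1}^{a_2} \cdots w_k x_{n+1}^{a_k}]_{n+1}$ (for $i=n$) and applying the relation once yields the $j=1$ summand plus the smaller instance $[w_1 w_2 x_i^{a_2} \cdots w_k x_i^{a_k}]_n$, to which the induction hypothesis applies directly after regrouping $w_1' = w_1 w_2$. You instead lift all the way to $\mathsf{F}_{n+k-1}$ and first isolate a clean identity $\sigma_k \equiv \sum_{j=1}^{k} \tau_j$ for the iterated fold map, then specialize by composing with $[W']_{n+k-1}$. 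The paper's route is shorter and avoids the bookkeeping with the shift $\iota$; your route extracts a reusable statement about the iterated diagonal modulo $\mathcal{I}$ that is slightly more conceptual and would, for instance, handle all the ``barred'' variants mentioned in the proof of Theorem~\ref{202312061920} uniformly.
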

\begin{proof}
We prove the claim for $i=n$ by induction with respect to $k$.
Let $\rho = [ w_1 x_n^{a_1} w_2 x_{n+1}^{a_2} \cdots w_k x_{n+1}^{a_k} ]_{n+1}$.
\begin{align*}
[ w_1 x_n^{a_1} w_2 x_n^{a_2} \cdots w_k x_n^{a_k} ]_n &= \rho \circ [x_1|x_2| \cdots | x_{n-1} | x_n | x_n]_n , \\
&\equiv \rho \circ [x_1|x_2| \cdots | x_{n-1} | x_n | e]_n + \rho \circ [x_1|x_2| \cdots | x_{n-1} | e | x_n]_n , \\
&= [ w_1 x_n^{a_1} w_2 w_3 \cdots w_k ]_n + [ w_1 w_2 x_n^{a_2} w_3 x_n^{a_3} \cdots w_k x_n^{a_k} ]_n
\end{align*}
We now focus on the right summand $[ w_1 w_2 x_n^{a_2} w_3 x_n^{a_3} \cdots w_k x_n^{a_k} ]_n$.
We put $w^\prime_1 = w_1w_2$ and $w^\prime_{s} = w_{s+1}$ for $2 \leq s \leq (k-1)$; and apply the assumption of induction so as to complete the proof.
The proof for $i$ other than $n$ is straightforward by applying the permutation of variables.
\end{proof}

\begin{Lemma} \label{202307142237}
For  $\sigma \in \Sigma_n$, we have $[x_{\sigma(1)}^{a_1} x_{\sigma(2)}^{a_2} \cdots x_{\sigma(n)}^{a_n}]_n \equiv (a_1 a_2 \cdots a_n ) [x_{\sigma(1)} x_{\sigma(2)} \cdots x_{\sigma(n)}]_n$. 
\end{Lemma}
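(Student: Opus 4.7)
My plan is to reduce the statement to the case $\sigma = \mathrm{id}$ by exploiting stability of $\mathcal{I}$ under right composition with permutations, then factor $[x_1^{a_1} x_2^{a_2} \cdots x_n^{a_n}]_n$ in $\mathsf{gr}^{\mathsf{op}}$ as the composite $[x_1 x_2 \cdots x_n]_n \circ \bigl( [x_1^{a_1}]_1 \ast [x_1^{a_2}]_1 \ast \cdots \ast [x_1^{a_n}]_1 \bigr)$ in order to reduce to the one-variable case $n = 1$.

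The key preliminary observation is that $\mathcal{I}$ is closed under the monoidal product $\ast$ with arbitrary morphisms on either side: for a generator $f \circ (\mathrm{id}_{i-1} \ast \theta \ast \mathrm{id}_{n-i})$ of $\mathcal{I}(n,m)$ and any $\gamma \in \mathds{k}\mathsf{gr}^{\mathsf{op}}(p,q)$, the interchange law gives
\[
\gamma \ast \bigl( f \circ (\mathrm{id}_{i-1} \ast \theta \ast \mathrm{id}_{n-i}) \bigr) \;=\; (\gamma \ast f) \circ (\mathrm{id}_{p+i-1} \ast \theta \ast \mathrm{id}_{n-i}),
\]
which is again a generator of $\mathcal{I}(p+n, q+m)$, and analogously on the other side. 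Combined with Proposition \ref{202312091726}, a standard telescoping argument then yields that if $\alpha_k \equiv \beta_k$ in $\mathds{k}\mathsf{gr}^{\mathsf{op}}(1,1)$ for each $k$, then $\alpha_1 \ast \cdots \ast \alpha_n \equiv \beta_1 \ast \cdots \ast \beta_n$. A similar manipulation, based on the naturality identity $(\mathrm{id}_{i-1} \ast \theta \ast \mathrm{id}_{n-i}) \circ \tau_\sigma = \tau_{\sigma'} \circ (\mathrm{id}_{j-1} \ast \theta \ast \mathrm{id}_{n-j})$ (for suitable $j$ and $\sigma' \in \Sigma_{n+1}$) coming from the symmetric structure on $\mathsf{gr}^{\mathsf{op}}$, shows that $\mathcal{I}$ is stable under right composition with the permutation morphism $\tau_\sigma = [x_{\sigma(1)}|\cdots|x_{\sigma(n)}]_n$. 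Thus right-composing the $\sigma = \mathrm{id}$ version with $\tau_\sigma$ produces the general one, since $[x_{\sigma(1)}^{a_1} \cdots x_{\sigma(n)}^{a_n}]_n = [x_1^{a_1} \cdots x_n^{a_n}]_n \circ \tau_\sigma$.

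For the base case $n = 1$, I would check $[x_1^a]_1 \equiv a\, [x_1]_1$ for every $a \in \mathds{Z}$: for $a > 0$ this is the example following Definition \ref{202312062128}; for $a = 0$, Lemma \ref{202307142220}(2) applied with $w_1 = e \in \mathsf{F}_1^{(1)}$ yields $[e]_1 \equiv 0$; for $a < 0$, Lemma \ref{202307142238} applied to $[x_1^{-a} x_1^a]_1 = [e]_1$ (with $n = i = 1$, $w_1 = w_2 = e$, and exponents $-a, a$) gives $[x_1^{-a}]_1 + [x_1^a]_1 \equiv 0$, so that $[x_1^a]_1 \equiv -[x_1^{-a}]_1 \equiv a\, [x_1]_1$. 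With this and the monoidal closure above, one obtains $[x_1^{a_1}|x_2^{a_2}|\cdots|x_n^{a_n}]_n \equiv (a_1 a_2 \cdots a_n)\, \mathrm{id}_n$; postcomposing with $[x_1 x_2 \cdots x_n]_n$, which preserves $\mathcal{I}$ by the left ideal property, then gives the desired congruence. The principal technical point is the verification of the two closure properties of $\mathcal{I}$ above; everything else reduces to direct composition calculations using the preceding lemmas.
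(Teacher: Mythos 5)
Your proof is correct, but it takes a genuinely different route from the paper's. The paper works in place on the multivariable word: setting $w = x_1^{a_1}\cdots x_{n-1}^{a_{n-1}}$, it peels one power of $x_n$ at a time via the $\theta$-relation, giving $[wx_n^{a_n}]_n \equiv [wx_n^{a_n-1}]_n + [wx_n]_n$ and hence $[wx_n^{a_n}]_n \equiv a_n[wx_n]_n$ by induction on the exponent; the case $a_n<0$ is handled by the same cancellation against $[w]_n \equiv 0$ that you use at $n=1$, and the general $\sigma$ is dismissed with ``the proof for the general case is similar.'' You instead reduce structurally: the factorization $[x_1^{a_1}\cdots x_n^{a_n}]_n = [x_1 x_2 \cdots x_n]_n \circ \bigl([x_1^{a_1}]_1 \ast \cdots \ast [x_1^{a_n}]_1\bigr)$ (which is correct under the paper's conventions) pushes everything into the one-variable case, at the price of proving two closure properties of $\mathcal{I}$ that the paper never states: closure under $\ast$ with arbitrary morphisms on either side (your interchange computation is right, and bilinearity of $\ast$ extends it from generators to all of $\mathcal{I}$) and stability under right composition with symmetries (also right, with $j = \sigma^{-1}(i)$ and $\sigma'$ the permutation of $n+1$ strands obtained from $\sigma$ by doubling strand $j$). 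Both arguments share the same computational engine: the $\theta$-relation for positive exponents, and Lemma \ref{202307142238} together with $[e]_1 \equiv 0$ (second part of Lemma \ref{202307142220}) for negative ones, so the base case of your reduction is exactly the paper's mechanism in miniature. What your route buys: the permutation reduction is made rigorous rather than asserted, and your two closure lemmas are precisely the tools needed to justify the ``extended formulae'' with bars that the paper only sketches in the proof of Theorem \ref{202312061920} --- there one simply postcomposes with $[x_1\cdots x_k \,|\, x_{k+1}\cdots x_n]_n$ instead of $[x_1\cdots x_n]_n$. What the paper's route buys: brevity, since the direct induction on the exponent needs no monoidal machinery at all.
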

\begin{proof}
We consider the case that $\sigma$ is the identity, since the proof for the general case is similar.
Lemma \ref{202307142220} implies that, if $a_j = 0$ for some $j$, then $[x_1^{a_1} x_2^{a_2} \cdots x_n^{a_n}]_n \equiv 0$.
Let $1 \leq j \leq n$ such that $a_j \neq 0$.
For simplicity, let $j=n$.
Suppose $a_n > 0$.
For $w = x_1^{a_1} x_2^{a_2} \cdots x_{n-1}^{a_{n-1}}$, we have 
\begin{align*}
[wx_n^{a_n}]_n &= [w x_n^{a_n-1}x_{n+1}]_{n+1} \circ [x_1| \cdots |x_{n-1}|x_n|x_n]_n , \\
&\equiv [w x_n^{a_n-1}x_{n+1}]_{n+1} \circ [x_1| \cdots |x_{n-1}|x_n|e]_n + [w x_n^{a_n-1}x_{n+1}]_{n+1} \circ [x_1| \cdots |x_{n-1}|e|x_n]_n , \\
&= [wx_n^{a_n-1}]_n + [wx_n]_n
\end{align*}
Inductively, we obtain $[wx_n^{a_n}]_n = a_n [wx_n]_n$.
If $a_n < 0$, then we have $[wx_n^{a_n} x_n^{-a_n}]_n \equiv [wx_n^{a_n}]_n + [wx_n^{-a_n}]_n \equiv [wx_n^{a_n}]_n - a_n [w x_n]_n$.
By the second part of Lemma \ref{202307142220}, we have $[wx_n^{a_n}]_n = [w]_n \equiv 0$ so that the result follows.
\end{proof}

\begin{theorem}
\label{202312061920}
For $n,m \in \mathds{N}$, the composition of the maps below is surjective:
$$\mathsf{Cat}_{\mathfrak{Ass}^u} (n,m) \stackrel{\mathcal{E}}{\to} \mathds{k} \mathsf{gr}^{\mathsf{op}} (n,m) \stackrel{\pi}{\to} \mathds{k} \mathsf{gr}^{\mathsf{op}} (n,m) / \mathcal{I} (n,m) .$$
\end{theorem}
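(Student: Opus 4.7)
The plan is to show that every basis morphism $[u_1 | \cdots | u_m]_n \in \mathds{k}\mathsf{gr}^{\mathsf{op}}(n,m)$ is congruent modulo $\mathcal{I}(n,m)$ to a $\mathds{k}$-linear combination of \emph{partition tuples} in the image of $\mathcal{E}$, namely $[w_1|\cdots|w_m]_n$ in which every variable $x_j$ appears exactly once as a positive letter (with the slots inducing a function $\underline{n} \to \underline{m}$ and an ordering on each fibre). The approach is an induction on a suitable complexity of $(u_1, \ldots, u_m)$, powered by a multi-slot version of Lemma \ref{202307142238}.

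First, I would establish the following tuple version of the defining relation of $\mathcal{I}$, which follows directly from the definition: for any $i \in \{1,\ldots,n\}$ and any lifts $v_l \in \mathsf{F}_{n+1}$ of the $u_l$ (meaning $\rho_1(v_l) = u_l$, where $\rho_1 : \mathsf{F}_{n+1} \to \mathsf{F}_n$ identifies $x_i$ with $x_{i+1}$ and shifts the higher indices down), the morphism $[v_1|\cdots|v_m]_{n+1} \circ (\mathrm{id}_{i-1} \ast \theta \ast \mathrm{id}_{n-i})$ lies in $\mathcal{I}(n,m)$ by construction, and expanding $\theta$ gives
\[
[u_1 | \cdots | u_m]_n \equiv [\rho_2 (v_1)|\cdots|\rho_2(v_m)]_n + [\rho_3(v_1)|\cdots|\rho_3(v_m)]_n \pmod{\mathcal{I}},
\]
where $\rho_2, \rho_3 : \mathsf{F}_{n+1} \to \mathsf{F}_n$ are the homomorphisms killing $x_i$, respectively $x_{i+1}$, and shifting indices. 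This generalizes Lemma \ref{202307142238} to morphisms $n \to m$ and gives full freedom to redistribute $x_i$-letters via the choice of lift.

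The reduction then proceeds by induction on the lexicographic pair $(L, K^-)$, with $L = \sum_l \ell(u_l)$ the total reduced word length and $K^-$ the number of negative-letter occurrences in the tuple. Three moves suffice: (i) if some $x_j$ is absent from the tuple, Lemma \ref{202307142220}(2) yields $[u_1|\cdots|u_m]_n \equiv 0$, which is trivially in the image of $\mathcal{E}$; (ii) if some $x_i$ appears at least twice, choose a lift so that at least one $x_i$-letter goes to $x_i$ and at least one to $x_{i+1}$, and then each of the two resulting summands has strictly smaller $L$; (iii) if every $x_j$ appears exactly once but some $x_i$ occurs as $x_i^{-1}$, so $u_{l_0} = w x_i^{-1} w'$ with $w, w' \in \mathsf{F}_n^{(i)}$ (automatic, since $x_i$ appears only in slot $l_0$), apply the identity to the auxiliary tuple obtained by replacing $u_{l_0}$ by $ww'$ (which vanishes by Lemma \ref{202307142220}(2)) with lift $\tilde v_{l_0} = w x_i x_{i+1}^{-1} w'$ satisfying $\rho_1(\tilde v_{l_0}) = ww'$, yielding $[\cdots | w x_i^{-1} w' | \cdots]_n \equiv -[\cdots | w x_i w' | \cdots]_n$, which preserves $L$ and strictly decreases $K^-$. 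When no move applies, every $x_j$ appears exactly once as a positive letter, so the tuple lies in the image of $\mathcal{E}$.

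The main obstacle will be verifying that Step (ii) really does reduce $L$ in both summands, since the lifts produce words in $\mathsf{F}_{n+1}$ whose reduced forms (and hence the reduced forms of $\rho_2(v_l), \rho_3(v_l)$ in $\mathsf{F}_n$) are not obvious; free-group cancellations may occur unpredictably after applying $\rho_2$ or $\rho_3$. The key observation is that each letter lifted to $x_i$ is killed by $\rho_2$ (dually for $\rho_3$), so the naive $x_i$-letter count strictly drops in each summand when the lift is chosen with at least one letter on each side; any further free-group reduction can only decrease the length further, guaranteeing a strict decrease of $L$, and the induction goes through.
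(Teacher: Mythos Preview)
Your proof is correct. Both your approach and the paper's use the defining $\theta$-relation together with Lemma~\ref{202307142220}(2) to reduce basis morphisms modulo $\mathcal{I}$, but the organization differs. The paper first treats the case $m=1$: it applies Lemma~\ref{202307142238} iteratively to reach linear combinations of permuted monomials $[x_{\sigma(1)}^{a_1}\cdots x_{\sigma(n)}^{a_n}]_n$, and then invokes Lemma~\ref{202307142237} to normalize the exponents to $1$; the general $m$ is only sketched by remarking that both lemmas extend verbatim when bars are inserted between words. You instead run a single induction on the lexicographic pair $(L,K^-)$ with three explicit moves, bypassing the intermediate monomial form entirely and treating all $m$ uniformly. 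Your packaging is arguably cleaner in its termination argument and avoids the informal ``insert bars'' step, while the paper's route makes the intermediate representatives visible. One minor point: in Step~(iii) you write $\tilde v_{l_0} = w\, x_i\, x_{i+1}^{-1}\, w'$, but strictly $w,w'$ must first be lifted to $\mathsf{F}_{n+1}$ by shifting indices above $i$ up by one; this is harmless once noted, and the computation of $\rho_2(\tilde v_{l_0}) = w x_i^{-1} w'$ and $\rho_3(\tilde v_{l_0}) = w x_i w'$ then goes through as you claim.
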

\begin{proof}
We first deal with the case $m=1$.
Let $\rho \in \mathds{k} \mathsf{gr}^{\mathsf{op}} ( n, 1)$.
Note that by the second part of Lemma \ref{202307142220} we have $\rho \equiv \sum^{r}_{j=1} a_j [w_j]_1$ where $w_j \in \mathsf{F}_n$ and $w_j \not\in \mathsf{F}_n^{(i)}$ for any $i$.
We iteratively apply Lemma \ref{202307142238} to the linear combination.
Then $\rho$ is a linear combination of $[x_{\sigma(1)}^{a_1} x_{\sigma(2)}^{a_2} \cdots x_{\sigma(n)}^{a_n}]_n$ up to $\mathcal{I} (n,1)$ where $\sigma$ is a permutation.
By Lemma \ref{202307142237}, such summand up to $\mathcal{I} (n,1)$ is induced by $\mathsf{Cat}_{\mathfrak{Ass}^u} (n,1)$ via $\mathcal{E}$.

We sketch the proof for general $m$.
Note that one can prove the formulae analogous to Lemma \ref{202307142238}, \ref{202307142237} even if there are some bars inserted between the words in the parenthesis $[~]$.
For example, by considering $m= 2$, we have
    \begin{align*}
        &[ w_1 x_i^{a_1} \cdots w_k x_i^{a_k} | w_{k+1} x_i^{a_{k+1}} \cdots w_l x_i^{a_l}]_n \\
        \equiv& \sum^{k}_{j=1} [ w_1 \cdots w_{j-1} ( w_j x_i^{a_j}) w_{j+1}  \cdots | \cdots w_l  ]_n + \sum^{l}_{j=k+1} [ w_1 \cdots | \cdots w_{j-1} ( w_j x_i^{a_j}) w_{j+1}  \cdots w_k ]_n .
    \end{align*}
This holds even if there are more than one bar;
the proof is parallel.
Similarly, for instance, we have
$$[x_1^{a_1} x_2^{a_2} \cdots x_k^{a_k} | x_{k+1}^{a_{k+1}} \cdots x_n^{a_n}]_n \equiv (a_1 a_2 \cdots a_n ) [x_1 x_2 \cdots x_k | x_{k+1} \cdots x_n]_n .$$
By using these extended formulae, one can prove our assertion as above.
\end{proof}

\section{Application to exponential $\mathsf{gr}^{\mathsf{op}}$-modules}
\label{202211081004}

The opposite category of finitely generated free groups $\mathsf{gr}^{\mathsf{op}}$ is a small category which satisfies the assumptions of $\mathcal{C}$.
Let $\mathds{k}$ be a commutative unital ring unless otherwise specified.
By \cite{Pirashvili} (see also \cite[Remark 2.1, Theorem 2.2]{MR3765469} and \cite{Hab}), the evaluation functor $\mathrm{ev}_{1} : \mathcal{F}^\mathsf{exp} ( \mathsf{gr^{\mathsf{op}}} ; \mathds{k} ) \to \mathsf{Hopf}^\mathsf{cc}_\mathds{k}$ induces an equivalence of categories (see Remark \ref{202207311722}).
In this section, we give an explicit description of some subcategories of exponential $\mathsf{gr}^{\mathsf{op}}$-modules under this equivalence.

\begin{remark}
\label{202207311722}
We sketch a construction of $\mathrm{ev}_{1}$ here.
Recall the notation in section \ref{202208181550}.
Let $F \in \mathcal{F}^\mathsf{exp} ( \mathsf{gr^{\mathsf{op}}} ; \mathds{k} )$.
Then the underlying module of the Hopf algebra $\mathrm{ev}_1 (F)$ is $F(1)$.
The multiplication, unit, comultiplication, counit and antipode are given as follows respectively:
\begin{align*}
\nabla_{\mathrm{ev}_1 (F)} : & F(1) \otimes F(1) \cong F(2) \stackrel{F(\nabla)}{\to} F(1) , \\ 
\eta_{\mathrm{ev}_1 (F)} : & \mathds{k} \cong F(0) \stackrel{F(\eta)}{\to} F(1) , \\
\Delta_{\mathrm{ev}_1 (F)} : & F(1) \stackrel{F(\Delta)}{\to} F(2) \cong F(1) \otimes F(1) , \\
\varepsilon_{\mathrm{ev}_1 (F)} : & F(1) \stackrel{F(\epsilon)}{\to} F(0) \cong \mathds{k} , \\
S_{\mathrm{ev}_1 (F)} : & F(1) \stackrel{F(\gamma)}{\to} F(1) .
\end{align*}

The quasi-inverse $\alpha : \mathsf{Hopf}^\mathsf{cc}_\mathds{k} \to \mathcal{F}^\mathsf{exp}( \mathsf{gr^{\mathsf{op}}} ; \mathds{k} )$ is given as follows.
For an object $H \in \mathsf{Hopf}^\mathsf{cc}_\mathds{k}$, we set $\left( \alpha (H) \right) ( n ) = U(H)^{\otimes n}$ where $U(H)$ is the underlying module of $H$.
The cocommutative Hopf algebra structure on $H$ induces a contravariant assignment of morphisms.
\end{remark}

\subsection{Analytic exponential $\mathsf{gr^{\mathsf{op}}}$-modules}
\label{202207211512}

In this section, we give an equivalence between conilpotent cocommutative Hopf algebras, and analytic exponential $\mathsf{gr}^{\mathsf{op}}$-modules.

Let $\mathsf{Hopf}^\mathsf{cc,conil}_\mathds{k}$ be the full subcategory of $\mathsf{Hopf}^\mathsf{cc}_\mathds{k}$ consisting of conilpotent cocommutative Hopf algebras over $\mathds{k}$.

Let $\mathcal{F}^\mathsf{exp}_\omega ( \mathsf{gr^{\mathsf{op}}} ; \mathds{k} )$ be the full subcategory of $\mathcal{F}^\mathsf{exp} ( \mathsf{gr^{\mathsf{op}}} ; \mathds{k} )$ consisting of analytic $\mathsf{gr^{\mathsf{op}}}$-modules.

\begin{theorem}
\label{202207212109}
The functor $\mathrm{ev}_{1}$ induces an equivalence of categories:
$$
\mathcal{F}^\mathsf{exp}_\omega ( \mathsf{gr^{\mathsf{op}}} ; \mathds{k} ) \simeq \mathsf{Hopf}^\mathsf{cc,conil}_\mathds{k} .
$$
\end{theorem}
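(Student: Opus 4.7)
The plan is to deduce this equivalence by restricting Pirashvili's equivalence $\mathrm{ev}_{1} : \mathcal{F}^\mathsf{exp}(\mathsf{gr}^{\mathsf{op}};\mathds{k}) \simeq \mathsf{Hopf}^\mathsf{cc}_\mathds{k}$ (whose quasi-inverse $\alpha$ is recalled in Remark \ref{202207311722}) to the respective full subcategories. Since both sides are defined as full subcategories cut out by a property, it suffices to check that $\mathrm{ev}_{1}$ and $\alpha$ preserve the appropriate property. Concretely, I must show that (i) if $F \in \mathcal{F}^\mathsf{exp}_\omega(\mathsf{gr}^{\mathsf{op}};\mathds{k})$ then $\mathrm{ev}_{1}(F)$ is conilpotent, and (ii) if $H \in \mathsf{Hopf}^\mathsf{cc,conil}_\mathds{k}$ then $\alpha(H)$ is analytic.

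For direction (i), note that $\mathsf{gr}^{\mathsf{op}}$ is a small category with finite products and a null object, so the general machinery of Part I applies. By Corollary \ref{202207211158}, $F$ analytic means that the coaugmented coalgebra $\widetilde F(X)$ is conilpotent for every $X \in \mathsf{gr}^{\mathsf{op}}$. Taking $X = 1$ yields that the underlying coaugmented coalgebra of $\mathrm{ev}_{1}(F) = F(1)$ is conilpotent, which is precisely the condition defining $\mathsf{Hopf}^\mathsf{cc,conil}_\mathds{k}$.

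For direction (ii), let $H \in \mathsf{Hopf}^\mathsf{cc,conil}_\mathds{k}$. By construction, $\alpha(H)(n) = H^{\otimes n}$ as a module, and the comultiplication of $\widetilde{\alpha(H)}(n)$ agrees with the tensor product coalgebra structure on $H^{\otimes n}$ (via $\alpha(H)(\Delta_n)$, where $\Delta_n : n \to n \times n$ in $\mathsf{gr}^{\mathsf{op}}$ dualizes to the comultiplication componentwise). Proposition \ref{202207281554} tells us that the tensor product of conilpotent coaugmented coalgebras is conilpotent, so by induction $H^{\otimes n}$ is conilpotent for every $n$. Applying Corollary \ref{202207211158} again, this means $\alpha(H)$ is analytic.

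Combining these two facts with the fact that $\mathrm{ev}_{1}$ and $\alpha$ form an equivalence on the ambient categories, we conclude that $\mathrm{ev}_{1}$ restricts to an equivalence between $\mathcal{F}^\mathsf{exp}_\omega(\mathsf{gr}^{\mathsf{op}};\mathds{k})$ and $\mathsf{Hopf}^\mathsf{cc,conil}_\mathds{k}$. The main conceptual step—the one carrying the real content of the theorem—is the identification of the polynomial filtration with the coradical filtration (Theorem \ref{202207141636}), which underlies Corollary \ref{202207211158}; once this is in hand, the argument reduces to the purely coalgebraic observation that conilpotency is preserved under tensor products.
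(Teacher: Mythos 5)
Your proof is correct and takes essentially the same route as the paper: the paper likewise restricts Pirashvili's equivalence to the two full subcategories, invoking Corollary \ref{202207211158} (resting on Theorem \ref{202207141636}) to factor $\mathrm{ev}_1$ through $\mathsf{Hopf}^\mathsf{cc,conil}_\mathds{k}$, and Proposition \ref{202207281554} to see that $(\alpha(H))(n) = H^{\otimes n}$ is conilpotent, hence $\alpha(H)$ analytic. Your explicit check that the coalgebra structure of $\widetilde{\alpha(H)}(n)$ agrees with the tensor-product coalgebra structure on $H^{\otimes n}$ merely spells out a point the paper leaves implicit.
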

\begin{proof}
We apply Corollary \ref{202207211158} to prove our claim.
By Corollary \ref{202207211158}, the restriction of $\mathrm{ev}_{1}$ to $\mathcal{F}^\mathsf{exp}_\omega ( \mathsf{gr^{\mathsf{op}}} ; \mathds{k})$ factors through $\mathsf{Hopf}^\mathsf{cc,conil}_\mathds{k} \hookrightarrow \mathsf{Hopf}^\mathsf{cc}_\mathds{k}$.
Denote the factorized functor by $\mathrm{ev}_{1}^\prime : \mathcal{F}^\mathsf{exp}_\omega ( \mathsf{gr^{\mathsf{op}}} ; \mathds{k} ) \to \mathsf{Hopf}^\mathsf{cc,conil}_\mathds{k}$.
Consider $\alpha$ introduced in Remark \ref{202207311722}.
On the one hand, the composition $\mathsf{Hopf}^\mathsf{cc,conil}_\mathds{k} \hookrightarrow \mathsf{Hopf}^\mathsf{cc}_\mathds{k} \stackrel{\alpha}{\to} \mathcal{F}^\mathsf{exp}( \mathsf{gr^{\mathsf{op}}} ; \mathds{k} )$ factors through the embedding $\mathcal{F}^\mathsf{exp}_\omega ( \mathsf{gr^{\mathsf{op}}} ; \mathds{k} ) \hookrightarrow \mathcal{F}^\mathsf{exp}( \mathsf{gr^{\mathsf{op}}} ; \mathds{k} )$.
In fact, the functor $\alpha (H)$ assigned to a conilpotent cocommutative Hopf algebra $H$ is analytic since $(\alpha (H))(n) = H^{\otimes n}$ is conilpotent for any $n$, by Proposition \ref{202207281554}.
Denote the factorized functor by $\alpha^\prime : \mathsf{Hopf}^\mathsf{cc,conil}_\mathds{k} \to \mathcal{F}^\mathsf{exp}_\omega ( \mathsf{gr^{\mathsf{op}}} ; \mathds{k} )$.
Then $\mathrm{ev}_{1}^\prime$ and $\alpha^\prime$ induce an equivalence of categories since so do the functors $\mathrm{ev}_{1}$ and $\alpha$.
\end{proof}

\subsection{Outer exponential $\mathsf{gr}^{\mathsf{op}}$-modules}

In this section, we prove an equivalence between bicommutative Hopf algebras and outer exponential $\mathsf{gr}^{\mathsf{op}}$-modules.
An {\it outer $\mathsf{gr}^{\mathsf{op}}$-module} is a functor $F \in \mathcal{F} (\mathsf{gr}^{\mathsf{op}} ; \mathds{k} )$ such that the induced action of inner automorphisms on $F ( n )$ is trivial for any $n$.
The notion of outer $\mathsf{gr}^{\mathsf{op}}$-module is introduced in \cite{PV}, motivated by the study of representations of $\mathrm{Out} (\mathsf{F}_n)$ coming from Hochschild homology associated to a wedge of circles.

Let $\mathsf{ab}$ be the category consisting of the free abelian groups $\mathds{Z}^n$ for $n \in \mathds{N}$ and group homomorphisms.
The evaluation $\mathrm{ev}_\mathds{Z} : \mathcal{F}^\mathsf{exp} ( \mathsf{ab} ; \mathds{k} ) \to \mathsf{Hopf}^\mathsf{bc}_\mathds{k}$ at $\mathds{Z} \in \mathsf{ab}$ induces an equivalence of categories \cite{touze}.
The assignment of the dual $X^\vee {:=} \mathrm{Hom} ( X , \mathds{Z} )$ to $X \in \mathsf{ab}$ induces an equivalence of categories $(-)^\vee : \mathsf{ab}^{\mathsf{op}} \to \mathsf{ab}$.
We denote by $\mathrm{ev}_\mathds{Z} : \mathcal{F}^\mathsf{exp} ( \mathsf{ab}^{\mathsf{op}} ; \mathds{k} ) \to \mathsf{Hopf}^\mathsf{bc}_\mathds{k}$ their composition with a slight abuse of notation.
Especially, $\mathrm{ev}_\mathds{Z}$ gives an equivalence of categories with a commutative diagram below where $\mathfrak{a} : \mathsf{gr} \to \mathsf{ab}$ is the abelianization functor:
\begin{equation}
\label{202211071428}
\begin{tikzcd}
\mathcal{F}^\mathsf{exp} ( \mathsf{ab}^{\mathsf{op}} ; \mathds{k} ) \ar[d, "\mathfrak{a}^\ast"] \ar[r, "\mathrm{ev}_\mathbb{Z}"] & \mathsf{Hopf}^\mathsf{bc}_\mathds{k} \ar[d, hookrightarrow]  \\
\mathcal{F}^\mathsf{exp} ( \mathsf{gr}^{\mathsf{op}} ; \mathds{k} ) \ar[r, "\mathrm{ev}_{1}"]  & \mathsf{Hopf}^\mathsf{cc}_\mathds{k}
\end{tikzcd}
\end{equation}

Let $\mathcal{F}^\mathsf{exp}_\mathsf{out} ( \mathsf{gr}^{\mathsf{op}} ; \mathds{k} )$ be the full subcategory of $\mathcal{F}^\mathsf{exp} ( \mathsf{gr}^{\mathsf{op}} ; \mathds{k} )$ consisting of outer $\mathsf{gr}^{\mathsf{op}}$-modules.
\begin{theorem}
\label{202207311733}
The functor $\mathfrak{a}^\ast$ induces an equivalence of categories:
$$
\mathcal{F}^\mathsf{exp} ( \mathsf{ab}^{\mathsf{op}} ; \mathds{k} )
\simeq
\mathcal{F}^\mathsf{exp}_\mathsf{out} ( \mathsf{gr}^{\mathsf{op}} ; \mathds{k} ) 
$$
In particular, the functor $\mathrm{ev}_{1}$ induces an equivalence of categories:
$$
\mathcal{F}^\mathsf{exp}_\mathsf{out} ( \mathsf{gr}^{\mathsf{op}} ; \mathds{k} ) 
\simeq
\mathsf{Hopf}^\mathsf{bc}_{\mathds{k}}.
$$
\end{theorem}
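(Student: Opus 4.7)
My plan is to leverage the commutative square \eqref{202211071428} together with Pirashvili's equivalence $\mathrm{ev}_1: \mathcal{F}^\mathsf{exp}(\mathsf{gr}^\mathsf{op}; \mathds{k}) \simeq \mathsf{Hopf}^\mathsf{cc}_\mathds{k}$ and the equivalence $\mathrm{ev}_\mathds{Z}: \mathcal{F}^\mathsf{exp}(\mathsf{ab}^\mathsf{op}; \mathds{k}) \simeq \mathsf{Hopf}^\mathsf{bc}_\mathds{k}$ of Touz\'e. The crux is to show that an exponential $\mathsf{gr}^\mathsf{op}$-module $G$ is outer if and only if the associated cocommutative Hopf algebra $H := G(1) = \mathrm{ev}_1(G)$ is bicommutative. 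Granted this, the second equivalence in the statement is just Pirashvili's equivalence restricted to the matching full subcategories; the first then follows from \eqref{202211071428}, since for any $F$ the composite $\mathrm{ev}_1 \circ \mathfrak{a}^\ast$ equals $\mathrm{ev}_\mathds{Z}$ and hence lands in $\mathsf{Hopf}^\mathsf{bc}_\mathds{k}$, so that $\mathfrak{a}^\ast$ and the restricted $\mathrm{ev}_1$ together exhibit both sides as equivalent to $\mathsf{Hopf}^\mathsf{bc}_\mathds{k}$.

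The easy half of the key claim is that $\mathfrak{a}^\ast F$ is always outer exponential: $\mathfrak{a}$ is symmetric monoidal, and inner automorphisms of $\mathsf{F}_n$ die in the abelianization $\mathds{Z}^n$. For the converse I would apply $G$ to the morphism $[x_1 | x_1 x_2 x_1^{-1}]_2 : 2 \to 2$ of $\mathsf{gr}^\mathsf{op}$ corresponding to conjugation by $x_1$ in $\mathrm{Aut}(\mathsf{F}_2)$. Under the exponential isomorphism $G(2) \cong H \otimes H$, counting that $x_1$ appears three times and $x_2$ once in the pair of output words, one reads off
$$
G([x_1 | x_1 x_2 x_1^{-1}]_2)(a \otimes b) \;=\; \sum a_{(1)} \otimes a_{(2)} b S(a_{(3)})
$$
in Sweedler notation $\Delta^{(2)}(a) = \sum a_{(1)} \otimes a_{(2)} \otimes a_{(3)}$. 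The outer hypothesis forces this to equal $a \otimes b$ for all $a, b \in H$. Composing with the swap and with the multiplication $m$ of $H$ yields $\sum a_{(2)} b S(a_{(3)}) a_{(1)} = ba$; by cocommutativity of $H$ one may relabel the Sweedler indices to rewrite this as $\sum a_{(1)} b S(a_{(2)}) a_{(3)}$, which in turn collapses to $ab$ by applying the antipode axiom $\sum S(x_{(1)}) x_{(2)} = \varepsilon(x) 1_H$ to the inner comultiplication $\sum a_{(2)} \otimes a_{(3)}$ arising from $\Delta^{(2)}(a) = (\mathrm{id} \otimes \Delta)\Delta(a)$. Thus $ab = ba$ and $H$ is commutative, hence bicommutative.

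With the equivalence \emph{outer $\Leftrightarrow$ bicommutative} in hand, the remaining assertions are formal: Pirashvili's equivalence restricts to $\mathrm{ev}_1 : \mathcal{F}^\mathsf{exp}_\mathsf{out}(\mathsf{gr}^\mathsf{op}; \mathds{k}) \simeq \mathsf{Hopf}^\mathsf{bc}_{\mathds{k}}$, and \eqref{202211071428} then yields the first equivalence $\mathfrak{a}^\ast$. The main obstacle is the Hopf-algebra computation identifying the outer condition with commutativity; although elementary, it requires careful tracking of Sweedler indices, simultaneously using cocommutativity (to permute them) and the antipode axiom (to collapse $S(a_{(2)}) a_{(3)}$ to a scalar). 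Conceptually, the outer condition at the $\mathsf{F}_2$-level is precisely the triviality of the adjoint action of $H$ on itself, which for a cocommutative Hopf algebra is well known to be equivalent to commutativity.
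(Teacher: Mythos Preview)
Your proof is correct and follows essentially the same route as the paper: reduce to showing that \emph{outer} is equivalent to \emph{bicommutative} under Pirashvili's equivalence, test this on the inner automorphism $\rho(x_i) = x_1 x_i x_1^{-1}$ of $\mathsf{F}_2$, and derive $ab = ba$ from the resulting Hopf identity $a \otimes b = a_{(1)} \otimes a_{(2)} b S(a_{(3)})$. The only cosmetic difference is in the Sweedler manipulation: the paper applies $\varepsilon$ to the first tensor slot to obtain the intermediate identity $\varepsilon(x)\,y = x_{(1)} y S(x_{(2)})$ and then computes $yx = x_{(1)} y S(x_{(2)}) x_{(3)} = xy$ without invoking cocommutativity, whereas you swap-then-multiply and use cocommutativity to cyclically permute the Sweedler indices before collapsing with the antipode axiom.
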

\begin{proof}
By (\ref{202211071428}), $\mathfrak{a}^\ast$ is a faithfully full embedding.
It is clear that the induced automorphism $\mathfrak{a} (g)$ is trivial for any inner automorphism $g$.
Hence, the essential image of $\mathfrak{a}^\ast$ is contained in $\mathcal{F}^\mathsf{exp}_\mathsf{out} ( \mathsf{gr}^{\mathsf{op}} ; \mathds{k} )$.
It suffices to prove that every $F \in \mathcal{F}^\mathsf{exp}_\mathsf{out} ( \mathsf{gr}^{\mathsf{op}} ; \mathds{k} )$ arises from a precomposition with $\mathfrak{a}$.
Equivalently, we prove that the cocommutative Hopf algebra $H = \mathrm{ev}_{1} (F)$ is commutative.
For the group presentation $\mathsf{F}_2 = \langle x_1 , x_2 \rangle$, consider the inner automorphism $\rho : \mathsf{F}_2 \to  \mathsf{F}_2$ induced by $x_1 \in \mathsf{F}_2$, i.e. $\rho ( x_i ) = x_1 x_i x_1^{-1}$.
We denote by $S$ the antipode on $H$, and freely use the Sweedler notation of comultiplication below.
For $x,y\in H$, one can calculate as follows by definition:
\begin{align*}
( F(\rho) )( x \otimes y ) 
= x_{(1)} \otimes x_{(2)} y S( x_{(3)} ) . 
\end{align*}
By the hypothesis that $F$ is outer, we have $( F(\rho) )( x \otimes y ) = x \otimes y$.
This gives $x \otimes y = x_{(1)} \otimes x_{(2)} y S( x_{(3)} )$.
Hence, for any $x,y \in H$, we have $\varepsilon (x) y = \varepsilon ( x_{(1)} ) x_{(2)} y S(x_{(3)} ) = x_{(1)} y S(x_{(2)} )$.
This result leads to the following calculation for $x,y \in H$:
\begin{align*}
yx &= \varepsilon  (x_{(1)} ) y x_{(2)} , \\
&= x_{(1)} y S(x_{(2)} ) x_{(3)} , \\
&= x_{(1)} y \varepsilon ( x_{(2)} ) = xy .
\end{align*}
\end{proof}

\subsection{Analytic $\mathsf{gr}^{\mathsf{op}}$-modules and Lie algebras}
\label{202301131738Japan}

In this section, we apply the previous results to prove an equivalence between some exponential $\mathsf{gr}^{\mathsf{op}}$-modules and Lie algebras.
The proof depends heavily on classical structure theorems for bialgebras (e.g. see  \cite[Theorem 1.3.4]{loday2012algebraic}).

In this section, we assume that the ground ring $\mathds{k}$ is a field of characteristic zero.
Let $\mathsf{Lie}_\mathds{k}$ be the category of Lie algebras over $\mathds{k}$ and Lie homomorphisms.
Then the assignment of primitive elements to Hopf algebras induce an equivalence of categories \cite[Section 5]{MM}:
\begin{equation}
\label{202211102107}
P : \mathsf{Hopf}^\mathsf{cc,conil}_\mathds{k} \stackrel{\simeq}{\longrightarrow} \mathsf{Lie}_\mathds{k}.
\end{equation}
The quasi-inverse is given by the universal enveloping algebra construction $U$.
This equivalence leads to the following corollary.

Let $\mathcal{F}^\mathsf{exp}_{\omega, \mathsf{out}} ( \mathsf{gr^{\mathsf{op}}} ; \mathds{k} )$ be the full subcategory of $\mathcal{F}^\mathsf{exp} ( \mathsf{gr}^{\mathsf{op}} ; \mathds{k} )$ consisting of analytic and outer $\mathsf{gr}^{\mathsf{op}}$-modules.

Let $\mathsf{Lie}^\mathsf{ab}_\mathds{k}$ be the full subcategory of $\mathsf{Lie}_{\mathds{k}}$ consisting of abelian Lie algebras.

\begin{Corollary}
\label{202211071547}
If the characteristic of the ground field $\mathds{k}$ is zero, then the composition of the evaluation on $1 \in \mathsf{gr}^{\mathsf{op}}$ with the functor $P$ induces equivalences of categories:
\begin{enumerate}
\item
$\mathcal{F}^\mathsf{exp}_\omega ( \mathsf{gr^{\mathsf{op}}} ; \mathds{k} ) \simeq \mathsf{Lie}_\mathds{k}$.
\item
$\mathcal{F}^\mathsf{exp}_{\omega, \mathsf{out}} ( \mathsf{gr^{\mathsf{op}}} ; \mathds{k} )  \simeq \mathsf{Lie}^\mathsf{ab}_\mathds{k} $.
\end{enumerate}
\end{Corollary}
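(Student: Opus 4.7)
The plan is to deduce both equivalences by combining results already available in the paper with the classical Cartier--Milnor--Moore theorem, which is summarised by the equivalence~(\ref{202211102107}).

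For item~(1), the strategy is simply to compose two equivalences. By Theorem~\ref{202207212109}, evaluation at $1 \in \mathsf{gr}^{\mathsf{op}}$ gives an equivalence
$\mathrm{ev}_1 : \mathcal{F}^{\mathsf{exp}}_\omega(\mathsf{gr}^{\mathsf{op}};\mathds{k}) \xrightarrow{\simeq} \mathsf{Hopf}^{\mathsf{cc},\mathsf{conil}}_\mathds{k}$, and the primitives functor $P$ in~(\ref{202211102107}) gives a second equivalence $\mathsf{Hopf}^{\mathsf{cc},\mathsf{conil}}_\mathds{k} \xrightarrow{\simeq} \mathsf{Lie}_\mathds{k}$. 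Composing $P \circ \mathrm{ev}_1$ yields the asserted equivalence; there is nothing more to check.

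For item~(2), the plan is to show that the equivalence of item~(1) restricts along the inclusions
\[
\mathcal{F}^{\mathsf{exp}}_{\omega,\mathsf{out}}(\mathsf{gr}^{\mathsf{op}};\mathds{k}) \hookrightarrow \mathcal{F}^{\mathsf{exp}}_{\omega}(\mathsf{gr}^{\mathsf{op}};\mathds{k}) \quad\text{and}\quad \mathsf{Lie}^{\mathsf{ab}}_\mathds{k} \hookrightarrow \mathsf{Lie}_\mathds{k}.
\]
By Theorem~\ref{202207311733}, an exponential $\mathsf{gr}^{\mathsf{op}}$-module $F$ is outer if and only if $\mathrm{ev}_1(F)$ is bicommutative, i.e.\ additionally commutative. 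Hence it suffices to prove that the Cartier--Milnor--Moore equivalence $P$ restricts to an equivalence between $\mathsf{Hopf}^{\mathsf{bc},\mathsf{conil}}_\mathds{k}$ and $\mathsf{Lie}^{\mathsf{ab}}_\mathds{k}$.

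The key point to verify is therefore: for a conilpotent cocommutative Hopf algebra $H$ over a field of characteristic zero, $H$ is commutative if and only if $P(H)$ is an abelian Lie algebra. One direction is immediate since $P$ is a functor of Lie algebras and the Lie bracket on $P(H)$ is the commutator in $H$; so if $H$ is commutative then $P(H)$ is abelian. For the converse, use the quasi-inverse $U : \mathsf{Lie}_\mathds{k} \to \mathsf{Hopf}^{\mathsf{cc},\mathsf{conil}}_\mathds{k}$: when $\mathfrak{g}$ is abelian, $U(\mathfrak{g})$ is a polynomial (symmetric) algebra, hence commutative. Since $P$ and $U$ are mutually quasi-inverse, this shows the restriction is well-defined and essentially surjective, and full faithfulness is inherited from item~(1). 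I expect no serious obstacle: the only potentially delicate step is the characteristic-zero input, which enters precisely through~(\ref{202211102107}) (and, equivalently, through the Poincar\'e--Birkhoff--Witt identification of $U(\mathfrak{g})$ with $\mathrm{Sym}(\mathfrak{g})$ as coalgebras in the abelian case).
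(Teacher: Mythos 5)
Your proposal is correct and follows essentially the same route as the paper: item (1) is the composition of Theorem \ref{202207212109} with the Milnor--Moore equivalence (\ref{202211102107}), and item (2) combines this with the second part of Theorem \ref{202207311733}. The only difference is that you spell out the verification, left implicit in the paper's two-line proof, that $P$ identifies conilpotent bicommutative Hopf algebras with abelian Lie algebras (via $H \cong U(P(H))$ and the commutativity of $U(\mathfrak{g})$ for abelian $\mathfrak{g}$), which is a sound and worthwhile detail to make explicit.
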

\begin{proof}
The first claim is immediate from Theorem \ref{202207212109}.
The second one follows from the second part of Theorem \ref{202207311733}.
\end{proof}

\begin{remark}
Let $\mathfrak{Lie}$ be the Lie operad.
In \cite{powell2021analytic}, Powell constructs an equivalence between analytic $\mathsf{gr}^{\mathsf{op}}$-modules and $\mathfrak{Lie}$-modules of characteristic of zero.
In particular, it yields an embedding $\mathsf{Lie}_\mathds{k} \hookrightarrow \mathcal{F}_{\mathfrak{Lie}} \simeq \mathcal{F}_\omega ( \mathsf{gr}^{\mathsf{op}} ; \mathds{k})$.
The equivalences in Corollary \ref{202211071547} are compatible with the embedding.
\end{remark}

Let $\mathcal{F}^\mathsf{exp}_{< \infty} ( \mathsf{gr}^{\mathsf{op}} ; \mathds{k} )$ be the full subcategory of $\mathcal{F}^\mathsf{exp} ( \mathsf{gr}^{\mathsf{op}} ; \mathds{k} )$ consisting of $\mathsf{gr}^{\mathsf{op}}$-modules with finite degrees.

\begin{Corollary}
If the characteristic of the ground field $\mathds{k}$ is zero, then the category $\mathcal{F}^\mathsf{exp}_{< \infty} ( \mathsf{gr}^{\mathsf{op}} ; \mathds{k} )$ is equivalent with the one point category.
\end{Corollary}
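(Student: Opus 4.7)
The plan is to translate the statement across the equivalence of Theorem \ref{202207212109}, then show that in characteristic zero the only cocommutative Hopf algebra whose coradical filtration stabilizes is $\mathds{k}$ itself.

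First, observe that any polynomial $\mathcal{C}$-module is trivially analytic: if $\mathrm{deg}(F) \leq N$ then $i_k(P_k(F)) = F$ for all $k \geq N$, so $F = \mathrm{colim}_k\, i_k(P_k(F))$. Hence $\mathcal{F}^\mathsf{exp}_{<\infty}(\mathsf{gr}^{\mathsf{op}}; \mathds{k})$ is a full subcategory of $\mathcal{F}^\mathsf{exp}_\omega(\mathsf{gr}^{\mathsf{op}}; \mathds{k})$. Let $F \in \mathcal{F}^\mathsf{exp}_{<\infty}(\mathsf{gr}^{\mathsf{op}}; \mathds{k})$ with $\mathrm{deg}(F) \leq N$, and set $H := \mathrm{ev}_1(F)$, a conilpotent cocommutative Hopf algebra by Theorem \ref{202207212109}. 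By Theorem \ref{202207141636}, the polynomial filtration of $F$ at $1$ agrees with the coradical filtration of $H$, so the hypothesis $\mathrm{deg}(F) \leq N$ becomes $P_N(H) = H$.

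The main step is to show that $H = \mathds{k}$. Suppose, for contradiction, that there exists $a \in H \setminus \mathds{k} 1_H$. Let $n \geq 1$ be the smallest integer with $a \in P_n(H)$, so that $\delta_H^n(a) \neq 0$ by Proposition \ref{202208302218}. Iterating Lemma \ref{202209010909} yields
$$\delta_H^{kn}(a^k) = \delta_H^n(a)^{\shuffle k}$$
in the shuffle algebra $\mathrm{Sh}(H)$. Since $\mathds{k}$ has characteristic zero, $\mathrm{Sh}(H)$ is isomorphic to a polynomial algebra (on Lyndon words), and in particular has no zero divisors; thus $\delta_H^{kn}(a^k) \neq 0$ for every $k \geq 1$, whence $a^k \notin P_{kn-1}(H)$ by Proposition \ref{202208302218}. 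Choosing $k$ large enough that $kn > N$ contradicts $a^k \in H = P_N(H)$. Therefore $H = \mathds{k} 1_H \cong \mathds{k}$.

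Under the equivalence $\alpha$ of Remark \ref{202207311722}, $H = \mathds{k}$ corresponds to the constant functor with value $\mathds{k}$ (the unit of $\mathcal{F}^\mathsf{exp}$). Since a Hopf algebra morphism $\mathds{k} \to \mathds{k}$ must preserve the unit and is therefore the identity, the category $\mathsf{Hopf}^\mathsf{cc,conil}_\mathds{k}$ restricted to $H = \mathds{k}$ is the one-point category, and the same holds for $\mathcal{F}^\mathsf{exp}_{<\infty}(\mathsf{gr}^{\mathsf{op}}; \mathds{k})$. The only step that requires any real content is the degree-multiplicativity of $a \mapsto a^k$ used above, which rests on the characteristic-zero assumption through the absence of zero divisors in $\mathrm{Sh}(H)$; this is precisely where the hypothesis enters (and where Example \ref{202212121114} shows the statement fails in positive characteristic).
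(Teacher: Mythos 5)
Your proof is correct, but it reaches the contradiction by a genuinely different route than the paper. The paper's proof first invokes analyticity and Corollary \ref{202211071547} to write $H = \mathrm{ev}_1(F)$ as a universal enveloping algebra $U(\mathfrak{g})$ via Milnor--Moore, then uses Poincar\'e--Birkhoff--Witt to get $x^{n+1} \neq 0$ for a nonzero $x \in \mathfrak{g}$ and the explicit formula $\delta^{n+1}(x^{n+1}) = (n+1)!\, x^{\otimes(n+1)} \neq 0$ to contradict $P_n(U(\mathfrak{g})) = U(\mathfrak{g})$. You instead bypass Milnor--Moore and PBW entirely: you take an arbitrary $a \notin \mathds{k}1_H$ of minimal coradical degree $n$ and inline the argument of Lemma \ref{202209011041}, namely $\delta_H^{kn}(a^k) = \delta_H^n(a)^{\shuffle k} \neq 0$ because $\mathrm{Sh}(H)$ is a polynomial algebra (Radford) and hence a domain in characteristic zero. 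What this buys is generality and self-containedness: your central step shows that \emph{any} bialgebra over a characteristic-zero field whose coradical filtration stabilizes at a finite stage is trivial --- no cocommutativity, conilpotency, or antipode is needed, and nothing beyond the coradical-filtration machinery of Part \ref{202312081751} is used (indeed it is the same mechanism behind Theorem \ref{202211292056} on finite-dimensional bialgebras, just without the finite-dimensionality). What the paper's route buys is brevity given the structure theory already established: once $H = U(\mathfrak{g})$, the contradiction is a one-line computation on a primitive element. The two proofs also localize the characteristic-zero hypothesis differently --- yours through the absence of zero divisors in $\mathrm{Sh}(H)$, the paper's through $(n+1)! \neq 0$ together with the validity of Milnor--Moore and PBW. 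Your preliminary observation that finite degree implies analyticity (so that Theorems \ref{202207212109} and \ref{202207141636} apply, giving $P_N(H) = H$) is sound and matches the paper's implicit first step.
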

\begin{proof}
Let $F$ be an exponential $\mathsf{gr}^{\mathsf{op}}$-module with finite degree $n \geq 0$.
In particular, the functor $F$ is analytic and thus, $F$ is induced by a Lie algebra $\mathfrak{g}$ by the first part of Corollary \ref{202211071547}.
By applying Theorem \ref{202207141636} to $X = 1$, we obtain $\left( P_n (F) \right) (X) \cong P_n ( U ( \mathfrak{g} ) )$.
We have $U ( \mathfrak{g} ) = P_n ( U ( \mathfrak{g} )) $ due to the degree assumption on $F$.
If $\mathfrak{g} \not\cong 0$, then we choose a nonzero $x \in \mathfrak{g}$.
By Poincar\'e-Birkhoff-Witt, we have $0 \neq x^{n+1} \in U ( \mathfrak{g} )$.
On the other hand, $\delta^{n+1} (x^{n+1}) = (n+1)! x^{\otimes (n+1)} \neq 0$ since the characteristic is zero.
Hence, $x \not\in P_n ( U ( \mathfrak{g} )) = U(\mathfrak{g})$ which leads to a contradiction so that we have $\mathfrak{g} \cong 0$.
Thus, the equivalence in the first part of Corollary \ref{202211071547} reduces to $\mathcal{F}^\mathsf{exp}_{< \infty} ( \mathsf{gr}^{\mathsf{op}} ; \mathds{k} ) \simeq \ast$.
\end{proof}

\section{Primitive filtration of $\mathsf{gr}^{\mathsf{op}}$-modules}
\label{202211081022}

In this section, we introduce the primitive filtration of $\mathsf{gr}^{\mathsf{op}}$-modules.
For exponential $\mathsf{gr}^{\mathsf{op}}$-modules, it reproduces the primitive filtration of cocommutative Hopf algebras.
The primitive filtration is induced by the primitive part (see Definition \ref{202301131504}) which turns out to have an action from the Lie operad $\mathfrak{Lie}$.
It generalizes the Lie algebra structure on the primitive part of Hopf algebras.

\subsection{Primitive filtration}
\label{202312091723}

In this section, we define the {\it primitive filtration} of $\mathsf{gr}^{\mathsf{op}}$-modules.
Let $\mathds{k}$ be a commutative unital ring unless otherwise specified.
The primitive filtration of $F \in \mathcal{F} ( \mathsf{gr}^{\mathsf{op}} ; \mathds{k} )$ is constructed from a $\Sigma$-module $\mathcal{P} ( F) \in \mathcal{F} ( \Sigma ; \mathds{k} )$ which is induced by the first step of the polynomial filtration related with $F$.
Moreover, we compare the polynomial filtration in section \ref{202211251500} and the primitive filtration.

Recall the cross effect functor in section \ref{202208131840}.
By using the explicit construction of polynomial approximation in Definition \ref{202207202244}, we define 
$$\widetilde{P}_1 ( F ) {:=} P_1 ( \mathrm{cr}^1 (F) ) , ~ F \in \mathcal{F} ( \mathsf{gr}^{\mathsf{op}} ; \mathds{k} ) .$$

In the following, recall that we are identifying natural numbers with the objects of $\mathsf{gr}^{\mathsf{op}}$ so that the product in the category $\mathsf{gr}^{\mathsf{op}}$ is realized as the sum of natural numbers.

\begin{Defn}
Let $F \in \mathcal{F} ( \mathsf{gr}^{\mathsf{op}} ; \mathds{k} )$. 
\begin{itemize}
\item
Let $[X_i]^{n-1}_{i=1}$ be a sequence  of objects in $\mathsf{gr}^{\mathsf{op}}$.
For $1 \leq m \leq n$, we define $\hat{F}_m ( [X_i]^{n-1}_{i=1} ) \in \mathcal{F} ( \mathsf{gr}^{\mathsf{op}} ; \mathds{k} )$ by $\left( \hat{F}_m ( [X_i]^{n-1}_{i=1} ) \right) ( Y ) {:=} F( \sum^{m-1}_{i=1} X_i  + Y + \sum^{n-1}_{i=m} X_i )  $ for an object $Y$, and $\left( \hat{F}_m ( [X_i]^{n-1}_{i=1} ) \right) (f) {:=} F( \Asterisk^{m-1}_{i=1} \mathrm{id}_{X_i} \ast f \ast \Asterisk^{n-1}_{i=m} \mathrm{id}_{X_i} )$ for a morphism $f$.
\item
For a sequence $[X_i]^{n}_{i=1}$, we define $R_n (F) \in \mathcal{F} ( \left( \mathsf{gr}^{\mathsf{op}}\right)^{\times n} ; \mathds{k} )$ by
$$
\left( R_n (F) \right) \left( [X_i]^{n}_{i =1} \right) {:=} \bigcap^{n}_{i=1} \widetilde{P}_1 \left( \hat{F}_i ( X_1, \cdots, X_{i-1}, X_{i+1} , \cdots , X_n ) \right) (X_i ) .
$$
\end{itemize}
\end{Defn}

For $[X_i]^{n}_{i=1}$ a sequence  of objects in $\mathsf{gr}^{\mathsf{op}}$, the symmetry on $\mathsf{gr}^{\mathsf{op}}$ induces a $\Sigma_n$-action: $\sigma \in \Sigma_n$ induces an isomorphism
$$
F(X_1 + X_2 + \cdots + X_n ) \to F(X_{\sigma^{-1}(1)} + X_{\sigma^{-1}(2)} + \cdots + X_{\sigma^{-1}(n)} ) .
$$
The restriction to $\left( R_n (F) \right) \left( [X_i]^{n}_{i =1} \right) \subset F(X_1 + X_2 + \cdots + X_n )$ induces an isomorphism
$$
\left( R_n (F) \right) \left( [X_i]^{n}_{i =1} \right) \to \left( R_n (F) \right) \left( [X_{\sigma^{-1} (i)}]^{n}_{i =1} \right) .
$$
By substitution of $X_i = 1$ for any $i$, we define the following.

\begin{Defn}
\label{202301131504}
We define $\mathcal{P} ( F) \in \mathcal{F} ( \Sigma ; \mathds{k} )$ {\it the primitive part of $F$} by 
$$\left( \mathcal{P} ( F) \right) (n) := \left( R_n (F) \right) ( \overbrace{1, 1, \cdots, 1}^n )  . $$
\end{Defn}

\begin{remark}
It turns out that $\mathcal{P} ( F)$ can be enhanced to a $\mathsf{Cat}_{\mathfrak{Lie}}$-module (see section \ref{202212202131Japan}). 
\end{remark}

\begin{Example}
\label{202212160937}
Let $\mathds{k}$ be a field.
If $F$ is exponential, then there exists a cocommutative Hopf algebra $H$ such that $F(n) = H^{\otimes n}$ (explicitly, $F = \alpha (H)$ in Remark \ref{202207311722}).
In this case, we have a natural isomorphism $\left( \mathcal{P} (F) \right) (n) \cong \mathrm{Prim} (H)^{\otimes n}$.
This will be proved in Lemma \ref{Korea202306161806}.
\end{Example}

%

Recall the notation in Definition \ref{202312062128}.

\begin{prop}
\label{202312062129}
For $n \in \mathds{N}$, we have
$$
\left( \mathcal{P} ( F) \right) (n) = \bigcap^{n}_{i = 1} \mathrm{Ker} \left( F (\mathrm{id}_{i-1} \ast \theta \ast \mathrm{id}_{n-i}) \right) .
$$
\end{prop}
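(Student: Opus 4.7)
The plan is to reduce to the case $n=1$ by applying that case to the auxiliary $\mathsf{gr}^{\mathsf{op}}$-modules $G_i := \hat{F}_i(1, \ldots, 1)$ for $1 \le i \le n$. By the definition of $\hat{F}_i$, any morphism $f : 1 \to 2$ of $\mathsf{gr}^{\mathsf{op}}$ is sent by $G_i$ to $F(\mathrm{id}_{i-1} \ast f \ast \mathrm{id}_{n-i})$, so in particular $G_i(\theta) = F(\mathrm{id}_{i-1} \ast \theta \ast \mathrm{id}_{n-i})$. Once I establish that, for every $G \in \mathcal{F}(\mathsf{gr}^{\mathsf{op}}; \mathds{k})$, one has
$$\widetilde{P}_1(G)(1) \;=\; \mathrm{Ker}\bigl(G(\theta)\bigr),$$
the defining formula $\mathcal{P}(F)(n) = \bigcap_{i=1}^n \widetilde{P}_1(G_i)(1)$ will give the proposition by direct substitution.

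For this $n=1$ identity I set $K := \mathrm{cr}^1(G)$. From the idempotency of $G(p_1)$ together with the relation $G(\epsilon_1) \circ G(\eta_1) = \mathrm{id}$ (which makes $G(\eta_1)$ a section of $G(\epsilon_1)$), one checks that $K(1) = \mathrm{Ker}(G(p_1)) = \mathrm{Ker}(G(\epsilon_1))$. By Definition \ref{202207202244}, $\widetilde{P}_1(G)(1) = \mathrm{Ker}(\chi_K([1,1]) \circ K(\Delta))$ with $\Delta = [x_1|x_1]_1$. I then expand the product of commuting idempotents $\chi_K([1,1]) = (\mathrm{id} - K([e|x_2]_2))(\mathrm{id} - K([x_1|e]_2))$ and track the relevant compositions in $\mathsf{gr}^{\mathsf{op}}$: namely $[e|x_2]_2 \circ \Delta = [e|x_1]_1$, $[x_1|e]_2 \circ \Delta = [x_1|e]_1$, and $[e|x_2]_2 \circ [x_1|e]_2 \circ \Delta = [e|e]_1 = \eta_2 \circ \epsilon_1$. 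Since every $v \in K(1) \subset \mathrm{Ker}(G(\epsilon_1))$ kills the last term, the four resulting summands assemble into $G(\theta)(v)$, yielding $\widetilde{P}_1(G)(1) = K(1) \cap \mathrm{Ker}(G(\theta))$.

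It remains to upgrade this intersection to an equality by showing $\mathrm{Ker}(G(\theta)) \subseteq K(1)$. The key move is to precompose $\theta$ with the morphism $[x_1]_2 : 2 \to 1$ of $\mathsf{gr}^{\mathsf{op}}$: a short computation in free groups gives $[x_1]_2 \circ [x_1|x_1]_1 = \mathrm{id}_1$, $[x_1]_2 \circ [x_1|e]_1 = \mathrm{id}_1$ and $[x_1]_2 \circ [e|x_1]_1 = p_1$, whence $[x_1]_2 \circ \theta = -p_1$. Applying $G$ to this identity, $G(\theta)(v) = 0$ forces $G(p_1)(v) = 0$, i.e.\ $v \in K(1)$, and the proof is complete. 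The only mildly delicate point is keeping the opposite-category conventions straight when expanding compositions in $\mathsf{gr}^{\mathsf{op}}$; once that bookkeeping is carried out, the remainder is pure linear algebra with commuting idempotents, and no deeper obstacle appears.
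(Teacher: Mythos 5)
Your proof is correct and takes essentially the same route as the paper: the paper's proof reduces, via the auxiliary functors $\hat{F}_i(1,\ldots,1)$, to the one-variable identity $\widetilde{P}_1(G)(1) = \mathrm{Ker}\left(G(\theta)\right)$, which it asserts holds ``by definition.'' Your expansion of the commuting idempotents in $\chi_K([1,1]) \circ K(\Delta)$, and in particular the extra verification $\mathrm{Ker}\left(G(\theta)\right) \subseteq \mathrm{Ker}\left(G(p_1)\right)$ via the relation $[x_1]_2 \circ \theta = -p_1$, correctly supply exactly the unwinding that the paper leaves implicit.
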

\begin{proof}
For $X_1 = X_2 = \cdots =X_n = 1 \in \mathsf{gr}^{\mathsf{op}}$, we have 
$$\widetilde{P}_1 \left( \hat{F}_i ( X_1, \cdots, X_{i-1}, X_{i+1} , \cdots , X_n ) \right) (X_i ) = \mathrm{Ker} \left( F (\mathrm{id}_{i-1} \ast \theta \ast \mathrm{id}_{n-i}) \right)$$
by definition.
\end{proof}

The category $\mathcal{F} ( \mathsf{gr}^{\mathsf{op}} ; \mathds{k} )$ is equivalent with the category of $\mathds{k}$-linear functors from $\mathds{k} \mathsf{gr}^{\mathsf{op}}$ to $\mathsf{Mod}_{\mathds{k}}$ via the linear extension.
Based on this observation, we regard $F \in \mathcal{F} ( \mathsf{gr}^{\mathsf{op}} ; \mathds{k} )$ as the corresponding linear functor $F : \mathds{k} \mathsf{gr}^{\mathsf{op}} \to \mathsf{Mod}_{\mathds{k}}$.
Recall the functor $\mathcal{E} : \mathsf{Cat}_{\mathfrak{Ass}^u} \hookrightarrow \mathds{k} \mathsf{gr}^{\mathsf{op}}$ in Definition \ref{202401101407}.
We define
$$\mathcal{E}^\ast F {:=} F \circ \mathcal{E} .$$ 
For $n \in \mathds{N}$, the structure of $\mathcal{E}^\ast F$ yields a $\mathds{k}$-linear natural transformation, 
$$
J_n : \mathsf{Cat}_{\mathfrak{Ass}^u} (n , - ) \otimes F(n) \to \mathcal{E}^\ast F 
;~ f \otimes v  \mapsto \left(( \mathcal{E} ^\ast F) (  f ) \right) (v) .
$$
By applying the tensor-hom adjunction, we obtain a homomorphism
$$
\check{J}_n : F(n) \to  \mathrm{Nat} \left( \mathsf{Cat}_{\mathfrak{Ass}^u} (n , - )  , \mathcal{E}^\ast F \right) .
$$

In the following statements, we recall the notation $(x_1 \otimes \cdots \otimes x_n) \in \mathsf{Cat}_{\mathfrak{Ass}^u} ( n , n )$ of the identity and the generators $\Delta, \nabla, \eta, \epsilon, \gamma$ of the category $\mathsf{gr}^{\mathsf{op}}$ (see section \ref{202212161644}).
\begin{theorem}
\label{202212161526}
Let $n \in \mathds{N}$.
For $F \in \mathcal{F} ( \mathsf{gr}^{\mathsf{op}} ; \mathds{k} )$, we consider $\check{J}_n$ defined as above.
The restriction of $\check{J}_n$ along the inclusion $\left( \mathcal{P} (F) \right)(n) \subset F (n)$ factors through $\mathrm{Nat}  \left( {}_\Delta \mathsf{Cat}_{\mathfrak{Ass}^u} (n , - )  ,  F \right)$.
In other words, there exists a homomorphism $\check{I}_n :  (\mathcal{P}(F)) (n) \to \mathrm{Nat}  \left( {}_\Delta \mathsf{Cat}_{\mathfrak{Ass}^u} (n , - )  ,  F \right)$ making the following diagram commutative:
$$
\begin{tikzcd}
 F(n) \ar[r, "\check{J}_n"] &  \mathrm{Nat} \left( \mathsf{Cat}_{\mathfrak{Ass}^u} (n , - )  , \mathcal{E}^\ast F \right)  \\
 (\mathcal{P}(F)) (n) \ar[r, "\check{I}_n"] \ar[u, hookrightarrow] & \mathrm{Nat}  \left( {}_\Delta \mathsf{Cat}_{\mathfrak{Ass}^u} (n , - )  ,  F \right) . \ar[u, hookrightarrow]
\end{tikzcd}
$$
\end{theorem}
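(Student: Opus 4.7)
The plan is to take for $\check{I}_n (v)$ the same pointwise formula as $\check{J}_n (v)$, namely $f \mapsto F (\mathcal{E} (f))(v)$, and to verify that it is natural with respect to all morphisms of $\mathsf{gr}^{\mathsf{op}}$, not only those coming from $\mathsf{Cat}_{\mathfrak{Ass}^u}$. Once this enhanced naturality is established, commutativity of the displayed square will be automatic, because the right-hand vertical map is restriction along $\mathcal{E}$ and $\mathcal{E}$ is the identity on objects, so the two compositions give identical values on every $f$.

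For a morphism $g : m \to m'$ in $\mathsf{gr}^{\mathsf{op}}$, write $g \cdot f \in {}_\Delta \mathsf{Cat}_{\mathfrak{Ass}^u} (n, m')$ for the action induced from the bimodule $\mathbb{H}$. The required naturality identity is $F (g \circ \mathcal{E}(f))(v) = F (\mathcal{E} (g \cdot f))(v)$. Proposition \ref{202312062129} realizes $(\mathcal{P}(F))(n)$ as the common kernel of the maps $F (\mathrm{id}_{i-1} \ast \theta \ast \mathrm{id}_{n-i})$ for $1 \leq i \leq n$, so the Yoneda map $h \mapsto F(h)(v) : \mathds{k} \mathsf{gr}^{\mathsf{op}} (n, -) \to F$ annihilates the left ideal $\mathcal{I} (n, -)$. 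Therefore it suffices to establish the purely structural congruence
\[
g \circ \mathcal{E}(f) \equiv \mathcal{E}(g \cdot f) \pmod{\mathcal{I}(n, m')} ,
\]
equivalently, that $f \mapsto [\mathcal{E}(f)]$ defines a morphism of $\mathsf{gr}^{\mathsf{op}}$-modules from ${}_\Delta \mathsf{Cat}_{\mathfrak{Ass}^u}(n, -)$ to the quotient $\mathds{k} \mathsf{gr}^{\mathsf{op}}(n, -) / \mathcal{I} (n, -)$, which is pointwise surjective by Theorem \ref{202312061920}.

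I plan to verify this congruence on a generating set for $\mathsf{gr}^{\mathsf{op}}$: it reduces to checking it for $\Delta, \nabla, \eta, \epsilon, \gamma$ together with permutations, acting on formal words $f = w_1 \otimes \cdots \otimes w_m$. The cases of $\nabla$, $\eta$ and permutations will be literal equalities in $\mathds{k}\mathsf{gr}^{\mathsf{op}}$, since $\mathcal{E}$ transports the corresponding $\mathbb{H}$-action onto the concatenation of formal words. For $\epsilon$, the key observation is that the unique morphism $n \to 0$ already lies in $\mathcal{I}(n, 0)$, obtained by composing $\mathrm{id}_{i-1} \ast \theta \ast \mathrm{id}_{n-i}$ with the unique morphism $n+1 \to 0$; this kills the discrepancy between $\epsilon \circ [w]_n$ and $\mathcal{E}(\varepsilon_{H_n}(w))$ for non-unit monomials. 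For the antipode $\gamma$, Lemma \ref{202307142237} gives $[x_i^{-1}]_n \equiv -[x_i]_n$, matching $\gamma \cdot x_i = -x_i$ in $H_n$.

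The main obstacle is the comultiplication case $g = \Delta$: for a monomial $w = x_{i_1} \cdots x_{i_k}$ one must establish
\[
[w \mid w]_n \equiv \sum_{S \subseteq \{1, \dots, k\}} [w_S \mid w_{\bar{S}}]_n \pmod{\mathcal{I}(n, 2)} ,
\]
so that the iterated fold coincides modulo $\mathcal{I}$ with the iterated shuffle comultiplication. I expect to prove this by induction on $k$, applying the two-bar extension of the $\theta$-relation exploited in the proof of Theorem \ref{202312061920}, together with Lemma \ref{202307142238}, to split each variable $x_{i_j}$ across the two tensor factors. Once the single-monomial case is settled, the full congruence for arbitrary $f \in {}_\Delta \mathsf{Cat}_{\mathfrak{Ass}^u}(n, m)$ and arbitrary $g \in \mathsf{gr}^{\mathsf{op}}$ will follow by bilinearity and by the functoriality of composition in $\mathsf{gr}^{\mathsf{op}}$.
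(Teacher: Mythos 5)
Your proposal is correct, but it takes a genuinely different route from the paper's proof. The paper never establishes your structural congruence $g \circ \mathcal{E}(f) \equiv \mathcal{E}(g \cdot f) \pmod{\mathcal{I}}$. Instead, writing $\Phi = \check{J}_n(v)$ and $K_n = {}_\Delta \mathsf{Cat}_{\mathfrak{Ass}^u}(n,-)$, it expresses $a = \left( K_n(\mathcal{E}(a)) \right)(x_1 \otimes \cdots \otimes x_n)$, invokes Theorem \ref{202312061920} only for the \emph{existence} of some $g' \in \mathsf{Cat}_{\mathfrak{Ass}^u}(n,l)$ with $\mathcal{E}(g') \equiv f \circ \mathcal{E}(a) \pmod{\mathcal{I}(n,l)}$, and then exploits two annihilation facts: $F(\mathcal{I}(n,-))(v) = 0$ by primitivity of $v$ (Proposition \ref{202312062129} together with the left-ideal property, Proposition \ref{202312091726}) --- which you also use --- and, in addition, $\left( K_n(\mathcal{I}(n,-)) \right)(x_1 \otimes \cdots \otimes x_n) = 0$, which holds because each $x_i$ is primitive in $H_n$, so the canonical element absorbs the $\theta$-relations on the bimodule side as well. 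This lets the paper pass the computation through $\mathcal{E}(g')$ on both the $K_n$-side and the $F$-side without ever identifying $g'$ with the bimodule action $g \cdot f$: existence of a lift modulo $\mathcal{I}$ suffices, and naturality of $\Phi$ over $\mathsf{Cat}_{\mathfrak{Ass}^u}$ does the rest. Your approach proves the stronger statement that $f \mapsto \pi(\mathcal{E}(f))$ is a morphism of $\mathsf{gr}^{\mathsf{op}}$-modules from $K_n$ to $\mathds{k}\mathsf{gr}^{\mathsf{op}}(n,-)/\mathcal{I}(n,-)$; that buys a cleaner conceptual picture (naturality of $\check{I}_n(v)$ becomes a one-line consequence, and your congruence explains \emph{why} Theorem \ref{202312061920} can hold), at the cost of the generator-by-generator verification, of which the $\Delta$-case induction is genuine new work. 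The paper's argument is shorter precisely because it recycles Theorem \ref{202312061920} wholesale and needs no explicit formula matching.

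Two spots in your plan need tightening, though both are within reach of the paper's lemmas. First, your device for $\epsilon$ (the trivial morphism $n \to 0$ lies in $\mathcal{I}(n,0)$, since composing $\mathrm{id}_{i-1} \ast \theta \ast \mathrm{id}_{n-i}$ with the trivial morphism $n+1 \to 0$ yields $-[\,]_n$) only covers the one-bar case $m=1$; for $m \geq 2$ the discrepancy is $[w_1|\cdots|\widehat{w_j}|\cdots|w_m]_n$, and there you should argue that a non-unit $w_j$ contains a variable $x_i$ which, by multilinearity of elements of $\mathsf{Cat}_{\mathfrak{Ass}^u}(n,m)$, occurs in no other bar, so the second part of Lemma \ref{202307142220} gives $[w_1|\cdots|\widehat{w_j}|\cdots|w_m]_n \equiv 0$. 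Second, for $\gamma$ your single-variable check $[x_i^{-1}]_n \equiv -[x_i]_n$ does not suffice: on a word $w_j = x_{a_1}\cdots x_{a_k}$ of length $k \geq 2$ you must match $[\cdots|w_j^{-1}|\cdots]_n$ with $\mathcal{E}(S(w_j)) = (-1)^k [\cdots|x_{a_k}\cdots x_{a_1}|\cdots]_n$, which requires the barred extension of Lemma \ref{202307142237} with all exponents equal to $-1$ applied to the reversed word. With these repairs, and the $\Delta$-case handled by the induction you describe (splitting each of the $n$ doubled variables with one $\theta$-relation, which produces exactly the sum over subsets $S$ matching the comultiplication of $H_n$ on multilinear words), your argument goes through.
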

\begin{proof}
Let $v \in \left( \mathcal{P} (F) \right)(n)$.
Consider $\Phi$ the $\mathds{k}$-linear natural transformation $\check{J}_n (v)$ from $\mathsf{Cat}_{\mathfrak{Ass}^u} (n , -)$ to $\mathcal{E}^\ast F$.
In particular, we have
$$\Phi_n (x_1 \otimes x_2 \cdots \otimes x_n) = v . $$
We prove that $\Phi$ gives a natural transformation from $K_n$ to $F$.
Put $K_n = {}_{\Delta} \mathsf{Cat}_{\mathfrak{Ass}^u} (n , -) \in \mathcal{F} ( \mathsf{gr}^{\mathsf{op}} ; \mathds{k} )$.
Note that $\mathcal{E}^\ast K_n =  \mathsf{Cat}_{\mathfrak{Ass}^u} (n , -)$.
Let $f : m \to l$ be a morphism in $\mathds{k} \mathsf{gr}^{\mathsf{op}}$.
It suffices to prove that the diagram below commutes:
$$
\begin{tikzcd}
K_n (m) \ar[r, "\Phi_m"] \ar[d, "K_n(f)"] & F(m) \ar[d, "F(f)"'] \\
K_n (l) \ar[r, "\Phi_l"] & F(l)
\end{tikzcd}
$$
Let $a \in K_n(m)$.
Below, we prove that $(\Phi_l \circ K_n (f) )(a) = \left( F(f) \circ \Phi_m \right) (a)$.
Note that we have 
$$a = \left( \mathcal{E}^\ast K_n (a ) \right) (x_1 \otimes x_2  \cdots \otimes x_n) .$$
where we regard $a$ on the right hand side as an element of $\mathsf{Cat}_{\mathfrak{Ass}^u} (n , m)$.
By Theorem \ref{202312061920}, there exists $g \in \mathsf{Cat}_{\mathfrak{Ass}^u} ( n,l)$ such that $\mathcal{E} (g) - f \circ \mathcal{E} (a) \in \mathcal{I} (n,l)$.
\begin{align*}
(\Phi_l \circ K_n (f) )(a) &= (\Phi_l \circ K_n (f) \circ \mathcal{E}^\ast K_n (a)) (x_1 \otimes x_2 \cdots \otimes x_n ) , \\
&= \left(\Phi_l \circ K_n (f \circ \mathcal{E} (a)) \right) (x_1 \otimes x_2 \cdots \otimes x_n ) , \\
&= ( \Phi_l \circ K_n ( \mathcal{E} (g) ) ) (x_1 \otimes x_2 \cdots \otimes x_n ) .
\end{align*}
where the final equality follows from the fact $\left( K_n ( \mathcal{I} (n,l) ) \right) ( x_1 \otimes x_2 \cdots \otimes x_n )= 0$, which follows from $\left( K_n (\mathrm{id}_{i-1} \ast \theta \ast \mathrm{id}_{n-i}) \right) ( x_1 \otimes x_2 \cdots \otimes x_n )= 0$.
The assumption $\Phi \in \mathrm{Nat} ( \mathcal{E}^\ast K_n , \mathcal{E}^\ast F)$ implies that
\begin{align*}
( \Phi_l \circ K_n ( \mathcal{E} (g) ) ) (x_1 \otimes x_2 \cdots \otimes x_n ) & = ( F( \mathcal{E} (g) ) \circ \Phi_n )(x_1 \otimes x_2 \cdots \otimes x_n ) , \\
&= \left( F( \mathcal{E} (g) ) \right) ( v ) .
\end{align*}
Note that we have $\left( F( \mathcal{I}( l , n) ) \right) (v) = 0$, since $v \in \left( \mathcal{P} (F) \right)(n)$ by the hypothesis and Proposition \ref{202312062129}.
Thus, we obtain $\left( F( \mathcal{E} (g) ) \right) ( v ) = \left( F ( f\circ \mathcal{E} (a) ) \right) (v)$.
By the assumption $\Phi \in \mathrm{Nat} ( \mathcal{E}^\ast K_n , \mathcal{E}^\ast F)$ again, we obtain
\begin{align*}
\left( F ( f\circ \mathcal{E} (a) ) \right) (v) &= \left( F(f) \circ \mathcal{E}^\ast F (a)  \circ \Phi_n \right) (x_1 \otimes x_2 \cdots \otimes x_n ) , \\
&= \left( F(f) \circ \Phi_m \circ \mathcal{E}^\ast K_n (a) \right) (x_1 \otimes x_2 \cdots \otimes x_n ) , \\
&= \left( F(f) \circ \Phi_m \right) (a) .
\end{align*}
\end{proof}

\begin{Defn}
\label{202312061102}
We denote by $I_n$ the induced natural transformation by applying the tensor-hom adjunction to $\check{I}_n$:
$$I_n : ~{}_{\Delta}\mathsf{Cat}_{\mathfrak{Ass}^u} (n , - ) \otimes \left( \mathcal{P} (F) \right)(n) \to F . $$
\end{Defn}

Then the following is well-defined since $I_k$'s are $\mathsf{gr}^{\mathsf{op}}$-homomorphisms by Theorem \ref{202212161526}.

\begin{Defn}
\label{202211251229}
For $F \in \mathcal{F} ( \mathsf{gr}^{\mathsf{op}} ; \mathds{k} )$, we define {\it the primitive filtration} of $F$:
$$
\begin{tikzcd}
& & F  & \\
 \cdots \ar[r, hookrightarrow] \ar[urr, shift left, hookrightarrow] & Q_n (F) \ar[r, hookrightarrow] \ar[ur, hookrightarrow] & Q_{n+1} (F) \ar[r, hookrightarrow] \ar[u, hookrightarrow] & \cdots \ar[ul, hookrightarrow]
\end{tikzcd}
$$
where $Q_n (F) \in \mathcal{F} ( \mathsf{gr}^{\mathsf{op}} ; \mathds{k} )$ is the image of $\sum_{k \leq n} I_k :  \bigoplus_{k \leq n} \left( {}_{\Delta} \mathsf{Cat}_{\mathfrak{Ass}^u} ( k, - ) \otimes \left( \mathcal{P} (F)  \right) (k) \right) \to F$.
The $\mathsf{gr}^{\mathsf{op}}$-module $F$ is {\it primitive} if $F \cong \mathrm{colim}_{n} ~ Q_n(F)$.
\end{Defn}

\begin{prop}
\label{202212160131}
Let $F \in \mathcal{F} ( \mathsf{gr}^{\mathsf{op}} ; \mathds{k} )$.
We have $Q_n(F) \subset P_n (F)$.
In particular, every primitive $\mathsf{gr}^{\mathsf{op}}$-module is analytic.
\end{prop}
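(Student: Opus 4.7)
The plan is to show that $Q_n(F)$ is itself a $\mathsf{gr}^{\mathsf{op}}$-module of degree at most $n$; once this is established, the universal property of $P_n$ (Definition \ref{202207202245} together with Proposition \ref{202207202250}) forces the inclusion $Q_n(F) \hookrightarrow F$ to factor through the counit $P_n(F) \hookrightarrow F$, yielding $Q_n(F) \subset P_n(F)$ as subfunctors of $F$. The second assertion will then follow at once: if $F \cong \mathrm{colim}_n\, Q_n(F)$ then the chain $Q_n(F) \subset P_n(F) \subset F$ gives $F \subset \mathrm{colim}_n P_n(F) \subset F$, so $F$ is analytic.

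To prove that $Q_n(F)$ has degree at most $n$, I would establish two stability properties of the polynomial filtration, both direct from the explicit idempotent description of cross-effects in (\ref{202211251145}). First, tensoring with a constant $\mathds{k}$-module preserves degree: for $G \in \mathcal{F}(\mathsf{gr}^{\mathsf{op}};\mathds{k})$ and $V \in \mathsf{Mod}_{\mathds{k}}$, the idempotent factors as $\chi_{G \otimes V}([X_i]) = \chi_G([X_i]) \otimes \mathrm{id}_V$, hence $\mathrm{cr}^{k+1}(G \otimes V) \cong \mathrm{cr}^{k+1}(G) \otimes V$. Second, direct sums preserve degree, since $\chi$ splits over direct summands. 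Combined with Proposition \ref{202312050902}, which gives $\mathrm{deg}({}_\Delta\mathsf{Cat}_{\mathfrak{Ass}^u}(k,-)) \leq k$, these two facts imply that
$$\bigoplus_{k \leq n}\bigl({}_\Delta\mathsf{Cat}_{\mathfrak{Ass}^u}(k,-) \otimes (\mathcal{P}(F))(k)\bigr)$$
has degree at most $n$.

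It remains to check that surjective natural transformations preserve the property of having degree at most $n$. For a surjection $q \colon G \twoheadrightarrow H$, naturality gives $\chi_H \circ q = q \circ \chi_G$, and surjectivity of $q$ on each component then yields $\mathrm{Im}(\chi_H) = q(\mathrm{Im}(\chi_G))$; consequently $\mathrm{cr}^{k+1}(G) \cong 0$ implies $\mathrm{cr}^{k+1}(H) \cong 0$. Applying this to the defining surjection $\bigoplus_{k \leq n}\bigl({}_\Delta\mathsf{Cat}_{\mathfrak{Ass}^u}(k,-) \otimes (\mathcal{P}(F))(k)\bigr) \twoheadrightarrow Q_n(F)$ from Definition \ref{202211251229} concludes that $Q_n(F) \in \mathcal{F}_{\leq n}(\mathsf{gr}^{\mathsf{op}};\mathds{k})$, and the adjunction argument above then closes the proof. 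I do not anticipate a serious obstacle; the entire argument reduces to the three elementary preservation properties (tensor with a constant module, direct sum, and quotient) of the polynomial filtration, each of which is immediate from the explicit idempotent formula for $\chi$.
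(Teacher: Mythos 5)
Your proof is correct and takes essentially the same route as the paper: both arguments rest on Proposition \ref{202312050902}, the stability of degree $\leq n$ under tensoring with a constant $\mathds{k}$-module and under direct sums, and the adjunction characterizing $P_n$. The only cosmetic difference is that the paper applies the functor $P_n$ directly to the natural transformation $\sum_{k \leq n} I_k$ and factors it through $P_n(F)$ before passing to images, which lets it bypass your additional (also correct) lemma that surjective natural transformations preserve degree $\leq n$.
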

\begin{proof}
Recall $I_n$ in Theorem \ref{202212161526}.
We apply the polynomial approximation functor $P_n : \mathcal{F} ( \mathsf{gr}^{\mathsf{op}} ; \mathds{k} ) \to \mathcal{F}_{\leq n} ( \mathsf{gr}^{\mathsf{op}} ; \mathds{k} )$ to the natural transformation $\sum_{k \leq n} I_k$.
By Proposition \ref{202312050902}, the application yields a natural transformation,
$$\bigoplus_{k \leq n} \left( {}_{\Delta} \mathsf{Cat}_{\mathfrak{Ass}^u} ( k, -) \otimes \left( \mathcal{P} (F)  \right) (k) \right) \to P_n (F) .$$
The definition of $Q_n(F)$ implies that $Q_n(F) \subset P_n (F)$.
\end{proof}

\subsection{Representability of the induced $\mathsf{Cat}_{\mathfrak{Lie}}$-module}
\label{202212202131Japan}

In this section, we show that the $\Sigma$-module structure on $\mathcal{P} (F)$ extends to a left $\mathsf{Cat}_{\mathfrak{Lie}}$-module structure.
Especially, we have a functor $\mathcal{P} : \mathcal{F} ( \mathsf{gr}^{\mathsf{op}} ; \mathds{k} ) \to \mathcal{F}_{\mathfrak{Lie}}$.
Furthermore, we go further by proving that the $( \mathds{k} \mathsf{gr}^{\mathsf{op}} , \mathsf{Cat}_{\mathfrak{Lie}})$-bimodule ${}_{\Delta} \mathsf{Cat}_{\mathfrak{Ass}^u}$ {\it represents} the functor $\mathcal{P} : \mathcal{F} ( \mathsf{gr}^{\mathsf{op}} ; \mathds{k} ) \to \mathcal{F}_{\mathfrak{Lie}}$.

Let $\mathcal{B} : \mathsf{Cat}_{\mathfrak{Lie}} \to \mathsf{Cat}_{\mathfrak{Ass}^u}$ be the functor induced by the operad homomorphism $\mathfrak{Lie} \to \mathfrak{Ass}^u$.
\begin{Lemma}
\label{202312061047}
For $n,m \in \mathds{N}$, the functor $\mathcal{B}$ induces a linear map 
$$\widetilde{\mathcal{B}}_{n,m} : \mathsf{Cat}_{\mathfrak{Lie}} (n, m) \to \mathcal{P} \left( {}_\Delta \mathsf{Cat}_{\mathfrak{Ass}^u} (n , - )  \right) (m) . $$
\end{Lemma}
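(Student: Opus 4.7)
The plan is to set $\widetilde{\mathcal{B}}_{n,m}(f) := \mathcal{B}(f)$, viewed in $\mathsf{Cat}_{\mathfrak{Ass}^u}(n,m) = F(m)$ for $F := {}_\Delta \mathsf{Cat}_{\mathfrak{Ass}^u}(n,-)$, and then to verify that the image lies in $\mathcal{P}(F)(m)$. By Proposition \ref{202312062129}, this amounts to checking that
$$F\bigl(\mathrm{id}_{i-1} \ast \theta \ast \mathrm{id}_{m-i}\bigr)\bigl(\mathcal{B}(f)\bigr) = 0 \quad \text{for every } 1 \leq i \leq m,$$
where $\theta = [x_1|x_1]_1 - [e|x_1]_1 - [x_1|e]_1$. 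Since the bimodule embedding $\iota' : F \hookrightarrow \mathbb{H}(n,-) = H_n^{\otimes -}$ from section \ref{Korea202306141722} is $\mathds{k}\mathsf{gr}^{\mathsf{op}}$-linear, it suffices to perform the check inside $H_n^{\otimes m}$.

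The first step is to identify the $\mathsf{gr}^{\mathsf{op}}$-action of $\theta$ on $\mathbb{H}(n,-)$. Because $\mathbb{H}(n,-)$ is the exponential $\mathsf{gr}^{\mathsf{op}}$-module associated to the shuffle Hopf algebra $H_n$ under the Pirashvili correspondence of Remark \ref{202207311722}, the morphisms $[x_1|x_1]_1$, $[e|x_1]_1$, $[x_1|e]_1$ act on $H_n$ as $\Delta_{H_n}$, $a \mapsto 1 \otimes a$, and $a \mapsto a \otimes 1$ respectively. Hence $\mathbb{H}(n,\theta)(a) = \Delta_{H_n}(a) - 1 \otimes a - a \otimes 1$, and by the exponential property $\mathbb{H}(n, \mathrm{id}_{i-1} \ast \theta \ast \mathrm{id}_{m-i})$ is the operator acting as $\mathbb{H}(n,\theta)$ on the $i$-th tensor factor and as the identity on the others. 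In particular, it annihilates any pure tensor $a_1 \otimes \cdots \otimes a_m$ whose $i$-th factor $a_i$ is primitive in $H_n$.

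The second step is to observe that $\iota'(\mathcal{B}(f))$ is a linear combination of such pure tensors. Indeed, by the decomposition $\mathsf{Cat}_{\mathfrak{Lie}}(n,m) = \bigoplus_{g : \underline{n} \to \underline{m}} \bigotimes_{j=1}^{m} \mathfrak{Lie}(g^{-1}(j))$ and the construction of $\iota'$ via the embedding $\iota$ of section \ref{202401101351}, $\iota'(\mathcal{B}(f))$ is a linear combination of tensors $w_1 \otimes \cdots \otimes w_m \in H_n^{\otimes m}$ in which each factor $w_j$ is an iterated Lie bracket in the generators $x_1, \ldots, x_n$ of $H_n$. Because the generators $x_k$ are primitive in the shuffle Hopf algebra $H_n$ by construction, and primitive elements are closed under the commutator bracket $[p,q] = pq - qp$, each $w_j$ lies in $\mathrm{Prim}(H_n)$. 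Combining this with the first step yields the required vanishing for every $i$, so that $\mathcal{B}(f) \in \mathcal{P}(F)(m)$.

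No serious obstacle arises: once the three summands of $\theta$ are identified with the Hopf operations on $H_n$ via Pirashvili's equivalence, and once the primitivity of iterated Lie brackets in the shuffle Hopf algebra is invoked, the statement follows by direct inspection. The only point requiring care is the bookkeeping of source/target in the $\mathsf{gr}^{\mathsf{op}}$-action, which is pinned down by the generators $\Delta, \eta, \epsilon$ of section \ref{202208181550} together with Remark \ref{202207311722}.
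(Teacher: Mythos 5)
Your proof is correct and takes essentially the same route as the paper: both reduce via Proposition \ref{202312062129} to the vanishing of $F(\mathrm{id}_{i-1} \ast \theta \ast \mathrm{id}_{m-i})$ on $\mathcal{B}(f)$, transfer the check along the embedding $\iota^\prime$ into $\mathbb{H}(n,-) = H_n^{\otimes -}$, and conclude from the primitivity of iterated Lie brackets in $H_n$ together with the fact that the $\theta$-action (reduced comultiplication in the $i$-th slot) annihilates tensors with a primitive entry there. The paper merely packages the transfer step as a pullback diagram, factoring through the kernel $M_r \subset \mathbb{H}(n,m)$, which is the same argument you carry out by direct computation.
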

\begin{proof}
Recall the notation in Definition \ref{202312062128}.
Let $f_r = \mathrm{id}_{r-1} \ast \theta \ast \mathrm{id}_{m-r} : m \to (m+1)$ and put $L_r = \mathrm{Ker} \left( {}_\Delta \mathsf{Cat}_{\mathfrak{Ass}^u} (n , f_r ) \right)$ for $1 \leq r \leq m$.
We have $\bigcap^{m}_{r=1} L_r = \mathcal{P} \left( {}_\Delta \mathsf{Cat}_{\mathfrak{Ass}^u} (n , - )  \right) (m)$ by Proposition \ref{202312062129}, so that it suffices to give a factorization of $\mathcal{B}$ as $ \mathsf{Cat}_{\mathfrak{Lie}} (n, m) \to L_r$ for arbitrary $r$.
Recall that in section \ref{Korea202306141722} we introduce ${}_\Delta \mathsf{Cat}_{\mathfrak{Ass}^u} (n , m )$ as the submodule of $\mathbb{H}(n,m)$ the tensor product of noncommutative polynomial bialgebras.
Let $M_r$ be the kernel of $\mathbb{H} (n,f_r ) :\mathbb{H} (n,m) \to \mathbb{H} (n,m+1)$.
By definition of ${}_\Delta \mathsf{Cat}_{\mathfrak{Ass}^u}$ (in section \ref{Korea202306141722}), we obtain the following commutative diagram whose rows and columns are all exact:
$$
\begin{tikzcd}
& 0 \ar[d] & 0 \ar[d] & 0 \ar[d] \\
0 \ar[r]  & L_r \ar[r] \ar[d]& {}_\Delta \mathsf{Cat}_{\mathfrak{Ass}^u} (n , m )\ar[r, "{}_\Delta \mathsf{Cat}_{\mathfrak{Ass}^u} (n {,} f_r )"]  \ar[d] & {}_\Delta \mathsf{Cat}_{\mathfrak{Ass}^u} (n , m+1 )  \ar[d] \\
0 \ar[r] & M_r \ar[r] & \mathbb{H} (n,m) \ar[r, "\mathbb{H} (n {,}  f_r)"] & \mathbb{H} (n, m+1) .
\end{tikzcd}
$$
One may apply this diagram to prove that $L_r$ is the pullback of the inclusions ${}_\Delta \mathsf{Cat}_{\mathfrak{Ass}^u} (n , m ) \to \mathbb{H}(n,m)$ and $M_r \to \mathbb{H} (n,m)$.
Hence, it suffices to construct $h : \mathsf{Cat}_{\mathfrak{Lie}} ( n ,m) \to M_r$ such that the diagram below commutes:
$$
\begin{tikzcd}
\mathsf{Cat}_{\mathfrak{Lie}} ( n ,m)  \ar[dr, dashrightarrow] \ar[drr, bend left, "\mathcal{B}"] \ar[ddr, bend right] & & \\
& L_r  \ar[d] \ar[r] & {}_\Delta \mathsf{Cat}_{\mathfrak{Ass}^u} (n , m ) \ar[d] \\
& M_r \ar[r] & \mathbb{H} (n,m) .
\end{tikzcd}
$$

The iterated Lie brackets of variables $x_1, x_2, \cdots , x_n$ lie in $\mathrm{Prim} (H_n)$.
Hence, we obtain a linear map $h^\prime : \mathsf{Cat}_{\mathfrak{Lie}} ( n ,m) \to \mathrm{Prim} (H_n)^{\otimes m}$.
On the one hand, the inclusion $i : \mathrm{Prim} (H_n) \to H_n$ induces $\mathrm{Prim} (H_n)^{\otimes m} \to M_r$ due to the following commutative diagram and the exactness of the row:
$$
\begin{tikzcd}
0 \ar[r] & M_r \ar[r] & \mathbb{H}(n,m) \ar[r, "\mathbb{H}(n{,} f_r)"] & \mathbb{H} (n, m+1) \\
& & \mathrm{Prim} (H_n)^{\otimes m} \ar[ur, "0"'] \ar[u] \ar[ul, dashrightarrow, "h^{\prime\prime}"] &
\end{tikzcd}
$$
Therefore, $h = h^{\prime\prime} \circ h^\prime : \mathsf{Cat}_{\mathfrak{Lie}} ( n ,m) \to  M_r$ is the linear map which satisfies our requirement.
\end{proof}

The inclusion $\mathcal{P}\left( {}_\Delta \mathsf{Cat}_{\mathfrak{Ass}^u} (n , - ) \right) \to  {}_\Delta \mathsf{Cat}_{\mathfrak{Ass}^u} (n ,  -)$ induces a natural transformation $\mathcal{P}\left( {}_\Delta \mathsf{Cat}_{\mathfrak{Ass}^u} (n , - ) \right) \otimes M \to  {}_\Delta \mathsf{Cat}_{\mathfrak{Ass}^u} (n , -) \otimes M$ for a $\mathds{k}$-module $M$.
By definition, this factors through the inclusion $\mathcal{P} \left(  {}_\Delta \mathsf{Cat}_{\mathfrak{Ass}^u} (n , - ) \otimes M \right) \to {}_\Delta \mathsf{Cat}_{\mathfrak{Ass}^u} (n , - ) \otimes M$, and we obtain the factorization:
$$\mathcal{U}^{M}_{n} : \mathcal{P}\left( {}_\Delta \mathsf{Cat}_{\mathfrak{Ass}^u} (n , - ) \right) \otimes M \to \mathcal{P} \left(  {}_\Delta \mathsf{Cat}_{\mathfrak{Ass}^u} (n , - ) \otimes M \right) .$$
By using this observation with $\widetilde{\mathcal{B}}_{n,m}$ in Lemma \ref{202312061047} and $I_n$ in Definition \ref{202312061102}, we introduce the following.
\begin{Defn} \label{202312081139}
For $n, m \in \mathds{N}$, we define
 $\mathcal{L}_{n,m} : \mathsf{Cat}_{\mathfrak{Lie}} (n,m) \otimes (\mathcal{P} (F) )(n) \to  \left( \mathcal{P} ( F) \right) (m)$ by the composition below.
$$
\begin{tikzcd}
\mathsf{Cat}_{\mathfrak{Lie}} (n,m) \otimes (\mathcal{P} (F) )(n) 
\ar[r, "\widetilde{\mathcal{B}}_{n{,}m}\otimes \mathrm{id}"] &  \mathcal{P} \left( {}_\Delta \mathsf{Cat}_{\mathfrak{Ass}^u} (n , - )  \right) (m) \otimes (\mathcal{P} (F) )(n)  \ar[d, "\mathcal{U}^{(\mathcal{P} (F) )(n) }_{n}"] \\
\left( \mathcal{P} (F) \right) (m)&  \mathcal{P} \left(  {}_\Delta \mathsf{Cat}_{\mathfrak{Ass}^u} (n , - ) \otimes (\mathcal{P} (F) )(n)  \right) (m) \ar[l, "\mathcal{P} (I_n)"] .
\end{tikzcd}
$$
\end{Defn}

The definition of $\mathcal{L}_{n,m}$ is unpacked as follows.
Let $a \in \mathsf{Cat}_{\mathfrak{Lie}} ( n,m)$ and $x \in \left( \mathcal{P} (F) \right) (n)$.
By definitions, we have
\begin{align}
\label{202312081627}
\mathcal{L}_{n,m} ( a \otimes x ) = J_n ( \mathcal{B} ( a) \otimes x ) .
\end{align}

\begin{prop}
\label{202212161700}
Let $F \in \mathcal{F} ( \mathsf{gr}^{\mathsf{op}} ; \mathds{k} )$.
The family $\mathcal{L}_{n,m}$ for $n,m \in \mathds{N}$ gives a $\mathsf{Cat}_{\mathfrak{Lie}}$-module structure on $\mathcal{P} (F)$, which is a sub $\mathsf{Cat}_{\mathfrak{Lie}}$-module of $\mathcal{B}^\ast \mathcal{E}^\ast (F)$.
\end{prop}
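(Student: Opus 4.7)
The plan is to exploit formula (\ref{202312081627}), which identifies $\mathcal{L}_{n,m}(a\otimes x)$ with $(\mathcal{E}^\ast F)(\mathcal{B}(a))(x)$, i.e.\ with the action of $\mathcal{B}(a) \in \mathsf{Cat}_{\mathfrak{Ass}^u}(n,m)$ on $x \in F(n)$ via the $\mathsf{Cat}_{\mathfrak{Ass}^u}$-module $\mathcal{E}^\ast F$. Modulo the restriction to the submodule $\mathcal{P}(F) \subset F$, this is precisely the $\mathsf{Cat}_{\mathfrak{Lie}}$-action defining $\mathcal{B}^\ast\mathcal{E}^\ast F$. Thus, once we know that $\mathcal{P}(F)$ is stable under this action, both the module axioms and the submodule statement follow immediately from the fact that $\mathcal{B}^\ast\mathcal{E}^\ast F$ is already a $\mathsf{Cat}_{\mathfrak{Lie}}$-module.

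The main step—and the only nontrivial one—is to verify that for $a \in \mathsf{Cat}_{\mathfrak{Lie}}(n,m)$ and $x \in \mathcal{P}(F)(n)$, the element $I_n(\mathcal{B}(a)\otimes x) \in F(m)$ lies in $\mathcal{P}(F)(m)$. By Proposition \ref{202312062129}, this is the same as showing $F(f_i)\bigl(I_n(\mathcal{B}(a)\otimes x)\bigr) = 0$ for every $1 \leq i \leq m$, where $f_i = \mathrm{id}_{i-1} \ast \theta \ast \mathrm{id}_{m-i}$. Here I would invoke two ingredients. First, Lemma \ref{202312061047} tells us that $\widetilde{\mathcal{B}}_{n,m}$ factors $\mathcal{B}$ through $\mathcal{P}\bigl({}_\Delta\mathsf{Cat}_{\mathfrak{Ass}^u}(n,-)\bigr)(m)$, so in particular
\[
\bigl({}_\Delta\mathsf{Cat}_{\mathfrak{Ass}^u}(n,f_i)\bigr)(\mathcal{B}(a)) = 0
\]
for every $i$. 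Second, by Theorem \ref{202212161526}, $I_n$ is a natural transformation of $\mathsf{gr}^{\mathsf{op}}$-modules, so the square
\[
\begin{tikzcd}
{}_\Delta\mathsf{Cat}_{\mathfrak{Ass}^u}(n,m)\otimes \mathcal{P}(F)(n) \ar[r, "I_n"] \ar[d, "{}_\Delta\mathsf{Cat}_{\mathfrak{Ass}^u}(n{,}f_i)\otimes \mathrm{id}"'] & F(m) \ar[d, "F(f_i)"] \\
{}_\Delta\mathsf{Cat}_{\mathfrak{Ass}^u}(n,m+1)\otimes \mathcal{P}(F)(n) \ar[r, "I_n"'] & F(m+1)
\end{tikzcd}
\]
commutes. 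Chasing $\mathcal{B}(a)\otimes x$ around the square gives $F(f_i)(I_n(\mathcal{B}(a)\otimes x)) = I_n(0\otimes x) = 0$, as required.

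Once stability is established, unitality and associativity are automatic. Indeed, $\mathcal{B}$ is a functor, so it sends the identity of $n$ in $\mathsf{Cat}_{\mathfrak{Lie}}$ to the identity $x_1\otimes \cdots \otimes x_n$ of $n$ in $\mathsf{Cat}_{\mathfrak{Ass}^u}$, and (\ref{202312081627}) then shows that the action of $\mathrm{id}_n$ is the identity on $\mathcal{P}(F)(n)$. Similarly, for $a \in \mathsf{Cat}_{\mathfrak{Lie}}(n,m)$ and $b \in \mathsf{Cat}_{\mathfrak{Lie}}(m,l)$, using $\mathcal{B}(b\circ a) = \mathcal{B}(b)\circ\mathcal{B}(a)$ and the functoriality of $\mathcal{E}^\ast F$ gives
\[
\mathcal{L}_{m,l}\bigl(b\otimes \mathcal{L}_{n,m}(a\otimes x)\bigr) = (\mathcal{E}^\ast F)(\mathcal{B}(b))\circ (\mathcal{E}^\ast F)(\mathcal{B}(a))(x) = \mathcal{L}_{n,l}(b\circ a \otimes x).
\]
Finally, the submodule assertion is exactly the content of formula (\ref{202312081627}): the inclusion $\mathcal{P}(F)(n) \hookrightarrow F(n)$ intertwines $\mathcal{L}_{n,m}$ with the $\mathsf{Cat}_{\mathfrak{Lie}}$-action on $\mathcal{B}^\ast\mathcal{E}^\ast F$, and stability (established above) makes this inclusion a morphism of $\mathsf{Cat}_{\mathfrak{Lie}}$-modules.
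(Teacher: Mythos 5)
Your proposal is correct and takes essentially the same approach as the paper: associativity and the submodule claim are read off from formula (\ref{202312081627}) together with the functoriality of $\mathcal{B}$ and of $\mathcal{E}^\ast F$, exactly as in the paper's chain of equalities $\mathcal{L}_{m,l}(b \otimes \mathcal{L}_{n,m}(a\otimes x)) = J_n(\mathcal{B}(b\circ a)\otimes x) = \mathcal{L}_{n,l}((b\circ a)\otimes x)$. Your explicit stability check---Lemma \ref{202312061047} combined with the naturality of $I_n$ from Theorem \ref{202212161526}---merely unwinds pointwise what the paper already builds into Definition \ref{202312081139} via the factorization through $\widetilde{\mathcal{B}}_{n,m}$, $\mathcal{U}$ and $\mathcal{P}(I_n)$, and your additional verification of unitality (which the paper leaves implicit) is a harmless bonus.
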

\begin{proof}
It suffices to prove that the diagram below commutes:
$$
\begin{tikzcd}
\mathsf{Cat}_{\mathfrak{Lie}} (m, l) \otimes \mathsf{Cat}_{\mathfrak{Lie}} (n, m) \otimes \left( \mathcal{P} (F) \right) (n) \ar[d, "\mathrm{id} \otimes \mathcal{L}_{n{,}m}"] \ar[r, "\circ \otimes \mathrm{id}"] & \mathsf{Cat}_{\mathfrak{Lie}} (n, l) \otimes  \left( \mathcal{P} (F) \right) (n) \ar[d, "\mathcal{L}_{n{,}l}"]  \\
\mathsf{Cat}_{\mathfrak{Lie}} (m , l) \otimes  \left( \mathcal{P} (F) \right) (m) \ar[r, "\mathcal{L}_{m{,}l}"] &  \left( \mathcal{P} (F) \right) (l) .
\end{tikzcd}
$$
It follows from 
\begin{align*}
\mathcal{L}_{m,l} ( b \otimes \mathcal{L}_{n,m} (a \otimes x ) ) &= J_m ( \mathcal{B} (b) \otimes  J_n ( \mathcal{B} ( a)  \otimes x)) , \\
&= J_n \left(  (\mathcal{B} (b) \circ \mathcal{B} ( a) ) \otimes x \right) , \\
&= J_n \left( \mathcal{B} (b \circ a) \otimes x \right) , \\
&= \mathcal{L}_{n,l} (  (b\circ a) \otimes x ) ,
\end{align*}
where $a \in \mathsf{Cat}_{\mathfrak{Lie}} ( n, m) , b \in \mathsf{Cat}_{\mathfrak{Lie}} ( m, l), x \in \left( \mathcal{P} ( F) \right) (n)$.
Furthermore, $\mathcal{P}(F)$ is a sub $\mathsf{Cat}_{\mathfrak{Lie}}$-module of $\mathcal{B}^\ast \mathcal{E}^\ast (F)$ since the $\mathsf{Cat}_{\mathfrak{Lie}}$-action is inherited from that of $\mathcal{B}^\ast \mathcal{E}^\ast (F)$ as we see in (\ref{202312081627}).
\end{proof}

For the category $\mathsf{Cat}_{\mathfrak{Ass}^u}$, the Yoneda embedding is given by $\mathsf{Cat}_{\mathfrak{Ass}^u}^{\mathsf{op}} \to \mathcal{F}_{\mathfrak{Ass}^u} ; n \mapsto \mathsf{Cat}_{\mathfrak{Ass}^u} (n , - )$ where $\mathcal{F}_{\mathfrak{Ass}^u}$ is the category of $\mathsf{Cat}_{\mathfrak{Ass}^u}$-modules and linear natural transformations.
For $G \in \mathcal{F}_{\mathfrak{Ass}^u}$, the Yoneda lemma implies that the evaluation on the identity $(x_1 \otimes \cdots \otimes x_n) \in \mathsf{Cat}_{\mathfrak{Ass}^u} ( n , n )$ (see section \ref{202212161644} for the notation) induces an isomorphism:
\begin{align}
\label{202212141625}
\mathrm{ev}_{x_1 \otimes \cdots \otimes x_n} : \mathrm{Nat} ( \mathsf{Cat}_{\mathfrak{Ass}^u} (n , - ) , G ) \to G (n) .
\end{align}
This isomorphism is natural with respect to $n \in \mathsf{Cat}_{\mathfrak{Ass}^u}$.

Let $F \in \mathcal{F} ( \mathsf{gr}^{\mathsf{op}} ; \mathds{k} )$.
For $G = \mathcal{E}^\ast F$ (see Definition \ref{202401101407} for $\mathcal{E}$), note that $\mathrm{Nat} ( {}_{\Delta} \mathsf{Cat}_{\mathfrak{Ass}^u} ( n , - ) , F )$ is naturally embedded into $\mathrm{Nat} ( \mathsf{Cat}_{\mathfrak{Ass}^u} (n , - ) , G )$.
We interpret the subspace via the Yoneda isomorphism in Theorem \ref{202212160932}.

In the following statements, we use the notation of the generators $\Delta, \nabla, \eta, \epsilon, s$ of the category $\mathsf{gr}^{\mathsf{op}}$ (see section \ref{Korea202306141722}).

\begin{Lemma}
\label{202212142110}
Let $F \in \mathcal{F} ( \mathsf{gr}^{\mathsf{op}} ; \mathds{k})$ and $n \in \mathds{N}$.
For $\xi \in \mathrm{Nat} ( {}_{\Delta} \mathsf{Cat}_{\mathfrak{Ass}^u} ( n , - ) , F )$, we have,
$$\xi (x_1 \otimes \cdots \otimes x_n) \in \left( \mathcal{P} (F) \right) ( n ) . $$
\end{Lemma}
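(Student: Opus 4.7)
The plan is to prove this via naturality of $\xi$ applied to the morphisms $\mathrm{id}_{i-1} \ast \theta \ast \mathrm{id}_{n-i} : n \to n+1$ in $\mathds{k}\mathsf{gr}^{\mathsf{op}}$, combined with the characterization of the primitive part given in Proposition \ref{202312062129}:
\[
(\mathcal{P}(F))(n) \;=\; \bigcap_{i=1}^{n} \mathrm{Ker}\bigl(F(\mathrm{id}_{i-1} \ast \theta \ast \mathrm{id}_{n-i})\bigr).
\]
So it suffices to show that, for each $1 \le i \le n$, $F(\mathrm{id}_{i-1} \ast \theta \ast \mathrm{id}_{n-i})\bigl(\xi(x_1 \otimes \cdots \otimes x_n)\bigr) = 0$.

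The first key step is to note that $\xi$ is a natural transformation of $\mathsf{gr}^{\mathsf{op}}$-modules, so for any morphism $f : p \to q$ in $\mathds{k}\mathsf{gr}^{\mathsf{op}}$ we have
\[
F(f)\circ \xi_p \;=\; \xi_q \circ {}_\Delta\mathsf{Cat}_{\mathfrak{Ass}^u}(n, f).
\]
Applying this to $f = \mathrm{id}_{i-1} \ast \theta \ast \mathrm{id}_{n-i}$, the problem reduces to computing
\[
{}_\Delta\mathsf{Cat}_{\mathfrak{Ass}^u}(n, \mathrm{id}_{i-1} \ast \theta \ast \mathrm{id}_{n-i})(x_1 \otimes \cdots \otimes x_n)
\]
inside $\mathbb{H}(n, n+1) = H_n^{\otimes(n+1)}$.

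The second key step is to identify this action concretely. By construction (section \ref{Korea202306141722}), the left $\mathds{k}\mathsf{gr}^{\mathsf{op}}$-action on $\mathbb{H}(n,-)$ is induced slot-by-slot by the Hopf algebra structure of $H_n$. Since $\theta = [x_1|x_1]_1 - [e|x_1]_1 - [x_1|e]_1$ corresponds, under the Pirashvili-type correspondence, precisely to the reduced comultiplication $\overline{\Delta}_{H_n} = \Delta_{H_n} - 1 \otimes (-) - (-)\otimes 1$, the action of $\mathrm{id}_{i-1} \ast \theta \ast \mathrm{id}_{n-i}$ on $x_1 \otimes \cdots \otimes x_n$ yields
\[
x_1 \otimes \cdots \otimes x_{i-1} \otimes \overline{\Delta}_{H_n}(x_i) \otimes x_{i+1} \otimes \cdots \otimes x_n.
\]
But $x_i$ is a primitive element of $H_n$ by the very definition of the shuffle Hopf structure ($\Delta(x_i) = x_i \otimes 1 + 1 \otimes x_i$), hence $\overline{\Delta}_{H_n}(x_i) = 0$ and the whole expression vanishes. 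Combining this with naturality gives $F(\mathrm{id}_{i-1} \ast \theta \ast \mathrm{id}_{n-i})(\xi(x_1 \otimes \cdots \otimes x_n)) = \xi(0) = 0$, completing the proof.

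The only real subtlety is the second step: one must be sure that $\theta$ really does act on $\mathbb{H}(n,-)$ as the reduced coproduct in the $i$-th tensor factor. This is immediate once one unpacks Remark \ref{202207311722} and the definition of $\mathbb{H}$, but it is worth stating explicitly to make the computation transparent.
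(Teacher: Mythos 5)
Your proof is correct and follows essentially the same route as the paper: reduce to the kernel characterization of $\left( \mathcal{P} (F) \right)(n)$ from Proposition \ref{202312062129}, use naturality of $\xi$, and observe that the action of $\mathrm{id}_{i-1} \ast \theta \ast \mathrm{id}_{n-i}$ kills $x_1 \otimes \cdots \otimes x_n$ because each $x_i$ is primitive in $H_n$. The only cosmetic difference is that the paper computes the action of $\Delta \ast \mathrm{id}_{n-1}$ and lets the two unit-insertion terms of $\theta$ cancel it, whereas you identify the action of $\theta$ directly with the reduced comultiplication in the $i$-th slot --- the same calculation, packaged slightly more transparently.
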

\begin{proof}
It suffices to prove that $\xi ( x_1 \otimes \cdots \otimes x_n ) \in \mathrm{Ker} \left( F (\mathrm{id}_{m-1} \ast \theta \ast \mathrm{id}_{n-m}) \right)$ for any $1 \leq m \leq n$ by Proposition \ref{202312062129}.
For simplicity, let $K_n {:=} ~{}_{\Delta} \mathsf{Cat}_{\mathfrak{Ass}^u} ( n , - )$ be the $\mathds{k} \mathsf{gr}^{\mathsf{op}}$-module.
Since $\xi$ preserves $\mathds{k} \mathsf{gr}^{\mathsf{op}}$-actions, we have,
$$( F ( \Delta \ast \mathrm{id}_{n-1} ) ) ( \xi ( x_1 \otimes \cdots \otimes x_n ) ) = \xi ( (K_n ( \Delta \ast \mathrm{id}_{n-1} ) ) ( x_1 \otimes \cdots \otimes x_n ) ) . $$
By definition, we have 
$$(K_n ( \Delta \ast \mathrm{id}_{n-1} ) ) ( x_1 \otimes \cdots \otimes x_n ) = x_1 \otimes 1 \otimes x_2 \otimes \cdots x_n + 1 \otimes x_1 \otimes x_2 \otimes \cdots \otimes x_n , $$
which leads to the above claim for $m =1$.
One can proceed similarly for any $m$.
\end{proof}

We now prove that the $( \mathds{k} \mathsf{gr}^{\mathsf{op}} , \mathsf{Cat}_{\mathfrak{Lie}})$-bimodule ${}_{\Delta} \mathsf{Cat}_{\mathfrak{Ass}^u}$ {\it represents} the functor $\mathcal{P} : \mathcal{F} ( \mathsf{gr}^{\mathsf{op}} ; \mathds{k} ) \to \mathcal{F}_{\mathfrak{Lie}}$.
In the following statement, recall that $\mathcal{P} (F)$ is a left $\mathsf{Cat}_{\mathfrak{Lie}}$-module by Proposition \ref{202212161700}.
The assignment $n \mapsto \mathrm{Nat} ( {}_{\Delta} \mathsf{Cat}_{\mathfrak{Ass}^u} ( n , - ) , F )$ gives a left $\mathsf{Cat}_{\mathfrak{Lie}}$-module arising from the right $\mathsf{Cat}_{\mathfrak{Lie}}$-module structure of ${}_{\Delta} \mathsf{Cat}_{\mathfrak{Ass}^u}$.

\begin{theorem}
\label{202212160932}
Let $F \in \mathcal{F} ( \mathsf{gr}^{\mathsf{op}} ; \mathds{k})$ and $n \in \mathds{N}$.
The evaluation $\mathrm{ev}_{x_1 \otimes \cdots \otimes x_n}$ in (\ref{202212141625}) factors through an isomorphism which is natural with respect to $n \in \mathsf{Cat}_{\mathfrak{Lie}}$:
$$\mathrm{Nat} ( {}_{\Delta} \mathsf{Cat}_{\mathfrak{Ass}^u} ( n , - ) , F ) \cong \mathcal{P} (F) ( n ) . $$
\end{theorem}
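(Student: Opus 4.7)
The strategy is to recognize the claimed bijection as a refinement of the Yoneda isomorphism (\ref{202212141625}), with $\check{I}_n$ from Theorem \ref{202212161526} as the explicit inverse. First I define
$$\Phi_n : \mathrm{Nat}({}_\Delta \mathsf{Cat}_{\mathfrak{Ass}^u}(n, -), F) \to F(n), \quad \xi \mapsto \xi(x_1 \otimes \cdots \otimes x_n),$$
which is the composition of the canonical inclusion $\mathrm{Nat}({}_\Delta \mathsf{Cat}_{\mathfrak{Ass}^u}(n, -), F) \hookrightarrow \mathrm{Nat}(\mathsf{Cat}_{\mathfrak{Ass}^u}(n, -), \mathcal{E}^\ast F)$ (by precomposition with $\mathcal{E}$) with the Yoneda evaluation $\mathrm{ev}_{x_1 \otimes \cdots \otimes x_n}$. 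By Lemma \ref{202212142110}, $\Phi_n$ factors through $\mathcal{P}(F)(n) \subseteq F(n)$, and it remains to show that $\Phi_n$ and $\check{I}_n$ are mutually inverse and compatible with the $\mathsf{Cat}_{\mathfrak{Lie}}$-actions.

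For the invertibility, one direction is immediate from the construction of $\check{I}_n$: since $\check{I}_n(v)$ lifts $\check{J}_n(v)$ along the inclusion, we have $\Phi_n(\check{I}_n(v)) = \check{J}_n(v)(x_1 \otimes \cdots \otimes x_n) = ((\mathcal{E}^\ast F)(\mathrm{id}_n))(v) = v$. For the converse, the essential input is the identity
$$a = \bigl({}_\Delta\mathsf{Cat}_{\mathfrak{Ass}^u}(n, \mathcal{E}(a))\bigr)(x_1 \otimes \cdots \otimes x_n),$$
for $a \in {}_\Delta\mathsf{Cat}_{\mathfrak{Ass}^u}(n, m)$ viewed as a morphism in $\mathsf{Cat}_{\mathfrak{Ass}^u}(n, m)$, which was established inside the proof of Theorem \ref{202212161526}. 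Naturality of $\xi$ then forces $\xi(a) = F(\mathcal{E}(a))(\xi(x_1 \otimes \cdots \otimes x_n))$, and the right-hand side is precisely $\check{I}_n(\Phi_n(\xi))(a)$; hence $\check{I}_n \circ \Phi_n = \mathrm{id}$.

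For naturality in $n \in \mathsf{Cat}_{\mathfrak{Lie}}$, I unpack both actions on a given $\alpha \in \mathsf{Cat}_{\mathfrak{Lie}}(n, m)$. The induced map $\alpha_\ast : \mathrm{Nat}({}_\Delta\mathsf{Cat}_{\mathfrak{Ass}^u}(n, -), F) \to \mathrm{Nat}({}_\Delta\mathsf{Cat}_{\mathfrak{Ass}^u}(m, -), F)$ is precomposition with the right $\mathsf{Cat}_{\mathfrak{Lie}}$-action $\alpha^\ast : {}_\Delta\mathsf{Cat}_{\mathfrak{Ass}^u}(m, -) \to {}_\Delta\mathsf{Cat}_{\mathfrak{Ass}^u}(n, -)$. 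Applied to the identity $x_1 \otimes \cdots \otimes x_m$, this yields $\mathcal{B}(\alpha) \in \mathsf{Cat}_{\mathfrak{Ass}^u}(n, m)$, since the right $\mathsf{Cat}_{\mathfrak{Lie}}$-action on ${}_\Delta\mathsf{Cat}_{\mathfrak{Ass}^u}$ factors through composition with $\mathcal{B}(\alpha)$ in $\mathsf{Cat}_{\mathfrak{Ass}^u}$. Consequently $\Phi_m(\alpha_\ast \xi) = \xi(\mathcal{B}(\alpha)) = F(\mathcal{E}(\mathcal{B}(\alpha)))(\Phi_n(\xi))$, which by (\ref{202312081627}) and Definition \ref{202312081139} coincides with $\mathcal{L}_{n, m}(\alpha \otimes \Phi_n(\xi))$, the image of $\Phi_n(\xi)$ under the $\mathsf{Cat}_{\mathfrak{Lie}}$-action on $\mathcal{P}(F)$.

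The main obstacle is this last compatibility: matching the $\mathsf{Cat}_{\mathfrak{Lie}}$-action on $\mathrm{Nat}(K_n, F)$ coming from the bimodule structure of ${}_\Delta\mathsf{Cat}_{\mathfrak{Ass}^u}$ with the action on $\mathcal{P}(F)$ defined through $\mathcal{B}$. This reduces to the identification $\alpha^\ast(x_1 \otimes \cdots \otimes x_m) = \mathcal{B}(\alpha)$, which itself uses that $x_1 \otimes \cdots \otimes x_m$ is the identity of $\mathsf{Cat}_{\mathfrak{Ass}^u}(m, m)$ and the compatibility of the bimodule embedding $\iota'$ from Section \ref{Korea202306141722}; everything else is essentially bookkeeping once $\check{I}_n$ is in hand.
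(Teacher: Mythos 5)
Your proposal is correct and follows essentially the same route as the paper: factor the Yoneda evaluation through $\mathcal{P}(F)(n)$ via Lemma \ref{202212142110}, and use $\check{I}_n$ from Theorem \ref{202212161526} as the inverse, with naturality in $n \in \mathsf{Cat}_{\mathfrak{Lie}}$ inherited from the Yoneda isomorphism (\ref{202212141625}). The only cosmetic difference is that you verify both composites $\Phi_n \circ \check{I}_n = \mathrm{id}$ and $\check{I}_n \circ \Phi_n = \mathrm{id}$ directly, whereas the paper deduces injectivity from the injectivity of $\mathrm{ev}_{x_1 \otimes \cdots \otimes x_n}$ and only checks the one-sided identity $h_n \circ \check{I}_n = \mathrm{id}$ for surjectivity.
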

\begin{proof}
The map $\mathrm{ev}_{x_1 \otimes \cdots \otimes x_n}$ factors into $h_n : \mathrm{Nat} (\mathsf{Cat}_{\mathfrak{Ass}^u} ( n , - ) , F ) \to \mathcal{P} (F) ( n )$ due to Lemma \ref{202212142110}.
Moreover, $h_n$ is injective since $\mathrm{ev}_{x_1 \otimes \cdots \otimes x_n}$ is an isomorphism.
The naturality of $h_n$ with respect to $n \in \mathsf{Cat}_{\mathfrak{Lie}}$ follows from that of (\ref{202212141625}).
Recall the map $I_n$ in Theorem \ref{202212161526}.
By definitions, we have $h_n \circ I_n$ is the identity.
Hence, $h_n$ is surjective.
\end{proof}

\begin{Corollary}
For $\mathds{k}$ a field of characteristic zero, a $\mathsf{gr}^{\mathsf{op}}$-module is primitive if and only if it is analytic.
\end{Corollary}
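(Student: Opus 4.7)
The forward implication (primitive $\Rightarrow$ analytic) is immediate from Proposition~\ref{202212160131}: since $Q_n(F) \subset P_n(F)$, primitivity $F \cong \mathrm{colim}_n Q_n(F)$ forces $F \cong \mathrm{colim}_n P_n(F)$, i.e.\ analyticity. The real content is the reverse implication. For this my plan is to invoke Powell's equivalence $\mathcal{F}_\omega(\mathsf{gr}^{\mathsf{op}};\mathds{k}) \simeq \mathcal{F}_{\mathfrak{Lie}}$ from \cite{powell2021analytic}, available because $\mathds{k}$ has characteristic zero.

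Given an analytic $F$, let $M \in \mathcal{F}_{\mathfrak{Lie}}$ denote its image under Powell's equivalence. The first step will be to identify $\mathcal{P}(F)$ with $M$ as $\mathsf{Cat}_{\mathfrak{Lie}}$-modules. Theorem~\ref{202212160932} already supplies a $\mathsf{Cat}_{\mathfrak{Lie}}$-natural isomorphism $\mathcal{P}(F)(n) \cong \mathrm{Nat}({}_\Delta\mathsf{Cat}_{\mathfrak{Ass}^u}(n,-), F)$. Since Powell's equivalence is itself built from the bimodule ${}_\Delta\mathsf{Cat}_{\mathfrak{Ass}^u}$ recalled in Section~\ref{202212161644}, with ${}_\Delta\mathsf{Cat}_{\mathfrak{Ass}^u}(n,-)$ corresponding to the representable $\mathsf{Cat}_{\mathfrak{Lie}}(n,-)$, the Yoneda lemma in $\mathcal{F}_{\mathfrak{Lie}}$ identifies the right-hand side with $M(n)$.

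Next I would use that the quasi-inverse of Powell's equivalence is the coend $M \mapsto M \otimes_{\mathsf{Cat}_{\mathfrak{Lie}}} {}_\Delta\mathsf{Cat}_{\mathfrak{Ass}^u}$, giving the canonical presentation
\[
F \;\cong\; \mathcal{P}(F) \otimes_{\mathsf{Cat}_{\mathfrak{Lie}}} {}_\Delta\mathsf{Cat}_{\mathfrak{Ass}^u}.
\]
Unfolding the coend as a coequalizer, the presentation epimorphism $\bigoplus_{k \in \mathds{N}} {}_\Delta\mathsf{Cat}_{\mathfrak{Ass}^u}(k,-) \otimes \mathcal{P}(F)(k) \twoheadrightarrow F$ is surjective, and by Definition~\ref{202312061102} its restriction to the $k$-th summand is exactly $I_k$. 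This yields $\bigcup_n Q_n(F) = F$, so $F$ is primitive.

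The hardest part will be the identification in the first step, namely pinning down that Powell's forward equivalence coincides (up to canonical isomorphism) with the functor $\mathcal{P}$ constructed in Section~\ref{202211081022}, and correspondingly that his quasi-inverse is realized by the coend with ${}_\Delta\mathsf{Cat}_{\mathfrak{Ass}^u}$. This is implicit in Powell's construction \cite{powell2021analytic} but requires some unpacking; alternatively, one could bypass Powell by analyzing the homogeneous subquotients $P_n(F)/P_{n-1}(F)$ as $\Sigma_n$-representations in characteristic zero and inductively showing $P_n(F) = Q_n(F)$.
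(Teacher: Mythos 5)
Your proposal is correct and follows essentially the same route as the paper: the authors likewise get the forward direction from Proposition~\ref{202212160131}, and for the converse they cite Powell's evaluation isomorphism ${}_{\Delta}\mathsf{Cat}_{\mathfrak{Ass}^u} \otimes_{\mathsf{Cat}_{\mathfrak{Lie}}} \mathrm{Nat}({}_{\Delta}\mathsf{Cat}_{\mathfrak{Ass}^u}, F) \cong F$ in characteristic zero (exactly your coend presentation), compose with the isomorphism of Theorem~\ref{202212160932}, and conclude that $\sum_k I_k$ is surjective. The identification you flagged as the ``hardest part'' is handled in the paper by invoking this evaluation isomorphism directly from \cite{powell2021analytic}, so no further unpacking of Powell's functor is needed.
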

\begin{proof}
Let $F$ be a $\mathsf{gr}^{\mathsf{op}}$-module.
By Proposition \ref{202212160131}, if $F$ is primitive, then it is analytic.
Conversely, if $F$ is analytic, then the evaluation gives ${}_{\Delta} \mathsf{Cat}_{\mathfrak{Ass}^u} \otimes_{\mathsf{Cat}_{\mathfrak{Lie}}} \mathrm{Nat} ( {}_{\Delta} \mathsf{Cat}_{\mathfrak{Ass}^u} , F ) \cong F$ by \cite{powell2021analytic} since the characteristic is zero.
We take a composition with the isomorphism in Theorem \ref{202212160932} to deduce that $\sum_{k \in \mathds{N}} I_k$ is surjective.
Therefore, $F$ is primitive.
\end{proof}

\begin{remark}
For the ground field $\mathds{k}$ of positive characteristic, there exist analytic and non-primitive $\mathsf{gr}^{\mathsf{op}}$-modules.
Some examples arise by combining Theorem \ref{202207212109} and Remark \ref{2022112921241}.
\end{remark}

\section{Application to primitive exponential $\mathsf{gr}^{\mathsf{op}}$-modules}
\label{202312081805}

In this section, we prove an equivalence between primitive cocommutative Hopf algebras and primitive exponential $\mathsf{gr}^{\mathsf{op}}$-modules (see Definition \ref{202211251229} for the definition of primitive $\mathsf{gr}^{\mathsf{op}}$-modules).

\subsection{Primitive filtration and coradical filtration}
\label{202211301655}

In this section, we give a comparison of the coradical filtration and the primitive filtration of bialgebras which is reviewed below.

Let $\mathds{k}$ be a commutative unital ring.
For a bialgebra $H$ over $\mathds{k}$, its underlying coaugmented coalgebra induces the coradical filtration $P_\bullet (H)$.
The algebra structure gives a submodule $P_1 (H)^n \subset H$ generated by multiples of $n$ elements in $P_1 (H)$.
It yields {\it the primitive filtration} of $H$:
$$
\mathds{k} = P_1 (H)^0 \subset \cdots \subset P_1 (H)^n \subset P_1 (H)^{n+1} \subset \cdots \subset H .
$$
The bialgebra $H$ is {\it primitive} (or {\it primitively generated}) if $H = \bigcup_{n \in \mathds{N}} P_1 (H)^n$.

We first prove that the coradical filtration includes the primitive filtration, properly in general.
We also discuss some conditions implying that the two filtrations coincide.
In particular, for $\mathds{k}$ a field of characteristic zero, if the bialgebra is either cocommutative, primitive or finite-dimensional, then the filtrations are equal.

\begin{prop}
\label{202211271234}
Let $H$ be a bialgebra over $\mathds{k}$.
The coradical filtration contains the primitive filtration of $H$:
$$
P_1(H)^n \subset P_n (H) .
$$
\end{prop}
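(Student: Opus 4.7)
The plan is to prove the inclusion by induction on $n$, using Proposition \ref{202401101549} (the compatibility of the coradical filtration with multiplication in a bialgebra) as the key engine.

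For the base case $n = 0$, we have $P_1(H)^0 = \mathds{k} \cdot 1_H$ by convention, and since $\delta^1_H(1_H) = e_H(1_H) = 0$ by the definition of $e_H$, we get $1_H \in P_0(H)$, hence $P_1(H)^0 \subset P_0(H)$. Alternatively, one can directly observe that $P_0(H) = \mathds{k} 1_H$ from the definition in section \ref{202212191823Japan}.

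For the inductive step, assume $P_1(H)^n \subset P_n(H)$. Every element of $P_1(H)^{n+1}$ is a finite sum of products of the form $x \cdot y$, where $x \in P_1(H)^n$ and $y \in P_1(H)$. By the inductive hypothesis $x \in P_n(H)$, and clearly $y \in P_1(H)$. Applying Proposition \ref{202401101549} with the exponents $n$ and $1$ yields $x \cdot y \in P_{n+1}(H)$. Since $P_{n+1}(H)$ is a submodule of $H$, the sum of such products lies in $P_{n+1}(H)$, which gives $P_1(H)^{n+1} \subset P_{n+1}(H)$.

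There is essentially no obstacle, since all the real work has been packaged into Proposition \ref{202401101549}: the stability of the coradical filtration under the multiplication of $H$, which was itself established through the decomposition in Proposition \ref{202207301503}. The final remark in the statement that the inclusion is strict in general should be illustrated by an example (for instance, the polynomial bialgebra $\mathds{k}[t]$ in positive characteristic $p$ from Theorem \ref{202211071539}, where $t^p \in P_1(\mathds{k}[t])$ by the computation $L_p(p) = 1$, even though $t^p$ is not a product of primitive elements so it does not lie in $P_1(\mathds{k}[t])^1$); this would motivate the subsequent discussion of when the two filtrations coincide.
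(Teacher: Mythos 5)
Your proof of the inclusion is correct and is essentially the paper's own argument: the paper disposes of Proposition \ref{202211271234} with the single line ``an iterated application of Proposition \ref{202401101549}'', which is precisely your induction (base case $P_1(H)^0=\mathds{k}1_H=P_0(H)$, inductive step via $P_n(H)\cdot P_1(H)\subset P_{n+1}(H)$).

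One caveat on your closing remark, which is tangential to the statement but mathematically wrong: in characteristic $p$ the element $t^p$ \emph{is} primitive in $\mathds{k}[t]$, since $\Delta(t^p)=(1\otimes t+t\otimes 1)^p=1\otimes t^p+t^p\otimes 1$ because $\binom{p}{k}\equiv 0 \pmod p$ for $0<k<p$; hence $t^p\in \mathrm{Prim}(\mathds{k}[t])\subset P_1(\mathds{k}[t])=P_1(\mathds{k}[t])^1$, and indeed $\mathds{k}[t]$ is primitively generated in positive characteristic (consistent with Theorem \ref{202211071539}: the monomials $t^{p^k}$ with $L_p(p^k)=1$ are exactly the primitives, and any $t^i$ with $L_p(i)\leq n$ factors as a product of at most $n$ of them). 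A correct illustration of strictness is the paper's Example \ref{202211301553}: the shuffle bialgebra $\mathrm{Sh}(V)$ with $\dim V\geq 2$, or with $\dim V=1$ in positive characteristic.
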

\begin{proof}
The result follows from an iterated application of Proposition \ref{202401101549}.
\end{proof}

We say that a bialgebra is {\it good} if its coradical filtration is equal to the primitive filtration.

\begin{Example}
We suppose that $m \cdot 1_{\mathds{k}} \in \mathds{k}$ is not a zero divisor for any nonzero integer $m$.
An application of the results in section \ref{202211071544} implies that polynomial bialgebras are good.
\end{Example}

\begin{Example}
\label{202211301553}
A bialgebra is not good in general.
Consider the shuffle bialgebra $H = \mathrm{Sh} (V)$ (see section \ref{202212111901}).
Note that for $v_1, v_2, \cdots, v_k \in V \subset \mathrm{Sh} (V)$, the multiplication $v_1 \shuffle v_2 \shuffle \cdots \shuffle v_k$ lies in the $k$-th symmetric tensor module contained in $T(V) = \mathrm{Sh} (V)$, i.e. the invariant subspace under the symmetric group $\Sigma_k$ action.
Hence, if $V$ is a free $\mathds{k}$-module with $\dim V \geq 2$, then $P_1 (H)^n \neq P_n (H)$ for $n \geq 2$ by Proposition \ref{202211291552}.
Furthermore, one can prove directly that, for $\mathds{k}$ a field of positive characteristic, $\mathrm{Sh} (V)$ is not good even if $\dim V =1$.
\end{Example}

\begin{Example}
If $\mathds{k}$ is a field of characteristic zero, then finite-dimensional bialgebras are good by Theorem \ref{202211292056}.
\end{Example}

We now suppose that the ground ring $\mathds{k}$ is a field.
Below, we give general properties satisfied by good bialgebras (see Theorem \ref{202208302145}).

\begin{prop}
\label{202312080622}
If $\mathds{k}$ is a field, then we have $\delta_H^n ( P_n (H)) \subset \mathrm{Prim} (H)^{\otimes n}$. 
\end{prop}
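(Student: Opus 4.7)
The plan is to combine the characterisation $P_n (H) = \mathrm{Ker} ( \delta^{n+1}_H )$ from Proposition \ref{202208302218} with a symmetric version of the composition identity proved there. Concretely, I claim that
$$
\bigl( \mathrm{id}_H^{\otimes (k-1)} \otimes \delta^2_H \otimes \mathrm{id}_H^{\otimes (n-k)} \bigr) \circ \delta^n_H = \delta^{n+1}_H
\qquad \text{for every } 1 \leq k \leq n ,
$$
the case $k=1$ being the one recorded in the proof of Proposition \ref{202208302218}. For arbitrary $k$ the computation is parallel: coassociativity gives $\Delta^{(n+1)}_H = (\mathrm{id}_H^{\otimes (k-1)} \otimes \Delta_H \otimes \mathrm{id}_H^{\otimes (n-k)} ) \circ \Delta_H^{(n)}$, and the identity $\delta^2_H \circ e_H = \delta^2_H$ (equivalent to $\delta^2_H \circ \eta_H = 0$, see Example \ref{202212181851}) allows one to commute the idempotents $e_H^{\otimes (n+1)}$ past the inserted $\Delta_H$ at position $k$.

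Granted this identity, for $a \in P_n (H)$ one has $\delta^{n+1}_H (a) = 0$, so
$$\delta^n_H (a) \in \bigcap_{k=1}^{n} \mathrm{Ker} \bigl( \mathrm{id}_H^{\otimes (k-1)} \otimes \delta^2_H \otimes \mathrm{id}_H^{\otimes (n-k)} \bigr) .$$
Since $\delta^n_H$ factors through $e_H^{\otimes n}$ we also have $\delta^n_H (a) \in \overline{H}^{\otimes n}$; moreover $\delta^2_H$ restricts to $\overline{\Delta}_H$ on $\overline{H}$ by Example \ref{202212181851}, with kernel $\mathrm{Prim} (H)$. The claim therefore reduces to the subspace identity
$$
\bigcap_{k=1}^{n} \bigl( \overline{H}^{\otimes (k-1)} \otimes \mathrm{Prim} (H) \otimes \overline{H}^{\otimes (n-k)} \bigr) = \mathrm{Prim} (H)^{\otimes n}
$$
inside $\overline{H}^{\otimes n}$. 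This is the step where the field hypothesis enters: over a field $\otimes$ is exact, so picking a complement $\overline{H} = \mathrm{Prim} (H) \oplus U$ and decomposing $\overline{H}^{\otimes n}$ as the direct sum indexed by the subsets $S \subset \underline{n}$ recording which positions lie in $U$, the $k$-th subspace in the intersection is precisely the sum over $\{ S \;:\; k \notin S \}$, and intersecting over $k$ leaves only the summand $S = \emptyset$, namely $\mathrm{Prim}(H)^{\otimes n}$.

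The main obstacle I expect is purely notational bookkeeping in the first step, making sure the insertion of $\delta^2_H$ at position $k$ and the commutation of the $e_H^{\otimes (n+1)}$ idempotents are arranged consistently; once the symmetric composition identity is in place, the conclusion is an immediate consequence of Proposition \ref{202208302218} together with the exactness of the tensor product over a field.
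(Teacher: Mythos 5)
Your proof is correct and follows essentially the same route as the paper: the key identity $(\mathrm{id}_H^{\otimes (k-1)} \otimes \delta^2_H \otimes \mathrm{id}_H^{\otimes (n-k)}) \circ \delta^n_H = \delta^{n+1}_H$ at every position $k$, combined with the characterisation $P_n(H) = \mathrm{Ker}(\delta^{n+1}_H)$ from Proposition \ref{202208302218} and the reduction to $\overline{H}^{\otimes n}$, is exactly the paper's argument. The only cosmetic difference is that where you identify the intersection of the slotwise kernels with $\mathrm{Prim}(H)^{\otimes n}$ by choosing a complement of $\mathrm{Prim}(H)$ in $\overline{H}$, the paper invokes its K{\"u}nneth-type kernel lemma (Lemma \ref{202207282250}); both use the field hypothesis in the same way.
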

\begin{proof}
Recall the notations in section \ref{202212191823Japan}.
By definitions, we have $(\mathrm{id}_{H}^{\otimes (i-1)} \otimes \varepsilon_H \otimes \mathrm{id}_{H}^{\otimes (n-i)} ) \circ \delta_H^n = 0$ for $1 \leq i \leq n$.
This leads to the following by applying Lemma \ref{202207282250}:
$$\delta_H^n ( P_n (H)) \subset \bigcap_{i=1}^{n} \mathrm{Ker} (\mathrm{id}_{H}^{\otimes (i-1)} \otimes \varepsilon_H \otimes \mathrm{id}_{H}^{\otimes (n-i)} ) =  \overline{H}^{\otimes n}$$
Hence, to prove our claim, it suffices to show that 
$(\mathrm{id}_{\overline{H}}^{\otimes (i-1)} \otimes \overline{\Delta}_H  \otimes \mathrm{id}_{\overline{H}}^{\otimes (n-i)} ) \circ \delta_H^n (x)= 0$ for $x \in P_n (H)$ by Lemma \ref{202207282250}.
Note that Proposition \ref{202208302218} implies that 
$$(\mathrm{id}_{\overline{H}}^{\otimes (i-1)} \otimes \overline{\Delta}_H  \otimes \mathrm{id}_{\overline{H}}^{\otimes (n-i)} ) \circ \delta_H^n = \delta_H^{n+1} . $$
We now use the assumption $x \in P_n(H)$, equivalently $\delta_H^{n+1} (x) = 0$.
\end{proof}
By Proposition \ref{202312080622}, the map $\delta_H^n : H \to H^{\otimes n}$ factors through $f_n : P_n (H) \to \mathrm{Prim} (H)^{\otimes n}$.
By Proposition \ref{202211271234}, the iterated multiplication $\nabla^{(n)} : H^{\otimes n} \to H$ induces a linear map $g_n : P_1 (H)^{\otimes n} \to P_n (H)$.
Let $h_n : \mathrm{Prim} (H)^{\otimes n} \to P_n (H)$ be the restriction of $g_n$.
Let $s_n : \mathrm{Prim} (H)^{\otimes n} \to \mathrm{Prim} (H)^{\otimes n}$ be the linear extension of $s_n ( a_1 \otimes a_2 \otimes \cdots \otimes a_n  ) {:=} \sum_{\sigma \in \Sigma_n} a_{\sigma (1)} \otimes a_{\sigma (2)} \otimes \cdots \otimes a_{\sigma (n)}$.

\begin{Lemma}
\label{202209011150}
For $n \in \mathds{N}$, we have $f_n \circ h_n  = s_n$.
\end{Lemma}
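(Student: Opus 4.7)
The plan is to evaluate both sides on a pure tensor $a_1 \otimes \cdots \otimes a_n \in \mathrm{Prim}(H)^{\otimes n}$ and to identify $\delta_H^n(a_1 a_2 \cdots a_n)$ with the symmetrization $s_n(a_1 \otimes \cdots \otimes a_n)$. The whole argument is an iterated application of Lemma \ref{202209010909}, together with the base case $\delta_H^1 = e_H$ evaluated on primitives.

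First I would record two elementary facts. Since $\mathrm{Prim}(H) = \mathrm{Ker}(\overline{\Delta}_H) \subset \overline{H}$, any $a \in \mathrm{Prim}(H)$ satisfies $\varepsilon_H(a) = 0$, so by Example \ref{202212181851} one has $\delta_H^1(a) = e_H(a) = a$; in particular $\mathrm{Prim}(H) \subset P_1(H)$ by Proposition \ref{202208302218}. Moreover, Proposition \ref{202401101549} gives $a_1 a_2 \cdots a_k \in P_k(H)$ for any $a_1, \ldots, a_k \in \mathrm{Prim}(H)$, so that the iterated product lies in $P_n(H)$ and it makes sense to apply $f_n$ to it. Unwinding the definitions, $(f_n \circ h_n)(a_1 \otimes \cdots \otimes a_n)$ is $\delta_H^n(a_1 a_2 \cdots a_n)$ viewed inside $\mathrm{Prim}(H)^{\otimes n}$ via the canonical inclusion.

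Next I would prove by induction on $n \geq 1$ the identity
$$\delta_H^n(a_1 a_2 \cdots a_n) \;=\; a_1 \shuffle a_2 \shuffle \cdots \shuffle a_n$$
in $\mathrm{Sh}(H)$, for $a_1, \ldots, a_n \in \mathrm{Prim}(H)$. The case $n=1$ is the first fact above. For the inductive step, set $b = a_1 \cdots a_{n-1} \in P_{n-1}(H)$ and $c = a_n \in P_1(H)$; then by Lemma \ref{202209010909}
$$\delta_H^n(bc) \;=\; \delta_H^{n-1}(b) \shuffle \delta_H^1(c) \;=\; (a_1 \shuffle \cdots \shuffle a_{n-1}) \shuffle a_n,$$
where the second equality uses the induction hypothesis and $\delta_H^1(a_n) = a_n$. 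Associativity of the shuffle product then yields the claim.

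Finally I would read off the symmetrization. The shuffle product of $n$ letters is, by the formula in section \ref{202212111901},
$$a_1 \shuffle a_2 \shuffle \cdots \shuffle a_n \;=\; \sum_{\sigma \in \Sigma_n} a_{\sigma^{-1}(1)} \otimes a_{\sigma^{-1}(2)} \otimes \cdots \otimes a_{\sigma^{-1}(n)},$$
since every permutation in $\Sigma_n$ is a $(1,1,\ldots,1)$-shuffle. Reindexing by $\sigma \mapsto \sigma^{-1}$ identifies the right-hand side with $s_n(a_1 \otimes \cdots \otimes a_n)$. In particular the sum lies in $\mathrm{Prim}(H)^{\otimes n}$, confirming the factorization through $f_n$, and we obtain $(f_n \circ h_n)(a_1 \otimes \cdots \otimes a_n) = s_n(a_1 \otimes \cdots \otimes a_n)$, which completes the proof. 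No step is really an obstacle here; the only subtlety is to make sure one correctly matches the conventions of the shuffle (indexing by $\sigma^{-1}$ versus $\sigma$) so that the answer comes out as the symmetrization $s_n$ rather than its inverse image.
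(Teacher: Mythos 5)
Your proposal is correct and follows essentially the same route as the paper: both compute $(f_n \circ h_n)(a_1 \otimes \cdots \otimes a_n) = \delta_H^n(a_1 \cdots a_n)$ by iterating Lemma \ref{202209010909} (your explicit induction is just the paper's ``iterated application'' spelled out), use $\varepsilon_H(a_j)=0$ to get $\delta^1_H(a_j)=a_j$, and identify the $n$-fold shuffle of single letters with $s_n$. Your extra care about the $\sigma$ versus $\sigma^{-1}$ indexing and about $a_1\cdots a_k \in P_k(H)$ via Proposition \ref{202401101549} is sound and only makes explicit what the paper leaves implicit.
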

\begin{proof}
Let $a_1, a_2 , \cdots , a_n  \in \mathrm{Prim} (H)$.
\begin{align*}
(f_n \circ h_n) ( a_1 \otimes a_2 \otimes \cdots \otimes a_n ) &= \delta^n ( a_1 a_2 \cdots a_n ) , \\
&= \delta_H^1 (a_1) \shuffle \delta_H^1 ( a_2 ) \shuffle \cdots \shuffle \delta_H^1 (a_n) ~ (\mathrm{by}~\mathrm{Lemma}~\ref{202209010909} ) , \\
&= a_1 \shuffle a_2 \shuffle \cdots \shuffle a_n ~ (\mathrm{by} ~\varepsilon (a_j) = 0) ,
\end{align*}
By definitions, we have $a_1 \shuffle a_2 \shuffle \cdots \shuffle a_n = \sum_{\sigma \in \Sigma_n} a_{\sigma (1)} \otimes a_{\sigma (2)} \otimes \cdots \otimes a_{\sigma (n)}$.
\end{proof}

In the following, consider the idempotent $e_H$ on $P_1 (H)$ defined by $e_H (x) = x - \varepsilon_H (x) \cdot 1_H$.

\begin{Lemma} \label{202312080706}
Let $W$ be the kernel of $e_H^{\otimes n}$.
Then we have $g_n ( W ) \subset P_{n-1} (H)$.
\end{Lemma}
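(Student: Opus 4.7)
The plan is to use the direct-sum decomposition of $P_1(H)^{\otimes n}$ induced by the idempotent $e_H$ together with the multiplicativity of the coradical filtration (Proposition \ref{202211271234}, which relies on Proposition \ref{202401101549}).

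First I would observe that on $P_1(H) = \mathds{k} 1_H \oplus \mathrm{Prim}(H)$ the idempotent $e_H = \mathrm{id}_H - \eta_H \circ \varepsilon_H$ acts as the projection onto $\mathrm{Prim}(H)$ with kernel $\mathds{k} 1_H$: indeed $\mathrm{Prim}(H) = \mathrm{Ker}(\overline{\Delta}_H) \subset P_1(H)$ sits inside $\overline{H} = \mathrm{Im}(e_H)$, while $e_H(1_H) = 0$. Mimicking the decomposition (\ref{202208301310}) used earlier for $H^{\otimes n}$, the pairwise-commuting idempotents $\tau_i(e_H) = \mathrm{id}^{\otimes (i-1)} \otimes e_H \otimes \mathrm{id}^{\otimes (n-i)}$ give
$$P_1(H)^{\otimes n} = \bigoplus_{S \subset \underline{n}} V_S,$$
where $V_S$ is the simultaneous eigenspace in which the $i$-th tensor factor lies in $\mathrm{Prim}(H)$ for $i \in S$ and is a scalar multiple of $1_H$ otherwise. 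Since $e_H^{\otimes n}$ is the projection onto $V_{\underline{n}} = \mathrm{Prim}(H)^{\otimes n}$, its kernel is
$$W = \bigoplus_{S \subsetneq \underline{n}} V_S.$$

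Next I would argue summand-by-summand. Every element of $V_S$ with $|S| < n$ is a linear combination of tensors in which at least one factor equals $1_H$; applying $g_n = \nabla^{(n)}$ cancels that factor and yields a product of at most $n-1$ elements of $P_1(H)$, which lies in $P_1(H)^{n-1}$. By Proposition \ref{202211271234}, $P_1(H)^{n-1} \subset P_{n-1}(H)$, so $g_n(V_S) \subset P_{n-1}(H)$ for every $S \subsetneq \underline{n}$ and therefore $g_n(W) \subset P_{n-1}(H)$.

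I do not expect any real obstacle here: the only thing to be careful about is the identification of the image and kernel of $e_H$ on $P_1(H)$, after which the argument is entirely a bookkeeping consequence of the eigenspace decomposition and the already-established multiplicativity $P_1(H)^{n-1} \subset P_{n-1}(H)$.
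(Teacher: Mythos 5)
Your proof is correct and follows essentially the same route as the paper: the paper's proof likewise observes that $W$ is spanned by tensors $a_1 \otimes \cdots \otimes a_n$ with some $a_j \in \mathds{k} 1_H$ and then invokes Proposition \ref{202211271234} via $P_1(H)^{n-1} \subset P_{n-1}(H)$. Your eigenspace decomposition of $P_1(H)^{\otimes n}$ under the commuting idempotents $\tau_i(e_H)$ merely makes explicit the identification of $\mathrm{Ker}(e_H^{\otimes n})$ that the paper states without elaboration.
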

\begin{proof}
Note that $W$ is the subspace generated by $a_1 \otimes a_2 \otimes \cdots \otimes a_n \in P_1 (H)^{\otimes n}$ such that $a_j \in \mathds{k} 1_H$ for some $1 \leq j \leq n$.
Hence, by Proposition \ref{202211271234}, we obtain $g_n ( W ) \subset P_{n-1} (H)$ .
\end{proof}

\begin{theorem}
\label{202208302145}
Let $H$ be a bialgebra over a field $\mathds{k}$.
Then the first part below implies the second part.
\begin{itemize}
\item The bialgebra $H$ is good.
\item For $n \in\mathds{N}$, we have $\delta_H^n \left( P_n (H) \right) \subset \left( H^{\otimes n} \right)^{\Sigma_n}$, i.e. the subspace $\delta_H^n \left( P_n (H) \right)$ is invariant under the permutation action of  $\Sigma_n$.
\end{itemize}
Moreover, if the characteristic of $\mathds{k}$ is zero, then the above properties are equivalent.
\end{theorem}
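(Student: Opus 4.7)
The plan is to establish the two directions separately, relying crucially on the explicit formula $f_n \circ h_n = s_n$ from Lemma \ref{202209011150} and on the identification $P_{n-1}(H) = \ker(\delta_H^n)$ from Proposition \ref{202208302218}.

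For the direction ``good $\Rightarrow$ symmetry'', I would argue as follows. If $P_n(H) = P_1(H)^n$, then every element of $P_n(H)$ is a sum of products $a_1 a_2 \cdots a_n$ with $a_i \in P_1(H) = \mathds{k} \cdot 1_H \oplus \mathrm{Prim}(H)$. Expanding each $a_i$ along this direct sum decomposition and using multilinearity, it suffices to treat the case in which each $a_i$ is either $1_H$ or primitive. If some factor equals $1_H$, the product lies in $P_1(H)^{n-1} \subset P_{n-1}(H) = \ker(\delta_H^n)$ by Propositions \ref{202211271234} and \ref{202208302218}, so its $\delta_H^n$-image vanishes, which is trivially $\Sigma_n$-invariant. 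Otherwise all $a_i \in \mathrm{Prim}(H)$, and Lemma \ref{202209011150} yields
$$
\delta_H^n(a_1 \cdots a_n) = f_n(h_n(a_1 \otimes \cdots \otimes a_n)) = s_n(a_1 \otimes \cdots \otimes a_n),
$$
which is manifestly $\Sigma_n$-invariant.

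For the converse in characteristic zero, I would proceed by induction on $n$, the cases $n = 0, 1$ being immediate from $P_1(H)^0 = \mathds{k} 1_H = P_0(H)$ and $P_1(H)^1 = P_1(H)$. Assume $P_{n-1}(H) \subset P_1(H)^{n-1}$ and take $x \in P_n(H)$. By Proposition \ref{202312080622}, the element $v := \delta_H^n(x)$ lies in $\mathrm{Prim}(H)^{\otimes n}$, and by hypothesis it is $\Sigma_n$-invariant, so $s_n(v) = n! \cdot v$. Since the characteristic is zero, we may set $z := v/n! \in \mathrm{Prim}(H)^{\otimes n}$ and define $x' := h_n(z) \in P_1(H)^n$. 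Lemma \ref{202209011150} then gives $\delta_H^n(x') = f_n(h_n(z)) = s_n(z) = v = \delta_H^n(x)$, so $x - x' \in \ker(\delta_H^n) = P_{n-1}(H) \subset P_1(H)^{n-1} \subset P_1(H)^n$ by the induction hypothesis. Hence $x = x' + (x - x') \in P_1(H)^n$, closing the induction.

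The main obstacle is the converse direction: the symmetry hypothesis only controls $\delta_H^n(x)$, not $x$ itself, so one must lift a symmetric element of $\mathrm{Prim}(H)^{\otimes n}$ to an actual element of $P_1(H)^n$ with the same $\delta_H^n$-image as $x$. This lift is supplied precisely by $f_n \circ h_n = s_n$, at the cost of dividing by $n!$ — which is exactly why the characteristic zero hypothesis is indispensable here, and why in positive characteristic the shuffle bialgebra from Example \ref{202211301553} fails to be good. The induction then absorbs the discrepancy $x - x'$, which lies in the kernel of $\delta_H^n$ by construction.
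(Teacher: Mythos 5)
Your proof is correct and takes essentially the same route as the paper's: both directions rest on Lemma \ref{202209011150} ($f_n \circ h_n = s_n$), the identification $P_{n-1}(H) = \mathrm{Ker}(\delta_H^n)$ from Proposition \ref{202208302218}, Proposition \ref{202312080622} to land in $\mathrm{Prim}(H)^{\otimes n}$, and division by $n!$ in the inductive converse. The only differences are cosmetic: you expand products of elements of $P_1(H) = \mathds{k}1_H \oplus \mathrm{Prim}(H)$ term by term where the paper uses the idempotent decomposition $P_1(H)^{\otimes n} \cong \mathrm{Prim}(H)^{\otimes n} \oplus W$ together with Lemma \ref{202312080706}, and in the converse you correct $x$ by the explicit lift $x' = h_n(\delta_H^n(x)/n!)$ instead of passing through the quotient $P_n(H)/P_{n-1}(H)$ and the induced injection $\overline{f_n}$.
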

\begin{proof}
Suppose that $H$ is good.
Let $n \in \mathds{N}$.
We prove that, for  $a \in P_n (H)$, $\delta_H^n ( a ) \in H^{\otimes n}$ is fixed under the $\Sigma_n$-action.
The hypothesis implies that $g_n$ is surjective.
We choose $b \in P_1 (H)^{\otimes n}$ such that $a = g_n (b)$.
Recall the notations in Lemma \ref{202312080706}.
The idempotent $e^{\otimes n}_H$ on $P_1(H)^{\otimes n}$ induces a direct decomposition $P_1(H)^{\otimes n} \cong \mathrm{Prim}(H)^{\otimes n} \oplus W$.
Let $c \in \mathrm{Prim} (H)^{\otimes n}$ and $c^\prime \in W$ be the components of $b$ with respect to this decomposition.
By Lemma \ref{202312080706}, we have $\delta_H^n ( g_n (c^\prime) ) = 0$.
Hence, we obtain $\delta_H^n ( g_n (b) ) = \delta_H^n ( g_n (c)) = f_n ( h_n (c))$ which leads to $\delta_H^n (a) = s_n (c)$, by Lemma \ref{202209011150}.
Thus $\delta_H^n ( a ) \in H^{\otimes n}$ is fixed under the $\Sigma_n$-action.

We now consider $\mathds{k}$ a field of characteristic zero.
We start from the second part of our statement and prove that $P_1(H)^n = P_n (H)$ by induction on $n$.
It is obvious for $n=1$.
Suppose that $P_1(H)^{n-1} = P_{n-1} (H)$ for $n \geq 2$.
It suffices to prove that $P_n (H) \subset P_1 (H)^n$ by Proposition \ref{202211271234}.
Let $a \in P_n (H)$.
By the hypothesis $\delta_H^n \left( P_n (H) \right) \subset \left( H^{\otimes n} \right)^{\Sigma_n}$, we have 
$$s_n ( \delta_H^n ( a ) ) = \sum_{\sigma \in \Sigma_n} \sigma ( \delta_H^n (a ) ) = n!~ \delta_H^n (a) , $$ 
or equivalently, $s_n \circ f_n = n! ~ f_n$ .
By Lemma \ref{202209011150}, we obtain
\begin{align} \label{202312081023}
f_n \circ h_n \circ f_n = n! ~ f_n .
\end{align}
Let $\pi_n : P_n (H) \to P_n (H) / P_{n-1} (H)$ be the projection.
Note that $f_n$ induces an injection $\overline{f_n} : P_n (H) / P_{n-1} (H) \to \mathrm{Prim} (H)^{\otimes n}$ such that $\overline{f_n} \circ \pi_n = f_n$ since $P_{n-1} (H)$ is the kernel of $\delta^n$.
By (\ref{202312081023}), we have $\overline{f_n} \circ \pi_n \circ h_n \circ f_n = n! ~ \overline{f_n} \circ \pi_n$ which gives $\pi_n \circ h_n \circ f_n = n!~ \pi_n$ since $\overline{f_n}$ is injective.
Hence, we have $a - (n!)^{-1} \cdot h_n (f_n (a)) \in P_{n-1} (H)$ (here, we use the assumption on the characteristic).
By the hypothesis of the induction, we have $a -(n!)^{-1} \cdot h_n (f_n (a)) \in P_1 (H)^{n-1}$.
Note that $(n!)^{-1} \cdot h_n (f_n (a)) \in P_1(H)^n$ by the definition of $h_n$.
Therefore, we have $a \in P_1 (H)^n$, since $P_1 (H)^{n-1} \subset P_1 (H)^n$.
\end{proof}

\begin{Corollary}
\label{202208302300}
Let $\mathds{k}$ be a field of characteristic zero.
Every cocommutative bialgebra over $\mathds{k}$ is good.
In particular, a primitive bialgebra over $\mathds{k}$ is good.
\end{Corollary}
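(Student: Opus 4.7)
My plan is to deduce both claims from Theorem \ref{202208302145}, which in characteristic zero gives the equivalence between goodness and the condition $\delta_H^n(P_n(H)) \subset (H^{\otimes n})^{\Sigma_n}$. So the entire task reduces to verifying this $\Sigma_n$-invariance in the cocommutative setting, and then showing the primitive case falls inside the cocommutative case.

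For the first part, let $H$ be cocommutative. Recall that $\delta_H^n = e_H^{\otimes n} \circ \Delta_H^{(n)}$ by definition (Definition \ref{2022073111613}), and the idempotent $e_H^{\otimes n} : H^{\otimes n} \to H^{\otimes n}$ is tautologically $\Sigma_n$-equivariant since it is a tensor power of a single endomorphism. It therefore suffices to show that the image of the iterated comultiplication $\Delta_H^{(n)} : H \to H^{\otimes n}$ lies in $(H^{\otimes n})^{\Sigma_n}$. The symmetric group $\Sigma_n$ is generated by the adjacent transpositions $(i\ i{+}1)$. Writing $\Delta_H^{(n)} = (\mathrm{id}_H^{\otimes (i-1)} \otimes \Delta_H \otimes \mathrm{id}_H^{\otimes (n-1-i)}) \circ \Delta_H^{(n-1)}$ (which is legitimate by coassociativity), the action of $(i\ i{+}1)$ on $\Delta_H^{(n)}(a)$ amounts to composing the last map with $\mathrm{id}_H^{\otimes (i-1)} \otimes \tau \otimes \mathrm{id}_H^{\otimes (n-1-i)}$, where $\tau$ is the swap on $H \otimes H$. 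By cocommutativity, $\tau \circ \Delta_H = \Delta_H$, so $\Delta_H^{(n)}(a)$ is fixed by every adjacent transposition, hence by all of $\Sigma_n$. Thus $\delta_H^n(P_n(H)) \subset \delta_H^n(H) \subset (H^{\otimes n})^{\Sigma_n}$, and by Theorem \ref{202208302145} the bialgebra $H$ is good.

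For the ``in particular'' statement, it is enough to show that every primitive bialgebra is cocommutative, since then the previous paragraph applies. Let $H$ be primitive, i.e. $H = \bigcup_n P_1(H)^n$, so that $H$ is generated as an algebra by $\{1_H\} \cup \mathrm{Prim}(H)$. Both $\Delta_H$ and $\tau \circ \Delta_H$ are algebra homomorphisms $H \to H \otimes H$ (the swap $\tau$ is an algebra homomorphism for the standard tensor product algebra structure). On the generating set they agree: on $1_H$ trivially, and on a primitive element $x$ because $\Delta_H(x) = x \otimes 1 + 1 \otimes x$ is symmetric under $\tau$. Two algebra homomorphisms agreeing on a generating set are equal, whence $\tau \circ \Delta_H = \Delta_H$ on all of $H$, i.e. $H$ is cocommutative, and the first part of the corollary applies.

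The entire argument is essentially mechanical once Theorem \ref{202208302145} is granted; the only point requiring any care is the extension of cocommutativity from the pairwise swap to full $\Sigma_n$-invariance of $\Delta_H^{(n)}$, which is standard but worth stating explicitly. No real obstacle arises here, since Theorem \ref{202208302145} has already absorbed the characteristic-zero content of the statement.
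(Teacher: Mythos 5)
Your proof is correct and follows essentially the same route as the paper: both reduce the claim to the $\Sigma_n$-invariance criterion in Theorem \ref{202208302145} and deduce that invariance from cocommutativity (the paper by iterating the $\Sigma_2$-invariant $\delta^2_H$ via Proposition \ref{202208302218}, you by observing that $\Delta_H^{(n)}$ lands in $(H^{\otimes n})^{\Sigma_n}$ and that $e_H^{\otimes n}$ is $\Sigma_n$-equivariant --- the same computation in slightly different packaging). Your explicit argument that a primitive bialgebra is cocommutative (two algebra maps $\Delta_H$ and $\tau \circ \Delta_H$ agreeing on the generating set $\{1_H\} \cup \mathrm{Prim}(H)$) correctly fills in a standard fact the paper merely asserts.
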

\begin{proof}
It is obvious that $\delta^2_H : H \to H \otimes H$ is fixed under the $\Sigma_2$-action, since $H$ is cocommutative.
Moreover, $\delta_H^n$ is equal to the $n$-times iterated application of $\delta^2_H$, and thus, $\delta^n_H$ is fixed under the $\Sigma_n$-action.
Hence, the converse in Theorem \ref{202208302145} implies the claim.
The last claim follows from the fact that a primitive bialgebra is cocommutative.
\end{proof}

\begin{remark}
If $\mathds{k}$ is a field of positive characteristic, there exist cocommutative bialgebras which are not good.
For example, the shuffle bialgebra $Sh (\mathds{k})$ is cocommutative but not good (see Example \ref{202211301553}).
\end{remark}

\begin{Corollary}
\label{202212121035}
Let $\mathds{k}$ be a field of characteristic zero.
A bialgebra over $\mathds{k}$ is primitive if and only if it is conilpotent and good.
\end{Corollary}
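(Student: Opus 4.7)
The plan is to observe that the two directions of the equivalence follow essentially by concatenating previously established results: Proposition \ref{202211271234} (the coradical filtration contains the primitive filtration) and Corollary \ref{202208302300} (over a field of characteristic zero, every cocommutative, and in particular every primitive, bialgebra is good).

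For the forward direction, assuming $H$ is primitive, I would argue as follows. By definition, $H = \bigcup_{n \in \mathds{N}} P_1(H)^n$. Combining this with the inclusion $P_1(H)^n \subset P_n(H)$ of Proposition \ref{202211271234} yields
\[
H = \bigcup_{n \in \mathds{N}} P_1(H)^n \subset \bigcup_{n \in \mathds{N}} P_n(H) \subset H,
\]
so $H$ is conilpotent. For goodness, I would invoke the fact that a primitive bialgebra is automatically cocommutative (since $\Delta$ is an algebra map and $\mathrm{Prim}(H)$ consists of cocommutative elements, the cocommutativity propagates to the subalgebra generated by $\mathrm{Prim}(H)$, which equals $H$ by assumption), and then apply the last sentence of Corollary \ref{202208302300} to conclude that $H$ is good.

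For the converse direction, assume $H$ is conilpotent and good. Then conilpotency gives $H = \bigcup_{n \in \mathds{N}} P_n(H)$, while goodness gives $P_n(H) = P_1(H)^n$ for every $n \in \mathds{N}$. Combining these two equalities produces $H = \bigcup_{n \in \mathds{N}} P_1(H)^n$, which is the definition of primitivity.

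No real obstacle is anticipated: the content of the corollary lies entirely in the two supporting results, and the proof is just a matter of unwinding the definitions of the primitive, coradical, and primitive-generation filtrations and assembling them. The only conceptual subtlety worth flagging in the write-up is the use of the hypothesis that the characteristic is zero, which enters solely through Corollary \ref{202208302300} in the forward direction; the converse direction is valid over any field (indeed, any commutative ring where "good" makes sense).
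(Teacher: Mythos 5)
Your proof is correct and follows essentially the same route as the paper: conilpotency of a primitive bialgebra via the inclusion $P_1(H)^n \subset P_n(H)$ of Proposition \ref{202211271234}, goodness via Corollary \ref{202208302300}, and the converse by combining $H = \bigcup_n P_n(H)$ with $P_1(H)^n = P_n(H)$. Your added remarks (the sketch of why primitivity implies cocommutativity, and the observation that the converse direction needs no hypothesis on the characteristic) are accurate but not part of the paper's argument, which simply cites these facts.
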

\begin{proof}
A primitive bialgebra $H$ is conilpotent since $H = \bigcup_{n \in \mathds{N}} P_1 (H)^n \subset \bigcup_{n \in \mathds{N}} P_n (H) \subset H$.
By Corollary \ref{202208302300}, the only if part is proved.
Conversely, for a conilpotent and good bialgebra $H$, we have $\bigcup_{n \in \mathds{N}} P_1 (H)^n = \bigcup_{n \in \mathds{N}} P_n (H)$ since it is good. 
The conilpotency of $H$ implies $\bigcup_{n \in \mathds{N}} P_1 (H)^n = H$.
\end{proof}

\begin{remark}
\label{202212121042}
Consider $\mathds{k}$ a field of characteristic zero. 
One can deduce from a classical structure theorem for bialgebras (e.g. see \cite[Theorem 1.3.4]{loday2012algebraic}) that conilpotency is equivalent to primitivity for a cocommutative bialgebra.
We remark that Corollary \ref{202208302300} and \ref{202212121035} lead to the same conclusion without applying the structure theorem.
\end{remark}

\subsection{Primitive exponential $\mathsf{gr}^{\mathsf{op}}$-modules}
\label{202211251536}

In this section, we give a relation between primitive cocommutative Hopf algebras and primitive exponential $\mathsf{gr}^{\mathsf{op}}$-modules.
In particular, we prove an equivalence of them if the ground ring $\mathds{k}$ is a field.

Let $\mathds{k}$ be a commutative unital ring.
Recall the functor $\mathrm{ev}_{1} : \mathcal{F}^\mathsf{exp} ( \mathsf{gr^{\mathsf{op}}} ; \mathds{k} ) \to \mathsf{Hopf}^\mathsf{cc}_\mathds{k}$ in Remark \ref{202207311722}.
In the following statements, let $H = \mathrm{ev}_{1} (F)$ be the induced cocommutative Hopf algebra for $F \in \mathcal{F}^\mathsf{exp} ( \mathsf{gr}^{\mathsf{op}} ; \mathds{k} )$.
Note that the exponentiality of $F$ induces the isomorphism $H^{\otimes m} \cong F(m)$.

For a Lie algebra $\mathfrak{g}$, the assignment of $\mathfrak{g}^{\otimes n}$ to $n \in \mathds{N}$ extends to a left $\mathsf{Cat}_{\mathfrak{Lie}}$-module which we denote by $\mathfrak{g}^{\otimes}$.
This construction is natural so that if we have a Lie algebra homomorphism $f : \mathfrak{g} \to \mathfrak{h}$, then it induces a natural transformation $\mathfrak{g}^{\otimes} \to \mathfrak{h}^{\otimes}$.
For a Hopf algebra $H$, we are mainly interested in the Lie algebras $\mathrm{Prim} (H)$ and $H$ itself, which are Lie algebras equipped with the commutator.

\begin{Lemma} \label{202312081633}
Let $H$ be a cocommutative Hopf algebra over $\mathds{k}$.
The inclusion $\mathrm{Prim} (H) \hookrightarrow H$ induces the natural transformation 
$$
\mathrm{Prim} (H)^{\otimes} \to \mathcal{B}^\ast \mathcal{E}^\ast \left( \alpha ( H) \right)
$$
where $\mathsf{Cat}_{\mathfrak{Lie}} \stackrel{\mathcal{B}}{\to} \mathsf{Cat}_{\mathfrak{Ass}^u} \stackrel{\mathcal{E}}{\to} \mathds{k} \mathsf{gr}^{\mathsf{op}}$ are the canonical functors introduced in section \ref{Korea202306121601}.
\end{Lemma}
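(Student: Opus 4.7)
The plan is to factor the desired natural transformation through $H$ regarded as a Lie algebra under its commutator bracket, as
\[
\mathrm{Prim}(H)^{\otimes} \longrightarrow H^{\otimes} \longrightarrow \mathcal{B}^\ast \mathcal{E}^\ast ( \alpha(H) ) .
\]
The first arrow is provided by the functoriality of $(-)^{\otimes}$ recalled in the paragraph preceding the lemma, applied to the Lie algebra homomorphism $\mathrm{Prim}(H) \hookrightarrow H$. I will construct the second arrow as an isomorphism of $\mathsf{Cat}_{\mathfrak{Lie}}$-modules, using the fact that on objects both sides equal $H^{\otimes n}$: indeed $\mathcal{B}$ and $\mathcal{E}$ act as the identity on objects, and $(\alpha(H))(n) = H^{\otimes n}$. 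The candidate map is therefore the identity componentwise; the content is to check that it intertwines the two $\mathsf{Cat}_{\mathfrak{Lie}}$-actions.

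Since $\mathsf{Cat}_{\mathfrak{Lie}}$ is generated as a $\mathds{k}$-linear category by permutations and by operadic Lie brackets $\ell \in \mathfrak{Lie}(k) \subset \mathsf{Cat}_{\mathfrak{Lie}}(k,1)$, it suffices to check compatibility on these two classes of morphisms. For permutations the compatibility is immediate, both actions being the symmetry of tensor products. For the elementary bracket $\ell_2 = [-,-] \in \mathfrak{Lie}(2)$, the action on $H^{\otimes}$ is the commutator $a \otimes b \mapsto ab - ba$. On the other side, by definition of the operad map $\mathfrak{Lie} \to \mathfrak{Ass}^u$ one has $\mathcal{B}(\ell_2) = x_1 x_2 - x_2 x_1$ in $\mathsf{Cat}_{\mathfrak{Ass}^u}(2,1)$, whose image under $\mathcal{E}$ is $[x_1 x_2]_2 - [x_2 x_1]_2 \in \mathds{k}\mathsf{gr}^{\mathsf{op}}(2,1)$; applying $\alpha(H)$ yields $\nabla_H - \nabla_H \circ \tau : H \otimes H \to H$, which is again the commutator. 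For a general iterated bracket, expanding $\ell$ as a signed sum of associative monomials via $\mathcal{B}$ and transporting through $\mathcal{E}$ and $\alpha(H)$ produces precisely the corresponding nested commutator in $H$.

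The main technical point is the bookkeeping for iterated brackets: checking that the signed sum of associative words representing $\mathcal{B}(\ell)$, once sent through $\mathcal{E}$ to a signed sum of morphisms $[w_1 | \cdots | w_m]_n$ in $\mathds{k}\mathsf{gr}^{\mathsf{op}}$ and then evaluated by $\alpha(H)$, reassembles into the nested commutator. This is essentially formal, reducing by functoriality to the arity-$2$ case handled above, together with the observation that $\alpha(H)$ sends a word-substitution morphism $[w_1 | \cdots | w_m]_n$ to the corresponding tuple of iterated products in $H$. Composing the first arrow of the factorization with the resulting isomorphism gives the natural transformation asserted by the lemma.
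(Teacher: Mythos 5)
Your proposal is correct and takes essentially the same route as the paper: the paper's proof likewise factors through the Lie algebra homomorphism $\mathrm{Prim}(H) \hookrightarrow H$ to obtain $\mathrm{Prim}(H)^{\otimes} \to H^{\otimes}$, and then identifies $H^{\otimes} = \mathcal{B}^\ast \mathcal{E}^\ast ( \alpha(H) )$, a step it records as holding ``by definitions'' and which you simply verify explicitly on the generators (permutations and brackets) of $\mathsf{Cat}_{\mathfrak{Lie}}$. Your generator-by-generator check, including the computation $\mathcal{E}(\mathcal{B}(\ell_2)) = [x_1 x_2]_2 - [x_2 x_1]_2$ giving the commutator under $\alpha(H)$, is a harmless elaboration of that step rather than a different argument.
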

\begin{proof}
The inclusion $\mathrm{Prim}(H) \to H$ is a Lie algebra homomorphism.
Hence, this induces a natural transformation $\mathrm{Prim}(H)^{\otimes} \to H^{\otimes}$ where $H^{\otimes} : \mathsf{Cat}_{\mathfrak{Lie}} \to \mathsf{Mod}_{\mathds{k}} ; n \mapsto H^{\otimes n}$ is the functor induced by the Lie algebra structure on $H$.
Note that we have $H^{\otimes} = \mathcal{B}^\ast \mathcal{E}^\ast \left( \alpha ( H) \right)$ by definitions.
\end{proof}

Let $F \in \mathcal{F}^{\mathsf{exp}} ( \mathsf{gr}^{\mathsf{op}} ; \mathds{k} )$.
Applying Lemma \ref{202312081633} to $\mathrm{ev}_1 (F)$ gives a natural transformation $\mathrm{Prim} ( \mathrm{ev}_1 (F) )^{\otimes} \to \mathcal{B}^\ast \mathcal{E}^\ast \left( \alpha ( \mathrm{ev}_1 (F) ) \right) \cong  \mathcal{B}^\ast \mathcal{E}^\ast ( F )$.

Recall that we have a natural inclusion $\mathcal{P} ( F) \hookrightarrow \mathcal{B}^\ast \mathcal{E}^\ast ( F )$ of $\mathsf{Cat}_{\mathfrak{Lie}}$-modules by Proposition \ref{202212161700}.
\begin{Lemma}
\label{Korea202306161806}
Let $F \in \mathcal{F}^{\mathsf{exp}} ( \mathsf{gr}^{\mathsf{op}} ; \mathds{k} )$.
The natural transformation $\mathrm{Prim} ( \mathrm{ev}_1 (F) )^{\otimes} \to \mathcal{B}^\ast \mathcal{E}^\ast ( F )$ in Lemma \ref{202312081633} factors through $\mathcal{P} ( F) \hookrightarrow \mathcal{B}^\ast \mathcal{E}^\ast ( F )$:
$$
\begin{tikzcd}
\mathrm{Prim} ( \mathrm{ev}_1 (F) )^{\otimes} \ar[rr] \ar[dr, "\exists \beta"'] & &\mathcal{B}^\ast \mathcal{E}^\ast ( F ) \\
& \mathcal{P} ( F) \ar[ur, hookrightarrow] . &
\end{tikzcd}
$$
If $\mathds{k}$ is a field, then the factorized natural transformation $\beta$ is an isomorphism.
\end{Lemma}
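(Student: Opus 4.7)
The plan is to compute $F(\theta)$ explicitly using the exponential structure of $F$ and then derive both parts from Proposition \ref{202312062129} together with Lemma \ref{202207282250}. Writing $H = \mathrm{ev}_{1}(F)$, the symmetric monoidal isomorphism $F(2) \cong H \otimes H$ identifies $F(\Delta) = \Delta_H$, $F([x_1|e]_1)(a) = a \otimes 1_H$ and $F([e|x_1]_1)(a) = 1_H \otimes a$; hence
\[
F(\theta)(a) = \Delta_H(a) - a \otimes 1_H - 1_H \otimes a.
\]
An element of $\mathrm{Ker}(F(\theta))$ automatically satisfies $\varepsilon_H(a) = 0$, by applying $\varepsilon_H \otimes \mathrm{id}_H$ to the defining relation, so $\mathrm{Ker}(F(\theta)) = \mathrm{Prim}(H)$. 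Moreover, the exponential property of $F$ identifies $F(\mathrm{id}_{i-1} \ast \theta \ast \mathrm{id}_{n-i})$ with $\mathrm{id}_H^{\otimes (i-1)} \otimes F(\theta) \otimes \mathrm{id}_H^{\otimes (n-i)} \colon H^{\otimes n} \to H^{\otimes (n+1)}$.

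For the factorization, take $a_1 \otimes \cdots \otimes a_n \in \mathrm{Prim}(H)^{\otimes n}$. The identification above gives
\[
F(\mathrm{id}_{i-1} \ast \theta \ast \mathrm{id}_{n-i})(a_1 \otimes \cdots \otimes a_n) = a_1 \otimes \cdots \otimes F(\theta)(a_i) \otimes \cdots \otimes a_n = 0
\]
for each $1 \leq i \leq n$, so by Proposition \ref{202312062129} the element lies in $\mathcal{P}(F)(n)$. Thus the natural transformation of Lemma \ref{202312081633} factors through $\mathcal{P}(F) \hookrightarrow \mathcal{B}^\ast \mathcal{E}^\ast F$, yielding the claimed $\beta$. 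Since $\mathcal{P}(F)$ is a sub $\mathsf{Cat}_{\mathfrak{Lie}}$-module of $\mathcal{B}^\ast \mathcal{E}^\ast F$ by Proposition \ref{202212161700}, and the underlying transformation is $\mathsf{Cat}_{\mathfrak{Lie}}$-linear by Lemma \ref{202312081633}, $\beta$ is automatically a morphism of $\mathsf{Cat}_{\mathfrak{Lie}}$-modules.

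Assume now that $\mathds{k}$ is a field. Injectivity of each $\beta_n$ is inherited from the inclusion $\mathrm{Prim}(H)^{\otimes n} \hookrightarrow H^{\otimes n}$. For surjectivity, flatness of modules over a field and the identification above give
\[
\mathrm{Ker}\bigl( \mathrm{id}_H^{\otimes (i-1)} \otimes F(\theta) \otimes \mathrm{id}_H^{\otimes (n-i)} \bigr) = H^{\otimes (i-1)} \otimes \mathrm{Prim}(H) \otimes H^{\otimes (n-i)}.
\]
An iterated application of Lemma \ref{202207282250} (applied in each tensor slot) then identifies
\[
\bigcap_{i=1}^{n} \bigl( H^{\otimes (i-1)} \otimes \mathrm{Prim}(H) \otimes H^{\otimes (n-i)} \bigr) \cong \mathrm{Prim}(H)^{\otimes n},
\]
and combined with Proposition \ref{202312062129} this yields $\mathcal{P}(F)(n) = \mathrm{Prim}(H)^{\otimes n}$, proving surjectivity of $\beta_n$. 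The main work therefore reduces to the explicit computation of $F(\theta)$; the only real subtlety is the iterated K\"unneth-type identity for the intersection of tensor-product kernels, which is precisely what Lemma \ref{202207282250} is set up to handle, and this is where the hypothesis that $\mathds{k}$ is a field enters.
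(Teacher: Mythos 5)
Your proposal is correct and follows essentially the same route as the paper's proof: both identify $\mathrm{Prim}(\mathrm{ev}_1(F)) = \mathrm{Ker}(F(\theta))$, use Proposition \ref{202312062129} to express $\mathcal{P}(F)(n)$ as the intersection of the kernels $L_i = \mathrm{Ker}(F(\mathrm{id}_{i-1}\ast\theta\ast\mathrm{id}_{n-i}))$, deduce the factorization from the exponential identification of these maps with $\mathrm{id}^{\otimes(i-1)}\otimes F(\theta)\otimes\mathrm{id}^{\otimes(n-i)}$, and in the field case apply Lemma \ref{202207282250} to identify $\bigcap_i L_i$ with $\mathrm{Prim}(H)^{\otimes n}$. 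Your only additions — the explicit computation of $F(\theta)$ and the verification that elements of $\mathrm{Ker}(F(\theta))$ are counit-trivial — merely spell out what the paper treats as true by definition.
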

\begin{proof}
Let $f_n: \mathrm{Prim} ( \mathrm{ev}_1 (F) )^{\otimes n} \to \mathrm{ev}_1 (F)^{\otimes n} = F(1)^{\otimes n} \cong F(n)$ be the $n$-fold tensor product of the inclusions.
It suffices to show the objectwise factorization below since $\mathcal{P} ( F)$ is a sub $\mathsf{Cat}_{\mathfrak{Lie}}$-module of $\mathcal{B}^\ast \mathcal{E}^\ast ( F )$:
$$
\begin{tikzcd}
F(n) \ar[r, equal] & F ( n) \\
\mathrm{Prim} ( \mathrm{ev}_1 (F) )^{\otimes n} \ar[r, "\exists \beta_n"] \ar[u, "f_n"] & ( \mathcal{P} ( F) ) (n)  .  \ar[u, hookrightarrow] 
\end{tikzcd}
$$
By Proposition \ref{202312062129}, it suffices to prove that the image of $f_n$ lies in $L_i = \mathrm{Ker} \left( F (\mathrm{id}_{i-1} \ast \theta \ast \mathrm{id}_{n-i}) \right)$ for every $1 \leq i \leq n$.
In fact, we have $\mathrm{Prim} (\mathrm{ev}_1 (F)) = \mathrm{Ker} ( F(\theta))$ by definition, so that we have $F (\mathrm{id}_{i-1} \ast \theta \ast \mathrm{id}_{n-i}) \circ f_n = 0$.
$$
\begin{tikzcd}
0 \ar[r] & L_i \ar[r] & F(n) \ar[r, "F (\mathrm{id}_{i-1} \ast \theta \ast \mathrm{id}_{n-i})"] & F(n+1) \\
&& \mathrm{Ker} ( F(\theta))^{\otimes n} \ar[ul, dashrightarrow] \ar[u, "f_n"] \ar[ur, "0"] . &
\end{tikzcd}
$$

We now assume that $\mathds{k}$ is a field, 
By applying Lemma \ref{202207282250}, we see that $L_i = F(1)^{\otimes (i-1)} \otimes \mathrm{Ker} (F(\theta)) \otimes F(1)^{\otimes (n-i)} $.
In particular, Proposition \ref{202312062129} implies that $\left( \mathcal{P} (F) \right) (n) = \bigcap^{n}_{i=1} L_i = \mathrm{Ker} (F(\theta))^{\otimes n}$ so that $\beta_n$ gives an isomorphism.
\end{proof}

Let $\mathsf{Hopf}^\mathsf{cc,prim}_\mathds{k} $ be the full subcategory of $\mathsf{Hopf}^\mathsf{cc}_\mathds{k}$ consisting of primitive Hopf algebras.

Let $\mathcal{F}^\mathsf{exp}_\mathsf{prim} ( \mathsf{gr^{\mathsf{op}}} ; \mathds{k} )$ be the full subcategory of $\mathcal{F}^\mathsf{exp} ( \mathsf{gr^{\mathsf{op}}} ; \mathds{k} )$ consisting of primitive $\mathsf{gr^{\mathsf{op}}}$-modules.

In the following, recall the functor $\alpha : \mathsf{Hopf}^\mathsf{cc}_\mathds{k} \to \mathcal{F}^\mathsf{exp}( \mathsf{gr^{\mathsf{op}}} ; \mathds{k} )$ in Remark \ref{202207311722}, which gives an equivalence of categories.
\begin{theorem}
\label{202312081733}
The functor $\alpha$ induces a fully faithful functor:
$$
\mathsf{Hopf}^\mathsf{cc,prim}_\mathds{k} \to \mathcal{F}^\mathsf{exp}_\mathsf{prim} ( \mathsf{gr^{\mathsf{op}}} ; \mathds{k} )  .
$$
If $\mathds{k}$ is a field, then it gives an equivalence of categories.
\end{theorem}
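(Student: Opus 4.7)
The plan is to exploit the equivalence $\alpha : \mathsf{Hopf}^{\mathsf{cc}}_{\mathds{k}} \stackrel{\simeq}{\longrightarrow} \mathcal{F}^{\mathsf{exp}}(\mathsf{gr}^{\mathsf{op}};\mathds{k})$ of Remark \ref{202207311722}, so that full faithfulness of the restriction becomes automatic once I check that $\alpha$ lands in $\mathcal{F}^{\mathsf{exp}}_{\mathsf{prim}}(\mathsf{gr}^{\mathsf{op}};\mathds{k})$; in the field case, essential surjectivity then reduces to showing that $H := \mathrm{ev}_1(F)$ is primitive whenever $F$ is.

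The main computational input will be an explicit description of $I_k$ of Definition \ref{202312061102} applied to $F = \alpha(H)$. Lemma \ref{Korea202306161806} provides a natural inclusion $\mathrm{Prim}(H)^{\otimes k} \hookrightarrow \mathcal{P}(\alpha(H))(k)$, which is an isomorphism when $\mathds{k}$ is a field. Precomposing $I_k$ with this inclusion and unwinding the definition of $J_k$ preceding Theorem \ref{202212161526}, the resulting natural transformation ${}_\Delta\mathsf{Cat}_{\mathfrak{Ass}^u}(k,-) \otimes \mathrm{Prim}(H)^{\otimes k} \to \alpha(H)$ at an object $m$ sends a generator $(w_1 \otimes \cdots \otimes w_m) \otimes (p_1 \otimes \cdots \otimes p_k)$ to $w_1(p) \otimes w_2(p) \otimes \cdots \otimes w_m(p) \in H^{\otimes m}$, where $w_j(p)$ denotes the product in $H$ obtained by substituting $p_i$ for $x_i$ in the word $w_j$.

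From this formula I would argue both directions as follows. If $H = \bigcup_n P_1(H)^n$, then every simple tensor $p_{1,1} \cdots p_{1,k_1} \otimes \cdots \otimes p_{m,1} \cdots p_{m,k_m} \in H^{\otimes m}$ of products of primitives lies in the image of $I_{k_1+\cdots+k_m}$ at $m$ (take $w_j$ to be the ordered monomial in the variables assigned to slot $j$), hence $\alpha(H)(m) = \bigcup_n Q_n(\alpha(H))(m)$ for every $m$, i.e. $\alpha(H)$ is primitive; combined with full faithfulness inherited from the larger equivalence, this yields the first assertion. For the field case, I would evaluate $F = \mathrm{colim}_n Q_n(F)$ at $1 \in \mathsf{gr}^{\mathsf{op}}$: using $\mathcal{P}(F)(k) = \mathrm{Prim}(H)^{\otimes k}$ (Lemma \ref{Korea202306161806}, where the field hypothesis is crucial) and the fact that ${}_\Delta\mathsf{Cat}_{\mathfrak{Ass}^u}(k,1)$ is spanned by the monomials $x_{\sigma(1)} x_{\sigma(2)} \cdots x_{\sigma(k)}$ for $\sigma \in \Sigma_k$, the explicit formula for $I_k$ identifies $Q_n(F)(1)$ with $P_1(H)^n$, so the primitivity of $F$ translates directly into $H = \bigcup_n P_1(H)^n$.

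The most delicate step will be the containment $Q_n(F)(1) \subset P_1(H)^n$ in the last identification. The reverse containment is immediate from the explicit formula for $I_k$, but ``$\subset$'' genuinely requires the equality $\mathcal{P}(F)(k) = \mathrm{Prim}(H)^{\otimes k}$ rather than just the inclusion; without the field hypothesis, inputs of $I_k$ drawn from the larger $\mathcal{P}(F)(k)$ could produce elements of $P_n(H)$ (in the sense of the coradical filtration) that escape $P_1(H)^n$, as happens exactly for non-good cocommutative Hopf algebras in the sense of section \ref{202211301655}. This is precisely why the theorem only promises a fully faithful embedding over an arbitrary commutative $\mathds{k}$: the gap between primitive generation and the polynomial/coradical filtration is bridged by Lemma \ref{Korea202306161806} only when $\mathds{k}$ is a field.
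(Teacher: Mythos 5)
Your proposal is correct and takes essentially the same route as the paper: your explicit substitution formula for $I_k$ restricted along $\mathrm{Prim}(H)^{\otimes k} \to \mathcal{P}(\alpha(H))(k)$ is precisely the composite $\widetilde{T}_k$ in the paper's commutative diagram (\ref{Korea202306161832}), and both arguments deduce the field-case converse by evaluating at the object $1$ using that $\beta_k$ from Lemma \ref{Korea202306161806} is an isomorphism over a field. Your closing paragraph correctly isolates the exact point where the field hypothesis is needed, matching the paper's proof.
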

\begin{proof}
We first prove that, if $H$ is a cocommutative primitive Hopf algebra, then $F = \alpha (H)$ is primitive.
Let $m \in \mathds{N}$.
Recall the notations in Theorem \ref{202212161526}.
For $k \in \mathds{N}$, consider the following diagram:
\begin{equation}
\label{Korea202306161832}
\begin{tikzcd}
H^{\otimes m} \ar[r, "\cong"] & F( m ) \\
\mathsf{Cat}_{\mathfrak{Ass}^u} ( k , m ) \otimes H^{\otimes k} \ar[u, "T_k"] \ar[r, "\cong"] & \mathsf{Cat}_{\mathfrak{Ass}^u} ( k , m ) \otimes  F ( k) \ar[u, "J_k"] \\
{}_{\Delta} \mathsf{Cat}_{\mathfrak{Ass}^u} ( k , m ) \otimes \mathrm{Prim} ( H )^{\otimes k} \ar[r, "\mathrm{id} \otimes \beta_k"] \ar[u] &  {}_{\Delta} \mathsf{Cat}_{\mathfrak{Ass}^u} ( k , m ) \otimes  \left( \mathcal{P} ( F) \right) ( k) . \ar[u] 
\end{tikzcd}
\end{equation}
The map $T_k$ is induced by the underlying algebra structure of $H$.
Note that ${}_{\Delta} \mathsf{Cat}_{\mathfrak{Ass}^u} ( k , m ) = \mathsf{Cat}_{\mathfrak{Ass}^u} ( k , m )$.
The top square commutes due to the definition of $J_k$: it is induced by the $\mathsf{Cat}_{\mathfrak{Ass}^u}$-module structure on $\mathcal{E}^\ast F$.
The square below commutes due to the definition of $\beta$ in Lemma \ref{Korea202306161806}.
Let $\widetilde{T}_k$ be the composition of the left vertical maps.
By definition, if $H$ is primitive, then $H^{\otimes m}$ is equal to the image of $\Sigma_{k \in \mathds{N}} \widetilde{T}_k$.
Hence, if $H$ is primitive, then $F( m )$ is equal to the image of $\Sigma_{k \in \mathds{N}} \widetilde{J}_k$, i.e. the $\mathsf{gr}^{\mathsf{op}}$-module  $F = \alpha (H)$ is primitive.
Therefore, the composition $\mathsf{Hopf}^\mathsf{cc,prim}_\mathds{k}  \hookrightarrow \mathsf{Hopf}^\mathsf{cc}_\mathds{k} \stackrel{\mathrm{ev}_1}{\to} \mathcal{F}^\mathsf{exp}( \mathsf{gr^{\mathsf{op}}} ; \mathds{k} )$ factors through $\mathcal{F}^\mathsf{exp}_\mathsf{prim} ( \mathsf{gr^{\mathsf{op}}} ; \mathds{k} )$.
Moreover, the factorized functor is faithfully full since so is $\mathrm{ev}_1$.

We now suppose that $\mathds{k}$ is a field.
It suffices to prove that an exponential $\mathsf{gr}^{\mathsf{op}}$-module $F$ is primitive only if the Hopf algebra $H = \mathrm{ev}_{1} ( F )$ is primitive.
By the last statement in Lemma \ref{Korea202306161806}, the horizontal map $\mathrm{id} \otimes \beta_k$ in (\ref{Korea202306161832}) is an isomorphism for any $k$.
Thus, if $F$ is primitive, then $H^{\otimes m}$ is equal to the image of $\Sigma_{k \in \mathds{N}} \widetilde{T}_k$, by the commutative diagram (\ref{Korea202306161832}).
By applying this to $m= 1$, it proved that $H$ is primitive.
\end{proof}

\begin{remark}
\label{2022112921241}
Every primitive bialgebra is conilpotent for any ground ring $\mathds{k}$ (see Proposition \ref{202211271234}).
For a cocommutative bialgebra over a field of characteristic zero, its conilpotency is equivalent to primitivity (see Remark \ref{202212121042}).
Hence, if $\mathds{k}$ is a field of characteristic zero, the equivalences of categories in Theorem \ref{202207212109} and \ref{202312081733} are the same.
In positive characteristic, however, $\mathsf{Hopf}^\mathsf{cc,prim}_\mathds{k} \subset \mathsf{Hopf}^\mathsf{cc,conil}_\mathds{k}$ is an (essentially) proper inclusion.
For example, the shuffle bialgebra $\mathrm{Sh} ( \mathds{k} )$ associated with the one-dimensional vector space is conilpotent but non-primitive (see section \ref{202212111901}).
In particular, the inclusion $\mathcal{F}^{\mathsf{exp}}_{\mathsf{prim}} ( \mathsf{gr}^{\mathsf{op}} ; \mathds{k}) \hookrightarrow \mathcal{F}^{\mathsf{exp}}_\omega ( \mathsf{gr}^{\mathsf{op}} ; \mathds{k})$ (by Proposition \ref{202212160131}) is (essentially) proper.
\end{remark}

\subsection{Equivalence with restricted Lie algebras}
In this section, we suppose that the ground ring $\mathds{k}$ is a field of positive characteristic $p$.
Then the functors $P : \mathsf{Hopf}^\mathsf{cc,conil}_\mathds{k} \to \mathsf{Lie}_\mathds{k}$ and $U : \mathsf{Lie}_\mathds{k} \to \mathsf{Hopf}^\mathsf{cc,conil}_\mathds{k}$ in section \ref{202301131738Japan} do not induce an equivalence of categories.
We give some counterexamples below.

\begin{Example}
The functor $P$ is not full.
Consider the conilpotent Hopf algebra $\mathds{k} [t]$ in section \ref{202211071544}.
By Theorem \ref{202211071539}, $P_1 (\mathds{k} [t])$ is infinite-dimensional since $L_p (i) =1$ if and only if $i = p^k$ for some $k \in \mathds{N}$.
Hence, $P ( \mathds{k} [t] )$ is an infinite-dimensional abelian Lie algebra, and thus, the map $P : \mathrm{End}_{\mathsf{Hopf}_\mathds{k}} (\mathds{k} [t]) \to \mathrm{End}_{\mathsf{Lie}_\mathds{k}} (P(\mathds{k} [t]))$ is not surjective.
\end{Example}
\begin{Example}
The functor $U$ is not essentially surjective.
For example, the conilpotent Hopf algebra $H$ in Example \ref{202212121114} is not isomorphic to any universal enveloping algebra.
\end{Example}

There is a well-known modification of (\ref{202211102107}) in positive characteristic (see \cite[Theorem 6.11]{MM}).
In fact, we have a well-behaved subcategory $\mathsf{Hopf}^\mathsf{cc,prim}_\mathds{k} \subset \mathsf{Hopf}^\mathsf{cc,conil}_\mathds{k}$ which is (essentially) proper by Remark \ref{2022112921241}.
The assignment of the induced restricted Lie algebra $\bar{P} (H)$ to a primitive cocommutative Hopf algebra $H$ gives an equivalence of categories:
$$
\bar{P} : \mathsf{Hopf}^\mathsf{cc,prim}_\mathds{k} \stackrel{\simeq}{\longrightarrow}  \mathsf{ResLie}_\mathds{k}
$$
where $\mathsf{ResLie}_\mathds{k}$ is the category of restricted Lie algebras.

In the following statement, we denote by $ \mathsf{ResLie}^\mathsf{ab}_\mathds{k} $ the category of abelian restricted Lie algebras.
\begin{Corollary}
\label{202211301650}
If $\mathds{k}$ is a field with positive characteristic, then the composition of the functor $\mathrm{ev}_1$ with $\bar{P}$ induces equivalences of categories:
\begin{enumerate}
\item
$\mathcal{F}^\mathsf{exp}_\mathsf{prim} ( \mathsf{gr^{\mathsf{op}}} ; \mathds{k} ) \simeq \mathsf{ResLie}_\mathds{k}$.
\item
$\mathcal{F}^\mathsf{exp}_{\mathsf{prim}, \mathsf{out}} ( \mathsf{gr^{\mathsf{op}}} ; \mathds{k} )  \simeq \mathsf{ResLie}^\mathsf{ab}_\mathds{k} $.
\end{enumerate}
\end{Corollary}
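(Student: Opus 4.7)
My plan is to deduce both parts from Theorem \ref{202312081733} combined with the Milnor--Moore equivalence $\bar{P} : \mathsf{Hopf}^\mathsf{cc,prim}_\mathds{k} \stackrel{\simeq}{\to} \mathsf{ResLie}_\mathds{k}$ recalled above. For part (1), the proof is essentially formal: Theorem \ref{202312081733} gives, over any field, an equivalence $\alpha : \mathsf{Hopf}^\mathsf{cc,prim}_\mathds{k} \stackrel{\simeq}{\to} \mathcal{F}^\mathsf{exp}_\mathsf{prim} ( \mathsf{gr^{\mathsf{op}}} ; \mathds{k})$ whose quasi-inverse is $\mathrm{ev}_1$. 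Composing its quasi-inverse with $\bar{P}$ yields the desired equivalence $\bar{P} \circ \mathrm{ev}_1 : \mathcal{F}^\mathsf{exp}_\mathsf{prim} ( \mathsf{gr^{\mathsf{op}}} ; \mathds{k}) \stackrel{\simeq}{\to} \mathsf{ResLie}_\mathds{k}$.

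For part (2), I would proceed by identifying which full subcategory on the Hopf side corresponds to $\mathcal{F}^\mathsf{exp}_{\mathsf{prim}, \mathsf{out}}(\mathsf{gr^{op}};\mathds{k})$. Since the equivalence of Theorem \ref{202312081733} is a restriction of the full equivalence $\mathrm{ev}_1 : \mathcal{F}^\mathsf{exp}(\mathsf{gr^{op}};\mathds{k}) \simeq \mathsf{Hopf}^\mathsf{cc}_\mathds{k}$, and the outer property corresponds under this latter equivalence to bicommutativity by Theorem \ref{202207311733}, the further restriction to outer, primitive exponential functors produces exactly the subcategory $\mathsf{Hopf}^\mathsf{bc,prim}_\mathds{k}$ of bicommutative primitive cocommutative Hopf algebras, i.e., commutative primitive cocommutative Hopf algebras.

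It then remains to check that $\bar{P}$ restricts to an equivalence $\mathsf{Hopf}^\mathsf{bc,prim}_\mathds{k} \simeq \mathsf{ResLie}^\mathsf{ab}_\mathds{k}$. The forward direction is immediate: if $H$ is commutative, then for any $x,y \in \bar{P}(H) \subset H$ we have $[x,y] = xy - yx = 0$, so $\bar{P}(H)$ is abelian as a restricted Lie algebra (the $p$-operation is not constrained). For the converse, note that $\bar{P}$ quasi-inverse sends a restricted Lie algebra $\mathfrak{g}$ to its restricted enveloping algebra $u(\mathfrak{g})$; when $\mathfrak{g}$ is abelian the universal property immediately makes $u(\mathfrak{g})$ commutative, so $\bar{P}^{-1}$ sends abelian restricted Lie algebras to bicommutative primitive Hopf algebras.

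The only genuinely delicate point, and so the main obstacle worth flagging, is the compatibility of the outer property on $\mathcal{F}^\mathsf{exp}_\mathsf{prim}$ with the commutativity property on $\mathsf{Hopf}^\mathsf{cc,prim}_\mathds{k}$ under the equivalence of Theorem \ref{202312081733}. This, however, follows by inspection: the argument given in the proof of Theorem \ref{202207311733} showing that $\mathrm{ev}_1(F)$ is commutative whenever $F$ is outer uses only the Sweedler computation with the inner automorphism of $\mathsf{F}_2$ by $x_1$, and this computation is available for any exponential $F$, regardless of whether $F$ is primitive; conversely, for $H$ commutative, the functor $\alpha(H)$ is visibly outer because the formula $F(\rho)(x \otimes y) = x_{(1)} \otimes x_{(2)} y S(x_{(3)})$ collapses to $x \otimes y$ using commutativity. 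Hence the restriction of the equivalence in part (1) along the full embeddings of outer subcategories lands exactly in $\mathsf{ResLie}^\mathsf{ab}_\mathds{k}$, completing the argument.
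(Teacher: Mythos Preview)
Your proof is correct and follows the same approach as the paper, which simply says part (1) is immediate from Theorem \ref{202312081733} and part (2) follows by combining Theorems \ref{202207311733} and \ref{202312081733}; you have filled in the details the paper leaves implicit, in particular the restriction of $\bar{P}$ to an equivalence $\mathsf{Hopf}^{\mathsf{bc,prim}}_\mathds{k} \simeq \mathsf{ResLie}^\mathsf{ab}_\mathds{k}$. One small imprecision: in your final paragraph, checking that the single inner automorphism $\rho$ of $\mathsf{F}_2$ by $x_1$ acts trivially is not by itself enough to conclude $\alpha(H)$ is outer, since outer requires all inner automorphisms on every $\mathsf{F}_n$ to act trivially; but this is not a gap in your argument, because you have already invoked Theorem \ref{202207311733} (whose converse direction in the paper comes from the factorization through $\mathfrak{a}^\ast$ in diagram (\ref{202211071428})), so the redundant Sweedler check can simply be dropped.
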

\begin{proof}
The first claim is immediate from Theorem \ref{202312081733}.
The second one is proved by combining Theorem \ref{202207311733} and Theorem \ref{202312081733}.
\end{proof}

\bibliography{reference}{}
\bibliographystyle{plain}

\end{document}